\documentclass[11pt,reqno,a4paper,dvipsnames]{amsart}
\usepackage[utf8]{inputenc}
\usepackage[dvipsnames]{xcolor}

\title[
Carleson operators on doubling metric measure spaces]{
Carleson operators on doubling metric measure spaces}
%\dedicatory{Dedicated to Lennart Carleson}
\author[Becker]{Lars Becker}
\address{Mathematical Institute,
	University of Bonn,
	Endenicher Allee 60, 53115, Bonn,
	Germany}
\email{becker@math.uni-bonn.de}

\author[van Doorn]{Floris van Doorn}
\address{Mathematical Institute,
	University of Bonn,
	Endenicher Allee 60, 53115, Bonn,
	Germany}
\email{vdoorn@math.uni-bonn.de}

\author[Jamneshan]{Asgar Jamneshan}
 \address{Mathematical Institute,
	University of Bonn,
	Endenicher Allee 60, 53115, Bonn,
	Germany}
\email{ajamnesh@math.uni-bonn.de}

\author[Srivastava]{Rajula Srivastava}
\address{Mathematical Institute,
	University of Bonn,
	Endenicher Allee 60, 53115, Bonn,
	Germany, and
 \newline Max Planck Institute for Mathematics, Vivatsgasse 7,
53111, Bonn,
Germany.}
\email{rajulas@math.uni-bonn.de}

\author[Thiele]{Christoph Thiele}
 \address{Mathematical Institute,
	University of Bonn,
	Endenicher Allee 60, 53115, Bonn,
	Germany}
\email{thiele@math.uni-bonn.de}

\date{\today}
%\date{October 2022}

\usepackage{mathtools}
\usepackage{amssymb}
\usepackage{amsthm}
\usepackage{amsmath}
\usepackage{graphicx}
\usepackage{xcolor}
\usepackage{enumerate} % Note(F): changed from enumitem, which is incompatible with Lean blueprint
\usepackage[hidelinks, colorlinks=false]{hyperref}
\usepackage{cleveref}

\usepackage[style=trad-alpha]{biblatex}
\ExecuteBibliographyOptions{safeinputenc=true,backref=true,giveninits,useprefix=true,maxnames=5,doi=false,eprint=true,isbn=false,url=false}
\bibliography{main.bib}

\theoremstyle{plain}
\newtheorem{theorem}{Theorem}
\newtheorem{lemma}[theorem]{Lemma}
\newtheorem{proposition}[theorem]{Proposition}

\theoremstyle{definition}

\numberwithin{theorem}{section}
\numberwithin{equation}{section}

\newcommand{\R}{\mathbb{R}}
\newcommand{\C}{\mathbb{C}}

\DeclareMathOperator{\dens}{\operatorname{dens}}

\DeclareMathOperator{\Lip}{\operatorname{Lip}}

\def \fp {\mathfrak p}
\def \fP {\mathfrak P}
\def \fu {\mathfrak u}
\def \fU {\mathfrak U}

\def \fq {\mathfrak q}

\def\fT{\mathfrak T}
\def\fL{\mathfrak L}
\def\fC{\mathfrak C}
\def\pc{\mathrm{c}}
\def\ps{\mathrm{s}}

\def \fc{\Omega}

\def\scI{\mathcal{I}}
\def \tQ{{Q}}
\def\mfa{\vartheta}
\def\mfb{\theta}
\def\Mf{\Theta}
\def\fcc{\mathcal{Q}}

% Make sure that arguments of \uses are real labels, by using invisible refs:
% latex prints a warning if the label is not defined, but nothing is shown in the pdf file.
\ExplSyntaxOn
\NewDocumentCommand{\uses}{m}
 % uncomment one of the following two
 % uncomment this line to show all dependencies:
 %{\clist_map_inline:nn{#1}{(\ref{##1})}%
 % uncomment this line to hide all dependencies:
  {\clist_map_inline:nn{#1}{\vphantom{\ref{##1}}}%
  \ignorespaces}
\ExplSyntaxOff

\ExplSyntaxOn
%\NewDocumentCommand{\uses}{m}
%\ignorespaces
\ExplSyntaxOff
\newcommand{\lean}[1]{}
\newcommand{\leanok}{}
\newcommand{\proves}[1]{}
\begin{document}

\begin{abstract}
Doubling metric measure spaces provide a natural framework for singular integral operators.
In contrast, the study of maximally modulated singular integral operators, the so-called Carleson operators, has largely been limited to Euclidean space with modulation functions such as polynomials
defined by algebraic means.
We present a general axiomatic approach to modulation functions on doubling metric measure spaces and
prove $L^p$ bounds for the corresponding
Carleson operators in \Cref{metric-space-Carleson}
and \Cref{linearised-metric-Carleson}.
This generalizes classical and modern
results on Carleson operators.
In addition to the proofs presented here, our main results  have been computer verified using the language Lean and the library mathlib,
as documented in the sibling communication \cite{becker2025blueprintformalizationcarlesonstheorem}.

\end{abstract}

\maketitle

\tableofcontents

%\textbf{Acknowledgements}: %TODO: Change Acknowledgments
%gonthier, Jeremy Tan, ...
%We greatly appreciate the help by anyone proof reading this blueprint or helping with the formalization process.

\section{Introduction} %TODO: Change introduction

Doubling metric measure spaces and the more general spaces of homogeneous type of Coifman and Weiss \cite{MR0499948} are a natural environment to define singular integrals.
Thanks to the work of Macias and Segovia \cite{MaciasSegovia}, one can pass back and forth between doubling metric measure spaces and spaces of homogeneous type.
We refer to the textbook \cite{stein-book} for an account of these spaces.

A singular integral operator $S$ acts on suitable functions $f$ on the doubling metric measure space $X$ and satisfies
\begin{equation*}%\label{gensingint}
    \int Sf(x) g(x)\, \mathrm{d}\mu(x):=
\int\int  K(x,y) f(y) g(x) \mathrm{d}\mu(y)\mathrm{d}\mu(x)
\end{equation*}
for functions $f$ and $g$ with disjoint supports,
where the kernel $K$ satisfies Calder\'on-Zygmund estimates
such as
\eqref{eqkernel-size}, \eqref{eqkernel-y-smooth}. Estimates
\eqref{eqkernel-size}, \eqref{eqkernel-y-smooth}, are
in terms of the metric distance $\rho$ and the measure distance, that is the measure $\mu(B)$ of the smallest ball $B$ containing two points, and have a natural scaling behaviour.

Much of the theory of singular integrals
consists of conditional results in the following sense. Conditioned on boundedness of a singular integral operator
in the Hilbert space  $L^2(X)$,
the theory provides other estimates such as $L^p$ bounds, weighted bounds, and sparse bounds, for the singular integral as well as for related operators
such as maximal or nontangential maximal truncations.
The initial bound often arises externally from Hilbert space techniques in the setting of the given singular integral. More abstractly, one has
$S(1)$ and $S(b)$
theorems that obtain the general $L^2(X)$ bound from testing
the $L^2(X)$ estimate on subsets
consisting of so-called test functions. In the setting of spaces of homogeneous type, \cite{christ1990b}
provides a general $S(b)$ theorem.

A variant of singular integral
operators  beyond the basic variety are the maximally modulated singular integrals $T$, also called Carleson operators. For disjointly supported $f$ and $g$, they satisfy
\begin{equation*}%\label{gencarlop}
\int Tf(x) g(x)\, \mathrm{d}\mu=\int\sup_{\mfa\in \Mf}\left|\int  K(x,y) e^{i\mfa(y)}f(y) g(x) \mathrm{d}\mu(y)\right|\,\mathrm{d}\mu(x)
\end{equation*}
for a set $\Mf$ of modulation functions. The classical  Carleson operator
is defined with the Hilbert kernel $K(x,y)=(x-y)^{-1}$
on the real line and the set $\Mf$ of linear functions. Estimates for the classical Carleson operator
are provided by the Carleson-Hunt theorem and are closely related to the celebrated theorem of Carleson \cite{carleson} on almost everywhere convergence of Fourier series of functions in $L^2$.
Besides the translation and dilation symmetry
 of the Hilbert transform that is typical
for singular integral theory, the classical Carleson operator
exhibits an additional modulation symmetry. This modulation symmetry mandates
techniques  that are often referred to as time-frequency analysis and go beyond the core singular integral techniques.
Further examples of Carleson operators in the literature include the Stein-Wainger \cite{stein-wainger} operator
with polynomial modulations without linear term, thereby avoiding time-frequency analysis, Lie's quadratic Carleson operator \cite{lie-quadratic} with quadratic
polynomials including linear terms, and Lie's general polynomial Carleson operator \cite{lie-polynomial}
and its further generalization \cite{zk-polynomial}. Mnatsakanyan \cite{mnatsakanyan} has encountered a Carleson operator with non-polynomial albeit still algebraically explicit modulations.

The purpose of the present paper is to generalize all these results by defining classes $\Mf$ of modulation functions axiomatically on doubling metric measure spaces and proving bounds for the corresponding Carleson operators.
The axioms are in terms of a family of metrics  on $\Mf$
parameterized by balls in $X$. The metric related to a ball $B$ controls the relative oscillation of two modulation functions
on $B$
as in \eqref{osccontrol}, and in many instances it can be chosen equal to the left-hand side
of \eqref{osccontrol}.
The axioms demand  a number of doubling properties
of the metrics
very much in the spirit of doubling metric measure spaces, namely
\eqref{firstdb},  \eqref{seconddb}, and \eqref{thirddb}.
 One additional axiom \eqref{eq-vdc-cond} that we call the cancellative axiom is of somewhat more technical nature
 and replaces techniques of partial integration in the Euclidean setting.
 %This axiom provides a doubling behaviour of some sort. It would be nice to write it more explicitly as a doubling property, but we were unable to do so.

 Our main Theorem \ref{metric-space-Carleson} is then of conditional nature roughly in
 the sense described above.
 It takes as assumption an $L^2(X)$ estimate on an operator related to the singular intgral, the
 maximally truncated
 non-tangential operator $T_*$ defined in
 \eqref{def-tang-unm-op}.
Bounds for this operator follow for standard Calder\'on-Zygmund kernels that are regular in both variables from bounds
in
$L^2(X)$ of the singular integral itself.
We pose bounds for this stronger operator as assumption,
because we only demand regularity of the
kernel $K$ in one of two variables.
We call such $K$  one-sided kernels. They
are natural for Carleson operators in view of the maximal construction in the other variable.
The non-tangential construction replaces regularity
in the other variable.
We also formulate a second theorem, Theorem \ref{linearised-metric-Carleson},
concluding boundedness for the  so-called linearized Carleson operator with the supremum over modulations replaced by a choice function $\tQ$.
Having such a choice function fixed,
one can demand a weaker
hypothesis on
an operator
\eqref{def-lin-star-op}
adapted to the specific choice function $\tQ$.
This more technical but stronger version is necessary in some applications such as a possible deduction of the Walsh Carleson theorem \cite{MR217510}, where the kernel $K$ is closely linked to the linearizing function $\tQ$ and one only has the weaker hypothesis.

We proceed to introduce the formal setup for our main theorems. The setup is intentionally worded completely parallel to the  sibling communication \cite{becker2025blueprintformalizationcarlesonstheorem}, see also the end
of the introduction for further discussion of the relationship between the sibling communications.

We carry a multi purpose parameter, a natural number
\begin{equation*}
    a\ge 4
\end{equation*} in our notation.
As $a$ gets larger, both the
hypotheses and the conclusions of the main theorems will become weaker.

A doubling metric measure space $(X,\rho,\mu, a)$ is a complete
and locally compact metric space $(X,\rho)$
equipped with a non-zero locally finite Borel measure $\mu$ that satisfies the doubling condition that for all $x\in X$ and all $R>0$ we have
\begin{equation}\label{doublingx}
    \mu(B(x,2R))\le 2^a\mu(B(x,R))\,,
\end{equation}
where we have denoted by $B(x,R)$ the open ball of radius $R$ centred at $x$:
\begin{equation*}%\label{eq-define-ball}
 B(x,R):=\{y\in X: \rho(x,y)<R\}. \end{equation*}

A collection $\Mf$ of real valued continuous functions on the doubling metric measure space $(X,\rho,\mu,a)$ is called compatible, if there is a point $o\in X$ where all the functions are equal to $0$ and there exists for each ball $B \subset X$ a metric $d_B$ on $\Mf$ such that the following five properties \eqref{osccontrol}, \eqref{firstdb}, \eqref{monotonedb}, \eqref{seconddb}, and \eqref{thirddb} are satisfied. For every ball $B \subset X$
\begin{equation}\label{osccontrol}
    \sup_{x,y\in B}|\mfa(x)-{\mfa(y)}- \mfb(x)+{\mfb(y)}| \le d_{B}(\mfa,\mfb)\,.
\end{equation}
For any two balls $B_1=B(x_1,R)$, $B_2= B(x_2,2R)$ in $X$ with $x_1\in B_2$ and any $\mfa,\mfb\in \Mf$,
\begin{equation}\label{firstdb}
    d_{B_2}(\mfa,\mfb)\le 2^a d_{B_1}(\mfa,\mfb) .
\end{equation}
For any two balls $B_1, B_2$ in $X$ with $B_1 \subset B_2$ and any $\mfa, \mfb \in \Mf$
\begin{equation}\label{monotonedb}
    d_{B_1}(\mfa,\mfb) \le d_{B_2}(\mfa, \mfb)
\end{equation}
and for any two balls
$B_1=B(x_1,R)$, $B_2= B(x_2,2^aR)$
with $B_1\subset B_2$, and $\mfa,\mfb\in \Mf$,
\begin{equation}\label{seconddb}
    2d_{B_1}(\mfa,\mfb)
\le d_{B_2}(\mfa,\mfb) .
\end{equation}
For every ball $B$ in $X$ and every $d_B$-ball $\tilde B$ of radius $2R$ in $\Mf$, there is a collection $\mathcal{B}$ of
at most $2^a$ many $d_B$-balls of radius $R$ covering $\tilde B$, that is,
\begin{equation}\label{thirddb}
    \tilde B\subset \bigcup \mathcal{B}.
\end{equation}

Further, a compatible collection $\Mf$ is called cancellative, if
for any ball $B$ in $X$ of radius $R$, any Lipschitz function $\varphi: X\to \C$
supported on $B$, and any $\mfa,\mfb\in \Mf$ we have
\begin{equation}
    \label{eq-vdc-cond}
    \left|\int_B e(\mfa(x)-{\mfb(x)}) \varphi(x) \mathrm{d}\mu(x)\right|\le 2^a \mu(B)\|\varphi\|_{\Lip(B)}
(1+d_B(\mfa,\mfb))^{-\frac{1}{a}},
\end{equation}
where $\|\cdot\|_{\Lip(B)}$ denotes the inhomogeneous Lipschitz norm on $B$:
$$
    \|\varphi\|_{\Lip(B)} = \sup_{x \in B} |\varphi(x)| + R \sup_{x,y \in B, x \neq y} \frac{|\varphi(x) - \varphi(y)|}{\rho(x,y)}\,.
$$

A one-sided Calder\'on--Zygmund kernel $K$ on the doubling metric measure space $(X, \rho, \mu, a)$ is a measurable function
\begin{equation*}
  K:X\times X\to \mathbb{C}
\end{equation*}
such that for all $x,y',y\in X$ with $x\neq y$, we have
\begin{equation}\label{eqkernel-size}
    |K(x,y)| \leq \frac{2^{a^3}}{V(x,y)}
\end{equation}
and if $2\rho(y,y') \leq \rho(x,y)$, then
\begin{equation}
  \label{eqkernel-y-smooth}
  |K(x,y) - K(x,y')| \leq \left(\frac{\rho(y,y')}{\rho(x,y)}\right)^{\frac{1}{a}}\frac{2^{a^3}}{V(x,y)},
\end{equation}
where \[V(x,y):=\mu(B(x,\rho(x,y))).\]
Define the maximally truncated non-tangential singular integral $T_{*}$ associated with $K$ by
\begin{equation}
    \label{def-tang-unm-op}
    T_{*}f(x):=\sup_{R_1 < R_2} \sup_{\rho(x,x')<R_1} \left|\int_{R_1< \rho(x',y) < R_2} K(x',y) f(y) \, \mathrm{d}\mu(y) \right|\,.
\end{equation}
We define the generalized Carleson operator $T$ by
\begin{equation}
    \label{def-main-op}
    Tf(x):=\sup_{\mfa\in\Mf} \sup_{0 < R_1 < R_2}\left| \int_{R_1 < \rho(x,y) < R_2} K(x,y) f(y) e(\mfa(y)) \, \mathrm{d}\mu(y) \right|\, ,
\end{equation}
where $e(r)=e^{ir}$.

Our main result is the following restricted weak type estimate for $T$ in the range $1<q\le 2$, which by interpolation techniques recovers $L^q$ estimates for the open range
$1<q<2$.
\begin{theorem}[metric space Carleson]
\label{metric-space-Carleson}
\leanok
\lean{metric_carleson}
    For all integers $a \ge 4$ and real numbers $1<q\le 2$
    the following holds.
    Let $(X,\rho,\mu,a)$ be a doubling metric measure space. Let $\Mf$ be a
    cancellative compatible collection of functions and let $K$ be a one-sided Calder\'on--Zygmund kernel on $(X,\rho,\mu,a)$. Assume that for every bounded measurable function $g$ on $X$ supported on a set of finite measure we have
    \begin{equation}\label{nontanbound}
        \|T_{*}g\|_{2} \leq 2^{a^3} \|g\|_2\,,
    \end{equation}
    where $T_{*}$ is defined in
\eqref{def-tang-unm-op}.
    Then for all Borel sets $F$ and $G$ in $X$ and
    all Borel functions $f:X\to \C$ with
    $|f|\le \mathbf{1}_F$, we have, with $T$ defined in \eqref{def-main-op},
    \begin{equation}
        \label{resweak}
        \left|\int_{G} T f \, \mathrm{d}\mu\right| \leq \frac{2^{443a^3}}{(q-1)^6} \mu(G)^{1-\frac{1}{q}} \mu(F)^{\frac{1}{q}}\, .
    \end{equation}
\end{theorem}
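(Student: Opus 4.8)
\emph{Discretization and model sum.} The plan is to run the time--frequency discretization scheme underlying the Euclidean proof of Carleson's theorem, carried out intrinsically on the doubling metric measure space, with the cancellative axiom \eqref{eq-vdc-cond} playing the role of integration by parts and the hypothesis \eqref{nontanbound} supplying the sole $L^2$ input. First I would fix a finite scale range $[-S,S]$, replace the scale integrals in \eqref{def-main-op} and \eqref{def-tang-unm-op} by their truncations to $2^{-S}<\rho<2^{S}$, and observe that by monotone convergence it suffices to prove \eqref{resweak} uniformly in $S$. Using a Christ-type dyadic construction --- whose nesting, small-boundary and doubling properties follow from \eqref{doublingx} --- one obtains a grid of cubes at each scale $s\in[-S,S]$; combining it with the frequency metrics $d_B$, the covering axiom \eqref{thirddb}, and the comparison estimates \eqref{firstdb}--\eqref{seconddb}, I would assemble a collection of tiles $\fp$, each a pair of a cube $I_{\fp}$ of sidelength $\sim 2^{s(\fp)}$ and a $d_{I_{\fp}}$-ball $\Omega_{\fp}\subset\Mf$ of radius $\sim 2^{-s(\fp)}$, arranged so that at a fixed spatial location the frequency balls are nested (the discrete uncertainty principle). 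Linearizing the suprema in $Tf(x)$ by measurable choices $x\mapsto\mfa(x)\in\Mf$ and $x\mapsto(R_1(x),R_2(x))$ and splitting the kernel dyadically in $\rho(x,y)$ via \eqref{eqkernel-size}--\eqref{eqkernel-y-smooth}, one rewrites $\int_G Tf\,\mathrm{d}\mu$, up to controllable errors, as $\sum_{\fp}\langle T_{\fp}f,\mathbf 1_G\rangle$, where $T_{\fp}$ is supported in $I_{\fp}$ and is active only where $\mfa(x)\in\Omega_{\fp}$.

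\emph{Exceptional set, densities, forests.} Next I would discard a subset of $G$ of measure at most $\tfrac12\mu(G)$ on which either the Hardy--Littlewood maximal function of $\mathbf 1_F$ or a maximal density function is too large; the weak-$(1,1)$ maximal bound needed here is a consequence of \eqref{doublingx}. Outside this set the relevant tiles are sorted into $O(\log)$ many layers according to the dyadic size of two density quantities --- one measuring frequency overlap weighted by $F$, one weighted by $G$ together with the linearizing choices --- and within each layer a greedy selection algorithm decomposes the tiles into boundedly many \emph{forests}, each a disjoint union of convex \emph{trees} $\fT$ with a top tile $\fp_{\fT}$ whose frequency ball is contained in $\Omega_{\fp}$ for every $\fp\in\fT$; distinct trees in a forest are spatially or frequency separated.

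\emph{Single tree estimate and counting.} The core is the single tree estimate, bounding $|\sum_{\fp\in\fT}\langle T_{\fp}f,\mathbf 1_G\rangle|$ by a product of suitable powers of the two densities times $\mu(I_{\fp_{\fT}})$. The tiles of $\fT$ at which $\mfa(x)$ lies in $\Omega_{\fp}$ are controlled by comparison with the non-tangential maximal operator $T_{*}$, whence \eqref{nontanbound} applies; for the remaining tiles one compares the phase $e(\mfa(\cdot))$ against $e(\mfb(\cdot))$ for a reference frequency $\mfb\in\Omega_{\fp_{\fT}}$, where $d_{I_{\fp}}(\mfa,\mfb)$ is large by the tile geometry and \eqref{osccontrol}, so that \eqref{eq-vdc-cond} yields a decaying factor $(1+d_{I_{\fp}}(\mfa,\mfb))^{-1/a}$ summable over the scales of the tree --- this is what replaces integration by parts. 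Then, for each forest, a $TT^{*}$/Bessel argument exploiting the near-disjointness of the frequency balls of the tree tops, together with \eqref{osccontrol} and \eqref{thirddb}, bounds the density-weighted sum of $\mu(I_{\fp_{\fT}})$ over all trees of the forest by the inverse density level times $\mu(G)$.

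\emph{Assembly.} Finally I multiply the per-tree bound by the per-forest count, sum over the $O(1)$ forests in each layer and over the $O(\log)$ layers; the dyadic density factors combine into a convergent geometric series whose ratio degenerates as $q\to1$, producing the factor $(q-1)^{-6}$ together with a power of $\mu(F)/\mu(G)$ which, absorbed against the layer count, yields the restricted weak type bound \eqref{resweak}, and letting $S\to\infty$ finishes the proof. I expect the main obstacle to be the single tree estimate: interlacing the one-sided kernel regularity, the non-tangential $L^2$ hypothesis, and the cancellative axiom into one clean tree bound with the correct density powers --- and, throughout, tracking the explicit dependence on $a$ and on $q-1$ so as to reach the stated constant $2^{443a^3}(q-1)^{-6}$.
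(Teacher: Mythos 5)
Your proposal captures the right overall architecture --- dyadic kernel split, Christ grid and tile structure, density stopping times, tree and forest estimates --- but it deviates from the paper's route in two substantial ways, one of which is a genuine gap and the other a misidentification of where the key hypotheses enter.

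First, the paper's decomposition (Proposition~\ref{prop-decomposition}, Section~\ref{proptopropprop}) partitions the surviving tiles into $n$-forests \emph{and} antichains, and these are estimated by two independent propositions with different mechanisms. The antichain bound (Proposition~\ref{antichain-operator}, Section~\ref{antichainboundary}) is a $TT^*$ argument driven by the cancellative axiom \eqref{eq-vdc-cond} via the H\"older van der Corput estimate (Proposition~\ref{Holder-van-der-Corput}); it never touches the $L^2$ hypothesis \eqref{nontanbound}. You propose only a forest decomposition. The "greedy" selection of trees into forests does not exhaust the tile set: the tiles with small $\mathfrak{B}(\fp)$, the minimal and maximal layers removed from $\fC_1(k,n,j)$, and the tiles out of reach of any prospective tree top form the antichains $\mathfrak{A}_{n,j}$, and you give no mechanism to control them. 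In the Lie/Zorin-Kranich generality that this paper works in, the antichain part is not an afterthought --- without it the decomposition does not close.

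Second, your account of the single tree estimate conflates the tree and antichain arguments. Inside a tree $\fT(\fu)$ every tile $\fp$ has $\Omega_{\fu}\subset\Omega_\fp$, and $T_\fp f(x)\neq 0$ forces $Q(x)\in\Omega_\fp$; there are no "remaining tiles" on which you would instead compare phases via \eqref{eq-vdc-cond}. The paper's pointwise tree estimate (Lemma~\ref{pointwise-tree-estimate}) replaces $Q(x)$ by the frozen top modulation $\fcc(\fu)$ --- the phase discrepancy is controlled by \eqref{osccontrol} together with the doubling properties \eqref{firstdb}, \eqref{seconddb} of the metrics --- and then bounds the frozen-modulation sum by maximal and boundary operators plus the nontangential operator $T^{\fcc(\fu)}_{\mathcal{N}}$, to which the $L^2$ hypothesis applies. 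The cancellative axiom enters the \emph{forest} bound only through the correlation estimate between distinct, well-separated trees (Lemma~\ref{correlation-distant-tree-parts}), not inside a single tree. Your proposed mix of $T_*$ and van der Corput within one tree is not the paper's argument and would require a separate justification that it yields the stated density dependence; as written, this is a gap.
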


For a Borel function $\tQ:X\to \Mf$, and $\mfa \in \Mf$ and $x\in X$ define
\begin{equation*}
    R_{\tQ}(\mfa,x)=\sup\{r>0:d_{B(x,r)}(\mfa, \tQ(x))<1\}
\end{equation*}
and define further
\begin{equation}
    \label{def-lin-star-op}
    T_{\tQ}^\mfa f(x):=\sup_{R_1<R_2} \ \sup_{\rho(x,x')<R_1}
    \left|\int_{R_1< \rho(x',y) < \min\{R_2,R_{\tQ}(\mfa,x')\}} K(x',y) f(y) \, \mathrm{d}\mu(y) \right|\,.
\end{equation}
Define further the linearized generalized Carleson operator $T_\tQ$ by
\begin{equation}
    \label{def-lin-main-op}
    T_\tQ f(x):= \sup_{0 < R_1 < R_2}\left| \int_{R_1 < \rho(x,y) < R_2} K(x,y) f(y) e(\tQ(x)(y)) \, \mathrm{d}\mu(y) \right|\,,
\end{equation}
where again $e(r)=e^{ir}$.

\begin{theorem}[linearised metric Carleson]
\label{linearised-metric-Carleson}
\leanok
\lean{linearized_metric_carleson}
%\uses{R-truncation} % todo
    For all integers $a \ge 4$ and real numbers $1<q\le 2$
    the following holds.
    Let $(X,\rho,\mu,a)$ be a doubling metric measure space. Let $\Mf$ be a
    cancellative compatible collection of functions.
    Let $\tQ:X\to \Mf$ be a Borel function with finite range.
    Let $K$ be a one-sided Calder\'on--Zygmund kernel on $(X,\rho,\mu,a)$. Assume that for every $\mfa\in \Mf$ and every bounded measurable function $g$ on $X$ supported on a set of finite measure we have
    \begin{equation}\label{linnontanbound}
        \|T_{\tQ}^\mfa g\|_{2} \leq 2^{a^3} \|g\|_2\,,
    \end{equation}
    where $T_{\tQ}^\mfa$ is defined in \eqref{def-lin-star-op}.
    Then for all bounded Borel sets $F$ and $G$ in $X$ and
    all Borel functions $f:X\to \C$ with
    $|f|\le \mathbf{1}_F$, we have, with $T_\tQ$ defined in \eqref{def-lin-main-op},
    \begin{equation}
        \label{linresweak}
        \left|\int_{G} T_\tQ f \, \mathrm{d}\mu\right| \leq \frac{2^{443a^3}}{(q-1)^6} \mu(G)^{1-\frac{1}{q}} \mu(F)^{\frac{1}{q}}\, .
    \end{equation}
\end{theorem}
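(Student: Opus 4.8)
The plan is to reduce \eqref{linresweak} to a \emph{finitary} bilinear estimate summed over a finite collection of ``tiles'', and then to run a time--frequency argument in the spirit of the proof of Carleson's theorem, adapted to the doubling metric setting. First I would reduce to bounded $F,G$ and, by truncating scales, to finitely many scales in the radial suprema of \eqref{def-lin-main-op}; a measurable selection turns the surviving suprema into Borel functions $R_1(x)<R_2(x)$, so it suffices to bound $\int_G \int_{R_1(x)<\rho(x,y)<R_2(x)} K(x,y) f(y) e(\tQ(x)(y))\,\mathrm{d}\mu(y)\,\mathrm{d}\mu(x)$. I would decompose the kernel dyadically, $K=\sum_s K_s$ with $K_s$ supported where $\rho(x,y)\sim 2^s$ and inheriting size and regularity from \eqref{eqkernel-size} and \eqref{eqkernel-y-smooth}. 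Using \eqref{doublingx} one builds a Christ-type system of dyadic cubes on $X$, and attaches to each cube $I$ and each admissible frequency scale a finite family of $d_B$-balls in $\Mf$, whose existence and bounded cardinality comes from \eqref{thirddb}; a \emph{tile} $\fp$ is a pair of a cube $I(\fp)$ and such a frequency ball, with order relations governed by \eqref{firstdb}, \eqref{monotonedb}, \eqref{seconddb}. Expanding the bilinear form as $\sum_{\fp}\langle T_{\fp} f,\mathbf 1_G\rangle$ with $T_{\fp}$ the natural space--frequency localized piece, and recording that $\tQ$ ``sees'' the frequency of a tile through $R_{\tQ}$, reduces the theorem to estimating this tile sum.

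Next I would carry out the density decomposition. Define spatial and mass densities $\dens_1,\dens_2$ of sets of tiles, and peel off exceptional sets --- where a Hardy--Littlewood-type maximal function of $\mathbf 1_F$, a ``boundary'' maximal function, or the density exceeds a threshold $\lambda$ --- whose total measure is small, bounded via the weak $(1,1)$ maximal inequality that \eqref{doublingx} supplies. On the complement, at each dyadic density level, the surviving tiles can be organized into $O(\log(1/\lambda))$ many \emph{forests}: collections of \emph{trees}, each tree nested under a common top tile with an essentially single common frequency, and the tree tops within a forest having boundedly overlapping shadows.

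The core is the single-tree estimate. For a tree $\fT$ with top frequency $\mfa$ I would split $\fT=\fT_1\cup\fT_2$. For $\fp\in\fT_1$ the frequency ball of $\fp$ is close to $\mfa$ and, after summing the telescoping scales, the contribution is dominated by the \emph{stopped} non-tangential operator $T_{\tQ}^{\mfa}$ applied to $f$, because the stopping radius $R_{\tQ}(\mfa,\cdot)$ is exactly the one matching the tree's frequency window; this is the sole place the quantitative hypothesis enters, and \eqref{linnontanbound} --- rather than a bound on the full $T_*$ --- is what is needed. For $\fp\in\fT_2$ the phase $e(\tQ(x)(y)-\mfa(y))$ oscillates substantially over the cube, since by \eqref{osccontrol} frequency separation forces genuine oscillation, so the cancellative axiom \eqref{eq-vdc-cond} produces a gain $(1+d_B(\cdot,\cdot))^{-1/a}$ that is summable in the scale; this step replaces integration by parts. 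Summing tree estimates over a forest is then an almost-orthogonality ($TT^*$) argument using the bounded overlap of tree supports together with the frequency separation of distinct trees.

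Finally, summing the forest bounds over the $O(\log(1/\lambda))$ forests at level $\lambda$, then geometrically over dyadic $\lambda$, and adding the exceptional-set contribution estimated directly by the maximal inequality, yields a bound of the shape $\mu(G)\,(\mu(F)/\mu(G))^{\theta}$; choosing the density cutoff in terms of $\mu(F)/\mu(G)$ and tracking how the number of density levels trades against the geometric decay produces the factor $(q-1)^{-6}$ and the estimate \eqref{linresweak}. I expect the two genuine obstacles to be: (i) building the entire tile combinatorics --- dyadic cubes, order relations, densities, and the forest decomposition --- from the doubling axiom and the metric axioms \eqref{firstdb}--\eqref{thirddb} alone, with no Euclidean or group structure available; and (ii) the combined tree-plus-forest estimate, the real time--frequency content, within which the delicate point is routing all quantitative information through the stopped operator $T_{\tQ}^{\mfa}$ so that the weaker hypothesis \eqref{linnontanbound} suffices.
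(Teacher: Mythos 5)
Your overall architecture matches the paper's: finitary reduction by restricting to finitely many scales, dyadic kernel decomposition $K=\sum_s K_s$, Christ cubes plus frequency balls from the covering axiom \eqref{thirddb} to form tiles, two densities, exceptional sets controlled by weak-$(1,1)$ maximal inequalities, and forests estimated by a $TT^*$ almost-orthogonality argument. You also correctly identify that the weaker hypothesis \eqref{linnontanbound} must enter only through a ``stopped'' nontangential operator inside the single-tree estimate, and that the cancellative axiom replaces integration by parts. That much is faithful to the paper.

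The place where you deviate substantively is the single-tree estimate, and there is a gap there. You propose to split each tree into a near-frequency part $\fT_1$ (handled by $T_{\tQ}^{\mfa}$) and a far-frequency part $\fT_2$ (handled by applying the cancellative axiom \eqref{eq-vdc-cond} directly to the phase $e(\tQ(x)(y)-\mfa(y))$). The paper does not split a tree this way: by the defining condition $4\fp\lesssim\fu$ in \eqref{forest1}, \emph{every} tile in a tree is frequency-close to the top, so there is no $\fT_2$. Instead, the tiles that do not organize into such frequency-concentrated trees are collected into \emph{antichains}, a structure absent from your sketch, and estimated by a separate $TT^*$ argument (\Cref{antichain-operator}). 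Concretely, your $\fT_2$-step faces an obstruction: the cancellative axiom \eqref{eq-vdc-cond} requires a Lipschitz (or, via \Cref{Holder-van-der-Corput}, a H\"older) test function $\varphi$, but the $y$-integrand in $T_\fp f$ involves the generic $f\le\mathbf 1_F$, which has no regularity. The paper circumvents this by never applying the cancellative axiom to a single tile or tree sum: it is applied only to $TT^*$ correlations of two tiles (\Cref{tile-correlation}) or of two separated trees (\Cref{correlation-distant-tree-parts}), where the test function is a product of two truncated kernels $K_{s_1}(x_1,\cdot)\overline{K_{s_2}(x_2,\cdot)}$ and does have the needed H\"older regularity. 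The single-tree estimate (\Cref{pointwise-tree-estimate}) in the paper uses only size/regularity bounds and the oscillation control \eqref{osccontrol}, with no appeal to cancellation. Your sketch also underestimates the bookkeeping: the decomposition produces on the order of $(n+2)^2$ forests and $Z(n+2)^3$ antichains at density level $2^{-n}$, not $O(\log(1/\lambda))$ forests alone, and this multiplicity is what combines with the density powers and the interpolation in $q$ to give the $(q-1)^{-6}$.
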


There is extensive literature on Carleson operators. We mention selected  results with particular relevance to the approach in this paper and with focus on the variety
of classes of modulation functions.
The
time frequency analysis required
to estimate many Carleson operators goes back to Carleson's seminal paper  \cite{carleson} on convergence and growth of Fourier series. Shortly after, this was extended to the dyadic case with Walsh modulations \cite{MR217510}.
% and later local fields, Mitch Tableson et al
The first actual $L^p$ bounds for the classical Carleson operator were observed by Hunt
\cite{MR238019}.
Somewhat dual approaches to Carleson operators were given in
\cite{fefferman}, \cite{lacey-thiele}.
In particular, the approach by Fefferman \cite{fefferman} has found applications in \cite{lie-polynomial} and \cite{zk-polynomial},
which were inspirational for the present paper. The possibility of more general classes of modulation
functions was elucidated
in the  work of \cite{mnatsakanyan}, who proves a Carleson-type theorem for the Malmquist-Takenaka series, which leads to modulation functions related to Blaschke products. A generalization of \eqref{def-main-op} from the previously mentioned Euclidean setting into the anisotropic setting that was suggested in \cite{zk-polynomial} is included in our theory.

Recent interest in polynomial Carleson operators was sparked by extensions of the theory of Stein and Wainger \cite{stein-wainger}.
Among these are maximally modulated Radon transforms
initiated by the work of Pierce and Yung \cite{MR3945729},  discrete analogs
of the results of Stein and Wainger as in
\cite{MR4422211},
\cite{MR4630911}, and
%krause-lacey?
monomial modulation functions with fractional powers avoiding resonances in
%Guo alone
\cite{MR4149067}.

The quest to extend the $TT^*$ methods of Stein and Wainger towards Carleson operators which
need time frequency analysis led
to various threads.
These include estimates involving restricted suprema, \cite{MR3708001}
and
results for a simplified singular kernel motivated by Radon transforms
 \cite{ramos}.
In
\cite{MR4797542}, \cite{gaitan2024nonzerozerocurvaturetransition}, \cite{hsu2024carlesonradontransformthenonresonant},
time frequency analysis came in through a hybrid construction between the bilinear Hilbert transform and Carleson operators, albeit with non-resonant frequencies.
A basic result with resonant modulations was established in \cite{becker2023degree}.

This paper has a sibling communication containing a computer verification
of Theorems \ref{metric-space-Carleson} and \ref{linearised-metric-Carleson} using the language Lean and the library mathlib.
More precisely, the ultimate purpose of the sibling communication is to provide a computer verification of part of the classical Carleson theorem. It does so as an application of Theorem \ref{metric-space-Carleson}, which it therefore also proceeds to verify. The axiomatic setup of our theorems is very suitable for computer verification, hence the route to the classical theorem via the modern generalization is natural.

The sibling communication has seventeen authors,
twelve authors in addition to the present ones contributing substantially to the coding in Lean. The coding effort took thirteen months.
Over a dozen further experts are acknowledged in the sibling communication for additional contributions.
Only recently such speed of formalization has become possible.
%, and our results are the first in this area of analysis that are computer verified prior to submission to a human refereeing process
A mathematical statement coded in Lean is correct with certainty as soon as a proof coded in Lean compiles properly. To verify the mathematical statement written in English, the remaining human task is to verify that the translation of the statement into Lean is correct, which amounts to a tiny fraction of work compared with the classical task of checking correctness of a proof.
For easy comparison, the statements of Theorems \ref{metric-space-Carleson} and \ref{linearised-metric-Carleson} together with their assumptions have been formulated here with much detail and in parallel within the
corresponding part of the sibling communication.

The present paper  announces the new \Cref{metric-space-Carleson} and \Cref{linearised-metric-Carleson} and presents a forty page English proof with a level of detail that is common in present research mathematics. This text largely preceeded the formalization and is absent in the sibling communication, but in our
opinion is the central piece of communication in the present culture of mathematics.
The auxiliary statements along with their headings in this paper
match statements in the so-called blueprint, a much longer and more detailed proof text attached to the sibling communication
that was guiding  Lean experts in the distributed effort to produce
the necessary Lean code.

%Our main theorems contain constants explicit in the parameters $a$ and $q$, as opposed to more existential approaches to constants in most of the literature. While
%Lean is perfectly capable of dealing with an array of interdependent existential constants, we decided on explicit constants so as to not have to explicitly manage such an  array of constants in the blueprint. The constants are not at all sharp. Indeed, we have on purpose kept some slack so as to reduce the spreading of corrections in the process of writing the blueprint.

\noindent \textit{Acknowledgement.}
L.B., F.v.D., R.S., and C.T. were funded by the Deutsche Forschungsgemeinschaft (DFG, German Research Foundation) under Germany's Excellence Strategy -- EXC-2047/1 -- 390685813.
L.B. , R.S., and C.T. were also supported by SFB 1060.
A.J. was funded by the Deutsche Forschungsgemeinschaft (DFG, German Research Foundation) - 547294463.

\section{Proof of Metric Space Carleson, overview}
\label{overviewsection}

We first note the quick argument that  \Cref{linearised-metric-Carleson} implies
\Cref{metric-space-Carleson}. By the monotone convergence theorem, we may restrict the supremum in \eqref{def-main-op} to finitely many values of $\vartheta$. Thus assuming $\Mf$ finite, we choose $Q(x)$ to be a maximizer of the supremum in \eqref{def-main-op}, so that the left hand side of \eqref{resweak} equals the left hand side of \eqref{linresweak}.
Observing that assumption \eqref{linnontanbound}  follows from assumption \eqref{nontanbound} as
the operator defined in \eqref{def-tang-unm-op} is larger than the operator \eqref{def-lin-star-op},
\Cref{metric-space-Carleson} follows from \Cref{linearised-metric-Carleson}.

Our proof of \Cref{linearised-metric-Carleson} is a refinement of \cite{zk-polynomial}, which itself is in the tradition of \cite{fefferman} and \cite{lie-polynomial}. The Carleson operator is broken up into
pieces \eqref{definetp}, parameterized by so-called tiles, which are
localized in both metric spaces $X$ and $\Mf$ and have nice dyadic properties thanks to a grid structure constructed in \Cref{grid-existence-tile-structure}.
The bulk of these pieces is regrouped into
collections called trees, which come with a geometric parameter $n$ and two density parameters. In a tree, the modulation parameter $\mfa$ can be assumed constant so that the hypothesis \eqref{linnontanbound} can be applied. It is important to collect almost orthogonal trees together into forests,
and the resulting forest bound is formulated in
\Cref{forest-operator}.

Outside the bulk, we end up with error terms either collected into antichains
or thrown into negligible exceptional sets.
Antichains are almost orthogonal collections of individual tiles and estimated in
\Cref{antichain-operator} without reference to the assumption \eqref{linnontanbound}.

The pair $(X,\Mf)$ has enough of the geometric properties of the Euclidean phase plane to apply adaptions of the tricks in the Euclidean setting to obtain an efficient decomposition into
forests and antichains formulated in
\Cref{prop-decomposition},
so that one obtains the desired bounds when summing the bounds from
Propositions \ref{forest-operator}
and \ref{antichain-operator}
over the various parameter values.

Each of the above mentioned propositions as well as
the short auxiliary \Cref{Holder-van-der-Corput}
is proved in its own section in this paper.
These sections can be read independently of each other, as all necessary definitions and statements used across the sections are formulated in the rest of the present section. The strong modularity, at the expense of a more technical overview section, is an accomplishment of the present paper
and important for the distributed Lean coding effort.

%This section organizes the proof of \Cref{linearised-metric-Carleson} into Sections \ref{christsection}, \ref{proptopropprop}, \ref{antichainboundary}, \ref{treesection} and \ref{liphoel}.
%These sections are mutually independent except for referring to the statements formulated in the present section. Sections \ref{christsection}, \ref{proptopropprop}, \ref{antichainboundary}, \ref{treesection}, and \ref{liphoel} each prove one proposition that is stated and applied in the present section to prove  \Cref{linearised-metric-Carleson}. The present section also introduces all definitions used across these sections.

%\Cref{global-auxiliary-lemmas} proves some auxiliary lemmas that are used in more than one of the remaining sections.

\subsection{Choosing parameters and preliminary reductions}

We prove Theorem \ref{linearised-metric-Carleson}.
Let $a, q$ be given as in \Cref{linearised-metric-Carleson}. Let a doubling metric measure space $(X,\rho,\mu, a)$, a cancellative compatible collection $\Mf$ of functions on $X$, and a point $o\in X$ with $\mfa(o)=0$ for all $\mfa\in \Mf$ be given. Let further a Borel measurable function $\tQ:X\to \Mf$ with finite range, a one-sided Calder\'on--Zygmund kernel $K$ on $X$ so that for every $\mfa\in \Mf$ the operator $T_{\tQ}^{\mfa}$ defined in \eqref{def-lin-star-op}
satisfies \eqref{linnontanbound}, Borel sets $F, G$ with finite measure, and a measurable function $f$ with $f \le \mathbf{1}_F$ be given.

We choose parameters
\begin{equation}\label{defineD}
    D:= 2^{100 a^2}\, ,
\end{equation}
\begin{equation*}
    \kappa:= 2^{-10a}\,,
\end{equation*}
and
\begin{equation}
    \label{defineZ}
    Z := 2^{12a}\,.
\end{equation}

Let
 $\psi:\R \to \R$ be the unique compactly supported, piece-wise linear, continuous function with corners at $\frac 1{4D}$, $\frac 1{2D}$, $\frac 14$, and $\frac 12$ that satisfies
 \begin{equation*}
    \sum_{s\in \mathbb{Z}} \psi(D^{-s}x)=1
\end{equation*}
for all $x>0$. This function vanishes outside $[\frac1{4D},\frac 12]$, is constant one on
$[\frac1{2D},\frac 14]$, and is Lipschitz
with constant $4D$.
For $s\in\mathbb{Z}$, we define
\begin{equation*}
    K_s(x,y):=K(x,y)\psi(D^{-s}\rho(x,y))\,,
\end{equation*}
so that for each $x, y \in X$ with $x\neq y$ we have
$$K(x,y)=\sum_{s\in\mathbb{Z}}K_s(x,y).$$
As a consequence of \eqref{eqkernel-size} and \eqref{eqkernel-y-smooth}, the functions $K_s$ satisfy
\begin{equation}
   \label{eq-Ks-size}
    |K_s(x,y)|\le \frac{2^{102 a^3}}{\mu(B(x, D^{s}))}\,
\end{equation}
and
\begin{equation}
    \label{eq-Ks-smooth}
    |K_s(x,y)-K_s(x, y')|\le \frac{2^{127a^3}}{\mu(B(x, D^{s}))}
    \left(\frac{ \rho(y,y')}{D^s}\right)^{\frac 1a}\,.
\end{equation}
Furthermore, if $K_s(x,y)\neq 0$, then
\begin{equation}\label{supp-Ks}
  \frac{1}{4} D^{s-1} \leq \rho(x,y) \leq \frac{1}{2} D^s\,.
\end{equation}
Up to an error that is controlled by the Hardy-Littlewood maximal function, the left hand side of \eqref{linresweak} is dominated by
\[
    \int_G \sup_{\sigma_1 \le \sigma_2} \left|\sum_{s={\sigma_1}}^{{\sigma_2}} \int K_s(x,y) f(y) e(\tQ(x)(y)) \, \mathrm{d}\mu(y)\right|  \mathrm{d}\mu(x).
\]
By monotone convergence as $S \to \infty$, we may and do restrict the supremum to finitely many values $-S \le \sigma_1 ,\sigma_2 \le S$.
Define $\sigma_1, \sigma_2: X \to [-S, S]$ to be two measurable functions selecting a maximizing pair for the supremum. By Fatou's lemma, we may further assume that $F, G$ are bounded sets. We may and do increase $S$ so that $F$ and  $G$ are contained in $B(o, \frac{1}{4}D^S)$. Finally, it is enough to prove that there exists $G' \subset G$ with $2\mu(G')\le \mu(G)$ such that
\begin{equation*}
    \int_{G\setminus G'} \left|\sum_{s={\sigma_1}(x)}^{{\sigma_2}(x)} \int K_s(x,y) f(y) e(\tQ(x)(y)) \, \mathrm{d}\mu(y) \right| \mathrm{d}\mu(x)
\end{equation*}
\begin{equation}
    \label{eq-linearized}
    \le \frac{2^{442a^3}}{(q-1)^5} \mu(G)^{1-\frac{1}{q}}
     \mu(F)^{\frac 1 q}\,.
\end{equation}
Indeed, applying this iteratively to $G_0 = G$, $G_1 = G_0'$, $G_2 = G_1'$ and so forth and summing the resulting geometric series yields \eqref{resweak} and \eqref{linresweak}.

With these reductions done, \Cref{linearised-metric-Carleson} will follow once we prove \eqref{eq-linearized}.

% \begin{proposition}[finitary Carleson]
% \label{finitary-Carleson}
% \leanok
% \lean{finitary_carleson}
% \uses{discrete-Carleson, grid-existence, tile-structure, tile-sum-operator}
% Let ${\sigma_1},\sigma_2\colon X\to \mathbb{Z}$ be measurable functions with finite range and ${\sigma_1}\leq \sigma_2$.  Let $F,G$ be bounded Borel sets in $X$. Then there is a Borel set $G'$ in $X$ with $2\mu(G')\leq \mu(G)$ such that
% for all Borel functions $f:X\to \C$ with $|f|\le \mathbf{1}_F$.
% \begin{equation*}
%     \int_{G \setminus G'} \left|\sum_{s={\sigma_1}(x)}^{{\sigma_2}(x)} \int K_s(x,y) f(y) e(\tQ(x)(y)) \, \mathrm{d}\mu(y) \right| \mathrm{d}\mu(x)
% \end{equation*}
% \begin{equation}
%     \label{eq-linearized}
%     \le \frac{2^{434a^3}}{(q-1)^5} \mu(G)^{1-\frac{1}{q}}
%      \mu(F)^{\frac 1 q}\,.
% \end{equation}
% \end{proposition}
% Let measurable functions ${\sigma_1}\leq \sigma_2\colon X\to \mathbb{Z}$ with finite range be given.
% Let bounded Borel sets $F,G$ in $X$ be given.
% Let $S$ be the smallest integer such that the ranges of
% $\sigma_1$ and $\sigma_2$ are contained in $[-S,S]$ and $F$ and $G$ are contained
% in the ball $B(o, \frac{1}{4}D^S)$.

\subsection{The dyadic model operator}
We now define notions of grids on $X$ and on $\Theta$, which we will use to define a dyadic model operator for the left hand side of \eqref{eq-linearized}.

%In \Cref{christsection}, we prove \Cref{finitary-Carleson} using a
%bound for a dyadic model formulated in \Cref{discrete-Carleson} below.

A grid structure $(\mathcal{D}, c, s)$ on $X$ consists of a finite collection $\mathcal{D}$ of pairs $(I, k)$ of Borel
sets in $X$ and integers $k \in [-S, S]$, the projection
\[
s\colon \mathcal{D}\to [-S, S], \qquad (I, k) \mapsto k
\]
to the second component which is assumed to be surjective and
called scale function, and a function $c:\mathcal{D}\to X$
called center function such that the five properties
\eqref{coverdyadic}, \eqref{dyadicproperty}, \eqref{subsetmaxcube},
\eqref{eq-vol-sp-cube}, and \eqref{eq-small-boundary} below hold. We call the elements of $\mathcal{D}$ dyadic cubes. By abuse of notation, we will usually write just $I$ for the cube $(I,k)$, and we will write $I \subset J$ to mean that for two cubes $(I,k), (J, l) \in \mathcal{D}$ we have $I \subset J$ and $k \le l$.

For each dyadic cube $I$ and each $-S\le k<s(I)$ we have
\begin{equation}\label{coverdyadic}
    I\subset \bigcup_{J\in \mathcal {D}: s(J)=k}J\, .
\end{equation}
Any two non-disjoint dyadic cubes $I,J$ with $s(I)\le s(J)$ satisfy
\begin{equation}\label{dyadicproperty}
    I\subset J.
\end{equation}
There exists a $I_0 \in \mathcal{D}$ with $s(I_0) = S$ and $c(I_0) = o$
%\begin{equation}\label{coverball}
%    B(o, \frac{1}{4} D^{S}) \subset I\,,
%\end{equation}
and for all cubes $J \in \mathcal{D}$, we have
\begin{equation}\label{subsetmaxcube}
    J \subset I_0\,.
\end{equation}
%changed the above sentence on June 25, CT
For any dyadic cube $I$,
\begin{equation}
    \label{eq-vol-sp-cube}
    c(I)\in B(c(I), \frac{1}{4} D^{s(I)}) \subset I \subset B(c(I), 4 D^{s(I)})\,.
\end{equation}
For any dyadic cube $I$ and any $t$ with $tD^{s(I)} \ge D^{-S}$, recalling $\kappa$ from \eqref{defineD},
\begin{equation}
    \label{eq-small-boundary}
    \mu(\{x \in I \ : \ \rho(x, X \setminus I) \leq t D^{s(I)}\}) \le 2 t^\kappa \mu(I)\,.
\end{equation}

A tile structure $(\fP,\scI,\fc,\fcc,\pc,\ps)$
for a given grid structure $(\mathcal{D}, c, s)$
is a finite set $\fP$ of elements called tiles with five maps
\begin{align*}
    \scI&\colon \fP\to {\mathcal{D}}\\
    \fc&\colon \fP\to \mathcal{P}(\Mf) \\
    \fcc &\colon \fP\to \tQ(X)\\
    \pc &\colon \fP\to X\\
    \ps &\colon \fP\to \mathbb{Z}
\end{align*}
with $\scI$ surjective and $\mathcal{P}(\Mf)$ denoting the power set of $\Mf$ such that the six properties \eqref{injective}, \eqref{eq-dis-freq-cover}, \eqref{eq-freq-dyadic},
\eqref{eq-freq-comp-ball}, \eqref{tilecenter}, and
\eqref{tilescale} below hold.
For each dyadic cube $I$, the restriction of the map $\Omega$ to the set
\begin{equation}\label{injective}
    \fP(I)=\{\fp: \scI(\fp) =I\}
\end{equation}
is injective
and we have the disjoint covering property ($\dot{\cup}$ denotes a disjoint union)
\begin{equation}\label{eq-dis-freq-cover}
    \tQ(X)\subset \dot{\bigcup}_{\fp\in \fP(I)}\fc(\fp).
\end{equation}
For any tiles $\fp,\fq$ with $\scI(\fp)\subset \scI(\fq)$ and $\fc(\fp) \cap \fc(\fq) \neq \emptyset$ we have
\begin{equation} \label{eq-freq-dyadic}
    \fc(\fq)\subset \fc(\fp) .
\end{equation}
For each tile $\fp$,
\begin{equation}\label{eq-freq-comp-ball}
    \fcc(\fp)\in B_{\fp}(\fcc(\fp), 0.2) \subset \fc(\fp) \subset B_{\fp}(\fcc(\fp),1)\,,
\end{equation}
where
\begin{equation*}
    B_{\fp} (\mfa, R) := \{\mfb \in \Mf \, : \, d_{\fp}(\mfa, \mfb) < R\,\} ,
\end{equation*}
and
\begin{equation}\label{defdp}
    d_{\fp} := d_{B(\pc(\fp),\frac 14 D^{\ps(\fp)})}\, .
\end{equation}
We have for each tile $\fp$
\begin{equation}\label{tilecenter}
    \pc(\fp)=c(\scI(\fp)),
\end{equation}
\begin{equation}\label{tilescale}
    \ps(\fp)=s(\scI(\fp)).
\end{equation}

\begin{proposition}[grid existence and tile structure]
    \label{grid-existence-tile-structure}
    There exists a grid structure $(\mathcal{D}, c,s)$. For a given grid structure $(\mathcal{D}, c,s)$, there exists a tile structure
        $(\fP,\scI,\fc,\fcc,\pc,\ps)$.
\end{proposition}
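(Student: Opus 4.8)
The plan is to handle the two assertions separately; they are essentially independent.

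\emph{The grid.} I would run the dyadic cube construction of Christ in the form adapted to doubling metric measure spaces. For each $k\in[-S,S]$ fix a maximal $\tfrac14 D^{k}$-separated subset of $B(o,4D^{S})$, taking the scale-$S$ set to be $\{o\}$; the doubling condition \eqref{doublingx} makes these sets finite, so $\mathcal D$ will be finite. Turning the nets into a partition of a neighbourhood of $B(o,4D^{S})$ in the standard way --- iteratively give each scale-$k$ centre a scale-$(k+1)$ parent and let the cube of a centre be the union of the cubes of its children together with the points closest to it --- yields \eqref{coverdyadic}, \eqref{dyadicproperty}, the top cube $I_0$ with $c(I_0)=o$, $s(I_0)=S$ and \eqref{subsetmaxcube}, and the inscribed/circumscribed ball inclusions \eqref{eq-vol-sp-cube}, where largeness of $D$ keeps the geometric series bounding the circumscribed radius below $4D^{s(I)}$. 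The one substantial property is the small boundary estimate \eqref{eq-small-boundary}: I would bound the $tD^{s(I)}$-neighbourhood of the boundary of $I$ by a union of thin annuli around the grandchildren of $I$ near that boundary and sum the doubling bounds for those annuli. The real work is tracking the constants: $D=2^{100a^2}$ and $\kappa=2^{-10a}$ are chosen precisely so that this comes out with exponent at least $\kappa$ and constant at most $2$, and I expect this to be the main obstacle of the grid part.

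\emph{The tile structure.} Given a grid, I would build the frequency partitions $\{\fc(\fp):\fp\in\fP(I)\}$ by downward recursion on $s(I)$. Put $d_I:=d_{B(c(I),\frac14 D^{s(I)})}$, which is the metric $d_\fp$ forced on tiles over $I$ by \eqref{defdp}, \eqref{tilecenter}, \eqref{tilescale}, and write $B_I(\cdot,\cdot)$ for the associated balls. Each cube $I\neq I_0$ has a unique grid parent $J$, the cube with $I\subset J$ and $s(J)=s(I)+1$, existing by \eqref{coverdyadic}, \eqref{dyadicproperty}. Assuming $\fP(J)$ constructed, choose $\mathcal N_I$ a maximal $\tfrac{7}{10}$-separated subset of $\mathcal N_J$ for $d_I$ (for $I=I_0$: of the finite set $\tQ(X)$), assign each $\fcc(\fq)$, $\fq\in\fP(J)$, to a $d_I$-nearest point of $\mathcal N_I$, and for $\mfa\in\mathcal N_I$ set
\begin{equation*}
 \fc(\fp_\mfa):=\Big(\bigcup_{\substack{\fq\in\fP(J)\\ \fcc(\fq)\ \text{assigned to}\ \mfa}}\fc(\fq)\Big)\ \cup\ \Big(\big(V_I(\mfa)\cap B_I(\mfa,\tfrac{9}{10})\big)\setminus\bigcup_{\fq\in\fP(J)}\fc(\fq)\Big),
\end{equation*}
where $V_I(\mfa)$ is the $d_I$-Voronoi cell of $\mfa$ in $\mathcal N_I$ with a fixed tie-break, together with $\fcc(\fp_\mfa):=\mfa$, $\pc(\fp_\mfa):=c(I)$, $\ps(\fp_\mfa):=s(I)$. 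The quantitative input I need is that consecutive metrics are comparable up to controlled powers of two, say $2^{95a}d_I\le d_J\le 2^{101a^3}d_I$, which one gets from \eqref{monotonedb}, \eqref{firstdb}, \eqref{seconddb} by iterating the doubling a bounded number of times to bridge the reference balls $B(c(I),\tfrac14 D^{s(I)})$ and $B(c(J),\tfrac14 D^{s(J)})$; this forces each $\fc(\fq)$, $\fq\in\fP(J)$, to have $d_I$-diameter far below the gap $\tfrac9{10}-\tfrac7{10}$.

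With this in place, \eqref{tilecenter}, \eqref{tilescale}, \eqref{eq-dis-freq-cover} and \eqref{injective} are immediate, since over a fixed $I$ the sets $\fc(\fp)$ are nonempty, pairwise disjoint, and their union is the union of all $\fP(J)$-cells, which inductively contains $\tQ(X)$. The sandwich \eqref{eq-freq-comp-ball} follows from the smallness of the $\fP(J)$-cells and the $\tfrac9{10}$-ball truncation: the outer inclusion is built in, and for the inner one a point of $B_I(\mfa,\tfrac15)$ either lies in some $\fP(J)$-cell --- whose centre is then the $d_I$-nearest point of $\mathcal N_I$ to $\mfa$, so that cell enters the union --- or lies outside all of them and hence in the Voronoi--ball part of $\fp_\mfa$. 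The nesting \eqref{eq-freq-dyadic} I would prove by induction on $s(\scI(\fq))-s(\scI(\fp))$: each $\fc(\fp)$ is a union of $\fP(J)$-cells together with a set disjoint from $R_J:=\bigcup_{\fq\in\fP(J)}\fc(\fq)$, while any cell $\fc(\fq)$ with $\scI(\fp)\subset\scI(\fq)$ lies in the covered region of $\scI(\fq)$, which is contained in $R_J$ because these covered regions only grow as the scale decreases; hence a cell $\fc(\fq)$ meeting $\fc(\fp)$ must meet one of the $\fP(J)$-cells in the union, and the inductive hypothesis applied to the strictly smaller scale gap gives $\fc(\fq)\subset\fc(\fp)$. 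Finiteness of $\fP$ follows from finiteness of $\mathcal D$ and of each $\mathcal N_I\subset\tQ(X)$, with \eqref{thirddb} controlling cardinalities. I expect the main obstacle of the tile part to be precisely this coordination --- securing the across-scale nesting, the ball-sandwich, and finiteness simultaneously --- which is what dictates the particular separation parameter, Voronoi rule and $\tfrac9{10}$-truncation above, together with the careful propagation of the comparability of the metrics $d_I$.
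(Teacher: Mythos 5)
Your proposal is correct, and for the tile structure it takes a genuinely different route from the paper. For the grid there is nothing substantive to compare: the paper's \Cref{grid-existence} simply invokes Christ's construction without detail, and you sketch that same construction, flagging the small-boundary estimate \eqref{eq-small-boundary} as the place where the constants have to be tracked.

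For the tile structure, you nest the frequency nets, $\mathcal{N}_I \subset \mathcal{N}_J$, by choosing at each cube a maximal $\tfrac{7}{10}$-separated (in $d_{I^\circ}$) subset of the parent's net, and build $\fc(\fp_\mfa)$ by attaching the parent cells assigned to $\mfa$ under a nearest-center rule, together with the $d_{I^\circ}$-Voronoi cell of $\mfa$ truncated to $B_{I^\circ}(\mfa,\tfrac{9}{10})$ and stripped of the parent cells. The paper (following \cite{zk-polynomial}) instead draws a fresh net $\mathcal{Z}(I)\subset \tQ(X)$ for each cube with the weaker separation that $0.3$-balls are disjoint only on $\tQ(X)$, converts it into a partition $\Omega_1$ by iterative subtraction within the cube, and then defines $\Omega(\fp)$ recursively as the union of those parent cells whose centers fall into $\Omega_1(\fp)$ plus an explicit $B_\fp(\fcc(\fp),0.2)$. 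Both are downward recursions that reunite parent cells and both hinge on the same engine, the strong monotonicity of \Cref{monotone-cube-metrics}, which makes parent cells have $d_{I^\circ}$-diameter of order $2^{-95a}$ and so all the wiggle-room bookkeeping close with the chosen radii. Your hard $\tfrac{7}{10}$-separation together with the Voronoi and $\tfrac{9}{10}$-ball truncation gives the inner inclusion $B_{I^\circ}(\mfa,\tfrac15)\subset\fc(\fp_\mfa)$ directly, whereas the paper secures it by gluing in the $0.2$-ball at every non-maximal scale.

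Two small imprecisions worth flagging. The outer inclusion $\fc(\fp_\mfa)\subset B_{I^\circ}(\mfa,1)$ is not literally ``built in'' by the $\tfrac9{10}$ truncation, since the union of assigned parent cells is not truncated; that piece needs the bound $d_{I^\circ}(\fcc(\fq),\mfa)<\tfrac{7}{10}$, coming from maximality of $\mathcal{N}_I$ inside $\mathcal{N}_J$, combined with the $2^{-95a}$ shrinkage of parent cells, which you should say explicitly rather than leave implicit. Also, the upper comparison $d_{J^\circ}\le 2^{101a^3}d_{I^\circ}$ you assert, while correct (it follows from \eqref{firstdb} and \eqref{monotonedb} by bridging the reference balls), is never actually used: only the lower bound from \Cref{monotone-cube-metrics} does any work in your argument.
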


We prove \Cref{grid-existence-tile-structure} in \Cref{christsection}.

We fix a grid structure $(\mathcal{D}, c, s)$ and a tile structure $(\fP,\scI,\fc,\fcc,\pc,\ps)$. We define for $\fp\in \fP$
\begin{equation}\label{defineep}
    E(\fp)=\{x\in \scI(\fp): \tQ(x)\in \fc(\fp) , {\sigma_1}(x)\le \ps(\fp)\le {\sigma_2}(x)\}
\end{equation}
and
\begin{equation}\label{definetp}
    T_{\fp} f(x)= \mathbf{1}_{E(\fp)}(x) \int K_{\ps(\fp)}(x,y) f(y) e(\tQ(x)(y)-\tQ(x)(x))\, \mathrm{d}\mu(y).
\end{equation}
We also introduce, for any collection $\fC \subset \fP$ of tiles, the notation
\[
    T_\fC = \sum_{\fp \in \fC} T_\fp.
\]
The covering properties \eqref{coverdyadic}, \eqref{subsetmaxcube} and \eqref{eq-dis-freq-cover} imply that we can rewrite the left hand side of \eqref{eq-linearized} as
\begin{equation}
    \label{eq-sum-tiles}
  %  \int_{G \setminus G'} \left|\sum_{s={\sigma_1}(x)}^{{\sigma_2}(x)} \int K_s(x,y) f(y) e(\tQ(x)(y)) \, \mathrm{d}\mu(y) \right| \mathrm{d}\mu(x) =
  \int_{G \setminus G'} \left|  T_{\fP} f \right| \, \mathrm{d}\mu.
\end{equation}

\subsection{Partition of the set of tiles into forests and antichains}

To estimate \eqref{eq-sum-tiles}, we decompose the collection of tiles $\fP$ into certain subcollections called antichains and forests, and subsequently apply estimates for the operators associated to the subcollections. We proceed by giving relevant definitions and stating the estimates and the decomposition.

We define the relation
\begin{equation}\label{straightorder}
    \fp\le \fp'
\end{equation}
 on $\fP\times \fP$ meaning
$\scI(\fp)\subset \scI(\fp')$ and
$\Omega(\fp')\subset \Omega(\fp)$.
We further define for $\lambda,\lambda' >0$
the relation
\begin{equation}\label{wiggleorder}
    \lambda \fp \lesssim \lambda' \fp'
\end{equation}
on $\fP\times \fP$ meaning
$\scI(\fp)\subset \scI(\fp')$ and
\begin{equation*}
    B_{\fp'}(\fcc(\fp'),\lambda') \subset B_{\fp}(\fcc(\fp),\lambda)\, .
\end{equation*}
Occasionally, one of $\lambda$ or $\lambda'$ is equal to $1$, at which point we omit it in the notation \eqref{wiggleorder}.

We define for a tile $\fp$ and $\lambda>0$
\begin{equation}\label{definee1}
    E_1(\fp):=\{x\in \scI(\fp)\cap G: \tQ(x)\in \fc(\fp)\}\, ,
\end{equation}
\begin{equation}\label{definee2}
    E_2(\lambda, \fp):=\{x\in \scI(\fp)\cap G: \tQ(x)\in B_{\fp}(\fcc(\fp), \lambda)\}\, .
\end{equation}
Given a subset $\fP'$ of $\fP$, we define
$\fP(\fP')$ to be the set of
all $\fp \in \fP$ such that there exist $\fp' \in \fP'$ with $\scI(\fp)\subset \scI(\fp')$. Define the densities
\begin{equation}\label{definedens1}
    {\dens}_1(\fP') := \sup_{\fp'\in \fP'}\sup_{\lambda \geq 2} \lambda^{-a} \sup_{\fp \in \fP(\fP'), \lambda \fp' \lesssim \lambda \fp}
    \frac{\mu({E}_2(\lambda, \fp))}{\mu(\scI(\fp))}\, ,
\end{equation}
\begin{equation}\label{definedens2}
    {\dens}_2(\fP') := \sup_{\fp'\in \fP'}
    \sup_{r\ge 4D^{\ps(\fp)}}
    \frac{\mu(F\cap B(\pc(\fp),r))}{\mu(B(\pc(\fp),r))}\, .
\end{equation}

An antichain is a subset $\mathfrak{A}$
of $\fP$ such that for any distinct $\fp,\fp'\in \mathfrak{A}$ we do not have $\fp\le \fp'$.
The following estimate for operators associated to antichains is proved in \Cref{antichainboundary}.

\begin{proposition}[antichain operator]
\label{antichain-operator}
\leanok
\lean{antichain_operator}

\uses{dens2-antichain,dens1-antichain}
For any antichain $\mathfrak{A} $ and for all $f:X\to \C$ with $|f|\le \mathbf{1}_F$ and all $g:X\to\C$ with $|g| \le \mathbf{1}_G$
\begin{equation} \label{eq-antiprop}
    \Big|\int \overline{g} T_{\mathfrak{A}} f\, \mathrm{d}\mu\Big|
    \le \frac{2^{117a^3}}{q-1} \dens_1(\mathfrak{A})^{\frac {q-1}{8a^4}}\dens_2(\mathfrak{A})^{\frac 1{q}-\frac 12} \|f\|_2 \|g\|_2\, .
\end{equation}
\end{proposition}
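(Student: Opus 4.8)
The plan is to deduce \eqref{eq-antiprop} by isolating two estimates for the bilinear form $\int \overline{g}\,T_{\mathfrak A}f\,\mathrm{d}\mu$ --- one governed by $\dens_1$, one by $\dens_2$ --- proving them separately, and combining them by convexity. Schematically these are
\begin{equation*}
  \Big|\int \overline{g}\,T_{\mathfrak A}f\,\mathrm{d}\mu\Big| \le \frac{2^{C a^3}}{q-1}\,\dens_1(\mathfrak A)^{\frac{1}{8a^4}}\,\|f\|_2\|g\|_2
  \qquad\text{and}\qquad
  \Big|\int \overline{g}\,T_{\mathfrak A}f\,\mathrm{d}\mu\Big| \le 2^{C a^3}\,\dens_2(\mathfrak A)^{\frac12}\,\|f\|_2\|g\|_2,
\end{equation*}
for an absolute constant $C$, the $(q-1)^{-1}$ in the first coming from a dyadic summation over density levels. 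Since the bilinear form is at most the minimum of the two right-hand sides, it is at most $A^{\theta}B^{1-\theta}$ for any $\theta\in[0,1]$ (with $A$ the first bound, $B$ the second); choosing $\theta:=2(q-1)/q\in(0,1]$ turns the exponent of $\dens_2$ into $\tfrac12(1-\theta)=\tfrac1q-\tfrac12$ and that of $\dens_1$ into $\tfrac{\theta}{8a^4}=\tfrac{q-1}{4a^4q}$, which is at least $\tfrac{q-1}{8a^4}$ because $q\le2$. As $0\le\dens_i(\mathfrak A)\le1$, lowering the $\dens_1$-exponent to $\tfrac{q-1}{8a^4}$ only weakens the bound, while the constant $2^{Ca^3}(q-1)^{-\theta}\le 2^{Ca^3}(q-1)^{-1}$ fits into $2^{117a^3}/(q-1)$ once $C\le117$. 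This yields \eqref{eq-antiprop}.

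The common starting point for the two estimates is that, for an antichain $\mathfrak A$, the sets $E(\fp)$ with $\fp\in\mathfrak A$ are pairwise disjoint. Indeed, if $\scI(\fp)=\scI(\fp')$ with $\fp\ne\fp'$, then $\fc(\fp)$ and $\fc(\fp')$ are distinct by \eqref{injective} and hence disjoint by \eqref{eq-dis-freq-cover}; if $\scI(\fp)\subsetneq\scI(\fp')$, then $\fc(\fp)\cap\fc(\fp')=\emptyset$, since otherwise \eqref{eq-freq-dyadic} would give $\fc(\fp')\subset\fc(\fp)$, i.e.\ $\fp\le\fp'$, contradicting the antichain property; and if $\scI(\fp),\scI(\fp')$ are incomparable they are disjoint by \eqref{dyadicproperty}, hence so are $E(\fp)\subset\scI(\fp)$ and $E(\fp')\subset\scI(\fp')$. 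Thus for each $x$ the sum $T_{\mathfrak A}f(x)$ has at most one nonzero term, the $T_\fp f$ have disjoint supports, $\|T_{\mathfrak A}f\|_2^2=\sum_{\fp\in\mathfrak A}\|T_\fp f\|_{L^2(E(\fp))}^2$, and $|\int\overline{g}\,T_{\mathfrak A}f|\le\|g\|_2\|T_{\mathfrak A}f\|_2$ by Cauchy--Schwarz. (In particular the plain bound $\|T_{\mathfrak A}f\|_2\lesssim 2^{Ca^3}\|f\|_2$ is immediate from dominating $T_{\mathfrak A}f$ pointwise by the Hardy--Littlewood maximal function, using \eqref{eq-Ks-size}, \eqref{supp-Ks} and doubling.) For a single tile, combining \eqref{eq-Ks-size}, \eqref{supp-Ks}, \eqref{eq-vol-sp-cube} and \eqref{doublingx} bounds $|T_\fp f(x)|$, for $x\in E(\fp)$, by $2^{Ca^3}\mu(\scI(\fp))^{-1}\int_{B(\pc(\fp),5D^{\ps(\fp)})}|f|\,\mathrm{d}\mu$; Cauchy--Schwarz with $|f|\le\mathbf{1}_F$ turns the $L^1$ average into $\dens_2(\mathfrak A)^{1/2}$ times an $L^2$ average via \eqref{definedens2}, and this, together with the disjointness of the $E(\fp)$ inside a fixed cube (so that $\sum_{\fp:\scI(\fp)=I}\mu(E(\fp))\le\mu(I)$), is the route to the $\dens_2$-estimate; alternatively, writing $\int_{E(\fp)}|T_\fp f|^2\le\mu(E(\fp))\|T_\fp f\|_{L^\infty(E(\fp))}^2$ and using $\mu(E(\fp))\le\mu(E_2(2,\fp))\le 2^a\dens_1(\mathfrak A)\mu(\scI(\fp))$, immediate from \eqref{definedens1} with $\fp'=\fp$, $\lambda=2$, is the route to the $\dens_1$-estimate.

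The main obstacle lies in the summation over $\fp\in\mathfrak A$: used crudely, the pointwise bounds above lose a factor proportional to the number of active scales (and, before exploiting disjointness within cubes, a factor controlled only by $\tQ(X)$), and these must be removed to reach the scale-independent constant of \eqref{eq-antiprop}. This is where cancellation enters: one invokes the cancellative axiom \eqref{eq-vdc-cond}, in the H\"older form of \Cref{Holder-van-der-Corput} --- necessary because $K_s(x,\cdot)$ is only $\tfrac1a$-H\"older by \eqref{eq-Ks-smooth} --- applied to the oscillatory factor $e(\tQ(x)(y)-\tQ(x)(x))$, to extract genuine decay across scales, and combines it with the disjointness of the $E(\fp)$, the disjointness of cubes of a common scale from \eqref{dyadicproperty}, the finiteness of $\tQ(X)$, and the $\lambda^{-a}$ weighting in \eqref{definedens1} controlling the overlap of the spatial cubes of $\mathfrak A$. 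Carrying out this bookkeeping so that the two displayed estimates emerge with clean powers of $\dens_1$ and $\dens_2$ is the technical heart; the convexity step that combines them is the short argument given above.
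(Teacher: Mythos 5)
Your high-level structure (two estimates, one keyed to $\dens_1$ and one to $\dens_2$, combined by a convexity/interpolation identity) is exactly the paper's strategy, and your proof that the sets $E(\fp)$, $\fp\in\mathfrak A$, are pairwise disjoint is correct and is the key starting observation. However, the two schematic lemmas you propose to interpolate between are not the right ones, and the difference is not cosmetic.

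You place the $(q-1)^{-1}$ factor in the $\dens_1$ estimate (attributing it to a ``dyadic summation over density levels'') and claim a clean $\dens_2(\mathfrak A)^{1/2}$ estimate with a $q$-uniform constant. This is backwards. In the paper the $\dens_1$ estimate (\Cref{dens1-antichain}) has \emph{no} $q$-dependence at all --- its maximal function is run at the fixed exponent $p'=(4a^4)'$, far from the $L^1$ endpoint --- while the $(q-1)^{-1}$ loss lives entirely in the $\dens_2$ estimate (\Cref{dens2-antichain}), and it is forced there: your proposed route to $\dens_2^{1/2}$ (Cauchy--Schwarz on the local average, then Hardy--Littlewood) would require a bound of the form $\|M(|f|^2)\|_1\lesssim\|f\|_2^2$, i.e.\ $L^1$-boundedness of the maximal operator, which fails. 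The paper avoids this by taking the intermediate exponent $\tilde q=2q/(1+q)>1$, which produces the weaker exponent $\dens_2^{1/\tilde q-1/2}=\dens_2^{1/(2q)}<\dens_2^{1/2}$, and the $(q-1)^{-1}$ arises precisely from the $L^{p}\to L^{p}$ norm of $M$ as $p\to1$. Because of this the powers you interpolate with ($\theta=2(q-1)/q$) are tuned to inputs that do not hold; the correct combination is the $(2-q)$-th power of the $\dens_2$ estimate times the $(q-1)$-st power of the $\dens_1$ estimate, which gives $\dens_2^{(2-q)/(2q)}=\dens_2^{1/q-1/2}$ and $\dens_1^{(q-1)/(8a^4)}$ exactly, with $(q-1)^{-(2-q)}\le(q-1)^{-1}$. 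Once you swap which side carries the $(q-1)^{-1}$ and relax the $\dens_2$ exponent to $1/(2q)$, your argument becomes the paper's. Your description of the $\dens_1$ side (single-tile $L^\infty$ bound on $E(\fp)$ via $\mu(E(\fp))\le 2^a\dens_1\mu(\scI(\fp))$, plus cancellation) is also too crude as stated: to extract the power $1/(8a^4)$ one needs the $TT^*$ pairing of \Cref{tile-correlation} with the van der Corput decay, followed by the counting \Cref{antichain-tile-count}; the naive tile-by-tile bound loses the scale-independent constant you need.
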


Let $n\ge 0$.
An $n$-forest is a pair $(\fU, \mathfrak{T})$
where $\fU$ is a subset of $\fP$
and $\mathfrak{T}$ is a map assigning to
each $\fu\in \fU$ a nonempty set $\fT (\fu)\subset \fP$ called tree
such that the following properties
\eqref{forest1}, \eqref{forest2},
\eqref{forest3},
\eqref{forest4},
\eqref{forest5}, and
\eqref{forest6}
hold.

For each $\fu\in \fU$ and each $\fp\in \fT(\fu)$
we have $\scI(\fp) \ne \scI(\fu)$ and
\begin{equation}\label{forest1}
    4\fp\lesssim \fu.
\end{equation}
For each $\fu \in \fU$ and each $\fp,\fp''\in \fT(\fu)$ and $\fp'\in \fP$
we have
\begin{equation}\label{forest2}
    \fp, \fp'' \in \mathfrak{T}(\fu), \fp \leq \fp' \leq \fp'' \implies \fp' \in \mathfrak{T}(\fu).
\end{equation}
We have
\begin{equation}\label{forest3}
   \|\sum_{\fu\in \fU} \mathbf{1}_{\scI(\fu)}\|_\infty \leq 2^n\,.
\end{equation}
We have for every $\fu\in \fU$
\begin{equation}\label{forest4}
    \dens_1(\fT(\fu))\le 2^{4a + 1-n}\, .
\end{equation}
We have for $\fu, \fu'\in \fU$ with $\fu\neq \fu'$ and $\fp\in \fT(\fu')$ with $\scI(\fp)\subset \scI(\fu)$ that, recalling $Z$ from \eqref{defineZ},
\begin{equation}\label{forest5}
    d_{\fp}(\fcc(\fp), \fcc(\fu))>2^{Z(n+1)}\, .
\end{equation}
We have for every $\fu\in \fU$ and $\fp\in \fT(\fu)$ that
\begin{equation}\label{forest6}
    B(\pc(\fp), 8D^{\ps(\fp)})\subset \scI(\fu)\, .
\end{equation}
When discussing the decomposition of the set of tiles,
we shall with slight abuse of language identify the forest $(\fU,\fT)$ with the set
$\bigcup_{\fu\in \fU}\fT(\fu)$ of tiles.

The following estimate for operators associated to forests is proved in \Cref{treesection}.
\begin{proposition}[forest operator]
\label{forest-operator}
\leanok
\lean{forest_operator}
For any $n\ge 0$ and any $n$-forest $(\fU,\fT)$ we have for all $f: X \to \mathbb{C}$ with $|f| \le \mathbf{1}_F$ and all $g:X\to\C$ with $|g| \le \mathbf{1}_G$
$$
    \Big| \int \overline{g} \sum_{\fu\in \fU} T_{\fT(\fu)} f \, \mathrm{d}\mu\Big|
    \le
    2^{440a^3}2^{-\frac{q-1}{q} n} \dens_2\left(\bigcup_{\fu\in \fU}\fT(\fu)\right)^{\frac{1}{q}-\frac{1}{2}} \|f\|_2 \|g\|_2 \,.
$$
\end{proposition}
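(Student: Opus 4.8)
The plan is to deduce the forest estimate by summing an $L^2$ bound for each single tree operator $T_{\fT(\fu)}$ over $\fu\in\fU$, the summation being cheap because of the following structural fact, which I would establish first: the sets $E_\fu:=\bigcup_{\fp\in\fT(\fu)}E(\fp)$, which contain $\supp T_{\fT(\fu)}f$ by \eqref{definetp}, are pairwise disjoint. Indeed, if $x\in E(\fp)\cap E(\fp')$ with $\fp\in\fT(\fu)$, $\fp'\in\fT(\fu')$, $\fu\ne\fu'$, then $\tQ(x)$ lies in both $\fc(\fp)$ and $\fc(\fp')$; ordering the dyadic cubes $\scI(\fp),\scI(\fp')$, which are nested by \eqref{dyadicproperty}, and combining \eqref{forest1}, \eqref{wiggleorder}, \eqref{eq-freq-comp-ball}, \eqref{firstdb}, \eqref{monotonedb} and \eqref{eq-vol-sp-cube}, one sees that the $d$-distance which \eqref{forest5} asserts to be larger than $2^{Z(n+1)}$ is in fact $O(2^{Ca})$, a contradiction since $Z=2^{12a}$ is enormous.

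\emph{Single tree estimate.} Fix $\fu\in\fU$. For $\fp\in\fT(\fu)$ and $x\in E(\fp)$, \eqref{forest1} and \eqref{eq-freq-comp-ball} give $d_\fp(\tQ(x),\fcc(\fu))\le 5$, so by \eqref{firstdb}, \eqref{monotonedb}, \eqref{eq-vol-sp-cube} the oscillation of $\tQ(x)-\fcc(\fu)$ in the sense of \eqref{osccontrol} is $\le 2^{Ca}$ on the $y$-support of $K_{\ps(\fp)}(x,\cdot)$ (using \eqref{supp-Ks}). I would therefore write
\[
    e\big(\tQ(x)(y)-\tQ(x)(x)\big)=e\big(\fcc(\fu)(y)-\fcc(\fu)(x)\big)\,A_x(y),\qquad A_x(y):=e\big((\tQ(x)-\fcc(\fu))(y)-(\tQ(x)-\fcc(\fu))(x)\big),
\]
keeping the first factor as a genuine $\fcc(\fu)$-modulation and treating $A_x$ as a unimodular amplitude of bounded oscillation. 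By \eqref{dyadicproperty}, \eqref{eq-dis-freq-cover} and the convexity \eqref{forest2}, for fixed $x$ the contributing tiles have distinct consecutive scales, so $T_{\fT(\fu)}f(x)$ is a scale-truncated, $\fcc(\fu)$-modulated singular integral with kernel $\sum_s K_s(x,\cdot)A_x(\cdot)$. Splitting the scale sum at the resonant scale $\log_D R_{\tQ}(\fcc(\fu),x)$: at low scales $A_x$ is almost constant on the relevant ball, so replacing it by $1$ costs an error dominated, after summing via \eqref{eq-Ks-size}--\eqref{eq-Ks-smooth}, by the Hardy--Littlewood maximal function $M$, and the main term is of the form controlled by $T_{\tQ}^{\fcc(\fu)}$ of \eqref{def-lin-star-op} applied to $f\,e(\fcc(\fu)(\cdot))$, where \eqref{forest6} guarantees both the localization of $f$ to $\scI(\fu)$ and the compatibility of the truncation at $R_{\tQ}(\fcc(\fu),\cdot)$; at high scales the cancellative axiom \eqref{eq-vdc-cond} applies with $d$-value growing geometrically in the scale, so those terms sum to a constant multiple of $M$, which is the step that digests the non-Calder\'on--Zygmund amplitude $A_x$. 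Together with the $O(1)$ boundary scales from $\psi$ and from $\sigma_1,\sigma_2$, this gives $|T_{\fT(\fu)}f(x)|\lesssim 2^{Ca^3}\big(M(f\mathbf{1}_{\scI(\fu)})(x)+T_{\tQ}^{\fcc(\fu)}(f\,e(\fcc(\fu)(\cdot)))(x)\big)$ for $x\in\scI(\fu)$, hence, by \eqref{linnontanbound} with $\mfa=\fcc(\fu)$ (note $\|f\,e(\fcc(\fu)(\cdot))\|_2=\|f\|_2$) and $L^2$-boundedness of $M$, the crude bound $\|T_{\fT(\fu)}f\|_2\lesssim 2^{Ca^3}\|f\mathbf{1}_{\scI(\fu)}\|_2$.

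\emph{Density gains.} This crude bound is not enough; one must upgrade it to $\|T_{\fT(\fu)}f\|_2\lesssim 2^{Ca^3}\dens_1(\fT(\fu))^{\frac{3q-2}{2q}}\dens_2(\bigcup_{\fu'\in\fU}\fT(\fu'))^{\frac1q-\frac12}\|f\mathbf{1}_{\scI(\fu)}\|_2$. One factor $\dens_1(\fT(\fu))^{1/2}$ is essentially free: by \eqref{definedens1} the support $E_\fu$ of $T_{\fT(\fu)}f$ meets each $\scI(\fp)$ in relative measure $\le\dens_1(\fT(\fu))$, so Hölder against the crude $L^2$ bound gains it. The remaining power of $\dens_1(\fT(\fu))$, and the factor $\dens_2^{1/q-1/2}$ which enters because $|f|\le\mathbf{1}_F$ feeds the $F$-density \eqref{definedens2} into the estimate, are obtained by interpolating the $L^2$ bound against a restricted-type (equivalently $L^{p'}$-improving) refinement of the pointwise tree estimate, with the exponents dictated by the forest summation below. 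I expect this to be the main obstacle: producing a near-linear power of $\dens_1(\fT(\fu))$ from a single tree, rather than the obvious $\dens_1(\fT(\fu))^{1/2}$, is precisely where the genuine time-frequency structure --- as opposed to merely the size and smoothness of $K$ --- must be exploited.

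\emph{Assembly.} By the disjointness of the $E_\fu$, for $|g|\le\mathbf{1}_G$,
\[
    \Big|\int\overline g\sum_{\fu\in\fU}T_{\fT(\fu)}f\,\mathrm{d}\mu\Big|\le\sum_{\fu}\|g\mathbf{1}_{E_\fu}\|_2\,\|T_{\fT(\fu)}f\|_2\le\Big(\sum_{\fu}\|g\mathbf{1}_{E_\fu}\|_2^2\Big)^{1/2}\Big(\sum_{\fu}\|T_{\fT(\fu)}f\|_2^2\Big)^{1/2}\le\|g\|_2\Big(\sum_{\fu}\|T_{\fT(\fu)}f\|_2^2\Big)^{1/2}.
\]
Inserting the single tree estimate, bounding $\sum_{\fu}\|f\mathbf{1}_{\scI(\fu)}\|_2^2\le 2^n\|f\|_2^2$ by the overlap bound \eqref{forest3}, and using $\dens_1(\fT(\fu))\le 2^{4a+1-n}$ from \eqref{forest4} (together with $\frac{3q-2}{2q}\le 1$ to absorb the $2^{4a+1}$ prefactor), the powers of $2^n$ combine as $\big(2^{-n}\big)^{\frac{3q-2}{2q}}\cdot 2^{n/2}=2^{-\frac{q-1}{q}n}$, which yields $\lesssim 2^{C'a^3}\,2^{-\frac{q-1}{q}n}\,\dens_2(\bigcup_{\fu\in\fU}\fT(\fu))^{\frac1q-\frac12}\|f\|_2\|g\|_2$; tracking the absolute constant to $2^{440a^3}$ completes the proof.
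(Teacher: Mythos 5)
Your opening observation---that the sets $E_\fu=\bigcup_{\fp\in\fT(\fu)}E(\fp)$ are pairwise disjoint---is correct and is essentially the same disjointness the paper uses (for the sets $E_j$ associated to rows, near the end of Section 5), proved via the separation condition \eqref{forest5} and the monotonicity of cube metrics exactly as you sketch. Your pointwise single-tree estimate via splitting at the resonant scale, digesting the amplitude $A_x$ with the Hardy--Littlewood maximal function and the cancellative axiom, and invoking \eqref{linnontanbound} with $\mfa=\fcc(\fu)$ is also in the spirit of the paper's pointwise tree estimate (\Cref{pointwise-tree-estimate}), and the $\dens_1(\fT(\fu))^{1/2}$ gain from the measure of $E_\fu$ is standard and appears in the paper as \Cref{local-dens1-tree-bound}.

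The genuine gap is in the ``Density gains'' step, and it is fatal to the chosen assembly. With $\dens_1^{1/2}$ per tree, the arithmetic you wrote gives $\big(\sum_\fu\|T_{\fT(\fu)}f\|_2^2\big)^{1/2}\lesssim\dens_1^{1/2}\big(\sum_\fu\|f\mathbf{1}_{\scI(\fu)}\|_2^2\big)^{1/2}\lesssim 2^{-n/2}\cdot 2^{n/2}\|f\|_2=\|f\|_2$: no decay in $n$ at all. To rescue it you posit a single-tree bound with $\dens_1(\fT(\fu))^{\frac{3q-2}{2q}}$, a power strictly above $1/2$, and hope to get it from a restricted-type or $L^{p'}$-improving refinement. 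This cannot work: the tree operator is controlled pointwise by the maximal function on $E_\fu$ and by the operator $T_{\tQ}^{\fcc(\fu)}$, and the latter is \emph{only} assumed bounded on $L^2$ in \eqref{linnontanbound}; there is no $L^p$ endpoint to interpolate against. Even abstractly, a tree consisting of a single tile with $f$ essentially constant on the kernel support saturates $\dens_1^{1/2}$, so no better single-tree power exists. You correctly flagged this as the main obstacle, but attributed it to the wrong missing ingredient: the time-frequency structure enters not by improving the single-tree density exponent, but by making \emph{distinct, well-separated trees almost orthogonal to each other}. This is the content of \Cref{correlation-separated-trees}, which the paper proves by combining the van der Corput estimate (\Cref{Holder-van-der-Corput}) with the tree separation \eqref{forest5}, and which yields an off-diagonal decay of $2^{-4n}$. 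The paper then decomposes the forest into at most $2^n$ rows with pairwise disjoint tops, combines the row bound ($2^{-n}$ on the diagonal, from $\dens_1^{1/2}$) with the row correlation ($2^{-4n}$ off-diagonal) in a $T^*T$ expansion, and obtains $2^{-n/2}$ overall decay; interpolation with the $\dens_2$-bearing row bound then gives the stated $2^{-\frac{q-1}{q}n}\dens_2^{\frac1q-\frac12}$. Without the tree-to-tree correlation estimate, the assembly via disjointness of $E_\fu$ and Cauchy--Schwarz alone cannot produce any $n$-decay, so your proof as written does not close.
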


The next proposition provides an efficient decomposition into antichains and forests.

% \begin{proposition}[discrete Carleson]
% \label{discrete-Carleson}
% There exists a Borel set $G'$ with $2\mu(G') \leq \mu(G)$ such that for all Borel functions $f:X\to \C$ with $|f|\le \mathbf{1}_F$
% we have
% \begin{equation}
%     \label{disclesssim}
%    \int_{G \setminus G'} \left| \sum_{\fp \in \fP} T_{\fp} f (x) \right| \, \mathrm{d}\mu(x) \le \frac{2^{434a^3}}{(q-1)^4} \mu(G)^{1-\frac{1}{q}} \mu(F)^{\frac{1}{q}}\,.
% \end{equation}
% \end{proposition}

\begin{proposition}
\label{prop-decomposition}
There exists a Borel set $G'$ with $2\mu(G') \leq \mu(G)$ such that the following holds. The set $\fP'$ of all tiles $\fp \in \fP$ with $\scI(\fp) \not \subset G'$ can be decomposed as a disjoint union
\begin{equation*}
   %\label{eq-decomposition}
   \fP' = \bigcup_{n \ge 0} \bigcup_{j= 0}^{12(n+2)^2} \bigcup_{\fu \in \fU_{n,j}} \mathfrak{T}_{n,j}(\fu) \cup \bigcup_{n \ge 0} \bigcup_{j=0}^{Z(n+2)^3} \mathfrak{A}_{n,j}
\end{equation*}
where each $(\fU_{n,j}, \fT_{n,j})$ is an $n$-forest with
\begin{equation*}
    %\label{eq-forest2dens}
    \dens_2\Big(\bigcup_{\fu \in \fU_{n,j}} \fT_{n,j}(\fu)\Big) \le 2^{2a+5} \frac{\mu(F)}{\mu(G)}
\end{equation*}
and each $\mathfrak{A}_{n,j}$ is an antichain with
\begin{equation*}
    \dens_1(\mathfrak{A}_{n,j}) \le 2^{4a+1 - n}
\end{equation*}
and
\begin{equation*}
    %\label{eq-antichain2dens}
    \dens_2(\mathfrak{A}_{n,j}) \le 2^{2a+5} \frac{\mu(F)}{\mu(G)}.
\end{equation*}
\end{proposition}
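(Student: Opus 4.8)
\emph{Proof plan.} As $\tQ$ has finite range and $\mathcal{D}$, $\fP$ are finite by \Cref{grid-existence-tile-structure}, all selections below are finite. The argument proceeds in three stages: remove a small exceptional set $G'$; stratify the surviving tiles into dyadic density levels indexed by $n$; and inside each level run a greedy tree selection. For the exceptional set write $G' = G_1 \cup G_2 \cup G_3$. Let $G_1$ be the union of the cubes $\scI(\fp)$ over tiles $\fp$ with $\dens_2(\{\fp\}) > 2^{2a+5}\mu(F)/\mu(G)$; by \eqref{eq-vol-sp-cube} each such cube lies inside a ball whose average of $\mathbf 1_F$ is comparable to $\mu(F)/\mu(G)$, so $G_1$ is contained in a fixed level set of a Hardy--Littlewood maximal function of $\mathbf 1_F$, and the weak $(1,1)$ inequality on the doubling space --- via the $5r$-covering lemma, losing only a factor $2^{O(a)}$ controlled by \eqref{doublingx} --- yields $\mu(G_1) \le \mu(G)/6$, the needed room being provided by $a \ge 4$. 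Let $G_2$ be the union of the cubes $\scI(\fp)$ over tiles $\fp$ with $\dens_1(\{\fp\}) > 2^{4a+1}$; a Vitali subfamily of the maximal such cubes, together with \eqref{definedens1}--\eqref{definee2} and \eqref{doublingx}, gives $\mu(G_2) \le \mu(G)/6$. Let $G_3$ be a boundary exceptional set, a union of relatively near-boundary sub-cubes of grid cubes, bounded by summing \eqref{eq-small-boundary} over the finitely many scales with the small exponent $\kappa$ making the sum over scale gaps geometric, arranged so that $\mu(G_3) \le \mu(G)/6$ and so that every surviving tile $\fp$ has $B(\pc(\fp), 8 D^{\ps(\fp)}) \subset J$ for each grid cube $J \supsetneq \scI(\fp)$. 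Then $2\mu(G') \le \mu(G)$. Set $\fP' := \{\fp \in \fP : \scI(\fp) \not\subset G'\}$; by construction each $\fp \in \fP'$ satisfies $\dens_2(\{\fp\}) \le 2^{2a+5}\mu(F)/\mu(G)$ and $\dens_1(\{\fp\}) \le 2^{4a+1}$.

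\emph{Density levels.} Stratify $\fP' = \bigsqcup_{n \ge 0} \fP_n$ dyadically according to the size of $\dens_1(\{\fp\})$ --- using a variant of $\dens_1$ that is monotone along the order \eqref{straightorder}, so that $\dens_1(\fP_n) \le 2^{4a+1-n}$ and $\le$-convex subcollections of $\fP_n$ stay in $\fP_n$. Since $\dens_1$ and $\dens_2$ decrease under restriction to subcollections, and $\dens_2(\fP_n) \le \dens_2(\fP')$, any tree or antichain contained in $\fP_n$ automatically obeys the bounds $\dens_1 \le 2^{4a+1-n}$ and $\dens_2 \le 2^{2a+5}\mu(F)/\mu(G)$ required in the proposition.

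\emph{Trees and antichains inside $\fP_n$.} Fix $n$. Repeatedly choose a $\le$-maximal not-yet-assigned tile $\fu \in \fP_n$, declare it a tree top (belonging to the current round $j$), and let $\fT(\fu)$ be a $\le$-convex hull inside $\fP_n$ of the not-yet-assigned $\fp \in \fP_n$ with $\scI(\fp) \subsetneq \scI(\fu)$, $4\fp \lesssim \fu$, and $B(\pc(\fp), 8 D^{\ps(\fp)}) \subset \scI(\fu)$; this gives \eqref{forest1}, \eqref{forest6}, and, since the factor $4$ in \eqref{forest1} leaves room via \eqref{firstdb}, also \eqref{forest2}, while \eqref{forest4} is inherited from $\fP_n$. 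A tile $\fp$ with $\scI(\fp) \subsetneq \scI(\fu)$ that is not absorbed into $\fT(\fu)$ must have $\fcc(\fp)$ far from $\fcc(\fu)$ in $d_\fp$; iterating \eqref{firstdb}, \eqref{monotonedb}, \eqref{seconddb} along the nested chain of tops above $\fp$ shows that each such deferral strictly increases a ``frequency reach'' of $\fp$, which the separation scale $2^{Z(n+1)}$ of \eqref{forest5} caps at $O((n+2)^2)$; this bounds the number of rounds and establishes \eqref{forest5}. Within one round the cubes of the chosen tops may overlap too much, but $\dens_1(\fP_n) \le 2^{4a+1-n}$ together with \eqref{definedens1}--\eqref{definee2} bounds their pointwise multiplicity by $O((n+2) 2^n)$, so each round splits into $O(n+2)$ subfamilies satisfying \eqref{forest3} with constant $2^n$; altogether at most $12(n+2)^2$ $n$-forests $(\fU_{n,j}, \fT_{n,j})$ arise. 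The tiles of $\fP_n$ that are neither tops nor absorbed are stratified into $\le$-antichains $\mathfrak A_{n,j}$, $0 \le j \le Z(n+2)^3$, the number of strata being bounded via \eqref{straightorder}, the doubling axioms \eqref{firstdb}--\eqref{thirddb}, and the separation \eqref{forest5}; each inherits the density bounds from $\fP_n$. Every $\fp \in \fP'$ is a top, lies in a tree, or lies in a residual antichain, and the construction makes these exclusive, so $\fP'$ is exactly the disjoint union displayed in the proposition.

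\emph{Main obstacle.} The crux is the last stage: meeting all six forest axioms simultaneously while keeping the number of forests and antichains polynomial in $n$. The tension lies between \eqref{forest5}, which forces distinct trees over a common cube to have well-separated frequencies, and \eqref{forest3}, which caps the overlap of their cubes; it is resolved by exploiting the density normalisation of the level $\fP_n$ together with the doubling axioms for the metrics $d_B$. This is the metric-space adaptation of the Fefferman-type combinatorics of \cite{fefferman}, \cite{lie-polynomial}, \cite{zk-polynomial}.
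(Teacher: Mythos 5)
Your high-level shape (exceptional set, density stratification, tree selection) matches the paper, but three of the paper's key mechanisms are missing, and the places where you rely on them are exactly the places where your argument becomes vague.

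\textbf{First, the stratification and the overlap bound \eqref{forest3}.} You stratify $\fP'$ only by a dyadic level $n$ of $\dens_1$, and you assert that $\dens_1(\fP_n) \le 2^{4a+1-n}$ ``bounds the pointwise multiplicity'' of the tree top cubes by $O((n+2)2^n)$. This does not follow from the density definitions \eqref{definedens1}--\eqref{definee2} alone: distinct tree tops over a common point can sit in essentially disjoint frequency balls, so a point $x$ need not lie in $E_2(\lambda,\fu)$ for more than one top $\fu$, and no multiplicity bound drops out. The paper instead stratifies by \emph{two} parameters $(k,n)$ — $k$ tracking the ratio $\mu(G\cap J)/\mu(J)$ over ancestors $J$, $n$ tracking density — and introduces the family $\mathfrak{M}(k,n)$ of maximal high-$E_1$-mass tiles. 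Pairwise disjointness of the sets $E_1(\mathfrak{m})$ gives a Carleson packing condition, and the John--Nirenberg inequality (\Cref{John-Nirenberg}) controls the overlap of the $\scI(\mathfrak{m})$. The exceptional set $G_2$ is precisely the set where that overlap is too large, and the overlap bound for tree tops in \Cref{forest-stacking} is obtained by injecting each top $\fu$ into a distinct $\mathfrak{m} \in \mathfrak{M}(k,n)$ and invoking $x\notin G_2$. Your $G_2$ (large-$\dens_1$ cubes) does not supply this; without the $k$-index you cannot even state the Carleson condition, let alone run John--Nirenberg.

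\textbf{Second, the meaning of $j$ and the separation \eqref{forest5}.} You use $j$ as a ``round counter'' in a greedy selection and assert that each deferral ``strictly increases a frequency reach'' capped at $O((n+2)^2)$. This is not the mechanism in the paper and it is not clear it can be made to work. In the paper, $j$ is the dyadic level of $|\mathfrak{B}(\fp)|$, the number of maximal tiles $\mathfrak{m}$ with $100\fp\lesssim\mathfrak{m}$ — this is Fefferman's counting trick, and it is crucial: it is what lets you deduce disjointness of the balls $B_\fu(\fcc(\fu),100)$ for competing tops (\Cref{relation-geometry}), it is what makes the equivalence relation $\sim$ on candidate tops well behaved (\Cref{equivalence-relation}), and it is what gives $j$ a bounded range once $\scI(\fp)\not\subset G'$. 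The separation $2^{Z(n+1)}$ of \eqref{forest5} is obtained not by counting deferrals but by removing $Z(n+1)+1$ minimal layers \emph{and} $Z(n+1)+1$ maximal layers of tiles and then amplifying a distance $\ge 8$ across those $Z(n+1)$ scale gaps using \Cref{monotone-cube-metrics} (\Cref{forest-separation}). Your proposal contains neither the layer removal nor the monotone-metrics amplification, and I do not see how a greedy round count substitutes for them.

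\textbf{Third, the boundary exceptional set $G_3$.} You define $G_3$ upfront so that every surviving tile $\fp$ has $B(\pc(\fp),8D^{\ps(\fp)})\subset J$ for \emph{every} grid ancestor $J\supsetneq\scI(\fp)$. Summing the small boundary estimate \eqref{eq-small-boundary} over all cubes $J$ and all scale gaps gives $\sum_{s}\sum_{I:s(I)=s}\mu(I)\approx 2S\,\mu(I_0)$ up to the geometric-in-gap factor, which does not converge to something $\le\mu(G)/6$ uniformly in $S$. The paper's $G_3$ is not defined upfront: it is a union of near-boundary tiles relative to the \emph{already selected} tree tops $\fu\in\fU_1(k,n,j)$ at the specific scale gap $Z(n+1)+1$, and its measure is controlled by chaining \Cref{boundary-exception}, \Cref{tree-count} and \Cref{top-tiles}, all of which again use $\mathfrak{M}(k,n)$ and the $(k,n)$-stratification. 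This ordering of the construction — tops first, then the boundary exception — is not optional.

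In short, the proposal recognizes that Fefferman-type combinatorics is needed but does not actually carry it out: the $(k,n)$ double stratification, the $\mathfrak{M}(k,n)$/John--Nirenberg overlap control behind $G_2$ and \eqref{forest3}, the $|\mathfrak{B}(\fp)|$-based $j$-index, the equivalence relation that merges candidate tops, and the removal of $Z(n+1)+1$ minimal and maximal layers to produce \eqref{forest5} are all missing, and their roles are replaced by assertions that do not have proofs.
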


The proof of \Cref{prop-decomposition} is done in \Cref{proptopropprop}.

Proposition
 \ref{prop-decomposition} gives a set $G'$ and a decomposition of the set of relevant tiles. We apply the triangle inequality to
\eqref{eq-sum-tiles}
along this decomposition and estimate the pieces by Propositions  \ref{antichain-operator} and \ref{forest-operator}. The resulting sum adds to less than the  bound \eqref{eq-linearized}, which
completes the proof of \Cref{linearised-metric-Carleson}.

\subsection{The cancellation condition with Hölder regularity}
To bridge the gap between the H\"older regularity condition \eqref{eqkernel-y-smooth} in \Cref{linearised-metric-Carleson}  and the Lipschitz regularity of the testing functions in the  cancellative condition \eqref{eq-vdc-cond},  we follow \cite{zk-polynomial} to formulate a variant of \eqref{eq-vdc-cond}
in the following proposition proved in \Cref{liphoel}.

Define
\begin{equation*}
    \tau:=\frac 1a\, .
\end{equation*}
Define for any open ball $B$ of radius $R$ in $X$ the $L^\infty$-normalized $\tau$-H\"older norm by
\begin{equation*}
    \|\varphi\|_{C^\tau(B)} = \sup_{x \in B} |\varphi(x)| + R^\tau \sup_{x,y \in B, x \neq y} \frac{|\varphi(x) - \varphi(y)|}{\rho(x,y)^\tau}\,.
\end{equation*}

\begin{proposition}[Holder van der Corput]
    \label{Holder-van-der-Corput}
    \leanok
    \lean{holder_van_der_corput}
    \uses{Lipschitz-Holder-approximation}
     Let $z\in X$ and $R>0$ and set $B=B(z,R)$.
     Let $\varphi: X \to \mathbb{C}$ be
     supported on $B$ and satisfy $\|{\varphi}\|_{C^\tau(2B)}<\infty$.
     Let $\mfa, \mfb \in \Mf$. Then
    \begin{equation*}
        \Big|\int e(\mfa(x)-{\mfb(x)})\varphi(x) \mathrm{d}\mu\Big|\le
         2^{7a} \mu(B) \|{\varphi}\|_{C^\tau(2B)}
       (1 + d_{B}(\mfa,\mfb))^{-\frac{1}{2a^2+a^3}}
    \,.
    \end{equation*}
\end{proposition}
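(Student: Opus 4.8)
The plan is to interpolate between the cancellative axiom \eqref{eq-vdc-cond} — which has exactly the desired shape but is available only for \emph{Lipschitz} test functions, and with the stronger exponent $-\tfrac1a$ — and the trivial bound $|\int e(\mfa(x)-\mfb(x))\varphi(x)\,\mathrm{d}\mu|\le\mu(2B)\|\varphi\|_{C^\tau(2B)}$. The bridge is a Lipschitz approximation of $\varphi$ at a scale $t\in(0,1]$, which is precisely the content of the Lipschitz--H\"older approximation lemma invoked above; I would use it in the form that there is $\varphi_t\colon X\to\C$ supported on $2B=B(z,2R)$ with
\[
  \|\varphi-\varphi_t\|_{L^\infty(X)}\le 2^{4}\, t^{\frac1{a-1}}\|\varphi\|_{C^\tau(2B)}\,,\qquad \|\varphi_t\|_{\Lip(2B)}\le 2^{4}\, t^{-1}\|\varphi\|_{C^\tau(2B)}\,.
\]
Such a $\varphi_t$ is produced by applying, separately to $\mathrm{Re}\,\varphi$ and $\mathrm{Im}\,\varphi$, the McShane-type inf-convolution $u\mapsto u_\Lambda(x):=\inf_{y}\big(u(y)+\Lambda\rho(x,y)\big)$ with $\Lambda:=t^{-1}R^{-1}\|\varphi\|_{C^\tau(2B)}$: then $u_\Lambda$ is $\Lambda$-Lipschitz, it vanishes outside $2B$ because $\Lambda\ge R^{-1}\|\varphi\|_{C^\tau(2B)}$ and $\varphi$ is supported on $B$, and $0\le u-u_\Lambda\le\sup_{r\ge0}\big(R^{-1/a}\|\varphi\|_{C^\tau(2B)}\,r^{1/a}-\Lambda r\big)$, whose value gives the displayed $L^\infty$ bound after an elementary optimization using $\tau=\tfrac1a$.

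With this $\varphi_t$ in hand I split
\[
  \Big|\int e(\mfa(x)-\mfb(x))\varphi(x)\,\mathrm{d}\mu\Big|\le\Big|\int_{2B}e(\mfa(x)-\mfb(x))\varphi_t(x)\,\mathrm{d}\mu\Big|+\mu(2B)\,\|\varphi-\varphi_t\|_{L^\infty(X)}\,.
\]
For the first term I apply \eqref{eq-vdc-cond} to the ball $2B$ of radius $2R$ and the Lipschitz function $\varphi_t$, obtaining $2^a\mu(2B)\|\varphi_t\|_{\Lip(2B)}(1+d_{2B}(\mfa,\mfb))^{-1/a}$, and then use the doubling property \eqref{doublingx} in the form $\mu(2B)\le 2^a\mu(B)$ together with the monotonicity \eqref{monotonedb} in the form $d_{2B}(\mfa,\mfb)\ge d_B(\mfa,\mfb)$, so that $(1+d_{2B}(\mfa,\mfb))^{-1/a}\le(1+d_B(\mfa,\mfb))^{-1/a}$. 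For the second term I again use $\mu(2B)\le 2^a\mu(B)$. Inserting the two bounds on $\varphi_t$, the quantity in question is at most
\[
  2^{2a+5}\,\mu(B)\,\|\varphi\|_{C^\tau(2B)}\Big(t^{-1}(1+d_B(\mfa,\mfb))^{-1/a}+t^{\frac1{a-1}}\Big)\,.
\]

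Finally I optimize over $t\in(0,1]$: balancing the two summands requires $t^{a/(a-1)}=(1+d_B(\mfa,\mfb))^{-1/a}$, i.e. $t=(1+d_B(\mfa,\mfb))^{-(a-1)/a^2}\in(0,1]$, and for this choice both summands equal $(1+d_B(\mfa,\mfb))^{-1/a^2}$, so the bound becomes $2^{2a+6}\mu(B)\|\varphi\|_{C^\tau(2B)}(1+d_B(\mfa,\mfb))^{-1/a^2}$. Since $a\ge4$ gives $a^2\le 2a^2+a^3$ and $2a+6\le 7a$, and since $1+d_B(\mfa,\mfb)\ge1$, this implies the asserted inequality with constant $2^{7a}$ and exponent $-\tfrac1{2a^2+a^3}$. (The stated exponent is thus deliberately weaker than the $-\tfrac1{a^2}$ the argument actually produces, which leaves plenty of room in the constants.)

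The only genuinely non-trivial ingredient is the Lipschitz--H\"older approximation lemma; granting it, this proposition is a routine interpolation of the cancellative axiom against the trivial bound. The point that needs care is that the approximant $\varphi_t$ is supported on the doubled ball $2B$ rather than on $B$ — this is exactly why the hypothesis is phrased with $\|\varphi\|_{C^\tau(2B)}$ and why \eqref{eq-vdc-cond}, \eqref{doublingx} and \eqref{monotonedb} must be applied to the ball $2B$ — and that one must track the $t$-dependence of both the $L^\infty$ error and the Lipschitz norm precisely enough that, after balancing, the resulting exponent is strictly positive.
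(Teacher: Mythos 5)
Your proof is correct and follows essentially the same route as the paper: approximate $\varphi$ by a Lipschitz function at scale $t$ (the paper invokes its Lemma~\ref{Lipschitz-Holder-approximation} directly while you sketch the inf-convolution that underlies it, obtaining slightly sharper exponents), split the integral accordingly, apply the cancellative axiom \eqref{eq-vdc-cond} to the Lipschitz piece on the doubled ball, and optimize over $t$. Your optimization yields the better exponent $-1/a^2$, which you correctly relax to the stated $-1/(2a^2+a^3)$; the paper's choice $t=(1+d_B(\mfa,\mfb))^{-\tau/(2+a)}$ produces the stated exponent directly because its approximation lemma is phrased with the more conservative bound $\|\tilde\varphi\|_{\Lip}\lesssim t^{-1-a}\|\varphi\|_{C^\tau}$.
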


\subsection{Monotonicity of cube metrics}
\label{global-auxiliary-lemmas}
We close this section by recording a technical strengthening of the monotonicity \eqref{monotonedb} of the metrics $d_B$.

% First, we record an estimate for the metrical entropy numbers of balls in the space $\Mf$ equipped with any of the metrics $d_B$, following from the doubling property \eqref{thirddb}.

% \begin{lemma}[ball metric entropy]
%     \label{ball-metric-entropy}
%     \leanok
%     \lean{Θ.finite_and_mk_le_of_le_dist}
%     Let $B' \subset X$ be a ball. Let $r > 0$, $\mfa \in \Mf$ and $k \in \mathbb{N}$. Suppose that $\mathcal{Z} \subset B_{B'}(\mfa, r2^k)$ satisfies that $\{B_{B'}(z,r)\mid z \in \mathcal{Z}\}$ is a collection of pairwise disjoint sets. Then
%     $$ |\mathcal{Z}| \le 2^{ka}\,. $$
% \end{lemma}
% % remark(Floris): weakened the condition $d_{B'}(z,z') \ge r$ to $B_{B'}(z, r) \cap B_{B'}(z', r) = \emptyset$
% % This also fixed an off-by-one error of $k$

% \begin{proof}
%     \leanok
%     By applying property \eqref{thirddb} $k$ times, we obtain a collection $\mathcal{Z}' \subset \Mf$ with $|\mathcal{Z}'| = 2^{ka}$ and
%     $$
%         B_{B'}(\mfa,r2^k) \subset \bigcup_{z' \in \mathcal{Z}'} B_{B'}(z', \frac{r}{2})\,.
%     $$
%     Then each $z \in \mathcal{Z}$ is contained in one of the balls $B(z', \frac{r}{2})$, but by the separation assumption no such ball contains more than one element of $\mathcal{Z}$. Thus $|\mathcal{Z}| \le |\mathcal{Z}'| = 2^{ka}$.
% \end{proof}

\begin{lemma}[monotone cube metrics]
    \label{monotone-cube-metrics}
    \leanok
    \lean{Grid.dist_mono, Grid.dist_strictMono}
    Let $(\mathcal{D}, c, s)$ be a grid structure. Denote for cubes $I \in \mathcal{D}$
    \begin{equation}\label{defineIsupo}
         I^\circ := B(c(I), \frac{1}{4} D^{s(I)})\,.
    \end{equation}
    Let $I, J \in \mathcal{D}$ with $I \subset J$.
    Then for all $\mfa, \mfb \in\Mf$ we have
    $$
        d_{I^\circ}(\mfa, \mfb) \le d_{J^\circ}(\mfa, \mfb)\,,
    $$
    and if $I \ne J$ then we have
    $$
        d_{I^\circ}(\mfa, \mfb) \le 2^{-95a} d_{J^\circ}(\mfa, \mfb)\,.
    $$
\end{lemma}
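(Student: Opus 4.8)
The plan is to reduce the statement to a strict inequality carrying a large scale gap, and then harvest that gap through the axiom \eqref{seconddb}. First, we may assume $I\neq J$: when $I=J$ only the first inequality is asserted and it is an equality, while for $I\neq J$ the first inequality follows from the second since $2^{-95a}\le 1$. Since $c(I)\in I\subset J$ by \eqref{eq-vol-sp-cube}, the cubes $I,J$ are non-disjoint, so if we had $s(I)=s(J)$ then \eqref{dyadicproperty} applied both ways would force $I=J$; hence $\ell:=s(J)-s(I)\ge 1$. Finally, $c(I)\in I\subset J\subset B(c(J),4D^{s(J)})$ yields the only geometric input we use:
\[
   \rho(c(I),c(J))<4D^{s(J)}.
\]

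The key point is that $I^\circ$ need not be contained in $J^\circ$ — the two balls may even be disjoint — so we never compare them directly. Instead we inflate $I^\circ$ \emph{concentrically} up to the radius of $J^\circ$, collecting one factor of $2$ from \eqref{seconddb} for each $2^a$-fold increase of the radius, and only afterwards move the center from $c(I)$ to $c(J)$, which costs merely a bounded factor. Precisely, for $0\le k\le m:=100a\ell$ put $B^{(k)}:=B\bigl(c(I),2^{ak}\cdot\tfrac14 D^{s(I)}\bigr)$; then $B^{(0)}=I^\circ$, and since $am=100a^2\ell$ and $D=2^{100a^2}$ we have $B^{(m)}=B\bigl(c(I),\tfrac14 D^{s(J)}\bigr)$. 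Each pair $B^{(k)}\subset B^{(k+1)}$ is concentric with radii in ratio $2^a$, so \eqref{seconddb} gives $2\,d_{B^{(k)}}\le d_{B^{(k+1)}}$; iterating over $k$,
\[
   2^{100a\ell}\,d_{I^\circ}(\mfa,\mfb)\ \le\ d_{B(c(I),\frac14 D^{s(J)})}(\mfa,\mfb).
\]

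For the recentering, the displayed bound $\rho(c(I),c(J))<4D^{s(J)}$ gives $B\bigl(c(I),\tfrac14 D^{s(J)}\bigr)\subset B\bigl(c(J),8D^{s(J)}\bigr)$, and since $8D^{s(J)}=2^{5}\cdot\tfrac14 D^{s(J)}$, combining \eqref{monotonedb} with five applications of \eqref{firstdb} along the concentric chain $J^\circ=B(c(J),\tfrac14 D^{s(J)})\subset\cdots\subset B(c(J),8D^{s(J)})$ yields
\[
   d_{B(c(I),\frac14 D^{s(J)})}(\mfa,\mfb)\ \le\ d_{B(c(J),8D^{s(J)})}(\mfa,\mfb)\ \le\ 2^{5a}\,d_{J^\circ}(\mfa,\mfb).
\]
Chaining the last two displays and using $\ell\ge 1$ gives $d_{I^\circ}(\mfa,\mfb)\le 2^{5a-100a\ell}\,d_{J^\circ}(\mfa,\mfb)\le 2^{-95a}\,d_{J^\circ}(\mfa,\mfb)$, as required. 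The one genuine obstacle is spotting this route: the naive attempts to nest $I^\circ$ inside $J^\circ$ fail, and one has to notice that the scale gap $D^\ell=2^{100a^2\ell}$ is so enormous that the $100a\ell$ gains of $2$ supplied by \eqref{seconddb} overwhelm the fixed factor $2^{5a}$ lost in recentering; everything else is elementary ball inclusions and exponent bookkeeping.
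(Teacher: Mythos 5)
Your proof is correct and takes essentially the same route as the paper: reduce to $I\ne J$ and $s(J)>s(I)$, harvest the scale gap $D^{s(J)-s(I)}$ via repeated application of \eqref{seconddb}, then recenter from $c(I)$ to $c(J)$ at a bounded cost of $2^{5a}$ from \eqref{firstdb}. The only difference is bookkeeping: you inflate $I^\circ$ concentrically to radius $\tfrac14 D^{s(J)}$ before recentering, whereas the paper first passes to $B(c(I),4D^{s(I)})$ by monotonicity and then to $B(c(I),4D^{s(J)})$; both versions land on $2^{-95a}$.
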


\begin{proof}
    \leanok
    If $s(I) \ge s(J)$ then \eqref{dyadicproperty} and the assumption $I\subset J$ imply $I = J$.

    If $s(J) \ge s(I)+1$, then from the monotonicity \eqref{monotonedb}, \eqref{defineD} and \eqref{seconddb}
    \begin{equation}
    \label{eq-dIJ-est}
        d_{I^\circ}(\mfa, \mfb) \le d_{B(c(I), 4 D^{s(I)})}(\mfa, \mfb) \le 2^{-100a} d_{B(c(I), 4D^{s(J)})}(\mfa, \mfb)\,.
    \end{equation}
    Using \eqref{eq-vol-sp-cube}, together with the inclusion $I \subset J$, we obtain
    $$
        B(c(I), 4 D^{s(J)}) \subset B(c(J), 8 D^{s(J)})\,.
    $$
    Using this together with the monotonicity property \eqref{monotonedb} and \eqref{firstdb} in \eqref{eq-dIJ-est}
    \begin{align*}
        d_{I^\circ}(\mfa, \mfb) \le 2^{-100a} d_{B(c(J), 8D^{s(J)})}(\mfa, \mfb) \le 2^{-95a} d_{B(c(J), \frac{1}{4}D^{s(J)})}(\mfa, \mfb)\,.
    \end{align*}
    This proves the second inequality claimed in the lemma, from which the first follows since $a \ge 4$ and hence $2^{-95a} \le 1$.
\end{proof}

\section{Existence of a tile structure}
\label{christsection}

Here we prove Proposition \ref{grid-existence-tile-structure}.
 Existence of a grid structure  was proved in the generality needed here by Christ \cite[\S 3]{christ1990b}. We record this as:

\begin{lemma}[grid existence]
    \label{grid-existence}
    \leanok
    \lean{grid_existence}
    There exists a grid structure $(\mathcal{D}, c,s)$.
\end{lemma}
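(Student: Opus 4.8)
The plan is to obtain the grid structure from Christ's construction of dyadic cubes on spaces of homogeneous type \cite{christ1990b}. Since $(X,\rho,\mu,a)$ is a complete, locally compact metric space carrying a doubling measure, it is a space of homogeneous type, and bounded subsets are totally bounded (hence relatively compact and of finite measure), so Christ's theorem applies. I would run that construction with base parameter $\delta:=D^{-1}=2^{-100a^2}$ (recall \eqref{defineD}), producing for every integer level $k$ a family of Borel ``cubes'' that partition $X$ at each fixed level, that nest dyadically across levels (two cubes at levels $k\le l$ are either disjoint, or the level-$k$ one is contained in the level-$l$ one), and each of which carries a center $c$ with the Christ ball sandwich $B(c,c_0D^{k})\subset Q\subset B(c,C_0D^{k})$ at level $k$. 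In this construction the admissible ratio $C_0/c_0$ is essentially arbitrary, at the price of requiring $\delta$ to be small in terms of $C_0/c_0$ and of the doubling constant $2^a$; as $D$ is astronomically large, I take $c_0=\tfrac14$ and $C_0=4$, which is exactly the window demanded in \eqref{eq-vol-sp-cube}.

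Two routine adaptations then turn this into a grid structure in the sense of the preceding section. First, to produce the maximal cube, arrange the maximal $D^{S}$-separated net at level $S$ to contain the base point $o$ (simply start the greedy selection at $o$), and let $I_0$ be the level-$S$ cube with center $o$; then $c(I_0)=o$, $s(I_0)=S$, and $B(o,\tfrac14 D^{S})\subset I_0\subset B(o,4D^{S})$. Second, discard every cube not contained in $I_0$ and every cube whose level lies outside $[-S,S]$; the surviving cubes form $\mathcal D$, with $c$ and $s$ the restrictions of the center and level maps. Since $I_0\subset B(o,4D^{S})$ is totally bounded, only finitely many cubes survive at each of the finitely many levels in $[-S,S]$, so $\mathcal D$ is finite, and $s$ is onto $[-S,S]$ because $I_0$ has positive measure and is partitioned by its surviving level-$k$ sub-cubes for every such $k$. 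With these identifications, \eqref{dyadicproperty} is the dyadic nesting, \eqref{subsetmaxcube} holds by construction, \eqref{coverdyadic} holds because for $k<s(I)$ the level-$k$ cubes meeting $I$ are precisely a partition of $I$ and all survive into $\mathcal D$, and \eqref{eq-vol-sp-cube} is Christ's sandwich with our choice of $c_0,C_0$.

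The substantive property is the small-boundary estimate \eqref{eq-small-boundary}: $\mu(\{x\in I:\rho(x,X\setminus I)\le tD^{s(I)}\})\le 2t^{\kappa}\mu(I)$ whenever $tD^{s(I)}\ge D^{-S}$, with $\kappa=2^{-10a}$. This is the small-boundary clause of Christ's theorem, and re-deriving it with these explicit exponent and constant is where the real work lies. The mechanism is that, for the cube structure the construction produces, a point lying within $tD^{k}$ of the boundary of its level-$k$ cube lies within a comparable collar of the boundary of each of its ancestors, the collar widths summing geometrically; counting, via the doubling property \eqref{doublingx}, the level-$(k-j)$ sub-cubes of $I$ that meet such a collar and choosing $j$ with $D^{-j}\sim t$ yields a bound $C(a)\,t^{\eta(a)}\mu(I)$ for a positive exponent $\eta(a)$ and a constant $C(a)$ depending only on $\delta$ and $2^a$. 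To land on the stated normalization one then chooses $\kappa=2^{-10a}$ small enough relative to $\eta(a)$ that $C(a)\,t^{\eta(a)}\le 2t^{\kappa}$ throughout the relevant range of $t$ (for $t$ with $2t^{\kappa}\ge1$ the estimate is trivial, the left side being at most $\mu(I)$) — which is precisely the numerology the very large value $D=2^{100a^2}$ is there to accommodate. Thus the main obstacle is not the formalism of the grid or the constant-matching for the ball sandwich, but faithfully reproducing Christ's small-boundary argument and verifying that $D=2^{100a^2}$, the doubling constant $2^a$, and $\kappa=2^{-10a}$ are mutually consistent; everything else is a transcription of the standard dyadic cube construction.
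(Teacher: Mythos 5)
Your proposal is correct and takes essentially the same route as the paper, which records this lemma as Christ's dyadic construction \cite{christ1990b} and gives no further proof body. Your sketch supplies the details the citation delegates — running Christ's construction at base $\delta=D^{-1}$, seeding the level-$S$ net at $o$, truncating to cubes inside $I_0$ and levels in $[-S,S]$, matching the inner/outer radii to $\tfrac14 D^{s(I)}$ and $4D^{s(I)}$, and tracking Christ's small-boundary argument to produce the explicit constant $2$ and exponent $\kappa=2^{-10a}$, correctly identifying the last item as the only substantive verification.
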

The next lemma follows \cite[Lemma 2.12]{zk-polynomial} to construct a tile structure.

\begin{lemma}[tile structure]
    \label{tile-structure}
    \leanok
    \lean{tile_existence}
        For a given grid structure $(\mathcal{D}, c,s)$, there exists a tile structure
        $(\fP,\scI,\fc,\fcc,\pc,\ps)$.
\end{lemma}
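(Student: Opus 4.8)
The plan is to reduce the existence of a tile structure to the construction, for each dyadic cube $I$, of a finite family of ``frequency cells'', and to build these families by a downward recursion on scale in the style of Christ's construction of dyadic cubes. Since $\tQ$ has finite range, $\tQ(X)$ is a finite subset of $\Mf$, and $\mathcal{D}$ is finite; so it suffices to produce for each $I\in\mathcal{D}$ a finite family $\mathcal{C}_I$ of pairwise disjoint subsets of $\Mf$ whose union contains $\tQ(X)$, together with an injective assignment $C\mapsto \xi_C\in\tQ(X)$ of centers, such that $B_{d_{I^\circ}}(\xi_C,0.2)\subset C\subset B_{d_{I^\circ}}(\xi_C,1)$ (here $I^\circ$ is as in \eqref{defineIsupo}, so $d_{I^\circ}=d_{\fp}$ for any tile $\fp$ over $I$ by \eqref{tilecenter}, \eqref{tilescale}, \eqref{defdp}), and such that the families are nested along cube inclusion: whenever $I\subset J$, a cell of $\mathcal{C}_J$ meeting a cell of $\mathcal{C}_I$ is contained in it. Given such data one sets $\fP=\{(I,C):I\in\mathcal{D},\ C\in\mathcal{C}_I\}$, lets $\scI$ be the first projection, $\fc(I,C)=C$, $\fcc(I,C)=\xi_C$, and defines $\pc,\ps$ by \eqref{tilecenter}, \eqref{tilescale}; then \eqref{injective}, \eqref{eq-dis-freq-cover}, \eqref{eq-freq-comp-ball}, \eqref{tilecenter}, \eqref{tilescale} hold by construction, \eqref{eq-freq-dyadic} is exactly the nesting, and $\fP$ is finite since $|\mathcal{C}_I|\le|\tQ(X)|$.

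First I would fix the geometry. By \eqref{dyadicproperty}, \eqref{coverdyadic}, \eqref{subsetmaxcube} the cubes form a finite tree under inclusion with root $I_0$, cubes of equal scale are pairwise disjoint, and every $I\neq I_0$ has a unique parent $\hat I$ with $s(\hat I)=s(I)+1$ and $I\subset\hat I$. By \eqref{monotonedb} the metrics $d_{I^\circ}$ are nondecreasing along inclusion, by \Cref{monotone-cube-metrics} one even has $d_{I^\circ}\le 2^{-95a}d_{\hat I^\circ}$, and iterating \eqref{firstdb} (enlarging a ball to contain a given one by concentric doublings, using \eqref{monotonedb} and \eqref{eq-vol-sp-cube}) gives a reverse comparison $d_{J^\circ}\le 2^{O(a^3 S)}d_{I^\circ}$ whenever $I\subset J$. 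Two consequences matter: every $d_{I^\circ}$-ball of radius $1$ lies inside a $d_{I_0^\circ}$-ball of bounded (but $S$-dependent) radius; and any cell of $\mathcal{C}_{\hat I}$, being inside a $d_{\hat I^\circ}$-ball of radius $1$, lies inside a $d_{I^\circ}$-ball of radius $2^{-95a}$, i.e. is ``negligibly small'' in the child metric.

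I would then construct the $\mathcal{C}_I$ by recursion on $s(I)$ from $S$ downward, adapting Christ's construction \cite{christ1990b} to the family $(d_B)$ as in \cite[Lemma 2.12]{zk-polynomial}, maintaining at each stage the two ball inclusions, the covering of $\tQ(X)$, and the nesting of $\mathcal{C}_I$ against $\mathcal{C}_J$ for all $J\supset I$. Fix an enumeration of $\tQ(X)$. For each $I$ choose a maximal $\tfrac{4}{5}$-separated subset $\Xi_I\subset\tQ(X)$ (with respect to $d_{I^\circ}$) as centers, and define $\mathcal{C}_I$ by a corrected, cut-off nearest-center rule: if $\mfb\in\Mf$ lies in the cell of $\mathcal{C}_{\hat I}$ with center $\eta$, assign $\mfb$ to the first center of $\Xi_I$ realizing $\min_{\xi\in\Xi_I}d_{I^\circ}(\eta,\xi)$; if $\mfb$ lies in no cell of $\mathcal{C}_{\hat I}$, assign it to the first center realizing $\min_{\xi\in\Xi_I}d_{I^\circ}(\mfb,\xi)$; in both cases only if that minimum is below a fixed threshold slightly less than $1$, otherwise leave $\mfb$ unassigned. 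For the root there is no parent clause, so $\mathcal{C}_{I_0}$ is a cut-off $d_{I_0^\circ}$-Voronoi decomposition. The ``inherit $\eta$'s assignment'' clause sends every cell of $\mathcal{C}_{\hat I}$ wholesale into a single cell of $\mathcal{C}_I$; combined with the nesting already available among cubes $\supseteq\hat I$, this yields \eqref{eq-freq-dyadic}. And because cells of $\mathcal{C}_{\hat I}$ are $2^{-95a}$-small in $d_{I^\circ}$ while $\Xi_I$ is $\tfrac{4}{5}$-separated, on a $d_{I^\circ}$-ball of radius $0.2$ (and on the region covered by $\mathcal{C}_{\hat I}$) the corrected rule agrees with the plain nearest-center rule, which delivers both inclusions of \eqref{eq-freq-comp-ball} and the covering of $\tQ(X)$.

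The main obstacle is precisely the tension between \eqref{eq-freq-comp-ball} and \eqref{eq-freq-dyadic}: a plain $d_{I^\circ}$-Voronoi decomposition satisfies \eqref{eq-freq-comp-ball} but not the nesting, whereas taking cells of $\mathcal{C}_I$ to be unions of cells of $\mathcal{C}_{\hat I}$ gives the nesting but fails \eqref{eq-freq-comp-ball}, since for a deep cube $I$ the ball $B_{d_{I^\circ}}(\xi,0.2)$ is not contained in the region covered by $\mathcal{C}_{I_0}$. The corrected rule threads between the two, and the quantitative input that makes this legitimate is the factor $2^{-95a}$ gained at every strict inclusion in \Cref{monotone-cube-metrics}. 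The remaining work is bookkeeping that I would regard as routine: checking that all points of a common ancestor cell inherit a consistent center, and pinning down the separation constant and the cut-off threshold so that the perturbed inclusions land exactly at radii $0.2$ and $1$; both are carried out following \cite[Lemma 2.12]{zk-polynomial}.
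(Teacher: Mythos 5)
Your proposal follows essentially the same route as the paper: fix maximal separated subsets of $\tQ(X)$ at each cube to serve as centers, perform a downward recursion on scale in which parent frequency cells are inherited wholesale into child cells, and invoke the $2^{-95a}$ gain from \Cref{monotone-cube-metrics} to absorb the resulting perturbation of the Voronoi geometry. The only real difference is presentational: the paper first builds a disjointified near-Voronoi partition $\Omega_1$ and then sets $\Omega(\fp)$ to be the union of those parent cells whose centers land in $\Omega_1(\fp)$ together with the explicit ball $B_\fp(\fcc(\fp),0.2)$, which makes the inner inclusion of \eqref{eq-freq-comp-ball} immediate, whereas your corrected nearest-center rule obtains the same conclusion by the separation estimate; both versions of the construction work.
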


\begin{proof}
Choose a grid structure $(\mathcal{D}, c, s)$.
For each $I \in \mathcal{D}$, fix a set $\mathcal{Z} = \mathcal{Z}(I)$ of maximal cardinality such that
\begin{equation*}
    \mathcal{Z} \subset \tQ(X)
\end{equation*}
and such that for any $\mfa, \mfb \in \mathcal{Z}$ with $\mfa\ne \mfb$ we have, recalling definition \eqref{defineIsupo},
\begin{equation*}
    B_{I^\circ}(\mfa, 0.3) \cap B_{I^\circ}(\mfb, 0.3) \cap \tQ(X) = \emptyset\,.
\end{equation*}
Note that this is clearly possible, as $\tQ(X)$ is finite. Maximality implies that
for each $I \in \mathcal{D}$, we have
\begin{equation*}
    %\label{eq-tile-cover}
    \tQ(X) \subset \bigcup_{z \in \mathcal{Z}(I)} B_{I^\circ}(z, 0.7)\,.
\end{equation*}
We define the set of tiles
$$
    \fP = \{(I, z) \ : \ I \in \mathcal{D}, z \in \mathcal{Z}(I)\}\,,
$$
and set
$$\scI((I, z)) = I, \qquad\qquad \fcc((I, z)) = z.$$ We further set $$\ps(\fp) = s(\scI(\fp)),\qquad \qquad \pc(\fp) = c(\scI(\fp)).$$ Then \eqref{tilecenter} and \eqref{tilescale} hold by definition. It remains to construct the map $\Omega$, and verify properties \eqref{eq-dis-freq-cover}, \eqref{eq-freq-dyadic} and
\eqref{eq-freq-comp-ball}.

We first construct an auxiliary map $\Omega_1$. For each $I \in \mathcal{D}$, we enumerate
$$
    \mathcal{Z}(I) = \{z_1, \dotsc, z_M\}\,.
$$
We define {$\Omega_1:\fP \mapsto \mathcal{P}(\Mf) $ as below}. Set
$$
    \Omega_1((I, z_1)) = B_{I^\circ}(z_1, 0.7) \setminus \bigcup_{z \in \mathcal{Z}(I)\setminus \{z_1\}} B_{I^\circ}(z, 0.3)
$$
and then define iteratively
\begin{equation*}
    \Omega_1((I, z_k)) = B_{I^\circ}(z_k, 0.7) \setminus \bigcup_{z \in \mathcal{Z}(I) \setminus \{z_k\}} B_{I^\circ}(z, 0.3) \setminus \bigcup_{i=1}^{k-1} \Omega_1((I, z_i))\,.
\end{equation*}
Then the sets $\Omega_1(\fp)$, $\fp \in \fP(I)$ are clearly pairwise disjoint, and an induction argument on $k$ shows that their union still contains $Q(X)$ and that
\begin{equation}
     \label{eq-omega1-incl}
     B_{\fp}(\fcc(\fp), 0.3) \subset \Omega_1(\fp) \subset B_{\fp}(\fcc(\fp), 0.7)\,.
\end{equation}
Now we are ready to define the function $\Omega$. We define for all $\fp \in \fP(I_0)$
\begin{equation}
    \label{eq-max-omega}
    \fc(\fp) = \Omega_1(\fp)\,.
\end{equation}
For all other cubes $I \in \mathcal{D}$, there exists a unique parent cube $J \supset I$ with $s(J) = s(I) + 1$, and we may assume that $\Omega(\fq)$ is already defined for $\fq \in \fP(J)$. Then we set for $\fp \in \fP(I)$
\begin{equation}
    \label{eq-it-omega}
    \fc(\fp) = \bigcup_{z \in \mathcal{Z}(J) \cap \Omega_1(\fp)} \Omega((J, z)) \cup B_{\fp}(\fcc(\fp),0.2)
    \, .
\end{equation}

We now verify that $(\fP,\scI,\fc,\fcc,\pc,\ps)$ forms a tile structure.

We first verify \eqref{eq-freq-comp-ball}. If $I =I_0$, then \eqref{eq-freq-comp-ball} holds for all $\fp \in \fP(I)$ by \eqref{eq-omega1-incl} and \eqref{eq-max-omega}. Else, we may assume by induction that \eqref{eq-freq-comp-ball} holds for the parent cube $J$ of $I$. Suppose that $\mfa \in \Omega(\fp)$. By \eqref{eq-it-omega}, $\mfa \in B_\fp(\fcc(\fp), 0.2)$ or there exists $z \in \mathcal{Z}(J) \cap \Omega_1(\fp)$ with $\mfa \in \Omega(J,z)$, so by \eqref{eq-omega1-incl}
$$
    d_{I^\circ}(\fcc(\fp),\mfa) \le d_{I^\circ}(\fcc(\fp), z) + d_{I^\circ}(z, \mfa) \le 0.7 + d_{I^\circ}(z, \mfa)\,.
$$
By \Cref{monotone-cube-metrics} and the induction hypothesis, this is estimated by
$$
    \le 0.7 + 2^{-95a} d_{J^\circ}(z,\mfa) \le 0.7 + 2^{-95a}\cdot 1 < 1\,.
$$
This shows the second inclusion in \eqref{eq-freq-comp-ball}, the first holds by definition.

The disjoint covering property \eqref{eq-dis-freq-cover} holds for $I = I_0$, because it holds for $\Omega_1$. The covering part of \eqref{eq-dis-freq-cover} then clearly follows by downward induction for all other cubes $I$ from the definition \eqref{eq-it-omega}. For disjointedness, suppose that $\mfa \in B_\fp(\fcc(\fp), 0.2)$ and $z \in \mathcal{Z}(J)$ is a point with $\mfa \in \Omega((J,z))$. Then
\[
    d_{I^\circ}(\fcc(\fp), z) \le d_{I^\circ}(\fcc(\fp), \mfa) + d_{I^\circ}(\mfa, z) \le 0.2 + 2^{-95a} d_{J^\circ}(\mfa, z) < 0.3.
\]
Thus, by \eqref{eq-omega1-incl}, $z \in \mathcal{Z}(J) \cap \Omega_1(\fp)$. By downward induction, it follows that the sets $\Omega(\fp), \fp \in \fP(I)$ are pairwise disjoint.

We turn to the grid condition \eqref{eq-freq-dyadic}. We first prove it when $\ps(\fq) = \ps(\fp) + 1$. Let $I = \scI(\fp)$. Since $I \subset \scI(\fq)$ and $\ps(\fq) = \ps(\fp) + 1$, we have $\scI(\fq) = J$, where $J$ is the parent cube in the definition \eqref{eq-it-omega}, in particular $\Omega(\fq) \subset \Omega(\fp)$ as needed. If $\ps(\fq) > \ps(\fp) + 1$, the grid condition \eqref{eq-freq-dyadic} follows by induction on the difference $\ps(\fq) - \ps(\fp)$. Indeed, pick $\fp'$ with $\ps(\fp') = \ps(\fq) - 1$ and $\scI(\fp) \subset \scI(\fp')$ and $\Omega(\fq) \subset \Omega(\fp')$. This exists by the covering property \eqref{eq-dis-freq-cover} and the part of \eqref{eq-freq-dyadic} that we already proved. Then $\Omega(\fp') \cap \Omega(\fp) \ne \emptyset$, so by induction $\Omega(\fp') \subset \Omega(\fp)$.
\end{proof}

\section{Organization of the set of tiles}
\label{proptopropprop}

In this section, we prove \Cref{prop-decomposition}.
%Let a grid structure $(\mathcal{D}, c, s)$ and a tile structure $(\fP, \scI, \fc, \fcc)$ for this grid structure be given.
The overall proof happens in \Cref{subsectilesorg},
where we define the exceptional set $G'$,
the size of which is estimated in
\Cref{subsetexcset},
and decompose the set $\fP$ of tiles
outside the exceptional set into
forests and antichains.
 \Cref{subsec-lessim-aux} contains  auxiliary lemmas needed for the verification of the forest- and antichain properties, which happens in \Cref{subsecforest} and \Cref{subsecantichain}.

\subsection{Organization of the tiles}\label{subsectilesorg}

In the following definitions, $k, n$, and
$j$ will be nonnegative integers.

We start by sorting tiles $\fp$ based on the size of the intersection of the spatial cube $\scI(\fp)$ and its parents with $G$. Define
$\mathcal{C}(G,k)$ to be the set of $I\in \mathcal{D}$
such that there exists a $J\in \mathcal{D}$ with $I\subset J$
and
\begin{equation}\label{muhj1}
   {\mu(G \cap J)} > 2^{-k-1}{\mu(J)}\, ,
\end{equation}
but there does not exist a $J\in \mathcal{D}$ with $I\subset J$ and
\begin{equation}\label{muhj2}
   {\mu(G \cap J)} > 2^{-k}{\mu(J)}\,.
\end{equation}
Let
\begin{equation*}
    \fP(k)=\{\fp\in \fP \ : \ \scI(\fp)\in \mathcal{C}(G,k)\, .
\end{equation*}
We need a notion of density `restricted to $\fP(k)$'. Define for $\fP'\subset \fP(k)$
\begin{equation}
    \label{eq-densdef}
   \dens_k' (\fP'):= \sup_{\fp'\in \fP'}\sup_{\lambda \geq 2} \lambda^{-a} \sup_{\fp \in \fP(k): \lambda \fp' \lesssim \lambda \fp}
    \frac{\mu({E}_2(\lambda, \fp))}{\mu(\scI(\fp))}\,.
\end{equation}
Sorting tiles further by density we define
\begin{equation}
    \label{def-cnk}
    \fC(k,n):=\{\fp\in \fP(k) \ : \
    2^{4a}2^{-n}< \dens_k'(\{\fp\}) \le
    2^{4a}2^{-n+1}\}\,.
\end{equation}
Our goal is to split $\fC(k,n)$ into a controlled number of $n$-forests. The main difficulty is to achieve separation of the involved trees. This is done following a trick of Fefferman \cite{fefferman}.
Define $ {\mathfrak{M}}(k,n)$ to be the set of maximal with respect to $\le$ tiles $\fp \in \fP(k)$ such that
 \begin{equation}\label{defMkn}
    \mu({E_1}(\fp)) > 2^{-n} \mu(\scI(\fp))\,.
 \end{equation}
 We
define for $\fp \in \fC(k,n)$
\begin{equation}\label{defbfp}
     \mathfrak{B}(\fp) := \{ \mathfrak{m} \in \mathfrak{M}(k,n) \ : \ 100 \fp \lesssim \mathfrak{m}\}
\end{equation}
and
\begin{equation*}
       \fC_1(k,n,j) := \{\fp \in \fC(k,n) \ : \ 2^{j} \leq |\mathfrak{B}(\fp)| < 2^{j+1}\}
\end{equation*}
and
\begin{equation*}
       \fL_0(k,n) := \{\fp \in \fC(k,n) \ : \ |\mathfrak{B}(\fp)| <1\}\,.
\end{equation*}

To increase separation of the trees, we remove the $Z(n+1) + 1$ minimal layers of tiles in each $\fC_1(k,n,j)$. Each minimal layer clearly forms an antichain, so this results in at most $Z(n+1)+1$ antichains $\fL_1(k,n,j,l)$, where $ 0 \le l \le Z(n+1)$, and a collection of leftover tiles
\begin{equation}
    \label{eq-C2-def}
    \fC_2(k,n,j):= \fC_1(k,n,j)\setminus \bigcup_{0\le l \le Z(n+1)}
\fL_1(k,n,j,l)\, .
\end{equation}

The remaining tile organization will be relative to
prospective tree tops, which we define now.
Define
\begin{equation}\label{defunkj}
     \fU_1(k,n,j)
\end{equation}
to be the set of all
$\fu \in \fC_1(k,n,j)$ such that
for all $\fp \in \fC_1(k,n,j)$
with $\scI(\fu)$ strictly contained in
$\scI(\fp)$ we have $B_{\fu}(\fcc(\fu), 100) \cap B_{\fp}(\fcc(\fp),100) = \emptyset$.

We first remove the pairs that are outside the immediate reach of any of the prospective tree tops.
Define
\begin{equation*}
\fL_2(k,n,j)
\end{equation*}
to be the set of all $\fp\in \fC_2(k,n,j)$ such that there
does not exist
$\fu\in \fU_1(k,n,j)$
with $\scI(\fp)\neq \scI(\fu)$ and $2\fp\lesssim \fu$.
Define
\begin{equation}
\label{eq-C3-def}
\fC_3(k,n,j):=\fC_2(k,n,j)
  \setminus \fL_2(k,n,j)\, .
\end{equation}

We next remove the $Z(n+1)+1$ maximal layers in $\fC_3(k,n,j)$, resulting in antichains $\fL_3(k,n,j,l)$, where $0 \le l \le Z(n+1)$, and the remainder
\begin{equation}
\label{eq-C4-def}
\fC_4(k,n,j):=\fC_3(k,n,j)
  \setminus \bigcup_{0 \le l \le Z(n+1)} \fL_3(k,n,j,l)\,.
\end{equation}

Finally, we remove tiles close to the boundary of the prospective tree tops, this is to ensure that \eqref{forest6} holds. Define
\begin{equation}
    \label{eq-L-def}
    \mathcal{L}(\fu)
\end{equation}
to be the set of all $I \in \mathcal{D}$ with $I \subset \scI(\fu)$ and $s(I) = \ps(\fu) - Z(n+1) - 1$ and
\begin{equation*}
    B(c(I), 8 D^{s(I)})\not \subset \scI(\fu)\, .
\end{equation*}
Define
\begin{equation*}
    \fL_4(k,n,j)
\end{equation*}
to be the set of all $\fp\in \fC_4(k,n,j)$ such that there exists
$\fu\in \fU_1(k,n,j)$
with $\scI(\fp) \subset \bigcup \mathcal{L}(\fu)$, and define
\begin{equation*}%\label{defc5}
\fC_5(k,n,j):=\fC_4(k,n,j)
  \setminus \fL_4(k,n,j)\, .
\end{equation*}

We define three exceptional sets.
The first exceptional set $G_1$ takes into account the ratio of the measures of $F$ and $G$.
Define $\fP_{F,G}$ to be the set of all $\fp\in \fP$
with
\begin{equation*}
    \dens_2(\{\fp\})> 2^{2a+5}\frac{\mu(F)}{\mu(G)}\,.
\end{equation*}
Define
\begin{equation*}
    G_1:=\bigcup_{\fp\in \fP_{F,G} }\scI(\fp)\, .
\end{equation*}
For an integer $\lambda\ge 0$, define $A(\lambda,k,n)$ to be the set
of all $x\in X$ such that
\begin{equation}
    \label{eq-Aoverlap-def}
    % remark(Georges Gonthier): removed 1+
    \sum_{\fp \in \mathfrak{M}(k,n)}\mathbf{1}_{\scI(\fp)}(x)>\lambda 2^{n+1}
\end{equation}
and define
\begin{equation*}
    G_2:=
\bigcup_{k\ge 0}\bigcup_{k\le n}
A(2n+6,k,n)\, .
\end{equation*}
Define
    \begin{equation}\label{defineg3}
        G_3 :=
        \bigcup_{k\ge 0}\, \bigcup_{n \geq k}\,
        \bigcup_{0\le j\le 2n+3}
        \bigcup_{\fp \in \fL_4 (k,n,j)}
        \scI(\fp)\, .
     \end{equation}
Define $G'=G_1\cup G_2 \cup G_3$. The following bound of the measure of $G'$ will be proved in \Cref{subsetexcset}.
\begin{lemma}[exceptional set]
    \label{exceptional-set}
    \leanok
    \lean{exceptional_set}
    We have
    \begin{equation*}
        \mu(G')\le 2^{-1}\mu(G)\, .
    \end{equation*}
\end{lemma}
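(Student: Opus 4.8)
The plan is to estimate the three pieces $G_1,G_2,G_3$ of $G'$ separately, each by a small multiple of $\mu(G)$, with the multiples summing to at most $2^{-1}$. For $G_1$ I would argue by a covering: each $\fp\in\fP_{F,G}$ comes, by the definition \eqref{definedens2} of $\dens_2$, with a radius $r_\fp\ge 4D^{\ps(\fp)}$ such that $\mu(F\cap B(\pc(\fp),r_\fp))>2^{2a+5}\tfrac{\mu(F)}{\mu(G)}\mu(B(\pc(\fp),r_\fp))$, and by \eqref{eq-vol-sp-cube} these finitely many balls cover $G_1$. Greedily extracting a pairwise disjoint subfamily whose threefold dilates still cover $G_1$ and using the doubling property \eqref{doublingx} twice gives $\mu(G_1)\le 2^{2a}\sum_i\mu(B(\pc(\fp_i),r_{\fp_i}))<2^{2a}\big(2^{2a+5}\tfrac{\mu(F)}{\mu(G)}\big)^{-1}\mu(F)=2^{-5}\mu(G)$.

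The main work is the bound on $G_2$. Fix $k\le n$. The key structural fact is that the sets $E_1(\fp)$, $\fp\in\mathfrak{M}(k,n)$, are pairwise disjoint: if $x\in E_1(\fp)\cap E_1(\fp')$ with $\fp\ne\fp'$, then $\scI(\fp),\scI(\fp')$ are nested by \eqref{dyadicproperty} while $\tQ(x)\in\fc(\fp)\cap\fc(\fp')$, so by \eqref{eq-dis-freq-cover} (if the cubes coincide) or \eqref{eq-freq-dyadic} (if one is strictly inside the other) one of $\fp\le\fp'$, $\fp'\le\fp$ holds, contradicting the maximality in \eqref{defMkn}. Since $E_1(\fp)\subset G\cap\scI(\fp)$, $\mu(E_1(\fp))>2^{-n}\mu(\scI(\fp))$, and every $Q\in\mathcal{D}$ containing some $\scI(\fp)$, $\fp\in\mathfrak{M}(k,n)\subset\fP(k)$, satisfies $\mu(G\cap Q)\le 2^{-k}\mu(Q)$ (as $\scI(\fp)\in\mathcal{C}(G,k)$, via \eqref{muhj2}), the overlap function $g:=\sum_{\fp\in\mathfrak{M}(k,n)}\mathbf{1}_{\scI(\fp)}$ obeys $\int_Q g\,\mathrm{d}\mu<2^{n-k}\mu(Q)$ for all such $Q$; moreover each super-level set $\{g\ge T\}$, $T\ge1$, is a union of cubes of $\mathcal{D}$. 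A Calderón--Zygmund stopping-time argument on the maximal subcubes $R$ of $\{g\ge T\}$ — on which the contribution to $g$ from cubes strictly containing $R$ is at most $T-1$ and that from $R$ itself is less than $2^{n-k}$ — then yields $\mu(\{g\ge T+2^{n-k+2}\})\le\tfrac13\mu(\{g\ge T\})$. Iterating from $T=1$, and using $\mu(\{g\ge1\})=\mu\big(\bigcup_{\fp\in\mathfrak{M}(k,n)}\scI(\fp)\big)\le 2^{k+1}\mu(G)$ (each $\scI(\fp)$ has, by \eqref{muhj1}, an ancestor of $G$-density $>2^{-k-1}$, to which the dyadic weak-type bound applies), I obtain $\mu(A(2n+6,k,n))=\mu(\{g\ge(2n+6)2^{n+1}+1\})\le 3^{-(n+3)2^k}2^{k+1}\mu(G)$; summing over $0\le k\le n$ (the $k=0$ term dominates) and then over $n\ge0$ gives $\mu(G_2)\le 2^{-2}\mu(G)$.

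For $G_3$ I would note that if $\fp\in\fL_4(k,n,j)$ and $\fu\in\fU_1(k,n,j)$ witnesses $\scI(\fp)\subset\bigcup\mathcal{L}(\fu)$, then each $I\in\mathcal{L}(\fu)$ has $I\subset B(c(I),4D^{s(I)})$ and meets $X\setminus\scI(\fu)$ within $8D^{s(I)}$ of $c(I)$, so $\bigcup\mathcal{L}(\fu)$ lies in the boundary layer $\{x\in\scI(\fu):\rho(x,X\setminus\scI(\fu))\le 12D^{\ps(\fu)-Z(n+1)-1}\}$. By the small-boundary property \eqref{eq-small-boundary}, with \eqref{defineD}, \eqref{defineZ} and $\kappa=2^{-10a}$, its measure is at most $2(12D^{-Z(n+1)-1})^\kappa\mu(\scI(\fu))\le 2^{5-100a^2 2^{2a}(n+1)}\mu(\scI(\fu))$. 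This prefactor is so minuscule that it absorbs a bounded-overlap estimate for the tree-top cubes $\scI(\fu)$, $\fu\in\fU_1(k,n,j)$ (coming, like \eqref{forest3}, from the $100$-separation built into \eqref{defunkj}), together with the bound $2^{k+1}\mu(G)$ for the measure of their union; summing over $0\le j\le 2n+3$, $0\le k\le n$, $n\ge0$ leaves $\mu(G_3)\le 2^{-10}\mu(G)$. Altogether $\mu(G')\le\mu(G_1)+\mu(G_2)+\mu(G_3)<2^{-1}\mu(G)$.

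The step I expect to be the main obstacle is the $G_2$ bound, and specifically extracting the gain $2^{-k}$: it is exactly this factor (from $\scI(\fp)\in\mathcal{C}(G,k)$) that makes the John--Nirenberg-type decay of $\mu(A(\lambda,k,n))$ in $\lambda=2n+6$ beat both the prefactor $2^{k+1}$ in $\mu(\{g\ge1\})$ and the $n+1$ scales $k$ contributing for each $n$. Its proof rests on the disjointness of the $E_1(\fp)$ over the \emph{maximal} family $\mathfrak{M}(k,n)$, where the frequency-tile axioms \eqref{eq-dis-freq-cover} and \eqref{eq-freq-dyadic} do the work; once that is in place, the stopping-time iteration and the geometric summation are routine.
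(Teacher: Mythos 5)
Your decomposition into $G_1, G_2, G_3$ and the first two bounds are in substance the paper's own proof. The Vitali covering for $G_1$ is a direct reproof of the weak $(1,1)$ maximal function bound the paper simply invokes. The Calder\'on--Zygmund stopping-time for $G_2$ is a from-scratch proof of the paper's John--Nirenberg \Cref{John-Nirenberg}. Your extra $2^{-k}$ in the Carleson packing constant $2^{n-k}$ is valid (any stopping cube $R$ either contains no $\scI(\fp)$, $\fp\in\mathfrak M(k,n)$, in which case the inner contribution vanishes, or it contains one and then inherits $\mu(G\cap R)\le 2^{-k}\mu(R)$ from $\scI(\fp)\in\mathcal C(G,k)$), but it is not what makes the sum close: even with constant $2^n$, the decay $2^{-\lambda}$ at $\lambda=2n+6$ beats $2^{k+1}$ precisely because the $k$-sum runs only over $k\le n$, giving $\sum_{k\le n}2^{k+1-2n-6}\le 2^{-n-4}$. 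Your flagged "main obstacle" is thus a bonus refinement, not a necessity. Also, $\int_Q g\,\mathrm{d}\mu<2^{n-k}\mu(Q)$ as written is false, since $g$ contains $\mathbf{1}_{\scI(\fp)}$ for $\scI(\fp)\supsetneq Q$; what you actually use, and what holds, is this bound for the part of $g$ coming from cubes contained in $Q$.

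The genuine gap is in $G_3$, at the step you gloss as a "bounded-overlap estimate for the tree-top cubes... coming, like \eqref{forest3}, from the $100$-separation built into \eqref{defunkj}, together with the bound $2^{k+1}\mu(G)$ for the measure of their union." The cubes $\scI(\fu)$, $\fu\in\fU_1(k,n,j)$, do \emph{not} have pointwise bounded overlap, and \eqref{forest3} is a constraint imposed on forests by definition, not something $\fU_1(k,n,j)$ inherits from \eqref{defunkj}. The separation \eqref{defunkj} only forces disjointness of modulation balls along nested pairs $\scI(\fu)\subsetneq\scI(\fp)$ with $\fp\in\fC_1(k,n,j)$; it does not cap the length of a chain $\scI(\fu_1)\subsetneq\scI(\fu_2)\subsetneq\dotsb$ through a common point. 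Consequently $\sum_\fu\mu(\scI(\fu))$ cannot be bounded by a constant times $\mu\big(\bigcup_\fu\scI(\fu)\big)$, and the super-exponential factor $D^{-\kappa Z(n+1)}$ has nothing to absorb. What the $100$-separation actually yields (paper's \Cref{tree-count}) is the pointwise comparison $\sum_\fu\mathbf{1}_{\scI(\fu)}\le 2^{9a-j}\sum_{\mathfrak m\in\mathfrak M(k,n)}\mathbf{1}_{\scI(\mathfrak m)}$: for fixed $\mathfrak m$ and $x$, the $\fu$'s with $x\in\scI(\fu)$ and $100\fu\lesssim\mathfrak m$ all share one cube $\scI(\fu)$ and have $0.2$-separated centers inside $B_\fu(\fcc(\mathfrak m),100)$, hence number at most $2^{9a}$ by \eqref{thirddb}, while each $\fu\in\fC_1(k,n,j)$ comes with at least $2^j$ such $\mathfrak m$. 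Only after this reduction does the problem become bounding $\sum_{\mathfrak m}\mu(\scI(\mathfrak m))$, which follows from the $\mathfrak M(k,n)$ Carleson packing already present in your $G_2$ argument (paper's \Cref{top-tiles}). That intermediate relation to $\mathfrak M(k,n)$ is the missing idea.
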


In \Cref{subsecforest}, we prove the following lemma. Recall that we defined
\[
    \fP' = \{\fp \in \fP \ : \ \scI(\fp) \not\subset G'\}.
\]

\begin{lemma}[forest union]
    \label{forest-union}
    \leanok
    \lean{forest_union}
    Let $n \ge k$ and $0 \le j \le 2n+3$. Then the set $\fC_5(k,n,j) \cap \fP'$ is the union of at most $4n + 12$ many $n$-forests.
\end{lemma}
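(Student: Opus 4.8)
The plan is to hang trees below the prospective tops $\fU_1(k,n,j)$ and then distribute these trees among $\le 4n+12$ groups, each of which will be an honest $n$-forest. Since $\fC_5(k,n,j)\subset\fC_3(k,n,j)$, every $\fp\in\fC_5(k,n,j)\cap\fP'$ admits some $\fv\in\fU_1(k,n,j)$ with $\scI(\fp)\subsetneq\scI(\fv)$ and $2\fp\lesssim\fv$; by \eqref{dyadicproperty} the candidate cubes $\scI(\fv)$ form a chain, so I fix a selection $\fu(\fp)$ among these $\fv$ (say one of smallest scale, ties broken by a fixed enumeration of the finite set $\fP$) and set $\fT(\fu):=\{\fp\in\fC_5(k,n,j)\cap\fP':\fu(\fp)=\fu\}$. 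After discarding empty fibres, the nonempty $\fT(\fu)$ partition $\fC_5(k,n,j)\cap\fP'$, and it remains to colour the surviving tops with $\le 4n+12$ colours so that \eqref{forest1}--\eqref{forest6} hold on each colour class: \eqref{forest1}, \eqref{forest2}, \eqref{forest4}, \eqref{forest5}, \eqref{forest6} are per-top (or per-pair) statements that restrict automatically to any subcollection of tops, while \eqref{forest3} is precisely what the colouring will arrange.

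I would first dispose of the routine axioms. \eqref{forest1}: $\scI(\fp)\subsetneq\scI(\fu(\fp))$ by construction, and $2\fp\lesssim\fu(\fp)$ upgrades trivially to $4\fp\lesssim\fu(\fp)$, a larger wiggle parameter being a weaker demand. \eqref{forest4}: $\fT(\fu)\subset\fC_1(k,n,j)\subset\fC(k,n)$, so each $\fp\in\fT(\fu)$ has $\dens_k'(\{\fp\})\le 2^{4a+1-n}$ by \eqref{def-cnk}; moreover any tile $\fp$ entering the supremum defining $\dens_1(\fT(\fu))$ satisfies $\scI(\fq)\subset\scI(\fp)\subset\scI(\fq')$ for some $\fq,\fq'\in\fT(\fu)\subset\fP(k)$, hence $\scI(\fp)\in\mathcal{C}(G,k)$ by the monotonicity of \eqref{muhj1}--\eqref{muhj2}, i.e. $\fp\in\fP(k)$, and therefore $\dens_1(\fT(\fu))\le\dens_k'(\fT(\fu))\le 2^{4a+1-n}$. \eqref{forest6}: the maximal-layer removal $\fC_3(k,n,j)\to\fC_4(k,n,j)$ forces $\ps(\fp)\le\ps(\fu)-Z(n+1)-1$, and then survival of $\fL_4(k,n,j)$ together with \eqref{coverdyadic} places $\scI(\fp)$ inside a scale-$(\ps(\fu)-Z(n+1)-1)$ cube $I\subset\scI(\fu)$ with $I\notin\mathcal{L}(\fu)$, so that $B(\pc(\fp),8D^{\ps(\fp)})\subset B(c(I),8D^{s(I)})\subset\scI(\fu)$ by \eqref{eq-vol-sp-cube} and the size of $D$. \eqref{forest2}: $\fT(\fu)$ is $\le$-convex, which follows from order-convexity of the cuts building up $\fC_5(k,n,j)\cap\fP'$ and from $\fu(\cdot)$ being locally constant in the order $\le$, using \eqref{dyadicproperty} and the auxiliary lemmas of \Cref{subsec-lessim-aux}; if convenient one replaces $\fT(\fu)$ by its order-convex hull inside $\fC_5(k,n,j)\cap\fP'$.

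The colouring underlying \eqref{forest3} is the combinatorial core. For distinct $\fu,\fv\in\fU_1(k,n,j)$ with $\scI(\fu)\subsetneq\scI(\fv)$, the defining property of $\fU_1$ applied to $\fu$ with $\fq=\fv$ gives $B_\fu(\fcc(\fu),100)\cap B_\fv(\fcc(\fv),100)=\varnothing$; since every $\mathfrak{m}\in\mathfrak{B}(\fu)$ has $\scI(\fu)\subset\scI(\mathfrak{m})$ and $B_{\mathfrak{m}}(\fcc(\mathfrak{m}),1)\subset B_\fu(\fcc(\fu),100)$, the sets $\mathfrak{B}(\fu)$ and $\mathfrak{B}(\fv)$ are disjoint, and each has cardinality $\ge 2^j\ge 1$. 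Consequently, if $\fu_1,\dots,\fu_L$ are nonempty-tree tops with strictly nested cubes all containing a common point, and $y$ is any point of $\scI(\fu_1)\setminus G'$ (which is nonempty because $\fT(\fu_1)\ne\varnothing$ forces $\scI(\fu_1)\not\subset G'\supset G_2$), then $L\le\sum_{\mathfrak{m}\in\mathfrak{M}(k,n)}\mathbf{1}_{\scI(\mathfrak{m})}(y)\le(2n+6)2^{n+1}=(4n+12)2^n$, using $y\notin A(2n+6,k,n)$ and $n\ge k$. Assigning to each nonempty-tree top $\fu$ the rank $\mathrm{rk}(\fu)\in\{0,\dots,(4n+12)2^n-1\}$ equal to the length of the longest strictly nested chain of nonempty-tree tops above $\fu$, and colouring $\fu$ by $\lfloor\mathrm{rk}(\fu)/2^n\rfloor\in\{0,\dots,4n+11\}$, one finds that each of the $4n+12$ colour classes carries at most $2^n$ tops over any given point, which is \eqref{forest3}; the degenerate case of several tops sharing a spatial cube is absorbed by a minor extension of this bookkeeping.

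The last axiom \eqref{forest5} is the step I expect to be the main obstacle: for $\fp\in\fT(\fu')$ and a distinct top $\fu$ with $\scI(\fp)\subset\scI(\fu)$ one must produce the separation $d_\fp(\fcc(\fp),\fcc(\fu))>2^{Z(n+1)}$. This is forced by the removal of the $Z(n+1)+1$ minimal layers in passing from $\fC_1(k,n,j)$ to $\fC_2(k,n,j)$, which guarantees a $\le$-chain of at least $Z(n+1)+1$ tiles of $\fC_1(k,n,j)$ with strictly shrinking cubes sitting below $\fp$, together with the strict monotonicity in \Cref{monotone-cube-metrics}: were the separation to fail, propagating the contraction factor $2^{-95a}$ down this chain and comparing with the $\fU_1$-separation of $\fu$ from larger $\fC_1(k,n,j)$-tiles would contradict the inequality placing those chain tiles in $\fC_1(k,n,j)$; making this precise requires the auxiliary lemmas on the relation $\lesssim$ collected in \Cref{subsec-lessim-aux}. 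Once \eqref{forest1}--\eqref{forest6} are verified on each colour class, the resulting trees are pairwise disjoint with union $\fC_5(k,n,j)\cap\fP'$, which is the assertion.
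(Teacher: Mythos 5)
Your construction differs materially from the paper's, and the difference is exactly where the proof breaks. You assign each tile $\fp\in\fC_5(k,n,j)\cap\fP'$ to a single top $\fu(\fp)\in\fU_1(k,n,j)$, chosen as a smallest-scale candidate $\fv$ with $2\fp\lesssim\fv$ and $\scI(\fp)\subsetneq\scI(\fv)$. The paper instead defines $\mathfrak{T}_1(\fu)$ without choosing, introduces an equivalence relation $\sim$ on tops with nonempty tree ($\fu\sim\fu'$ iff $\fu=\fu'$ or some $\fp\in\mathfrak{T}_1(\fu)$ satisfies $10\fp\lesssim\fu'$), selects one representative per class, and \emph{merges} the trees of equivalent tops before checking the forest axioms. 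This merging step — Fefferman's trick — is what makes \eqref{forest5} provable, and your sketch of \eqref{forest5} does not reproduce it. Concretely, nothing in the definition \eqref{defunkj} of $\fU_1(k,n,j)$ prevents two distinct tiles $\fu\ne\fu'\in\fU_1(k,n,j)$ with $\scI(\fu)=\scI(\fu')$ whose balls $B_\fu(\fcc(\fu),100)$ and $B_{\fu'}(\fcc(\fu'),100)$ intersect: the disjointness condition in \eqref{defunkj} only constrains tiles of \emph{strictly larger} cube. For a much smaller tile $\fp$ with $\scI(\fp)\subsetneq\scI(\fu)$, both $2\fp\lesssim\fu$ and $2\fp\lesssim\fu'$ can then hold, since by \Cref{monotone-cube-metrics} the balls $B_\fu(\fcc(\fu),1)$ and $B_{\fu'}(\fcc(\fu'),1)$ shrink to $d_\fp$-balls of radius $2^{-95a}$ and both can fit inside $B_\fp(\fcc(\fp),2)$. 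If your tie-break gives $\fu(\fp)=\fu'$ while some other tile $\fp_0$, for which $\fu'$ is not a candidate, gives $\fu(\fp_0)=\fu$, then both trees are nonempty, $\fp\in\fT(\fu')$, $\fu\ne\fu'$, $\scI(\fp)\subset\scI(\fu)$, and $2\fp\lesssim\fu$ forces $\fcc(\fu)\in B_\fp(\fcc(\fp),2)$, so $d_\fp(\fcc(\fp),\fcc(\fu))<2$, nowhere near $2^{Z(n+1)}$.

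The minimal-layer removal cannot rescue this: it amplifies an existing positive separation along a $\le$-chain, but in the scenario above there is no separation to start from. What \Cref{forest-separation} actually exploits is that $\fu\not\sim\fu'$ produces a chain tile $\fp'$ far below $\fp$ with $10\fp'\not\lesssim\fu'$, hence an initial lower bound $d_{\fp'}(\fcc(\fp),\fcc(\fu'))>8$ that the factor $2^{95aZ(n+1)}$ then inflates; without the relation $\sim$ you have neither $\fp'$ nor any initial lower bound, and the counterexample shows there cannot be one. The same gap silently undermines your coloring for \eqref{forest3}: tops sharing a cube have equal rank, so they land in the same color class, and your rank count controls only strictly nested tops. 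The ``minor extension of this bookkeeping'' you gesture at for the degenerate case is precisely the equivalence relation you have skipped, and it is not minor — it is the heart of the argument. Your verifications of \eqref{forest1}, \eqref{forest4}, \eqref{forest6} and the chain-length bound via $A(2n+6,k,n)$ are sound, and you correctly flagged \eqref{forest5} as the crux; the missing idea is merging close tops before doing anything else.
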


In \Cref{subsecantichain}, we prove the following lemma.
\begin{lemma}[forest complement]
    \label{forest-complement}
    There exists a decomposition of the set of tiles not contained in any forest into a disjoint union of antichains
    \begin{equation}
    \label{eq-forest-complement}
        \fP' \setminus \bigcup_{k \ge 0} \bigcup_{n \ge k} \bigcup_{0\le j \le 2n+3} \fC_5(k,n,j) = \bigcup_{n \ge 0} \bigcup_{j = 0}^{Z(n+2)^3} \mathfrak{A}_{n,j}\, ,
    \end{equation}
    where each $\mathfrak{A}_{n,j}$ is an antichain with
    \begin{equation}
        \label{eq-antichain1dens}
        \dens_1(\mathfrak{A}_{n,j}) \le 2^{4a+1 - n}\, .
    \end{equation}
\end{lemma}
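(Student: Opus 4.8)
The plan is to read off the leftover set explicitly from the construction in \Cref{subsectilesorg} and then regroup it into antichains organised by the density parameter $n$. I would first record two structural facts. Every tile $\fp$ lies in exactly one $\fP(k)$, since the cube $\scI(\fp)$ lies in exactly one class $\mathcal{C}(G,k)$ — the one determined by which dyadic interval contains $\sup_{J\supset\scI(\fp)}\mu(G\cap J)/\mu(J)$ — and, within $\fP(k)$, in exactly one $\fC(k,n)$, since $\dens_k'(\{\fp\})$ lies in exactly one interval $(2^{4a-n},2^{4a-n+1}]$. Moreover $\fC(k,n)=\emptyset$ whenever $n\le k$: for $\fp\in\fP(k)$ and any $\fq\in\fP(k)$ we have $E_2(\lambda,\fq)\subset G\cap\scI(\fq)$, while $\scI(\fq)\in\mathcal{C}(G,k)$ together with \eqref{muhj2} for $J=\scI(\fq)$ gives $\mu(G\cap\scI(\fq))\le 2^{-k}\mu(\scI(\fq))$, so $\dens_k'(\{\fp\})\le 2^{-a-k}$; combined with $\dens_k'(\{\fp\})>2^{4a-n}$ this forces $n>5a+k>k$.

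Next I would unwind the chain $\fC(k,n)=\fL_0(k,n)\cup\bigcup_{j}\fC_1(k,n,j)$ together with $\fC_1(k,n,j)\supset\fC_2(k,n,j)\supset\dots\supset\fC_5(k,n,j)$ to see that the complement of $\bigcup_{k}\bigcup_{n\ge k}\bigcup_{0\le j\le 2n+3}\fC_5(k,n,j)$ in $\fP$ is, over $k$ and over $n>k$, the union of $\fL_0(k,n)$; of $\fC_1(k,n,j)$ for $j>2n+3$; and of $\fL_4(k,n,j)$, $\fL_2(k,n,j)$, $\fL_1(k,n,j,l)$, $\fL_3(k,n,j,l)$ for $0\le j\le 2n+3$, $0\le l\le Z(n+1)$. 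Intersecting with $\fP'$ kills two families. For $0\le j\le 2n+3$ the tiles of $\fL_4(k,n,j)$ have $\scI(\fp)\subset G_3\subset G'$, so are absent from $\fP'$. For $j>2n+3$ and $\fp\in\fC_1(k,n,j)$ we have $|\mathfrak{B}(\fp)|\ge 2^{j}\ge 2^{2n+4}$, and since every $\mathfrak{m}\in\mathfrak{B}(\fp)$ satisfies $100\fp\lesssim\mathfrak{m}$ and hence $\scI(\fp)\subset\scI(\mathfrak{m})$, every point of $\scI(\fp)$ lies in at least $2^{2n+4}>(2n+6)2^{n+1}$ of the cubes $\scI(\mathfrak{m})$, $\mathfrak{m}\in\mathfrak{M}(k,n)$; thus $\scI(\fp)\subset A(2n+6,k,n)\subset G_2\subset G'$ by \eqref{eq-Aoverlap-def}, so $\fC_1(k,n,j)\cap\fP'=\emptyset$. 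What survives is
\[
\Big(\bigcup_{k\ge 0}\bigcup_{n>k}\fL_0(k,n)\ \cup\ \bigcup_{\substack{k\ge 0,\,n>k\\ 0\le j\le 2n+3\\ 0\le l\le Z(n+1)}}\big(\fL_1(k,n,j,l)\cup\fL_2(k,n,j)\cup\fL_3(k,n,j,l)\big)\Big)\cap\fP'.
\]

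For the antichain property, $\fL_1(k,n,j,l)$ and $\fL_3(k,n,j,l)$ are antichains by construction, and I would check directly from \eqref{defbfp} and \eqref{defunkj} that $\fL_0(k,n)$ and $\fL_2(k,n,j)$ are antichains (or split into a bounded number of them), using that distinct tiles over a common cube are never $\le$-comparable together with the maximality built into $\mathfrak{M}(k,n)$ and $\fU_1(k,n,j)$. The density bound \eqref{eq-antichain1dens} rests on the monotonicity of $\mathcal{C}(G,k)$ extracted from \eqref{muhj1}--\eqref{muhj2}: if $I'\subset I\subset I''$ with $I',I''\in\mathcal{C}(G,k)$, then $I\in\mathcal{C}(G,k)$. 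Consequently, for any $\mathfrak{A}\subset\fC(k,n)$ lying in a single $\fP(k)$, every tile $\fp$ that is relevant to $\dens_1(\mathfrak{A})$ — i.e.\ with $\scI(\fp')\subset\scI(\fp)\subset\scI(\fp'')$ for some $\fp',\fp''\in\mathfrak{A}$ — again lies in $\fP(k)$, whence $\dens_1(\mathfrak{A})\le\sup_{\fp'\in\mathfrak{A}}\dens_k'(\{\fp'\})\le 2^{4a+1-n}$. Since all families above are contained in some $\fC(k,n)$, each of the resulting antichains satisfies \eqref{eq-antichain1dens}.

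Finally, fixing $n$, the antichains above are indexed by $k$ with $0\le k<n$, by $j$ with $0\le j\le 2n+3$, by $l$ with $0\le l\le Z(n+1)$, plus the $\fL_0(k,n)$ and $\fL_2(k,n,j)$; relabelling them (with empty antichains as padding) yields the required $\mathfrak{A}_{n,j}$ with $0\le j\le Z(n+2)^3$, realising the claimed disjoint union and bound. The two parts I expect to be the actual work are: (i) verifying that $\fL_0(k,n)$ and $\fL_2(k,n,j)$ are antichains — this is precisely where the Fefferman-style top selection of \Cref{subsectilesorg} is used; and (ii) the combinatorial bookkeeping that the number of antichains at level $n$ does not exceed $Z(n+2)^3+1$, which is elementary but sensitive to the value $Z=2^{12a}$ and to the index ranges.
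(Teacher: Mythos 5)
Your proposal follows essentially the same route as the paper's: unwind the chain of set-theoretic definitions from \Cref{subsectilesorg} to exhibit the leftover as a union of the families $\fL_0,\fL_1,\fL_2,\fL_3$; discard $\fL_4$ via $G_3$ and the large-$j$ classes $\fC_1(k,n,j)$ via $G_2$; get the density bound from the sandwich monotonicity of $\mathcal{C}(G,k)$ (which is exactly the content of \Cref{dens-compare}); and reduce the antichain claim to checking $\fL_0$ and $\fL_2$, which is where the paper invokes \Cref{L0-antichain} (at most $n$ antichains, not one — your parenthetical hedge is necessary) and \Cref{L2-antichain}. You correctly flag the two places where the work actually lies, and your extra observation that $\fC(k,n)=\emptyset$ unless $n>5a+k$ is a correct sharpening of the paper's $n\ge k$ that is in fact needed for the index bookkeeping, which the paper leaves implicit.
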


Note that by the definition of $G_1$, for all $\fC \subset \fP'$
\[
    \dens_2(\fC) \le 2^{2a + 5} \frac{\mu(F)}{\mu(G)}\, .
\]
\Cref{exceptional-set}, \Cref{forest-union} and \Cref{forest-complement}
then prove  Proposition \ref{prop-decomposition}.

\subsection{Exceptional set estimates}
\label{subsetexcset}

We prove  bounds for $G_1$, $G_2$ and $G_3$
in \eqref{eqG1}, \eqref{eqG2} and \eqref{eqG3} below. Summing the bounds proves \Cref{exceptional-set}.

The set $G_1$ is contained in the set $\{M\mathbf{1}_{F} > 2^{2a + 5} \mu(F) / \mu(G)\}$, so the weak $L^1$ bound for the Hardy-Littlewood maximal function implies
\begin{equation}
    \label{eqG1}
    \mu(G_1)\le 2^{-5}\mu(G)\,.
\end{equation}

We turn to the set $G_2$.

\begin{lemma}[John Nirenberg]
\label{John-Nirenberg}
\leanok
\lean{john_nirenberg}
    For all integers $k,n,\lambda\ge 0$, we have
    \begin{equation*}
        \mu(A(\lambda,k,n)) \le 2^{k+1-\lambda}\mu(G)\,.
    \end{equation*}
\end{lemma}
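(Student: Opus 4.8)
The plan is to fix $k$ and $n$ and argue by a stopping-time / iteration on the level sets $A(\lambda, k, n)$, exactly in the spirit of the classical John--Nirenberg inequality. First I would unwind the definitions: a tile $\mathfrak m$ lies in $\mathfrak M(k,n)$ only if $\scI(\mathfrak m) \in \mathcal C(G,k)$, hence by \eqref{muhj1} its cube $J$ (or some cube containing it) satisfies $\mu(G \cap J) > 2^{-k-1}\mu(J)$, while by the failure of \eqref{muhj2} \emph{no} dyadic cube containing $\scI(\mathfrak m)$ has $G$-density exceeding $2^{-k}$. The maximality of the tiles in $\mathfrak M(k,n)$ with respect to $\le$ should be used to control the overlap: the point is that $\sum_{\mathfrak m \in \mathfrak M(k,n)} \mathbf 1_{\scI(\mathfrak m)}$ is, away from a set controlled by $\mu(G)$, bounded by a multiple of $2^n$, and the excess over $\lambda 2^{n+1}$ decays geometrically in $\lambda$.

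The key steps, in order, would be: (1) Reduce to a statement about the dyadic cubes $\scI(\mathfrak m)$, $\mathfrak m \in \mathfrak M(k,n)$; since $\scI$ restricted to $\fP(I)$ is injective but several tiles can share a cube only through different frequency components, I would first note that distinct maximal tiles with the same cube cannot both satisfy \eqref{defMkn} with disjoint frequency data in a way that inflates the count --- more precisely, the sum in \eqref{eq-Aoverlap-def} counts cubes with multiplicity, and I would bound the number of $\mathfrak m \in \mathfrak M(k,n)$ with a fixed cube by a constant depending only on $a$ absorbed into the $2^n$ via $Z = 2^{12a}$. (2) For the cube count, run the following iteration: let $\mathcal M_0$ be the collection of maximal cubes among $\{\scI(\mathfrak m)\}$; because of the stopping condition defining $\mathcal C(G,k)$, each such cube $I$ has $\mu(G \cap I) \le 2^{-k}\mu(I)$ but contains, by \eqref{defMkn}, a definite $2^{-n}$-proportion of its measure witnessed by the sets $E_1(\mathfrak m)$, which live inside $G$. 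Summing $\mu(E_1(\mathfrak m)) > 2^{-n}\mu(\scI(\mathfrak m))$ over all $\mathfrak m$ whose cube is contained in a fixed top cube $I \in \mathcal M_0$, and using that the $E_1(\mathfrak m)$ for comparable tiles have bounded overlap, gives $\sum \mu(\scI(\mathfrak m)) \lesssim 2^n \mu(G \cap I) \le 2^{n-k}\mu(I)$. (3) Translate this $L^1$ bound on $\sum \mathbf 1_{\scI(\mathfrak m)}$ into the weak-type statement: since $\int_I \sum_{\mathfrak m} \mathbf 1_{\scI(\mathfrak m)} \, \mathrm d\mu \lesssim 2^{n-k}\mu(I)$, Chebyshev gives $\mu(\{x \in I : \sum \mathbf 1_{\scI(\mathfrak m)}(x) > \lambda 2^{n+1}\}) \lesssim 2^{-k-\lambda}\mu(I)$, and summing over $I \in \mathcal M_0$ (whose union has measure $\le$ a multiple of $\mu(G)$, again by the density stopping condition) yields $\mu(A(\lambda,k,n)) \le 2^{k+1-\lambda}\mu(G)$ after tracking constants. (4) Conclude by summing, though here we only need the single estimate.

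The main obstacle I anticipate is step (2): making the bounded-overlap assertion for the sets $E_1(\mathfrak m)$, $\mathfrak m \in \mathfrak M(k,n)$, precise. Two maximal tiles $\mathfrak m \ne \mathfrak m'$ with respect to $\le$ can have nested or equal cubes provided their frequency balls $\Omega(\mathfrak m), \Omega(\mathfrak m')$ are not comparable; the definition \eqref{definee1} puts $E_1(\mathfrak m) = \{x \in \scI(\mathfrak m) \cap G : \tQ(x) \in \Omega(\mathfrak m)\}$, and since the $\Omega(\mathfrak m)$ for tiles over a fixed cube are pairwise disjoint by \eqref{eq-dis-freq-cover}, the sets $E_1(\mathfrak m)$ over a \emph{fixed} cube are automatically disjoint. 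The genuine difficulty is overlap between tiles at \emph{different} scales, where maximality with respect to $\le$ must be invoked to say that if $\scI(\mathfrak m) \subsetneq \scI(\mathfrak m')$ then $\Omega(\mathfrak m') \not\subset \Omega(\mathfrak m)$, i.e. the frequency supports eventually separate; combined with \eqref{eq-freq-dyadic} this should force a tree-like structure on $\mathfrak M(k,n)$ with controlled branching, and the John--Nirenberg iteration then runs on that tree. I would need to be careful that the constant lost in the branching is genuinely independent of $\lambda$ so that the geometric decay $2^{-\lambda}$ survives.
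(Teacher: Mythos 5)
Your overall strategy is the right one (Carleson packing on the cubes $\scI(\mathfrak m)$, then restrict to high-$G$-density stopping cubes), and your ``main obstacle'' paragraph essentially rediscovers the paper's key observation: maximality of $\mathfrak M(k,n)$ with respect to $\le$ combined with \eqref{eq-freq-dyadic} forces the sets $E_1(\mathfrak m)$, $\mathfrak m \in \mathfrak M(k,n)$, to be pairwise disjoint \emph{across all scales}, not just over a fixed cube. Once you have that disjointness, \eqref{defMkn} immediately gives the Carleson packing condition $\sum_{\mathfrak m : \scI(\mathfrak m) \subset J} \mu(\scI(\mathfrak m)) \le 2^n \mu(J)$, and the multiplicity and branching concerns in your step (1) evaporate. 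You do not need to absorb any constant into $Z = 2^{12a}$.

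The genuine gap is in step (3). From the $L^1$ bound $\int_I \sum_{\mathfrak m} \mathbf 1_{\scI(\mathfrak m)}\,\mathrm d\mu \lesssim 2^{n-k}\mu(I)$, Chebyshev only gives
\begin{equation*}
\mu\Big(\Big\{x \in I : \sum_{\mathfrak m} \mathbf 1_{\scI(\mathfrak m)}(x) > \lambda 2^{n+1}\Big\}\Big) \;\lesssim\; \frac{2^{n-k}\mu(I)}{\lambda 2^{n+1}} \;=\; \frac{2^{-k-1}}{\lambda}\,\mu(I)\,,
\end{equation*}
which decays like $1/\lambda$, not like $2^{-\lambda}$. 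You wrote this as $\lesssim 2^{-k-\lambda}\mu(I)$, silently converting polynomial decay into geometric decay; that is exactly the jump that Chebyshev cannot make. The geometric decay $2^{-\lambda}$ is the whole content of the classical John--Nirenberg inequality, which upgrades a Carleson packing condition to an exponential level-set estimate via a recursive stopping-time argument (at each stage, stop at maximal cubes where the partial sum $\sum \mathbf 1_{\scI(\mathfrak m)}$ first exceeds $2^{n+1}$, apply the packing bound inside each, and iterate). You allude to such an iteration in your opening sentence, but the written argument replaces it by Chebyshev, which is insufficient. The paper simply cites the John--Nirenberg inequality at this point and then sums over the maximal stopping cubes $\mathcal M^*(k)$, using $\mu(G \cap J) > 2^{-k-1}\mu(J)$ and pairwise disjointness of $\mathcal M^*(k)$ to convert $\sum_J \mu(J)$ into $2^{k+1}\mu(G)$. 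To repair your proof, replace the Chebyshev step by an explicit John--Nirenberg iteration (or cite it as the paper does); the rest of your outline then goes through.
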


\begin{proof}
    By pairwise disjointness of the sets $E_1(\fp)$, $\fp \in \mathfrak{M}(n,k)$ and \eqref{defMkn}, we have for all grid cubes $J$ the Carleson packing condition
    \[
        \sum_{\fp \in \mathfrak{M}(n,k): \scI(\fp) \subset J} \mu(\scI(\fp)) \le 2^n \sum_{\fp \in \mathfrak{M}(n,k): \scI(\fp) \subset J} \mu(E_1(\fp)) \le 2^n \mu(J)\,.
    \]
    By the John-Nirenberg inequality, it follows that for all grid cubes $J$
    \[
        \mu\Big(\Big\{x \in J \ : \ \sum_{\fp \in \mathfrak{M}(n,k): \scI(\fp) \subset J} \mathbf{1}_{\scI(\fp)}(x) > \lambda 2^{n+1}\Big\}\Big) \le 2^{-\lambda} \mu(J).
    \]
    Denote the set of maximal cubes $J$ with $\mu(G \cap J) > 2^{-k-1} \mu(J)$ by $\mathcal{M}^*(k)$.
    Each cube $\scI(\fp)$ with $\fp \in \mathfrak{M}(n,k)$ is contained in a cube $J \in \mathcal{M}^*(k)$. Hence, summing the last display over all cubes $J \in \mathcal{M}^*(k)$ gives
    \[
        \mu(A(\lambda, k,n)) = \sum_{J \in \mathcal{M}^*(k)} \mu(A(\lambda, k,n)\cap J) \le 2^{-\lambda} \sum_{J \in \mathcal{M}^*(k)} \mu(J).
    \]
    Using pairwise disjointness of the cubes in $\mathcal{M}^*(k)$ we conclude
    \[
         \le 2^{k+1-\lambda} \sum_{J \in \mathcal{M}^*(k)} \mu(J \cap G) \le 2^{k+1-\lambda} \mu(G). \qedhere
    \]
\end{proof}
Using Lemma \ref{John-Nirenberg} and summing twice a geometric series yields
\begin{equation}
    \label{eqG2}
    \mu(G_2) \le \sum_{0\le k}\sum_{k\le n}
\mu(A(2n+6,k,n))\le 2^{-2}\mu(G).
\end{equation}

We turn to the set $G_3$.

\begin{lemma}[top tiles]
\label{top-tiles}
\leanok
\lean{top_tiles}
 \uses{John-Nirenberg}
    We have
    \begin{equation}\label{eq-musum}
        % Note(Georges Gonthier): suggested new factor
        \sum_{\mathfrak{m} \in \mathfrak{M}(k,n)} \mu(\scI(\mathfrak{m}))\le 2^{n+k+3}\mu(G).
    \end{equation}
\end{lemma}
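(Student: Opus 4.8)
The plan is to rewrite the left hand side of \eqref{eq-musum} as the integral of the overlap function
$$
    N(x) := \sum_{\mathfrak{m} \in \mathfrak{M}(k,n)} \mathbf{1}_{\scI(\mathfrak{m})}(x)
$$
and to control this integral by a dyadic layer cake decomposition at scale $2^{n+1}$, feeding each layer into \Cref{John-Nirenberg}. Since $\fP$ is finite, $N$ is a finite, nonnegative, integer-valued Borel function, and Tonelli's theorem gives $\sum_{\mathfrak{m} \in \mathfrak{M}(k,n)} \mu(\scI(\mathfrak{m})) = \int N \, \mathrm{d}\mu$.

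For the layer cake step I would record the pointwise bound
$$
    N \le \sum_{\lambda \ge 0} 2^{n+1}\, \mathbf{1}_{A(\lambda,k,n)}\,,
$$
which holds because at any point $x$ the number of integers $\lambda \ge 0$ with $\lambda 2^{n+1} < N(x)$ is at least $N(x)/2^{n+1}$, and by \eqref{eq-Aoverlap-def} these are exactly the indices $\lambda$ for which $x \in A(\lambda,k,n)$. Integrating this inequality and applying \Cref{John-Nirenberg} to each term gives
$$
    \int N \, \mathrm{d}\mu \le \sum_{\lambda \ge 0} 2^{n+1}\, \mu(A(\lambda,k,n)) \le \sum_{\lambda \ge 0} 2^{n+1} 2^{k+1-\lambda} \mu(G) = 2^{n+k+2}\mu(G)\sum_{\lambda \ge 0} 2^{-\lambda} = 2^{n+k+3}\mu(G)\,,
$$
which is \eqref{eq-musum}.

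There is essentially no obstacle here; the only point requiring thought is to choose the layer cake scale equal to $2^{n+1}$ so that the thresholds match those appearing in the definition \eqref{eq-Aoverlap-def} of $A(\lambda,k,n)$. It is worth noting that the $\lambda = 0$ contribution $2^{n+1}\mu(A(0,k,n))$ — morally the statement that $\bigcup_{\mathfrak{m} \in \mathfrak{M}(k,n)} \scI(\mathfrak{m})$, each of whose members lies in $\mathcal{C}(G,k)$, is covered by the pairwise disjoint maximal grid cubes $J$ with $\mu(G\cap J) > 2^{-k-1}\mu(J)$ and hence has measure at most $2^{k+1}\mu(G)$ — is already contained in the $\lambda = 0$ instance of \Cref{John-Nirenberg}. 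Should one prefer to avoid invoking \Cref{John-Nirenberg} at $\lambda = 0$, one can bound that single term directly by this covering argument (using the collection $\mathcal{M}^*(k)$ from the proof of \Cref{John-Nirenberg}) and apply \Cref{John-Nirenberg} only for $\lambda \ge 1$; the resulting constant is the same up to the harmless factor $2$ coming from summing the geometric series.
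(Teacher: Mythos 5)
Your proof is correct and follows the paper's argument exactly: both rewrite the sum of $\mu(\scI(\mathfrak{m}))$ as the integral of the overlap function, bound that integral by $2^{n+1}\sum_{\lambda \ge 0} \mu(A(\lambda,k,n))$ via a layer cake decomposition at scale $2^{n+1}$, and finish with \Cref{John-Nirenberg}. Your exposition fills in the pointwise bound and the geometric series explicitly, but the route is the same.
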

\begin{proof}
\leanok
% Note(Georges Gonthier): sum should start at 0, not 1
We have for the left-hand side of \eqref{eq-musum}
\begin{equation*}
    \int \sum_{\mathfrak{m} \in \mathfrak{M}(k,n)} \mathbf{1}_{\scI(\mathfrak{m})}(x) \, \mathrm{d}\mu(x) \le
2^{n+1} \sum_{\lambda=0}^{|\mathfrak{M}|}\mu(A(\lambda, k,n))\,.
\end{equation*}
Now the claimed estimate follows from \Cref{John-Nirenberg}.
\end{proof}

\begin{lemma}[tree count]
\label{tree-count}
\leanok
\lean{tree_count}
Let $k,n,j\ge 0$. We have for every $x\in X$
\begin{equation*}
    \sum_{\fu\in \fU_1(k,n,j)} \mathbf{1}_{\scI(\fu)}(x)
    \le 2^{-j}
    2^{9a} \sum_{\mathfrak{m}\in \mathfrak{M}(k,n)}
     \mathbf{1}_{\scI(\mathfrak{m})}(x)\, .
\end{equation*}
\end{lemma}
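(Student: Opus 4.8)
The plan is to fix $x \in X$ and count the tree tops $\fu \in \fU_1(k,n,j)$ with $x \in \scI(\fu)$. Since all these cubes $\scI(\fu)$ contain the common point $x$, the grid property \eqref{dyadicproperty} forces them to be nested, so they form a chain $I_1 \supsetneq I_2 \supsetneq \cdots$ (ordered by inclusion, equivalently by scale). By the definition \eqref{defunkj} of $\fU_1(k,n,j)$, for any two of these tops $\fu, \fu'$ with $\scI(\fu) \subsetneq \scI(\fu')$ we have $B_{\fu}(\fcc(\fu),100) \cap B_{\fu'}(\fcc(\fu'),100) = \emptyset$. On the other hand, each $\fu \in \fU_1(k,n,j) \subset \fC_1(k,n,j)$ satisfies $|\mathfrak{B}(\fu)| \ge 2^j$, i.e. there are at least $2^j$ elements $\mathfrak{m} \in \mathfrak{M}(k,n)$ with $100\fu \lesssim \mathfrak{m}$, which in particular forces $\scI(\fu) \subset \scI(\mathfrak{m})$ and $B_{\mathfrak{m}}(\fcc(\mathfrak{m}),100) \subset B_{\fu}(\fcc(\fu),100)$; hence $\fcc(\mathfrak{m}) \in B_{\fu}(\fcc(\fu),100)$. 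Combining these, each top $\fu$ (with $x \in \scI(\fu)$) has its own pocket of at least $2^j$ distinct frequencies $\fcc(\mathfrak{m})$ lying in the ball $B_{\fu}(\fcc(\fu),100)$, and the pockets of different tops are disjoint.

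Next I would turn this into a counting inequality. Each such $\mathfrak{m}$ has $\scI(\fu) \subset \scI(\mathfrak{m})$, and since $x \in \scI(\fu)$ we get $x \in \scI(\mathfrak{m})$, so $\sum_{\mathfrak{m} \in \mathfrak{M}(k,n)} \mathbf{1}_{\scI(\mathfrak{m})}(x)$ counts (with multiplicity) all the frequencies appearing in all the pockets. The disjointness of pockets across tops would give
\[
    2^j \cdot \#\{\fu \in \fU_1(k,n,j) : x \in \scI(\fu)\} \le \#\{\mathfrak{m} \in \mathfrak{M}(k,n) : x \in \scI(\mathfrak{m}),\ \fcc(\mathfrak{m}) \in \text{some pocket}\} \le \sum_{\mathfrak{m} \in \mathfrak{M}(k,n)} \mathbf{1}_{\scI(\mathfrak{m})}(x),
\]
which would already essentially be the claim — \emph{except} that the pockets are not literally disjoint as sets of elements $\mathfrak{m}$: the balls $B_{\fu}(\fcc(\fu),100)$ are disjoint, but a single $\mathfrak{m}$ could a priori have $\fcc(\mathfrak{m})$ landing in different balls for different $\fu$ — no wait, disjoint balls cannot all contain the same point $\fcc(\mathfrak{m})$, so this \emph{is} genuinely fine for distinct $\fu$ in the chain. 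The real subtlety, and the reason for the factor $2^{9a}$, is that an $\mathfrak{m}$ with $x \notin \scI(\mathfrak{m})$ must be excluded, and more importantly that when I fix $\fu$ and sum over the $\ge 2^j$ associated $\mathfrak{m}$'s, I need each such $\mathfrak{m}$ to actually contribute to the right-hand side, i.e. $x \in \scI(\mathfrak{m})$, which holds since $x \in \scI(\fu) \subset \scI(\mathfrak{m})$. So the naive bound gives exactly $2^{-j} \sum_\mathfrak{m} \mathbf{1}_{\scI(\mathfrak{m})}(x)$, with \emph{no} loss. The factor $2^{9a}$ must therefore come from a different, weaker, chain of inclusions: presumably $100\fu \lesssim \mathfrak{m}$ does not directly give $B_{\mathfrak{m}}(\fcc(\mathfrak{m}),100) \subset B_{\fu}(\fcc(\fu),100)$ with the clean constant, and one must use \eqref{firstdb} to relate $d_{\mathfrak{m}}$ and $d_{\fu}$ across differing scales, which is where powers of $2^a$ enter; alternatively each frequency $\fcc(\mathfrak{m})$ may be reachable from boundedly many $\fu$ and a Vitali/covering argument in the metric space $(\Mf, d_B)$ using the doubling axiom \eqref{thirddb} (iterated $O(a)$ times to pass from radius $100$ down to a fixed small radius) produces the $2^{9a}$.

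The main obstacle I anticipate is the bookkeeping in the previous paragraph: carefully tracking, via the wiggle-order relation $100\fu \lesssim \mathfrak{m}$ together with \eqref{firstdb}, \eqref{seconddb}, \eqref{monotonedb} and the doubling property \eqref{thirddb} of the frequency metrics, exactly which ball in $\Mf$ each $\fcc(\mathfrak{m})$ lies in, and arguing that for a \emph{fixed} $\mathfrak{m}$ the number of tops $\fu$ in our chain that can "claim" $\mathfrak{m}$ is at most $2^{9a}$ (this is the overlap-counting step, using that the relevant $d_{\fu}$-balls are comparable and the measure/metric doubling bounds the number of disjoint-ish pieces). Once that overlap bound is in hand, one sums: $\sum_{\fu : x \in \scI(\fu)} 2^j \le \sum_{\fu : x \in \scI(\fu)} |\mathfrak{B}(\fu)| \le 2^{9a} \sum_{\mathfrak{m} : x \in \scI(\mathfrak{m})} \mathbf{1} = 2^{9a} \sum_{\mathfrak{m} \in \mathfrak{M}(k,n)} \mathbf{1}_{\scI(\mathfrak{m})}(x)$, and dividing by $2^j$ finishes the proof. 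The remaining steps — establishing the chain structure from \eqref{dyadicproperty}, and extracting $\scI(\fu) \subset \scI(\mathfrak{m})$ and $\fcc(\mathfrak{m}) \in B_{\fu}(\fcc(\fu),100)$ from the definition of $\mathfrak{B}(\fu)$ — are routine.
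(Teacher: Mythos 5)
Your opening move is built on a false premise that then propagates confusion through the rest of the argument. You assert that the cubes $\scI(\fu)$ with $x \in \scI(\fu)$, $\fu \in \fU_1(k,n,j)$, "form a chain $I_1 \supsetneq I_2 \supsetneq \cdots$," i.e. are pairwise distinct. But \eqref{dyadicproperty} only gives that these cubes are \emph{nested}; it does not give that the map $\fu \mapsto \scI(\fu)$ is injective on $\fU_1(k,n,j)$. Many distinct tree tops $\fu$ can share the same spatial cube $\scI(\fu)$, and the separation condition in \eqref{defunkj} only applies when $\scI(\fu) \subsetneq \scI(\fp)$ strictly, so it says nothing about two tops at the same scale. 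Consequently, your claim that "the pockets of different tops are disjoint" and that the naive count gives $2^{-j}\sum_{\mathfrak m}\mathbf 1_{\scI(\mathfrak m)}(x)$ with no loss is wrong: the $2^{9a}$ loss is exactly the overlap coming from multiple tops sharing a single spatial cube. Your subsequent attempt to reverse-engineer the $2^{9a}$ produces two guesses, the first of which (constants lost in \eqref{firstdb}) is a red herring.

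Your second guess (a covering argument via \eqref{thirddb} in $(\Mf, d_B)$) is in fact the right mechanism, and your final displayed inequality $\sum_{\fu: x\in\scI(\fu)} 2^j \le 2^{9a}\sum_{\mathfrak m}\mathbf 1_{\scI(\mathfrak m)}(x)$ is the correct one — but you leave the crucial overlap bound $|\fU(\mathfrak m)|\le 2^{9a}$ entirely unproved, and that bound is where all the content of this lemma lives. To close the gap one needs two observations you did not make. First, if $\fu,\fu' \in \fU(\mathfrak m)$, i.e. both claim the same $\mathfrak m$ via $100\fu\lesssim\mathfrak m$, $100\fu'\lesssim\mathfrak m$, then $\fcc(\mathfrak m)\in B_\fu(\fcc(\fu),100)\cap B_{\fu'}(\fcc(\fu'),100)$, so these balls are not disjoint; by \eqref{defunkj} this forces $\scI(\fu)=\scI(\fu')$ (not merely nestedness), hence $d_\fu = d_{\fu'}$. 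Second, with a single metric $d_\fu$ in play, the points $\fcc(\fu')$, $\fu'\in\fU(\mathfrak m)$, all lie in $B_\fu(\fcc(\mathfrak m),100)$ and the balls $B_\fu(\fcc(\fu'),0.2)$ are pairwise disjoint by \eqref{eq-freq-comp-ball} and \eqref{eq-dis-freq-cover}; then $9$ applications of \eqref{thirddb} (since $100/2^9<0.2$) bound the number of such disjoint balls, hence $|\fU(\mathfrak m)|$, by $2^{9a}$. Without the first observation the metrics $d_\fu$ and $d_{\fu'}$ are incomparable and the covering argument does not get off the ground.
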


\begin{proof}
\leanok
Let $x\in X$. For each
$\fu\in \fU_1(k,n,j)$ with $x\in \scI(\fu)$, as $\fu \in \fC_1(k,n,j)$,
there are at least $2^{j}$ elements $\mathfrak{m}\in \mathfrak{M}(k,n)$
with $100\fu \lesssim \mathfrak{m}$.  Hence
\begin{equation}\label{ubymsum}
     \mathbf{1}_{\scI(\fu)}(x)
    \le 2^{-j}\sum_{\mathfrak{m} \in \mathfrak{M}(k,n): 100\fu\lesssim \mathfrak{m}} \mathbf{1}_{\scI(\mathfrak{m})}(x)\, .
\end{equation}
For each $\mathfrak{m}\in \mathfrak{M}(k,n)$
with $x\in \scI(\mathfrak{m})$,
let $\fU(\mathfrak{m})$ be the set of
$\fu\in \fU_1(k,n,j)$ with $x\in \scI(\fu)$
and $100\fu \lesssim \mathfrak{m}$.
Summing \eqref{ubymsum} over $\fu$ gives
\begin{equation*}
     \sum_{\fu\in \fU_1(k,n,j)} \mathbf{1}_{\scI(\fu)}(x)
    \le 2^{-j}\sum_{\mathfrak{m} \in \mathfrak{M}(k,n)}
    \sum_{\fu \in \fU(\mathfrak{m})} \mathbf{1}_{\scI(\mathfrak{m})}(x)\, .
\end{equation*}
It remains to show that $\lvert \fU(\mathfrak{m})\rvert \le 2^{9a}$.
Let $\fu \in \fU(\mathfrak{m})$, then by definition
\begin{equation*}
     d_{\fu}(\fcc(\fu),\fcc(\mathfrak{m}))\le 100\, .
\end{equation*}
If $\fu'$ is a further element in $\fU(\mathfrak{m})$ with $\fu\neq \fu'$, then
\begin{equation*}
    \fcc(\mathfrak{m})
    \in B_{\fu}(\fcc(\fu),100)\cap B_{\fu'}(\fcc(\fu'),100)\,.
\end{equation*}
By the definition of $\fU_1(k,n,j)$, none of $\scI(\fu)$, $\scI(\fu')$ is strictly contained in the other. As both contain $x$, we have $\scI(\fu)=\scI(\fu')$ and in particular $d_{\fu}=d_{\fu'}$.

The geometric doubling condition \eqref{thirddb} for $d_{\fu}$ implies that every collection of disjoint balls $B_\fu(\mfa, 0.2)$ with $\mfa \in B_\fu(\fcc(\mathfrak{m}), 100)$ has size at most $2^{9a}$. Applying this to the balls with centers $\fcc(\fu'), \fu' \in \fU(\mathfrak{m})$, which are disjoint by \eqref{eq-dis-freq-cover}, completes the proof.
\end{proof}

\begin{lemma}[boundary exception]
\label{boundary-exception}
\leanok
\lean{boundary_exception}

Let $\mathcal{L}(\fu)$ be as defined in \eqref{eq-L-def}. For each $\fu\in \fU_1(k,n,l)$
\begin{equation*}
\mu\Big(\bigcup_{I\in \mathcal{L}(\fu)} I\Big)
\le 2 \cdot D^{-\kappa Z(n+1)}
        \mu(\scI(\mathfrak{u})).
\end{equation*}
\end{lemma}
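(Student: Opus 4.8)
The plan is to deduce this directly from the small boundary property \eqref{eq-small-boundary} of the grid, applied to the top cube $J:=\scI(\fu)$, after checking that the union $\bigcup_{I\in\mathcal L(\fu)}I$ sits inside a thin inner neighbourhood of the boundary of $J$. If $\mathcal L(\fu)=\emptyset$ the left-hand side is $0$ and there is nothing to prove, so I would assume $\mathcal L(\fu)\neq\emptyset$.

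First I would fix $I\in\mathcal L(\fu)$ and set $s:=\ps(\fu)=s(\scI(\fu))$, so that $s(I)=s-Z(n+1)-1$ by the definition \eqref{eq-L-def} of $\mathcal L(\fu)$; in particular $s(I)\ge -S$ since $I\in\mathcal D$, hence $D^{s(I)}\ge D^{-S}$. For $x\in I$, \eqref{eq-vol-sp-cube} gives $\rho(x,c(I))<4D^{s(I)}$, while the defining property $B(c(I),8D^{s(I)})\not\subset\scI(\fu)$ of $\mathcal L(\fu)$ furnishes a point $y\notin\scI(\fu)$ with $\rho(c(I),y)<8D^{s(I)}$. The triangle inequality then yields
\[
\rho\big(x,X\setminus\scI(\fu)\big)\le\rho(x,y)\le\rho(x,c(I))+\rho(c(I),y)<12D^{s(I)}=12D^{-Z(n+1)-1}D^{s}\,.
\]
Since also $x\in I\subset\scI(\fu)$, this shows
\[
\bigcup_{I\in\mathcal L(\fu)}I\ \subset\ \Big\{x\in\scI(\fu)\ :\ \rho\big(x,X\setminus\scI(\fu)\big)\le 12D^{-Z(n+1)-1}D^{s}\Big\}\,.
\]

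Next I would apply \eqref{eq-small-boundary} to $J=\scI(\fu)$ with $t:=12D^{-Z(n+1)-1}$; the admissibility hypothesis $tD^{s(J)}=12D^{s(I)}\ge D^{-S}$ holds by the scale bound noted above. This gives
\[
\mu\Big(\bigcup_{I\in\mathcal L(\fu)}I\Big)\le 2t^{\kappa}\mu(\scI(\fu))=2\big(12D^{-Z(n+1)-1}\big)^{\kappa}\mu(\scI(\fu))\,.
\]
Finally, since $D=2^{100a^2}\ge 12$ by \eqref{defineD} and $a\ge 4$, we have $12D^{-Z(n+1)-1}\le D^{-Z(n+1)}$, and raising to the positive power $\kappa$ preserves this, so $\big(12D^{-Z(n+1)-1}\big)^{\kappa}\le D^{-\kappa Z(n+1)}$, which is exactly the claimed bound. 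The argument is a routine application of the grid axioms; the only points requiring any care are the scale bookkeeping $s(I)=\ps(\fu)-Z(n+1)-1$ needed to verify the admissibility condition for \eqref{eq-small-boundary}, and the trivial numerical inequality $12\le D$ — neither of which is a genuine obstacle.
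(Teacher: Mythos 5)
Your proof is correct and follows essentially the same route as the paper's: you show each $I\in\mathcal L(\fu)$ lies in the thin inner neighbourhood $\{x\in\scI(\fu):\rho(x,X\setminus\scI(\fu))\le 12D^{\ps(\fu)-Z(n+1)-1}\}$ and then invoke the small boundary property \eqref{eq-small-boundary} together with $D\ge 12$. The only difference is that you spell out the triangle-inequality inclusion and the admissibility check $tD^{s(J)}\ge D^{-S}$ in slightly more detail than the paper does.
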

\begin{proof}
\leanok
Let $\fu\in \fU_1(k,n,l)$ and $I \in \mathcal{L}(\fu)$. By definition of $\mathcal{L}(\fu)$
\begin{equation*}
    I \subset X(\fu):=\{x \in \scI(\fu) \, : \, \rho(x, X \setminus \scI(\fu)) \leq 12 D^{\ps(\fu) - Z(n+1)-1}\}\,.
\end{equation*}
 By the small boundary property \eqref{eq-small-boundary} and $D \ge 12$
   $$
        \mu(X(\fu)) \le
        2\cdot(12 D^{-Z(n+1)-1})^\kappa
        \mu(\scI(\mathfrak{u})) \le 2 \cdot D^{-\kappa Z(n+1)}
        \mu(\scI(\mathfrak{u})).
    $$
\end{proof}

\begin{lemma}[third exception]
    \label{third-exception}
    \leanok
    \lean{third_exception}
\uses{tree-count,boundary-exception, top-tiles}
       We have
\begin{equation}
    \label{eqG3}
    \mu(G_3)\le 2^{-4} \mu(G)\, .
\end{equation}
\end{lemma}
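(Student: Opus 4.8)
The plan is to dominate $G_3$ by a union of the boundary strips $\bigcup\mathcal{L}(\fu)$ ranging over prospective tree tops $\fu \in \fU_1(k,n,j)$, bound each strip via \Cref{boundary-exception}, and then control the total $\sum_{\fu}\mu(\scI(\fu))$ by feeding \Cref{tree-count} into \Cref{top-tiles}. All the substantive work has already been done in those three lemmas; what remains here is bookkeeping of a rapidly convergent series using the explicit values of $D$, $\kappa$, $Z$.

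First I would unwind the definitions: by \eqref{defineg3} and the definition of $\fL_4(k,n,j)$, for each $\fp \in \fL_4(k,n,j)$ there is $\fu \in \fU_1(k,n,j)$ with $\scI(\fp) \subset \bigcup\mathcal{L}(\fu)$, hence
$$
    G_3 \subset \bigcup_{k \ge 0}\bigcup_{n \ge k}\bigcup_{0 \le j \le 2n+3}\bigcup_{\fu \in \fU_1(k,n,j)}\bigcup_{I \in \mathcal{L}(\fu)} I\, .
$$
Applying subadditivity and \Cref{boundary-exception} gives
$$
    \mu(G_3) \le \sum_{k \ge 0}\sum_{n \ge k}\sum_{0 \le j \le 2n+3} 2 D^{-\kappa Z(n+1)} \sum_{\fu \in \fU_1(k,n,j)} \mu(\scI(\fu))\, .
$$

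Next I would integrate the pointwise estimate of \Cref{tree-count} over $X$ to obtain $\sum_{\fu \in \fU_1(k,n,j)} \mu(\scI(\fu)) \le 2^{-j} 2^{9a} \sum_{\mathfrak{m} \in \mathfrak{M}(k,n)} \mu(\scI(\mathfrak{m}))$, and bound the right-hand side by \Cref{top-tiles} to get $\sum_{\fu \in \fU_1(k,n,j)} \mu(\scI(\fu)) \le 2^{-j+9a+n+k+3}\mu(G)$. Substituting this, summing $\sum_{j \ge 0} 2^{-j} \le 2$ and $\sum_{k=0}^{n} 2^{k} \le 2^{n+1}$, and collecting exponents yields
$$
    \mu(G_3) \le \sum_{n \ge 0} 2^{9a+2n+6}\, D^{-\kappa Z(n+1)}\, \mu(G)\, .
$$

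Finally I would plug in $D = 2^{100a^2}$, $\kappa = 2^{-10a}$, $Z = 2^{12a}$, so that $\kappa Z = 2^{2a}$ and $D^{-\kappa Z(n+1)} = 2^{-100a^2 2^{2a}(n+1)}$. Since $a \ge 4$, the exponent $100 a^2 2^{2a}$ dwarfs $9a + 2n + 6$, so the series in $n$ converges geometrically and is dominated by a fixed multiple of its first term $2^{9a+6} 2^{-100a^2 2^{2a}}$, which is vastly smaller than $2^{-4}$; this establishes \eqref{eqG3}. The main (and only) obstacle is ensuring the constants line up, as there is no conceptual difficulty once \Cref{tree-count}, \Cref{top-tiles}, and \Cref{boundary-exception} are in hand.
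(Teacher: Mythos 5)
Your argument follows the paper's proof exactly: dominate $\bigcup_{\fp \in \fL_4(k,n,j)}\scI(\fp)$ by $\bigcup_{\fu\in\fU_1(k,n,j)}\bigcup\mathcal{L}(\fu)$, apply \Cref{boundary-exception} to each summand, then integrate \Cref{tree-count} and feed it into \Cref{top-tiles} to control $\sum_{\fu}\mu(\scI(\fu))$, and finally sum the resulting triple series. Your exponent bookkeeping (including $2^{9a+2n+6}$ after collapsing the $j$ and $k$ sums, and $\kappa Z = 2^{2a}$ so that $D^{-\kappa Z(n+1)} = 2^{-100a^2 2^{2a}(n+1)}$) is correct and matches the paper.
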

\begin{proof}
\leanok
As each $\fp\in \fL_4(k,n,j)$
is contained in $\cup\mathcal{L}(\fu)$ for some
$\fu\in \fU_1(k,n,l)$, we have
\begin{equation*}
\mu\Big(\bigcup_{\fp \in \fL_4 (k,n,j)}\scI(\fp)\Big)
\le \sum_{\fu\in \fU_1(k,n,j)}
\mu\Big(\bigcup_{I \in \mathcal{L} (\fu)}I\Big)\, .
\end{equation*}
Using \Cref{boundary-exception}, \Cref{tree-count} and finally \Cref{top-tiles}, we estimate this further
 by
\begin{equation*}
    \le 2\sum_{\fu\in \fU_1(k,n,j)}
    D^{-\kappa Z(n+1)}
        \mu(\scI(\mathfrak{u}))
    \le 2^{9a+1-j} \sum_{\mathfrak{m}\in \mathfrak{M}(k,n)}
     D^{-\kappa Z(n+1)}
    \mu(\scI(\mathfrak{m}))
\end{equation*}
  \begin{equation*}
     \le
2^{9a + 1-j} D^{-\kappa Z(n+1)}
     2^{n+k+3}\mu(G)\, .
\end{equation*}
Now we estimate $G_3$ defined in \eqref{defineg3} by
\begin{align*}
    \mu(G_3)&\le \sum_{k\ge 0}\, \sum_{n \geq k}\,
    \sum_{0\le j\le 2n+3}
    \mu\Big(\bigcup_{\fp \in \fL_4 (k,n,j)}
    \scI(\fp)\Big)\\
    &\le \sum_{k\ge 0}\, \sum_{n \geq k}\,
    \sum_{0\le j\le 2n+3}
    2^{9a + 4 + n + k -j} D^{-\kappa Z(n+1)}\mu(G)
\end{align*}
Summing the series, using the definitions of $D, Z$ and $\kappa$, proves the lemma.
\end{proof}

\subsection{Auxiliary lemmas}
\label{subsec-lessim-aux}
Before proving \Cref{forest-union} and \Cref{forest-complement}, we collect some useful properties of $\lesssim$.

\begin{lemma}[wiggle order 2]
    \label{wiggle-order-2}
    \leanok
    \lean{smul_C2_1_2}
    \uses{monotone-cube-metrics}
    Let $n, m \ge 1$ and $k > 0$.
    If $\fp, \fp' \in \fP$ with $\scI(\fp) \ne \scI(\fp')$ and
    $n \fp \lesssim k \fp'$
    then
    \begin{equation}
        \label{eq-wiggle2}
        (n + 2^{-95 a} m) \fp \lesssim m\fp'\,.
    \end{equation}
\end{lemma}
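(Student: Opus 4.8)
The plan is to unwind the definition \eqref{wiggleorder} and reduce everything to a triangle inequality for the metrics $d_\fp$. By \eqref{wiggleorder}, the hypothesis $n\fp\lesssim k\fp'$ provides $\scI(\fp)\subset\scI(\fp')$ — which is already the spatial half of the desired conclusion $(n+2^{-95a}m)\fp\lesssim m\fp'$ — together with the frequency inclusion $B_{\fp'}(\fcc(\fp'),k)\subset B_\fp(\fcc(\fp),n)$. So the only thing left to prove is the frequency inclusion $B_{\fp'}(\fcc(\fp'),m)\subset B_\fp(\fcc(\fp),n+2^{-95a}m)$.

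Recalling \eqref{defdp}, \eqref{tilecenter}, \eqref{tilescale} and the notation \eqref{defineIsupo}, I would first note $d_\fp = d_{\scI(\fp)^\circ}$ and $d_{\fp'}=d_{\scI(\fp')^\circ}$. Since $\scI(\fp)\subset\scI(\fp')$ and, by hypothesis, $\scI(\fp)\ne\scI(\fp')$, \Cref{monotone-cube-metrics} applies with the strict inclusion and gives the gain factor
\[
    d_\fp(\mfa,\mfb)\le 2^{-95a}\, d_{\fp'}(\mfa,\mfb)\qquad\text{for all }\mfa,\mfb\in\Mf.
\]
Now take $\mfb\in B_{\fp'}(\fcc(\fp'),m)$, i.e. $d_{\fp'}(\fcc(\fp'),\mfb)<m$. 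Since $k>0$, the center $\fcc(\fp')$ lies in $B_{\fp'}(\fcc(\fp'),k)$, hence by the frequency inclusion from the hypothesis it lies in $B_\fp(\fcc(\fp),n)$, so $d_\fp(\fcc(\fp),\fcc(\fp'))<n$. Then the triangle inequality for $d_\fp$ together with the displayed monotonicity yields
\[
    d_\fp(\fcc(\fp),\mfb)\le d_\fp(\fcc(\fp),\fcc(\fp'))+d_\fp(\fcc(\fp'),\mfb)< n+2^{-95a}\,d_{\fp'}(\fcc(\fp'),\mfb)< n+2^{-95a}m,
\]
so $\mfb\in B_\fp(\fcc(\fp),n+2^{-95a}m)$, which is precisely the frequency inclusion needed for \eqref{eq-wiggle2}.

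This argument is short and I do not anticipate any real obstacle; it is essentially bookkeeping plus one invocation of \Cref{monotone-cube-metrics}. The only points to handle with care are the asymmetric roles of the two radii in \eqref{wiggleorder} (the primed one shrinks, the unprimed one grows) and making sure the hypothesis $\scI(\fp)\ne\scI(\fp')$ is used exactly where the strict case of \Cref{monotone-cube-metrics} is invoked, so as to legitimately gain the factor $2^{-95a}$.
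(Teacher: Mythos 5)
Your proof is correct and follows essentially the same route as the paper's: decompose $d_\fp(\fcc(\fp),\mfb)$ via the triangle inequality, bound the first term by $n$ using $\fcc(\fp')\in B_{\fp'}(\fcc(\fp'),k)\subset B_\fp(\fcc(\fp),n)$, and gain the factor $2^{-95a}$ on the second term from \Cref{monotone-cube-metrics}. The only cosmetic difference is that you explicitly note $k>0$ is what puts the center in its own ball, which the paper leaves implicit.
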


\begin{proof}
    \leanok
    The assumption implies that $\scI(\fp) \subsetneq \scI(\fp')$. Let $\mfa \in B_{\fp'}(\fcc(\fp'), m)$. Then we have by the triangle inequality
    $$
        d_{\fp}(\fcc(\fp), \mfa) \le d_{\fp}(\fcc(\fp), \fcc(\fp')) + d_{\fp}(\fcc(\fp'), \mfa)
    $$
    The first summand is bounded by $n$ since
    $$
        \fcc(\fp') \in B_{\fp'}(\fcc(\fp'), k) \subset B_{\fp}(\fcc(\fp), n).
    $$
    Using \Cref{monotone-cube-metrics} for the second summand shows
    $$
        d_{\fp}(\fcc(\fp), \mfa) \le n + 2^{-95a} d_{\fp'}(\fcc(\fp'), \mfa) < n + 2^{-95a} m\,.
    $$
    Combined with $\scI(\fp) \subset \scI(\fp')$, this yields \eqref{eq-wiggle2}.
\end{proof}

\begin{lemma}[wiggle order 3]
\label{wiggle-order-3}
\leanok
\lean{wiggle_order_11_10, wiggle_order_100, wiggle_order_500}
    The following implications hold for all $\fq, \fq' \in \fP$:
    % Note: some fixes were suggested by Georges Gonthier
    % Note: we basically use {eq-sc1} in chapter 6. Mention this.
    \begin{equation}
        \label{eq-sc1}
        \fq \le \fq' \ \text{and} \ \lambda \ge 1.1 \implies \lambda \fq \lesssim \lambda \fq'\,,
    \end{equation}
    \begin{equation}
        \label{eq-sc2}
        10\fq \lesssim \fq' \ \text{and} \ \scI(\fq) \ne \scI(\fq') \implies 100 \fq \lesssim 100 \fq'\,,
    \end{equation}
    \begin{equation}
        \label{eq-sc3}
        2\fq \lesssim \fq' \ \text{and} \ \scI(\fq) \ne \scI(\fq') \implies 4 \fq \lesssim 500 \fq'\,.
    \end{equation}
\end{lemma}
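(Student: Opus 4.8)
The plan is to dispatch the three implications \eqref{eq-sc1}, \eqref{eq-sc2}, \eqref{eq-sc3} one at a time: \eqref{eq-sc1} directly from \Cref{monotone-cube-metrics} and the frequency-ball inclusions \eqref{eq-freq-comp-ball}; and \eqref{eq-sc2}, \eqref{eq-sc3} by invoking \Cref{wiggle-order-2} with suitable constants and then absorbing the small $2^{-95a}$ errors, using the trivial fact that the relation $\lambda \fq \lesssim \lambda' \fq'$ only becomes weaker when $\lambda$ is increased, since then the ball $B_\fq(\fcc(\fq),\lambda)$ grows.

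For \eqref{eq-sc1}, I would first note that $\fq \le \fq'$ forces either $\fq = \fq'$ or $\scI(\fq) \subsetneq \scI(\fq')$: if $\scI(\fq) = \scI(\fq')$ then $\fcc(\fq') \in \fc(\fq') \subset \fc(\fq)$ by \eqref{eq-freq-comp-ball} and the definition \eqref{straightorder} of $\le$, and since the sets $\fc(\fp)$ with $\scI(\fp) = \scI(\fq)$ are pairwise disjoint by \eqref{eq-dis-freq-cover}, this gives $\fq = \fq'$, in which case the claim is trivial. Assuming $\scI(\fq) \subsetneq \scI(\fq')$, take any $\mfa \in B_{\fq'}(\fcc(\fq'), \lambda)$; by the triangle inequality for $d_\fq$,
\[
    d_\fq(\fcc(\fq), \mfa) \le d_\fq(\fcc(\fq), \fcc(\fq')) + d_\fq(\fcc(\fq'), \mfa).
\]
Here $d_\fq(\fcc(\fq), \fcc(\fq')) < 1$ because $\fcc(\fq') \in \fc(\fq') \subset \fc(\fq) \subset B_\fq(\fcc(\fq), 1)$ by \eqref{eq-freq-comp-ball}, while $d_\fq(\fcc(\fq'), \mfa) \le 2^{-95a} d_{\fq'}(\fcc(\fq'), \mfa) < 2^{-95a}\lambda$ by \Cref{monotone-cube-metrics} (recalling $d_\fp = d_{\scI(\fp)^\circ}$ via \eqref{defdp}, \eqref{tilecenter}, \eqref{tilescale}). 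Hence $d_\fq(\fcc(\fq), \mfa) < 1 + 2^{-95a}\lambda$, which is $< \lambda$ since $\lambda \ge 1.1$ and $1/(1 - 2^{-95a}) < 1.1$ for $a \ge 4$. Together with $\scI(\fq) \subset \scI(\fq')$, this is precisely $\lambda \fq \lesssim \lambda \fq'$.

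For \eqref{eq-sc2}, apply \Cref{wiggle-order-2} with $\fp = \fq$, $\fp' = \fq'$, $n = 10$, $k = 1$, $m = 100$; its hypotheses hold (namely $\scI(\fq) \ne \scI(\fq')$ and $10\fq \lesssim \fq'$), so $(10 + 100 \cdot 2^{-95a})\fq \lesssim 100 \fq'$, and since $10 + 100 \cdot 2^{-95a} \le 100$ for $a \ge 4$ we conclude $100\fq \lesssim 100\fq'$. For \eqref{eq-sc3}, the same argument with $n = 2$, $k = 1$, $m = 500$ gives $(2 + 500 \cdot 2^{-95a})\fq \lesssim 500\fq'$, and $2 + 500 \cdot 2^{-95a} \le 4$ for $a \ge 4$ yields $4\fq \lesssim 500\fq'$.

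I do not expect a serious obstacle here. The only slightly delicate point is the degenerate case $\scI(\fq) = \scI(\fq')$ in \eqref{eq-sc1}: there \Cref{monotone-cube-metrics} only gives $d_\fq = d_{\fq'}$ rather than a genuine contraction, so one must instead use the disjointness half of \eqref{eq-dis-freq-cover} to collapse $\fq$ and $\fq'$. Beyond that, everything reduces to the elementary numerical checks $1/(1 - 2^{-95a}) < 1.1$, $10 + 100 \cdot 2^{-95a} \le 100$, and $2 + 500 \cdot 2^{-95a} \le 4$, all of which hold comfortably for $a \ge 4$.
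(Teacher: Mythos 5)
Your proof is correct and follows essentially the same route as the paper: \eqref{eq-sc2} and \eqref{eq-sc3} via \Cref{wiggle-order-2} with the constants you name, and \eqref{eq-sc1} by collapsing the degenerate case $\scI(\fq)=\scI(\fq')$ through \eqref{eq-dis-freq-cover} and then handling the strict-inclusion case with the triangle inequality and \Cref{monotone-cube-metrics} (the paper phrases this as first deducing $\fq\lesssim 0.2\fq'$ and then invoking \Cref{wiggle-order-2} with $m=\lambda$, which is your inline computation packaged as a lemma call, together with the monotonicity in the first wiggle parameter that you also note). The numerical checks are all correct for $a\ge 4$.
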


\begin{proof}
    \leanok
    Claims \eqref{eq-sc2} and \eqref{eq-sc3} are consequences of \Cref{wiggle-order-2} and $a \ge 4$.
    For \eqref{eq-sc1}, if $\scI(\fq) = \scI(\fq')$ then
    $\fq = \fq'$ by \eqref{eq-dis-freq-cover} and the order definition \eqref{straightorder}.
    If $\scI(\fq) \ne \scI(\fq')$, then from \eqref{eq-freq-comp-ball} and the order definitions \eqref{straightorder} and \eqref{wiggleorder} it follows that
    $\fq \lesssim 0.2\fq'$, and \eqref{eq-sc1} follows from
    \Cref{wiggle-order-2}.
\end{proof}

We call a collection $\mathfrak{A}$ of tiles convex if
\begin{equation*}
    \fp \le \fp' \le \fp'' \ \text{and} \ \fp, \fp'' \in \mathfrak{A} \implies \fp' \in \mathfrak{A}\,.
\end{equation*}
With the help of \Cref{wiggle-order-2} and \Cref{wiggle-order-3}, it is easy to verify that each of the collections $\fP(k),  \fC(n,k)$ and $\fC_s(n,k,j)$ for $s = 1,2,3,4, 5$ are convex.

We close this subsection with an estimate for densities.

\begin{lemma}[dens compare]
    \label{dens-compare}
    \leanok
    \lean{dens1_le_dens'}
    For every $k, n\ge 0$ and every $\mathfrak{A} \subset \fC(k,n)$:
    \begin{equation*}
        \dens_1(\mathfrak{A}) \le 2^{4a}2^{-n+1}\,.
    \end{equation*}
\end{lemma}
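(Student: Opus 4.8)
The plan is to reduce the bound to the single-tile density $\dens_k'$ introduced in \eqref{eq-densdef}: I will show
$$\dens_1(\mathfrak{A}) \le \sup_{\fp' \in \mathfrak{A}} \dens_k'(\{\fp'\})$$
and then invoke the upper bound built into the definition \eqref{def-cnk} of $\fC(k,n)$, which gives $\dens_k'(\{\fp'\}) \le 2^{4a}2^{-n+1}$ for every $\fp' \in \mathfrak{A}$ (note $\mathfrak{A} \subset \fC(k,n) \subset \fP(k)$, so $\dens_k'(\{\fp'\})$ is even well defined).

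Comparing the two densities reduces, after unwinding the nested suprema in \eqref{definedens1} and \eqref{eq-densdef}, to the following claim: for fixed $\fp' \in \mathfrak{A}$ and $\lambda \ge 2$, every tile $\fp$ admissible in the innermost supremum of $\dens_1(\mathfrak{A})$ --- i.e. $\fp \in \fP(\mathfrak{A})$ with $\lambda \fp' \lesssim \lambda \fp$ --- actually lies in $\fP(k)$, hence is also admissible in $\dens_k'(\{\fp'\})$, where the averaged quotient $\mu(E_2(\lambda,\fp))/\mu(\scI(\fp))$ is literally the same. To prove the claim I would first extract the sandwich $\scI(\fp') \subset \scI(\fp) \subset \scI(\fp'')$ for some $\fp'' \in \mathfrak{A}$: the left inclusion is part of the definition of $\lambda \fp' \lesssim \lambda \fp$, and the right inclusion is exactly what membership $\fp \in \fP(\mathfrak{A})$ supplies. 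Since $\mathfrak{A} \subset \fC(k,n) \subset \fP(k)$, both $\scI(\fp')$ and $\scI(\fp'')$ lie in $\mathcal{C}(G,k)$. I then verify the two defining conditions of $\mathcal{C}(G,k)$ for $\scI(\fp)$: a cube $J \supset \scI(\fp'')$ witnessing \eqref{muhj1} for $\scI(\fp'')$ also contains $\scI(\fp)$ and hence witnesses \eqref{muhj1} for $\scI(\fp)$; and a hypothetical cube $J \supset \scI(\fp)$ satisfying \eqref{muhj2} would contain $\scI(\fp')$ as well, contradicting $\scI(\fp') \in \mathcal{C}(G,k)$. Therefore $\scI(\fp) \in \mathcal{C}(G,k)$, i.e. $\fp \in \fP(k)$.

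With the claim in hand, the supremum defining $\dens_1(\mathfrak{A})$ is taken over a subfamily of the one defining $\dens_k'(\{\fp'\})$ for each $\fp'$, so $\dens_1(\mathfrak{A}) \le \sup_{\fp' \in \mathfrak{A}} \dens_k'(\{\fp'\}) \le 2^{4a}2^{-n+1}$ by \eqref{def-cnk}, which is the assertion. The only genuine obstacle is the membership $\fp \in \fP(k)$, and the point to be careful about there is the direction of the inclusions: the relation $\lambda\fp' \lesssim \lambda\fp$ pushes $\scI(\fp)$ to be \emph{larger} than $\scI(\fp')$, while $\fp \in \fP(\mathfrak{A})$ keeps it \emph{smaller} than some $\scI(\fp'')$, and it is precisely this trapping between two cubes of the same class $\mathcal{C}(G,k)$ that forces $\scI(\fp)$ into $\mathcal{C}(G,k)$. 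Everything else is bookkeeping with suprema.
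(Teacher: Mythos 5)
Your proposal is correct and takes essentially the same route as the paper: compare $\dens_1$ to $\dens_k'$ by showing the innermost supremum in $\dens_1$ is over a subfamily of the one in $\dens_k'$, then read off the bound from the definition \eqref{def-cnk} of $\fC(k,n)$. In fact your write-up is more careful than the paper's, which asserts tersely that ``if $\fp \in \fP(\mathfrak{A})$ then it satisfies \eqref{muhj1} and \eqref{muhj2}''; you correctly isolate that one needs the sandwich $\scI(\fp') \subset \scI(\fp) \subset \scI(\fp'')$, with the lower inclusion (coming from $\lambda\fp' \lesssim \lambda\fp$, which the paper uses implicitly) supplying the non-existence condition \eqref{muhj2} and the upper inclusion (from $\fp \in \fP(\mathfrak{A})$) supplying the witness for \eqref{muhj1}.
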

\begin{proof}
\leanok
We first show that for all sets $\mathfrak{A} \subset \fP(k)$ it holds
\begin{equation}
    \label{eq-dens-comp}
    \dens_1(\mathfrak{A})\le \dens_k'(\mathfrak{A})\,.
\end{equation}
It suffices to show that for all $\fp'\in \mathfrak{A}$
and $\lambda\ge 2$ and $\fp\in \fP(\mathfrak{A})$ with $\lambda \fp' \lesssim \lambda \fp$ we have
\begin{equation*}
    \frac{\mu({E}_2(\lambda, \fp))}{\mu(\scI(\fp))}
    \le \sup_{\fp'' \in \fP(k): \lambda \fp' \lesssim \lambda \fp''}
    \frac{\mu({E}_2(\lambda, \fp''))}{\mu(\scI(\fp''))}.
\end{equation*}
    But if $\fp \in \fP(\mathfrak{A})$ then it satisfies \eqref{muhj1}
    and \eqref{muhj2}, so $\fp \in \fP(k)$. Thus we can simply take $\fp'' = \fp$ and the inequality along with \eqref{eq-dens-comp} follows.

    Combining \eqref{eq-dens-comp} with definitions \eqref{eq-densdef} and \eqref{def-cnk} we have that
    \[
        \dens_1(\fC(n,k)) \le \dens_k'(\fC(k,n))= \sup_{\fp \in \fC(k,n)} \dens_k'(\{\fp\}) \le 2^{4a}2^{-n+1}.
    \]
    The lemma follows since $\dens_1$ is increasing with respect to set inclusion.
\end{proof}

\subsection{Verification of the forest properties}
\label{subsecforest}
We prove \Cref{forest-union}.
Fix $k,n,j\ge 0$.
Define
$$
    \fC_6(k,n,j)
$$
to be the set of all tiles $\fp \in \fC_5(k,n,j)$ such that $\scI(\fp) \not\subset G'$. Since $\fC_5(k,n,j)$ is convex, so is $\fC_6(k,n,j)$.
The following chain of lemmas
establishes that the set $\fC_6(k,n,j)$ can be written as a union of a small number of $n$-forests.

For $\fu\in \fU_1(k,n,j)$, define
\begin{equation*}
    \mathfrak{T}_1(\fu):= \{\fp \in \fC_1(k,n,j) \ : \scI(\fp)\neq \scI(\fu), \ 2\fp \lesssim \fu\}\,.
\end{equation*}
Define
\begin{equation*}
    \fU_2(k,n,j) := \{ \fu \in \fU_1(k,n,j) \, : \, \mathfrak{T}_1(\fu) \cap \fC_6(k,n,j) \ne \emptyset\}\,.
\end{equation*}
Define a relation $\sim$ on $\fU_2(k,n,j)$
by setting $\fu\sim \fu'$
for $\fu,\fu'\in \fU_2(k,n,j)$
if $\fu=\fu'$ or there exists $\fp$ in $\mathfrak{T}_1(\fu)$
with $10 \fp\lesssim \fu'$.

\begin{lemma}[relation geometry]
    \label{relation-geometry}
    \leanok
    \lean{URel.eq, URel.not_disjoint}
    \uses{wiggle-order-3}
    If $\fu \sim \fu'$, then $\scI(u) = \scI(u')$ and
    \begin{equation*}
        B_{\fu}(\fcc(\fu), 100) \cap B_{\fu'}(\fcc(\fu'), 100) \neq \emptyset\, .
    \end{equation*}
\end{lemma}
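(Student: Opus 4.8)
The plan is to unwind the definition of $\sim$ and reduce to a statement about two tree tops $\fu,\fu'$ sitting over one common smaller tile. If $\fu=\fu'$ both assertions are trivial, so assume $\fu\neq\fu'$; then there is $\fp\in\mathfrak{T}_1(\fu)$ with $10\fp\lesssim\fu'$. By the definition of $\mathfrak{T}_1(\fu)$ this means $\fp\in\fC_1(k,n,j)$, $\scI(\fp)\neq\scI(\fu)$ and $2\fp\lesssim\fu$, so $\scI(\fp)\subsetneq\scI(\fu)$; from $10\fp\lesssim\fu'$ we also have $\scI(\fp)\subset\scI(\fu')$. As $\scI(\fp)$ lies in both $\scI(\fu)$ and $\scI(\fu')$, the dyadic property \eqref{dyadicproperty} forces $\scI(\fu)$ and $\scI(\fu')$ to be $\subset$-comparable; this will be used at the end.

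Next I would prove that the two frequency balls are not disjoint. Using \Cref{wiggle-order-3}: from $2\fp\lesssim\fu$ with $\scI(\fp)\neq\scI(\fu)$, inequality \eqref{eq-sc3} gives $4\fp\lesssim 500\fu$, hence $B_\fu(\fcc(\fu),500)\subset B_\fp(\fcc(\fp),4)$ and in particular $d_\fp(\fcc(\fp),\fcc(\fu))<4$. If moreover $\scI(\fp)\neq\scI(\fu')$, then \eqref{eq-sc2} applied to $10\fp\lesssim\fu'$ gives $100\fp\lesssim 100\fu'$, hence $B_{\fu'}(\fcc(\fu'),100)\subset B_\fp(\fcc(\fp),100)$; in the remaining case $\scI(\fp)=\scI(\fu')$ one has $d_\fp=d_{\fu'}$ directly from \eqref{defdp}. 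In either case one exhibits an explicit common element of $B_\fu(\fcc(\fu),100)$ and $B_{\fu'}(\fcc(\fu'),100)$ — the frequency center of the appropriate tile among $\fp,\fu,\fu'$ — the verification reducing to the triangle inequality for $d_\fp$ around $\fcc(\fp)$ together with the trivial observation that each center lies in every ball of positive radius about itself. This yields $B_\fu(\fcc(\fu),100)\cap B_{\fu'}(\fcc(\fu'),100)\neq\emptyset$.

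Finally I would deduce $\scI(\fu)=\scI(\fu')$ by contradiction. If they were distinct, then by the comparability observed above one is strictly contained in the other; say $\scI(\fu)\subsetneq\scI(\fu')$ (the opposite case is identical with the roles of $\fu$ and $\fu'$ interchanged, using the $\fU_1(k,n,j)$-membership of $\fu'$). Since $\fu\in\fU_1(k,n,j)$, $\fu'\in\fC_1(k,n,j)$ and $\scI(\fu)$ is strictly contained in $\scI(\fu')$, the defining property of $\fU_1(k,n,j)$ in \eqref{defunkj} gives $B_\fu(\fcc(\fu),100)\cap B_{\fu'}(\fcc(\fu'),100)=\emptyset$, contradicting the previous paragraph. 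Hence $\scI(\fu)=\scI(\fu')$, and $d_\fu=d_{\fu'}$ by \eqref{defdp}.

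The main obstacle is the non-disjointness step. One must propagate the ball inclusions produced by \Cref{wiggle-order-3} while carefully tracking which of $\fp,\fu,\fu'$ carries the smaller cube — so that \Cref{monotone-cube-metrics} and the doubling estimates \eqref{firstdb}, \eqref{monotonedb} for the metrics $d_B$ get used in the correct direction — and then check that the constants $2,4,10,100,500$ combine so that a radius of at most $100$ suffices for the common point. The case split on whether $\scI(\fp)=\scI(\fu')$ is a minor but genuine nuisance, forced by the hypothesis $\scI(\fq)\neq\scI(\fq')$ in \eqref{eq-sc2}.
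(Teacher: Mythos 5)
Your first and last paragraphs are fine and match the paper, but the middle step — proving $B_\fu(\fcc(\fu),100)\cap B_{\fu'}(\fcc(\fu'),100)\neq\emptyset$ by exhibiting an explicit common point — has a genuine gap. The wiggle-order relations $2\fp\lesssim\fu$ and $10\fp\lesssim\fu'$, together with \Cref{wiggle-order-3}, give you inclusions of the form $B_\fu(\fcc(\fu),\cdot)\subset B_\fp(\fcc(\fp),\cdot)$ and $B_{\fu'}(\fcc(\fu'),\cdot)\subset B_\fp(\fcc(\fp),\cdot)$; unwinding these, all you control is the $d_\fp$-distances, e.g.\ $d_\fp(\fcc(\fp),\fcc(\fu))<2$, $d_\fp(\fcc(\fp),\fcc(\fu'))<10$, hence $d_\fp(\fcc(\fu),\fcc(\fu'))<12$. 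But to place any of $\fcc(\fp),\fcc(\fu),\fcc(\fu')$ in \emph{both} $B_\fu(\fcc(\fu),100)$ and $B_{\fu'}(\fcc(\fu'),100)$ you would need a bound on $d_\fu$ or $d_{\fu'}$, and \Cref{monotone-cube-metrics} only gives $d_\fp\le 2^{-95a}d_{\fu'}$ when $\scI(\fp)\subsetneq\scI(\fu')$ — i.e.\ the metric attached to the larger cube \emph{dominates} the one attached to the smaller cube. A small $d_\fp$-distance therefore does not translate into a small $d_\fu$- or $d_{\fu'}$-distance; it could be larger by a factor up to $2^{95a}$. Your argument only closes in the degenerate subcase $\scI(\fp)=\scI(\fu')$, where $d_\fp=d_{\fu'}$.

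The paper avoids this dead end entirely with a counting argument (the Fefferman trick). From \eqref{eq-Fefferman-trick0} and transitivity of $\lesssim$, every $\mathfrak{m}\in\mathfrak{M}(k,n)$ with $100\fu\lesssim\mathfrak{m}$ or $100\fu'\lesssim\mathfrak{m}$ also satisfies $100\fp\lesssim\mathfrak{m}$, so $\mathfrak{B}(\fu)\cup\mathfrak{B}(\fu')\subset\mathfrak{B}(\fp)$. If the two frequency balls were disjoint, then $\mathfrak{B}(\fu)$ and $\mathfrak{B}(\fu')$ would be disjoint (since $\fcc(\mathfrak{m})$ lies in the respective ball), forcing $|\mathfrak{B}(\fp)|\ge|\mathfrak{B}(\fu)|+|\mathfrak{B}(\fu')|\ge 2^{j+1}$, contradicting $\fp\in\fC_1(k,n,j)$. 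This is the step you are missing; no direct metric computation can replace it here.
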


\begin{proof}
    \leanok
    Let $\fu, \fu' \in \fU_2(k,n,j)$ with $\fu \sim \fu'$ and $\fu \ne \fu'$. Then there exists $\fp \in \fC_1(k,n,j)$ such that $\scI(\fp) \ne \scI(\fu)$ and $2 \fp \lesssim \fu$ and $10 \fp \lesssim \fu'$.
    Using \Cref{wiggle-order-3}, we deduce that
    \begin{equation}
        \label{eq-Fefferman-trick0}
        100 \fp\lesssim 100 \fu\,, \qquad 100 \fp \lesssim 100\fu'\,.
    \end{equation}
    Now suppose that $B_{\fu}(\fcc(\fu), 100)$ and $ B_{\fu'}(\fcc(\fu'), 100)$ are disjoint. Then $\mathfrak{B}(\fu)$ and $ \mathfrak{B}(\fu')$ are disjoint, but also $\mathfrak{B}(\fu) \subset \mathfrak{B}(\fp)$ and $\mathfrak{B}(\fu') \subset \mathfrak{B}(\fp)$, by \eqref{defbfp}, \eqref{wiggleorder} and \eqref{eq-Fefferman-trick0}.
    Hence,
    $$
        |\mathfrak{B}(\fp)| \geq |\mathfrak{B}(\fu)| + |\mathfrak{B}(\fu')| \geq 2^{j} + 2^j = 2^{j+1}\,,
    $$
    which contradicts $\fp \in \fC_1(k,n,j)$. Therefore we must have
    \begin{equation}
    \label{eq100cap}
        B_{\fu}(\fcc(\fu), 100) \cap B_{\fu'}(\fcc(\fu'), 100) \ne \emptyset\, .
    \end{equation}
    Since $\scI(\fp) \subset \scI(\fu)$ and $\scI(\fp) \subset \scI(\fu')$, the cubes $\scI(\fu)$ and $\scI(\fu')$ are nested.
    Combining this with \eqref{eq100cap} and definition \eqref{defunkj} of $\fU_1(k,n,j)$, we conclude that $\scI(\fu) = \scI(\fu')$.
\end{proof}

\begin{lemma}[equivalence relation]
\label{equivalence-relation}
\uses{relation-geometry}
\leanok
\lean{equivalenceOn_urel}
For each $k,n,j$, the relation $\sim$ on
$\fU_2(k,n,j)$ is an equivalence relation.
\end{lemma}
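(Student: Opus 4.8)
Reflexivity is immediate, since $\fu=\fu'$ is one of the two clauses defining $\sim$. For symmetry and transitivity the key input is \Cref{relation-geometry}: whenever $\fu\sim\fu'$ it gives $\scI(\fu)=\scI(\fu')$, so by \eqref{defdp} the cube metrics $d_\fu$ and $d_{\fu'}$ are literally the same metric on $\Mf$, and it gives a point in $B_{\fu}(\fcc(\fu),100)\cap B_{\fu'}(\fcc(\fu'),100)$, whence $d_\fu(\fcc(\fu),\fcc(\fu'))<200$. I will also use that every $\fu\in\fU_2(k,n,j)$ has $\mathfrak{T}_1(\fu)\neq\emptyset$ by definition of $\fU_2(k,n,j)$, and that any $\fq\in\mathfrak{T}_1(\fu)$ satisfies $\fq\in\fC_1(k,n,j)$, $\scI(\fq)\subsetneq\scI(\fu)$ and $2\fq\lesssim\fu$; in particular $\fcc(\fu)\in B_\fq(\fcc(\fq),2)$, and since $\scI(\fq)\subsetneq\scI(\fu)$, \Cref{monotone-cube-metrics} gives $d_\fq(\mfa,\mfb)\le 2^{-95a}d_\fu(\mfa,\mfb)$ for all $\mfa,\mfb\in\Mf$.

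For symmetry, let $\fu\sim\fu'$ with $\fu\neq\fu'$ and pick any $\fq\in\mathfrak{T}_1(\fu')$; note $\scI(\fq)\subsetneq\scI(\fu')=\scI(\fu)$. Given $\mfa\in B_\fu(\fcc(\fu),1)$, the triangle inequality through $\fcc(\fu')$ and $\fcc(\fu)$, together with $d_\fq(\fcc(\fq),\fcc(\fu'))<2$, $d_\fq(\fcc(\fu'),\fcc(\fu))\le 2^{-95a}d_{\fu'}(\fcc(\fu'),\fcc(\fu))<2^{-95a}\cdot 200$ and $d_\fq(\fcc(\fu),\mfa)\le 2^{-95a}d_\fu(\fcc(\fu),\mfa)<2^{-95a}$, yields
\[
    d_\fq(\fcc(\fq),\mfa)<2+2^{-95a}\cdot 201<10\,.
\]
Hence $B_\fu(\fcc(\fu),1)\subset B_\fq(\fcc(\fq),10)$, which together with $\scI(\fq)\subset\scI(\fu)$ means $10\fq\lesssim\fu$; since $\fq\in\mathfrak{T}_1(\fu')$ this gives $\fu'\sim\fu$. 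Transitivity is the same estimate run once more: if $\fu\sim\fu'$ and $\fu'\sim\fu''$ then \Cref{relation-geometry} gives $\scI(\fu)=\scI(\fu')=\scI(\fu'')$ and, via the triangle inequality, $d_\fu(\fcc(\fu),\fcc(\fu''))<400$; if $\fu=\fu''$ we are done, and otherwise, picking $\fq\in\mathfrak{T}_1(\fu)$ and $\mfa\in B_{\fu''}(\fcc(\fu''),1)$, the triangle inequality through $\fcc(\fu)$ and $\fcc(\fu'')$ gives $d_\fq(\fcc(\fq),\mfa)<2+2^{-95a}(400+1)<10$ exactly as above, so $10\fq\lesssim\fu''$ and hence $\fu\sim\fu''$.

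The statement carries essentially no difficulty beyond \Cref{relation-geometry}: the only thing to watch is that the defining condition for $\sim$ is worded asymmetrically, so symmetry and transitivity must be re-proved by exhibiting a witness tile for the reversed relation. The natural choice — any tile in the (nonempty) tree $\mathfrak{T}_1$ of the source top — works because the radius-$100$ frequency overlap supplied by \Cref{relation-geometry} is shrunk far below the radius-$10$ threshold by the gain $2^{-95a}$ in \Cref{monotone-cube-metrics}, the latter being applicable precisely because the spatial cubes of $\fu,\fu',\fu''$ all coincide and strictly contain $\scI(\fq)$.
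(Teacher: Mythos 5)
Your proof is correct and follows essentially the same route as the paper: reflexivity is by fiat, and for symmetry and transitivity you invoke \Cref{relation-geometry} to align the spatial cubes (so $d_\fu = d_{\fu'} = d_{\fu''}$) and obtain the radius-$100$ frequency overlap, then use \Cref{monotone-cube-metrics} to shrink distances below the threshold $10$ when passing to a witness tile in $\mathfrak{T}_1$. The only cosmetic difference is that the paper packages part of the shrinking estimate into \Cref{wiggle-order-3} (the implication $2\fq\lesssim\fq' \Rightarrow 4\fq\lesssim 500\fq'$) whereas you carry out the triangle-inequality-plus-$2^{-95a}$ bound inline; the underlying arithmetic is the same.
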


\begin{proof}
    \leanok
    Reflexivity holds by definition.
    For transitivity, pick pairwise distinct
    $\fu, \fu', \fu'' \in \fU_1(k,n,j)$
    with $\fu \sim \fu'$, $\fu' \sim \fu''$.
    By \Cref{relation-geometry}, it follows that $\scI(\fu) =\scI(\fu') = \scI(\fu'')$, that there exists
    \begin{equation*}
        \mfa \in B_{\fu}(\fcc(\fu), 100) \cap B_{\fu'}(\fcc(\fu'), 100)
    \end{equation*}
    and that there exists
    \begin{equation*}
        \mfb \in B_{\fu'}(\fcc(\fu'), 100) \cap B_{\fu''}(\fcc(\fu''), 100)\, .
    \end{equation*}
    We now estimate for $q \in B_{\fu''}(\fcc(\fu''), 1)$
    \begin{align*}
        d_{\fu}(\fcc(\fu), q) &\le d_{\fu}(\fcc(\fu), \mfa) + d_{\fu}(\mfa, \fcc(\fu'))\\
        &\quad+ d_{\fu}(\fcc(\fu'), \mfb) + d_{\fu}(\mfb, \fcc(\fu'')) +
        d_{\fu}(\fcc(\fu''), q)\,.
    \end{align*}
    Using \eqref{defdp} and the fact that $\scI(\fu) = \scI(\fu') = \scI(\fu'')$ this equals
    \begin{multline*}
        d_{\fu}(\fcc(\fu), \mfa) + d_{\fu'}(\mfa, \fcc(\fu'))+ d_{\fu'}(\fcc(\fu'), \mfb) + d_{\fu''}(\mfb, \fcc(\fu'')) +
        d_{\fu''}(\fcc(\fu''), q)\\
        < 100 + 100 + 100 + 100 + 1 < 500\,.
    \end{multline*}
    Since $\fu \sim \fu'$, there exists some $\fp \in \mathfrak{T}_1(\fu)$ with $10\fp\lesssim \fu'$.
    By \eqref{eq-sc3}, $\fp \in \mathfrak{T}_1(\fu)$ implies
    $4 \fp \lesssim 500 \fu$, from which it follows that $d_{\fp}(\fcc(\fp), q) < 4 < 10$. We have shown that $B_{\fu''}(\fcc(\fu''), 1) \subset B_{\fp}(\fcc(\fp), 10)$, combining this with $\scI(\fu'') = \scI(\fu)$ gives $\fu \sim \fu''$.

    For symmetry suppose that $\fu \sim \fu'$, so $\scI(\fu) = \scI(\fu')$ and there exists $\mfa \in B_{\fu}(\fcc(\fu), 100) \cap B_{\fu'}(\fcc(\fu'), 100)$. We may assume that $\fu \ne \fu'$. There exists $\fp \in \mathfrak{T}_1(\fu')$, which then satisfies $2\fp\lesssim \fu'$ and $\scI(\fp) \neq \scI(\fu')$. By \eqref{eq-sc3}
    \begin{equation}
        \label{eq-rel1}
        4\fp \lesssim 500 \fu'\,.
    \end{equation}
    If $q \in B_{\fu}(\fcc(\fu),1)$ then we have, using that $\scI(\fu) = \scI(\fu')$ and hence $d_\fu = d_{\fu'}$:
    \begin{align*}
        d_{\fu'}(\fcc(\fu'), q) &\le  d_{\fu'}(\fcc(\fu'), \mfa) + d_{\fu}(\mfa, \fcc(\fu)) + d_{\fu}(\fcc(\fu), q) < 500\,.
    \end{align*}
    Combined with \eqref{eq-rel1} we obtain $B_{\fu}(\fcc(\fu), 1) \subset B_{\fp}(\fcc(\fp), 4)$, so
    \begin{equation}
        \label{eqcheck1}
        10\fp \lesssim 4\fp \lesssim \fu
    \end{equation}
    and consequently $\fu' \sim \fu$.
\end{proof}

Choose a set $\fU_3(k,n,j)$ of representatives for the equivalence
classes of $\sim$ in $\fU_2(k,n,j)$.
Define for each $\fu\in \fU_3(k,n,j)$
\begin{equation}\label{definesv}
\fT_2(\fu):=
   \bigcup_{\fu\sim \fu'}\mathfrak{T}_1(\fu')\cap \fC_6(k,n,j)\, .
\end{equation}
It is straightforward to check that each $\fT_2(\fu)$ is convex, meaning \eqref{forest2} holds. By construction
\begin{equation*}
    \fC_6(k,n,j)=\bigcup_{\fu\in \fU_3(k,n,j)}\mathfrak{T}_2(\fu)\, .
\end{equation*}
We now check that $(\fC_6(k,n,j), \fT_2)$ satisfies also the forest properties \eqref{forest1}, \eqref{forest4}, \eqref{forest5} and \eqref{forest6}.

\begin{lemma}[forest geometry]
    \label{forest-geometry}
    \uses{relation-geometry}
    \leanok
    \lean{forest_geometry}
    For each $\fu\in \fU_3(k,n,j)$,
    the set $\mathfrak{T}_2(\fu)$
    satisfies \eqref{forest1}.
\end{lemma}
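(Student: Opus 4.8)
The plan is to unwind the definition \eqref{definesv} of $\fT_2(\fu)$ and reduce everything to facts already in hand. Given $\fp \in \fT_2(\fu)$, there is by construction a $\fu' \in \fU_2(k,n,j)$ with $\fu \sim \fu'$ and $\fp \in \mathfrak{T}_1(\fu') \cap \fC_6(k,n,j)$; in particular $\scI(\fp) \ne \scI(\fu')$ and $2\fp \lesssim \fu'$. Applying \Cref{relation-geometry} to $\fu \sim \fu'$ gives $\scI(\fu) = \scI(\fu')$, which already yields the first assertion of \eqref{forest1}, that $\scI(\fp) \ne \scI(\fu)$; it also produces a modulation function $\mfa \in B_\fu(\fcc(\fu),100) \cap B_{\fu'}(\fcc(\fu'),100)$.

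To obtain $4\fp \lesssim \fu$, I would first observe that $\scI(\fu) = \scI(\fu')$ forces $\pc(\fu) = \pc(\fu')$ and $\ps(\fu) = \ps(\fu')$ via \eqref{tilecenter} and \eqref{tilescale}, hence $d_\fu = d_{\fu'}$ by \eqref{defdp}. The triangle inequality applied to this common metric and the point $\mfa$ then gives $d_\fu(\fcc(\fu),\fcc(\fu')) < 200$. Next, since $2\fp \lesssim \fu'$ and $\scI(\fp) \ne \scI(\fu')$, implication \eqref{eq-sc3} of \Cref{wiggle-order-3} upgrades this to $4\fp \lesssim 500\fu'$, that is, $\scI(\fp) \subset \scI(\fu')$ and $B_{\fu'}(\fcc(\fu'),500) \subset B_\fp(\fcc(\fp),4)$. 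Finally, for any $q \in B_\fu(\fcc(\fu),1)$ one has $d_{\fu'}(\fcc(\fu'),q) = d_\fu(\fcc(\fu'),q) \le d_\fu(\fcc(\fu'),\fcc(\fu)) + d_\fu(\fcc(\fu),q) < 200 + 1 < 500$, so $q \in B_{\fu'}(\fcc(\fu'),500) \subset B_\fp(\fcc(\fp),4)$. Thus $B_\fu(\fcc(\fu),1) \subset B_\fp(\fcc(\fp),4)$, and together with $\scI(\fp) \subset \scI(\fu') = \scI(\fu)$ this is exactly the relation $4\fp \lesssim \fu$, as required.

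The whole argument is bookkeeping on top of two nontrivial inputs already proved: \Cref{relation-geometry} (which encapsulates Fefferman's counting trick and is what pins down $\scI(\fu) = \scI(\fu')$) and the dilation implication \eqref{eq-sc3}. The only subtlety worth flagging is keeping track of which cube metric is in play: the hypothesis $2\fp \lesssim \fu'$ and the derived $4\fp \lesssim 500\fu'$ live with respect to $d_{\fu'}$, whereas the target \eqref{forest1} is phrased with $d_\fu$, and the identification $d_\fu = d_{\fu'}$ is available precisely because \Cref{relation-geometry} gives $\scI(\fu) = \scI(\fu')$. The case $\fu = \fu'$ needs no separate treatment, since then $\mfa = \fcc(\fu)$ and $d_\fu(\fcc(\fu),\fcc(\fu')) = 0$, and the same chain of inequalities goes through. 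Beyond this I do not anticipate any real difficulty.
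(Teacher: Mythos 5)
Your proof is correct and follows the same route as the paper's: the paper simply invokes ``the proof of \eqref{eqcheck1}'' (in the symmetry part of Lemma \ref{equivalence-relation}), which is exactly the argument you have written out in full, and then uses \Cref{relation-geometry} to deduce $\scI(\fp) \ne \scI(\fu)$. The only cosmetic difference is that you triangulate $d_\fu(\fcc(\fu),\fcc(\fu'))$ into two terms before bounding $d_{\fu'}(\fcc(\fu'),q)$, whereas the paper does it in one three-term chain; both yield the same numerical bound.
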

\begin{proof}
    \leanok
    Let $\fp \in \mathfrak{T}_2(\fu)$. By \eqref{definesv}, there exists $\fu' \sim \fu$ with $\fp \in \mathfrak{T}_1(\fu')$.
    The proof of \eqref{eqcheck1} shows that $4\fp \lesssim \fu$. Also $\scI(\fp) \ne \scI(\fu')$ and by \Cref{relation-geometry} $\scI(\fu) = \scI(\fu')$, so $\scI(\fp) \ne \scI(\fu)$.
\end{proof}

\begin{lemma}[forest separation]
    \label{forest-separation}
    \uses{monotone-cube-metrics}
    \leanok
    \lean{forest_separation}
    For each $\fu,\fu'\in \fU_3(k,n,j)$ with $\fu\neq \fu'$ and each $\fp \in \fT_2(\fu)$
    with $\scI(\fp)\subset \scI(\fu')$ we have
    \begin{equation*}
    d_{\fp}(\fcc(\fp), \fcc(\fu')) > 2^{Z(n+1)}\,.
    \end{equation*}
\end{lemma}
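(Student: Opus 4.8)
The plan is a Fefferman-style tree-separation argument: instead of estimating $d_{\fp}(\fcc(\fp),\fcc(\fu'))$ directly, one transfers a weak, constant-scale non-containment from a deep $\fC_1(k,n,j)$-descendant $\fp_0$ of $\fp$ and amplifies it over $Z(n+1)+1$ dyadic scales using the quantitative monotonicity \Cref{monotone-cube-metrics}. First I unpack the hypotheses. By \eqref{definesv} there is $\fv\in\fU_2(k,n,j)$ with $\fv\sim\fu$ and $\fp\in\mathfrak{T}_1(\fv)\cap\fC_6(k,n,j)$; in particular $2\fp\lesssim\fv$, so (recalling \eqref{wiggleorder}) $\scI(\fp)\subset\scI(\fv)$ and $B_{\fv}(\fcc(\fv),1)\subset B_{\fp}(\fcc(\fp),2)$. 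Since $\fu\neq\fu'$ are distinct members of the system of representatives $\fU_3(k,n,j)$ we have $\fu\not\sim\fu'$, and as $\sim$ is an equivalence relation (\Cref{equivalence-relation}) it follows that $\fv\neq\fu'$ and $\fv\not\sim\fu'$; by the definition of $\sim$ this means that \emph{no} tile $\fq\in\mathfrak{T}_1(\fv)$ satisfies $10\fq\lesssim\fu'$.

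Next I extract the deep descendant. Since $\fp\in\fC_6(k,n,j)\subset\fC_2(k,n,j)$, and $\fC_2(k,n,j)$ is obtained from $\fC_1(k,n,j)$ by deleting its $Z(n+1)+1$ successive minimal $\le$-layers $\fL_1(k,n,j,l)$, a rank count produces a strictly $\le$-increasing chain $\fp_0<\fp_1<\dots<\fp_{Z(n+1)}<\fp$ with each $\fp_i\in\fC_1(k,n,j)$. Strictness of $\le$ forces strict cube inclusions $\scI(\fp_0)\subsetneq\dots\subsetneq\scI(\fp)$ (equal cubes would make the tiles equal by \eqref{dyadicproperty}, \eqref{injective}, \eqref{eq-dis-freq-cover}), so, recalling $d_{\fp}=d_{\scI(\fp)^{\circ}}$ from \eqref{defdp}, \eqref{defineIsupo}, iterating \Cref{monotone-cube-metrics} $Z(n+1)+1$ times gives
\begin{equation*}
    d_{\fp_0}(\mfa,\mfb)\ \le\ 2^{-95a(Z(n+1)+1)}\,d_{\fp}(\mfa,\mfb)\qquad(\mfa,\mfb\in\Mf).
\end{equation*}
Moreover $\fp_0\le\fp$ (transitivity of $\le$) gives $\fc(\fp)\subset\fc(\fp_0)$, so by \eqref{eq-freq-comp-ball} $\fcc(\fp)\in\fc(\fp)\subset\fc(\fp_0)\subset B_{\fp_0}(\fcc(\fp_0),1)$, i.e.\ $d_{\fp_0}(\fcc(\fp_0),\fcc(\fp))<1$.

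Then I show $\fp_0\in\mathfrak{T}_1(\fv)$. For $\mfa\in B_{\fv}(\fcc(\fv),1)\subset B_{\fp}(\fcc(\fp),2)$, the two displayed estimates give
\begin{equation*}
    d_{\fp_0}(\fcc(\fp_0),\mfa)\ \le\ d_{\fp_0}(\fcc(\fp_0),\fcc(\fp))+2^{-95a}\,d_{\fp}(\fcc(\fp),\mfa)\ <\ 1+2^{1-95a}\ <\ 2\,,
\end{equation*}
so $B_{\fv}(\fcc(\fv),1)\subset B_{\fp_0}(\fcc(\fp_0),2)$; together with $\scI(\fp_0)\subsetneq\scI(\fv)$ and $\fp_0\in\fC_1(k,n,j)$ this is exactly $2\fp_0\lesssim\fv$ with $\scI(\fp_0)\neq\scI(\fv)$, i.e.\ $\fp_0\in\mathfrak{T}_1(\fv)$. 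Hence, by the first paragraph, $10\fp_0\lesssim\fu'$ does not hold.

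Finally I argue by contradiction: suppose $d_{\fp}(\fcc(\fp),\fcc(\fu'))\le 2^{Z(n+1)}$. Since $\scI(\fp)\subset\scI(\fu')$, \Cref{monotone-cube-metrics} gives $d_{\fp}\le d_{\fu'}$, so every $\mfa\in B_{\fu'}(\fcc(\fu'),1)$ satisfies $d_{\fp}(\fcc(\fp),\mfa)\le 2^{Z(n+1)}+d_{\fu'}(\fcc(\fu'),\mfa)<2^{Z(n+1)}+1$, and therefore
\begin{equation*}
    d_{\fp_0}(\fcc(\fp_0),\mfa)\ \le\ d_{\fp_0}(\fcc(\fp_0),\fcc(\fp))+2^{-95a(Z(n+1)+1)}\,d_{\fp}(\fcc(\fp),\mfa)\ <\ 1+2^{-95a(Z(n+1)+1)}\bigl(2^{Z(n+1)}+1\bigr)\ <\ 2\ <\ 10\,,
\end{equation*}
the penultimate inequality because $a\ge4$ forces $2^{-95a(Z(n+1)+1)}(2^{Z(n+1)}+1)\le 2^{(1-95a)(Z(n+1)+1)}\le 2^{-379}<1$. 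Thus $B_{\fu'}(\fcc(\fu'),1)\subset B_{\fp_0}(\fcc(\fp_0),10)$, and with $\scI(\fp_0)\subset\scI(\fu')$ this means $10\fp_0\lesssim\fu'$, contradicting the previous step. Hence $d_{\fp}(\fcc(\fp),\fcc(\fu'))>2^{Z(n+1)}$. The only genuinely non-routine point is recognizing that the large target $2^{Z(n+1)}$ is exactly what the minimal-layer pruning $\fC_1\to\fC_2$ makes available: it guarantees $Z(n+1)+1$ descended scales, and \Cref{monotone-cube-metrics} converts each into a factor $2^{95a}$, which beats $2^{Z(n+1)}$ with room to spare since $95a\gg1$; the $\sim$-bookkeeping, the chain extraction, and the downward stability of $\mathfrak{T}_1$ along $\le$ are routine.
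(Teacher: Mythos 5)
Your proof is correct and follows essentially the same route as the paper's: extract a deep $\fC_1(k,n,j)$-descendant of $\fp$ (the paper cites a single tile $\fp'$ with $\ps(\fp')\le\ps(\fp)-Z(n+1)$, you build the full chain $\fp_0<\dots<\fp_{Z(n+1)}<\fp$), verify it lies in $\mathfrak{T}_1(\fv)$ for the representative $\fv\sim\fu$ with $\fp\in\mathfrak{T}_1(\fv)$, use $\fu\not\sim\fu'$ to conclude $10\fp_0\not\lesssim\fu'$, and amplify via \Cref{monotone-cube-metrics}. The only cosmetic difference is that the paper argues directly — it picks a witness $q\in B_{\fu'}(\fcc(\fu'),1)\setminus B_{\fp'}(\fcc(\fp'),10)$ and computes $d_{\fp'}(\fcc(\fp),\fcc(\fu'))>8$ before scaling up — whereas you phrase the same computation as a proof by contradiction.
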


\begin{proof}
    \leanok
    By the definition \eqref{eq-C2-def} of $\fC_2(k,n,j)$, there exists a tile $\fp' \in \fC_1(k,n,j)$ with $\fp' \le \fp$ and $\ps(\fp') \le \ps(\fp)- Z(n+1)$.
    By \Cref{monotone-cube-metrics}, we have
    \begin{equation}
        \label{eq big}
        d_{\fp}(\fcc(\fp), \fcc(\fu')) \ge 2^{95a Z(n+1)} d_{\fp'}(\fcc(\fp), \fcc(\fu'))\,.
    \end{equation}
    Since $\fp \in \fT_2(\fu)$, there exists $\mathfrak{v} \sim \fu$ with $2\fp' \lesssim 2\fp \lesssim \mathfrak{v}$ and $\scI(\fp') \ne \scI(\mathfrak{v})$. Since $\fu, \fu'$ are not equivalent under $\sim$, neither are $\mathfrak{v}$ and $\fu'$, thus $10\fp' \not\lesssim \fu'$. This implies that there exists $q \in B_{\fu'}(\fcc(\fu'), 1) \setminus B_{\fp'}(\fcc(\fp'), 10)$.

    From $\fp' \le \fp$ and $\scI(\fp') \subset \scI(\fp) \subset \scI(\fu')$ and \Cref{monotone-cube-metrics} it then follows that
    \begin{align*}
        d_{\fp'}(\fcc(\fp), \fcc(\fu'))
        &\ge -d_{\fp'}(\fcc(\fp), \fcc(\fp')) + d_{\fp'}(\fcc(\fp'), q) - d_{\fu'}(q, \fcc(\fu'))\\
        &> -1 + 10 - 1 = 8\,.
    \end{align*}
    Combining this with \eqref{eq big} completes the proof.
\end{proof}

\begin{lemma}[forest inner]
    \label{forest-inner}
    \uses{relation-geometry}
    \leanok
    \lean{forest_inner}
    For each $\fu\in \fU_3(k,n,j)$
    and each $\fp \in \mathfrak{T}_2(\fu)$
    we have
    \begin{equation*}
        B(\pc(\fp), 8 D^{\ps(\fp)}) \subset \scI(\fu).
    \end{equation*}
\end{lemma}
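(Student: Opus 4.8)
\emph{Overview.} The plan is to argue by contradiction: assume $B(\pc(\fp), 8D^{\ps(\fp)}) \not\subset \scI(\fu)$, and exhibit a grid cube that forces $\fp \in \fL_4(k,n,j)$, contradicting $\fp \in \mathfrak{T}_2(\fu) \subset \fC_6(k,n,j) \subset \fC_5(k,n,j) = \fC_4(k,n,j) \setminus \fL_4(k,n,j)$; recall $\fC_6(k,n,j) \subset \fC_5(k,n,j) \subset \dots \subset \fC_1(k,n,j)$. First I would unpack membership: since $\fp \in \mathfrak{T}_2(\fu)$ there is $\fu' \sim \fu$ with $\fp \in \mathfrak{T}_1(\fu') \cap \fC_6(k,n,j)$, so $2\fp \lesssim \fu'$, $\scI(\fp) \neq \scI(\fu')$, and by \Cref{relation-geometry} $\scI(\fu') = \scI(\fu)$; hence $\scI(\fp) \subsetneq \scI(\fu)$.

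\emph{The scale estimate $\ps(\fp) \le \ps(\fu) - Z(n+1) - 1$.} Since $\fC_4(k,n,j)$ arises from $\fC_3(k,n,j)$ by deleting its $Z(n+1)+1$ topmost $\le$-layers, from $\fp \in \fC_4(k,n,j)$ one obtains a $\le$-chain of length $Z(n+1)+1$ above $\fp$ inside $\fC_3(k,n,j)$; its top is a tile $\fq \in \fC_3(k,n,j)$ with $\fp \le \fq$ and $\ps(\fq) \ge \ps(\fp) + Z(n+1) + 1$. I claim $\scI(\fq) \subset \scI(\fu)$. If not, then since $\scI(\fp)$ lies in both $\scI(\fq)$ and $\scI(\fu)$, \eqref{dyadicproperty} makes these cubes nested, so $\scI(\fu) \subsetneq \scI(\fq)$. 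As $\fq \in \fC_3(k,n,j) = \fC_2(k,n,j) \setminus \fL_2(k,n,j)$, the definition of $\fL_2(k,n,j)$ gives $\fv \in \fU_1(k,n,j)$ with $\scI(\fq) \subsetneq \scI(\fv)$ and $2\fq \lesssim \fv$. Applying \eqref{eq-sc1} to $\fp \le \fq$ gives $2\fp \lesssim 2\fq$, and composing with $2\fq \lesssim \fv$ gives $2\fp \lesssim \fv$, hence also $10\fp \lesssim \fv$; with $\scI(\fp) \subsetneq \scI(\fv)$ this yields $\fp \in \mathfrak{T}_1(\fv)$. Since $\fp \in \fC_6(k,n,j)$, this forces $\fv \in \fU_2(k,n,j)$, and since $\fp \in \mathfrak{T}_1(\fu')$ with $10\fp \lesssim \fv$, the definition of $\sim$ gives $\fu' \sim \fv$. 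By transitivity (\Cref{equivalence-relation}) $\fu \sim \fv$, so \Cref{relation-geometry} forces $\scI(\fu) = \scI(\fv)$, contradicting $\scI(\fu) \subsetneq \scI(\fv)$. Thus $\scI(\fq) \subset \scI(\fu)$, whence $\ps(\fq) \le \ps(\fu)$ and the scale estimate follows.

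\emph{Conclusion.} Assume $B(\pc(\fp), 8D^{\ps(\fp)}) \not\subset \scI(\fu)$ and set $k_0 := \ps(\fu) - Z(n+1) - 1$, so $-S \le \ps(\fp) \le k_0 < \ps(\fu)$ by the scale estimate. Combining \eqref{coverdyadic} for $\scI(\fu)$ at scale $k_0$ with \eqref{dyadicproperty} yields a unique $I \in \mathcal{D}$ with $s(I) = k_0$ and $\scI(\fp) \subset I$, and moreover $I \subset \scI(\fu)$. If $\ps(\fp) = k_0$, then $\scI(\fp) = I$, so $\pc(\fp) = c(I)$ by \eqref{tilecenter} and $B(c(I), 8D^{s(I)}) = B(\pc(\fp), 8D^{\ps(\fp)}) \not\subset \scI(\fu)$. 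If $\ps(\fp) < k_0$, then by \eqref{eq-vol-sp-cube} $\pc(\fp) = c(\scI(\fp)) \in \scI(\fp) \subset I \subset B(c(I), 4D^{s(I)})$, and since $8D^{\ps(\fp)} \le 8D^{k_0-1} \le 4D^{k_0}$ (as $D \ge 2$) we get $B(\pc(\fp), 8D^{\ps(\fp)}) \subset B(c(I), 8D^{s(I)})$, so again $B(c(I), 8D^{s(I)}) \not\subset \scI(\fu)$. In both cases $I \in \mathcal{L}(\fu)$, hence $\scI(\fp) \subset \bigcup \mathcal{L}(\fu)$; since $\fu \in \fU_3(k,n,j) \subset \fU_1(k,n,j)$, this means $\fp \in \fL_4(k,n,j)$, contradicting $\fp \in \fC_5(k,n,j)$. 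Therefore $B(\pc(\fp), 8D^{\ps(\fp)}) \subset \scI(\fu)$.

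\emph{Main obstacle.} The hard part is the scale estimate, and within it the claim that the chain of $\le$-ancestors of $\fp$ in $\fC_3(k,n,j)$ does not leave $\scI(\fu)$: this is exactly where one must simultaneously use the Fefferman-type equivalence relation $\sim$ on prospective tree tops, its transitivity, \Cref{relation-geometry}, and the wiggle-order inflation \eqref{eq-sc1}.
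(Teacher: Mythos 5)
Your proof is correct and takes essentially the same route as the paper: both derive the scale gap $\ps(\fp) \le \ps(\fu) - Z(n+1) - 1$ from the removal of the $Z(n+1)+1$ maximal layers in the construction of $\fC_4$ together with the definition of $\fC_3$, transitivity of $\sim$, and \Cref{relation-geometry}, and then run the resulting intermediate cube at scale $\ps(\fu) - Z(n+1) - 1$ through the definitions of $\mathcal{L}(\fu)$ and $\fL_4$ and the membership $\fp \in \fC_5$. The only differences are cosmetic: you frame the whole argument as a contradiction where the paper argues directly, and you balance a slightly sharper chain count ($\ps(\fq) \ge \ps(\fp) + Z(n+1)+1$) against the non-strict $\ps(\fq) \le \ps(\fu)$ where the paper pairs the weaker count $\ps(\fq) \ge \ps(\fp) + Z(n+1)$ with the strict $\ps(\fq) < \ps(\fu'')$ coming directly from $\fC_3$.
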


\begin{proof}
    \leanok
    Let $\fp \in \mathfrak{T}_2(\fu)$, so in particular $\fp \in \fC_4(k,n,j)$. By the definition \eqref{eq-C4-def} of $\fC_4(k,n,j)$, there exists a tile $\fq \in \fC_3(n,k,j)$ with $\fp \le \fq$ and
    $\ps(\fp) \le \ps(\fq) - Z(n+1)$.
    By the definition \eqref{eq-C3-def} of $\fC_3(n,k,j)$, there exists $\fu'' \in \fU_1(k,n,j)$ with $2\fq \lesssim \fu''$ and $\ps(\fq) < \ps(\fu'')$.
    Then we have in particular that $10 \fp \lesssim \fu''$, so, using transitivity of $\sim$, we have $\fu \sim \fu''$.
    \Cref{relation-geometry} shows that $\scI(\fu'') = \scI(\fu)$, hence $\ps(\fq) < \ps(\fu)$ and $\ps(\fp) \le \ps(\fq) - Z(n+1) \le \ps(\fu) - Z(n+1) - 1$.

    Let $I \in \mathcal{D}$ be the cube with $s(I) = \ps(\fu) - Z(n+1) - 1$ and $I \subset \scI(\fu)$ and $\scI(\fp) \subset I$.
    Since $\fp \in \fC_5(k,n,j)$, we have that $I \notin \mathcal{L}(\fu)$, so $B(c(I), 8D^{s(I)}) \subset \scI(\fu)$.
    The same then holds for the subcube $\scI(\fp) \subset I$.
\end{proof}

It remains to prove the final forest property \eqref{forest3}. This is accomplished by the following lemma, which implies that $\fC_6(k,n,j)$ is the union of at most $4n+12$ sub-collections satisfying \eqref{forest3}, which are then $n$-forests.

\begin{lemma}[forest stacking]
    \label{forest-stacking}
    \leanok
    \lean{forest_stacking}
    It holds for $k\le n$ that
    \begin{equation*}
        \sum_{\fu \in \fU_3(k,n,j)} \mathbf{1}_{\scI(\fu)} \le (4n+12)2^{n}\,.
    \end{equation*}
\end{lemma}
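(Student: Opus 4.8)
The plan is to prove the stated bound pointwise, so fix $x\in X$ and put $\mathcal U_x:=\{\fu\in\fU_3(k,n,j):x\in\scI(\fu)\}$; it suffices to show $|\mathcal U_x|\le(4n+12)2^n=(2n+6)2^{n+1}$. Since all cubes $\scI(\fu)$, $\fu\in\mathcal U_x$, contain $x$, they are pairwise nested by \eqref{dyadicproperty}, hence occur in a finite chain $I^{(1)}\subsetneq\dots\subsetneq I^{(t)}$; for each $s$ fix one $\fu^{(s)}\in\mathcal U_x$ with $\scI(\fu^{(s)})=I^{(s)}$, write $m_s$ for the number of $\fu\in\mathcal U_x$ with $\scI(\fu)=I^{(s)}$ (so $|\mathcal U_x|=\sum_s m_s$), and set $\fu_0:=\fu^{(1)}$.

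First I would bound the number $t$ of distinct cubes by Fefferman's separation trick through the sets $\mathfrak B(\cdot)$. For $\fu\in\mathcal U_x\subset\fC_1(k,n,j)$ one has $2^j\le|\mathfrak B(\fu)|$, and for each $\mathfrak m\in\mathfrak B(\fu)$ the relation $100\fu\lesssim\mathfrak m$ gives $\scI(\fu)\subset\scI(\mathfrak m)$ together with $\fcc(\mathfrak m)\in B_{\mathfrak m}(\fcc(\mathfrak m),1)\subset B_\fu(\fcc(\fu),100)$; in particular $\scI(\mathfrak m)\supset I^{(1)}$, so $\bigcup_{\fu\in\mathcal U_x}\mathfrak B(\fu)\subset\mathcal M_0:=\{\mathfrak m\in\mathfrak M(k,n):\scI(\mathfrak m)\supset I^{(1)}\}$. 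If $s<s'$ then $\scI(\fu^{(s)})\subsetneq\scI(\fu^{(s')})$ with $\fu^{(s')}\in\fC_1(k,n,j)$, so the defining property of $\fU_1(k,n,j)$ applied to $\fu^{(s)}\in\fU_1$ yields $B_{\fu^{(s)}}(\fcc(\fu^{(s)}),100)\cap B_{\fu^{(s')}}(\fcc(\fu^{(s')}),100)=\emptyset$; a common element of $\mathfrak B(\fu^{(s)})$ and $\mathfrak B(\fu^{(s')})$ would put $\fcc(\mathfrak m)$ in both balls, so $\mathfrak B(\fu^{(1)}),\dots,\mathfrak B(\fu^{(t)})$ are pairwise disjoint nonempty subsets of $\mathcal M_0$ and $t\le|\mathcal M_0|$. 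To estimate $|\mathcal M_0|$, note $\fu_0\in\fU_3\subset\fP'$ forces $I^{(1)}=\scI(\fu_0)\not\subset G'$, and since $k\le n$ we have $A(2n+6,k,n)\subset G_2\subset G'$, so some $y_0\in I^{(1)}$ lies outside $A(2n+6,k,n)$, that is $\sum_{\mathfrak m\in\mathfrak M(k,n)}\mathbf 1_{\scI(\mathfrak m)}(y_0)\le(2n+6)2^{n+1}$; as every $\mathfrak m\in\mathcal M_0$ contains $y_0$, this gives $t\le|\mathcal M_0|\le(2n+6)2^{n+1}=(4n+12)2^n$.

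Finally I would bound the per-cube multiplicities $m_s$, aiming to show $m_s=1$. If $\fu\ne\fu'$ in $\fU_3$ share a cube $I$, then $\fu\not\sim\fu'$, and choosing a witness $\fp\in\fT_1(\fu)\cap\fC_6$ (available since $\fu\in\fU_2$) and unwinding $10\fp\not\lesssim\fu'$ exactly as in the proofs of Lemmas \ref{relation-geometry} and \ref{equivalence-relation} forces $d_{I^\circ}(\fcc(\fu),\fcc(\fu'))$ to exceed an absolute constant; combined with the disjointness of the frequency cells $\fc(\fq)$, $\fq\in\fP(I)$, from \eqref{eq-dis-freq-cover} and the geometric doubling \eqref{thirddb}, this should pin down the number of such $\fu$, which the constants should be arranged so as to be $1$. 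This last point is the main obstacle: one must genuinely exploit that $\fU_3$ selects exactly one representative per $\sim$-class, since passing instead through $\fU_1$ and estimating $\sum_{\fu\in\fU_1}\mathbf 1_{\scI(\fu)}$ by Lemma \ref{tree-count} directly carries a factor $2^{9a}$ for which the clean bound $(4n+12)2^n$ leaves no room when $j$ is small.
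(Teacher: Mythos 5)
Your first two steps track the paper's argument closely: bound the number of cubes through the pairwise disjointness of the sets $\mathfrak{B}(\cdot)$, and control the size of the relevant set of top tiles using that avoidance of the exceptional set $A(2n+6,k,n)\subset G_2$ caps the stacking of the $\scI(\mathfrak{m})$. Two small points there. First, the inclusion "$\fU_3(k,n,j)\subset\fP'$" is true but not definitional; it uses that any $\fu\in\fU_2(k,n,j)$ has a witness $\fp\in\fT_1(\fu)\cap\fC_6(k,n,j)$ with $\scI(\fp)\subsetneq\scI(\fu)$ and $\scI(\fp)\not\subset G'$, which forces $\scI(\fu)\not\subset G'$. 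Second, in the chain you should note explicitly that every $\mathfrak{m}\in\mathcal M_0$ satisfies $\scI(\mathfrak m)\supset I^{(1)}\ni y_0$, so that $|\mathcal M_0|$ is indeed bounded by the $\mathbf 1_{\scI(\mathfrak m)}$-count at $y_0$.

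The genuine gap is the third step, which you yourself flag. The claim $m_s=1$ is neither needed nor true: two distinct $\fu,\fu'\in\fU_3(k,n,j)$ with $\scI(\fu)=\scI(\fu')$ are perfectly possible whenever the balls $B_\fu(\fcc(\fu),100)$ and $B_{\fu'}(\fcc(\fu'),100)$ are disjoint in $\Mf$, since then no argument via $\sim$ (Lemma~\ref{relation-geometry} only runs one way) nor via \eqref{defunkj} (which concerns strictly nested cubes) prevents both from being selected as representatives. There is no constant to arrange that forces uniqueness per cube.

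What does close the argument is to drop the $t$-and-$m_s$ decomposition entirely and prove that the $\mathfrak{B}(\fu)$ are pairwise disjoint over all $\fu\in\mathcal U_x$, not just over the chosen $\fu^{(s)}$. Combined with your bound $|\mathcal M_0|\le(4n+12)2^n$, this gives $|\mathcal U_x|\le|\mathcal M_0|$ directly. The nested-cube case you already handled via \eqref{defunkj}. For $\fu\ne\fu'$ with $\scI(\fu)=\scI(\fu')$: if $\mathfrak m\in\mathfrak B(\fu)\cap\mathfrak B(\fu')$, then $100\fu\lesssim\mathfrak m$ and $100\fu'\lesssim\mathfrak m$ place $\fcc(\mathfrak m)$ in both balls $B_\fu(\fcc(\fu),100)$ and $B_{\fu'}(\fcc(\fu'),100)$, whence (since $d_\fu=d_{\fu'}$) $d_\fu(\fcc(\fu),\fcc(\fu'))<200$. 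Now take a witness $\fp\in\fT_1(\fu)\cap\fC_6(k,n,j)$ (exists since $\fu\in\fU_2$), so $2\fp\lesssim\fu$ and $\scI(\fp)\subsetneq\scI(\fu)=\scI(\fu')$; the triangle inequality and \Cref{monotone-cube-metrics} — exactly the computation in the symmetry part of the proof of \Cref{equivalence-relation} — give $B_{\fu'}(\fcc(\fu'),1)\subset B_\fp(\fcc(\fp),10)$, i.e.\ $10\fp\lesssim\fu'$, so $\fu\sim\fu'$. This contradicts that $\fU_3$ contains one representative per $\sim$-class. That is the point your write-up misses and is precisely where the representative structure of $\fU_3$ enters; the paper's own argument is the contradiction version of the same injectivity claim.
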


\begin{proof}
    \leanok
    Suppose to the contrary that a point $x$ is contained in more than $(4n + 12)2^n$ cubes $\scI(\fu)$ with $\fu \in \fU_3(k,n,j)$.
    Since $\fU_3(k,n,j) \subset \fC_1(k,n,j)$, for each such $\fu$,
    there exists $\mathfrak{m} \in \mathfrak{M}(k,n)$ such that $100\fu \lesssim \mathfrak{m}$.
    We fix such an $\mathfrak{m}(\fu) := \mathfrak{m}$ for each $\fu$, and claim that the map $\fu \mapsto\mathfrak{m}(\fu)$ is injective.
    Indeed, assume for $\fu\neq \fu'$ there is $\mathfrak{m} \in \mathfrak{M}(k,n)$ such that
    $100\fu \lesssim \mathfrak{m}$ and $100\fu' \lesssim \mathfrak{m}$. By \eqref{dyadicproperty},
    either $\scI(\fu) \subset \scI(\fu')$ or $\scI(\fu') \subset \scI(\fu)$.
    By \eqref{defunkj}, the balls $B_{\fu}(\fcc(\fu),100)$ and $B_{\fu'}(\fcc(\fu'), 100)$ are disjoint.
    This contradicts $\Omega(\mathfrak{m})$ being contained in both sets by \eqref{eq-freq-comp-ball}.
    Thus $x$ is contained in more than $(4n + 12)2^n$ cubes $\scI(\mathfrak{m})$, $\mathfrak{m} \in \mathfrak{M}(k,n)$.
    Consequently, we have by \eqref{eq-Aoverlap-def} that $x \in A(2n + 6, k,n) \subset G_2$.
    Let $\scI(\fu)$ be an inclusion minimal cube among the $\scI(\fu'), \fu' \in \fU_3(k,n,j)$ with $x \in \scI(\fu)$.
    It satisfies
    $$
        \scI(\fu) \subset \{y \ : \ \sum_{\fu \in \fU_3(k,n,j)} \mathbf{1}_{\scI(\fu)}(y) > 1 + (4n+12)2^{n}\} \subset G_2\,,
    $$
    thus $\mathfrak{T}_1(\fu) \cap \fC_6(k,n,j) = \emptyset$.
    This contradicts $\fu \in \fU_2(k,n,j)$.
\end{proof}

\subsection{Verification of the antichain property}
\label{subsecantichain}

We prove \Cref{forest-complement}. We first claim that the set on the left side of \eqref{eq-forest-complement} equals
\begin{equation}\label{eq-l-decomposition}
     \fP' \cap \Big[\bigcup_{k \ge 0} \bigcup_{n \ge k} \big[\fL_0(k,n) \cup \bigcup_{0 \le j \le 2n+3} \fL_2(k,n,j)\big] \end{equation}
\begin{equation*}
    \cup \bigcup_{k \ge 0} \bigcup_{n \ge k}\bigcup_{0 \le j \le 2n+3} \bigcup_{0 \le l \le Z(n+1)} \big[\fL_1(k,n,j,l) \cup \fL_3(k,n,j,l)\big]\Big]\,.
\end{equation*}
To see the claim, let $\fp \in \fP(k) \cap \fP'$. By \eqref{muhj2},
\[
    \mu(E_2(\lambda, \fp')) \le 2^{-k} \mu(\scI(\fp'))
\]
whenever $\fp' \in \fP(k)$ with $\scI(\fp) \subset \scI(\fp')$, so $\dens_k'(\{\fp\}) \le 2^{-k}$. Hence there exists $n\ge k$ with $\fp \in \fC(k,n)$. Also, since $\scI(\fp) \not \subset G'$, there exist at most $1 + (4n + 12)2^n < 2^{2n+4}$ tiles $\mathfrak{m} \in \mathfrak{M}(k,n)$ with $\fp \le \mathfrak{m}$. It follows that $\fp \in \fL_0(k,n)$ or $\fp \in \fC_1(k,n,j)$ for some $1 \le j \le 2n + 3$.
The claim now follows as
the construction of the collections $\fC_5(k,n,j)$ consists of removing from $\fC_1(k,n,j)$ the collections $\mathcal{L}_1, \mathcal{L}_2, \mathcal{L}_3$ and $\mathcal{L}_4$
with paremeters as claimed in \eqref{eq-l-decomposition}, where we have omitted $\mathcal{L}_4$ because all tiles $\fp \in \fL_4(k,n,j)$ satisfy $\scI(\fp) \subset G'$ and are therefore not contained in $\fP'$.

\Cref{dens-compare} implies the density estimate \eqref{eq-antichain1dens} for all terms in the decomposition.
Moreover, $\fL_1$ and $\fL_3$ where constructed as minimal or maximal sets of tiles, thus they are all antichains. It  remains to verify that $\fL_0$ and $\fL_2$ can also be decomposed into a small number of antichains.

\begin{lemma}[L0 antichain]
\label{L0-antichain}
\uses{monotone-cube-metrics}
\leanok
\lean{iUnion_L0', pairwiseDisjoint_L0', antichain_L0'}
    For all $k \ge 0$ and $n \ge k$, the set $\fL_0(k,n)$ is the union of at most $n$ antichains.
\end{lemma}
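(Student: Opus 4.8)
The plan is to show that the partial order $\le$ restricted to $\fL_0(k,n)$ has no chains of length $n+1$, and then invoke Dilworth-type reasoning (or rather the elementary ``chain covering'' fact) that a finite poset whose longest chain has length at most $n$ decomposes into at most $n$ antichains. The latter is standard and easy: assign to each tile $\fp$ the length of the longest chain in $\fL_0(k,n)$ with top element $\fp$; tiles with a common value form an antichain since a strict comparison would force different values, and the values lie in $\{1,\dots,n\}$. So the whole content is the chain length bound.

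To bound chain lengths, suppose $\fp_1 < \fp_2 < \dots < \fp_m$ is a strictly increasing chain in $\fL_0(k,n)$, so in particular $\scI(\fp_1) \subsetneq \dots \subsetneq \scI(\fp_m)$ (the spatial cubes must be strictly nested, since equal cubes would force equal tiles via \eqref{eq-dis-freq-cover} and the definition \eqref{straightorder} of $\le$). I would now use the density bookkeeping: each $\fp_i \in \fL_0(k,n) \subset \fC(k,n)$, so by \eqref{def-cnk} we have $\dens_k'(\{\fp_i\}) > 2^{4a}2^{-n}$. On the other hand, recall $\fP(k)$ is defined via the stopping condition \eqref{muhj1}, \eqref{muhj2}: for $\fp \in \fP(k)$, every grid cube $J \supset \scI(\fp)$ satisfies $\mu(G\cap J)\le 2^{-k}\mu(J)$. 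The key geometric input is the definition of $\fL_0(k,n)$ itself: $|\mathfrak{B}(\fp)| < 1$, i.e.\ $\mathfrak{B}(\fp) = \emptyset$, meaning there is \emph{no} $\mathfrak m \in \mathfrak M(k,n)$ with $100\fp \lesssim \mathfrak m$. So the strategy is: from the density lower bound on $\fp_i$, produce an $\mathfrak m \in \mathfrak M(k,n)$ related to $\fp_i$ (via maximality in the definition of $\mathfrak M(k,n)$ and the fact that $E_1 \subset E_2$ at appropriate scales), then show that the chain structure plus \eqref{eq-Aoverlap-def}-type overlap counting forces too many such $\mathfrak m$'s through a single point, contradicting the John--Nirenberg bound (\Cref{John-Nirenberg}) once $m > n$.

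More concretely, I expect the argument to run as follows. For each $\fp_i$ in the chain, the density bound $\dens_1(\{\fp_i\})$ combined with $|\mathfrak B(\fp_i)| = 0$ should be used to show that $\fp_i$ itself (or a tile with the same spatial cube) satisfies $\mu(E_1(\fp_i)) > 2^{-n}\mu(\scI(\fp_i))$ up to the constant, so that $\fp_i$ is dominated by some $\mathfrak m_i \in \mathfrak M(k,n)$ with $\scI(\fp_i) \subset \scI(\mathfrak m_i)$; but $\mathfrak B(\fp_i) = \emptyset$ forbids $100\fp_i \lesssim \mathfrak m_i$, which together with $\fp_i \le \mathfrak m_i$ (hence $1.1\fp_i \lesssim 1.1 \mathfrak m_i$ by \eqref{eq-sc1}) is only possible in a bounded range of scales --- so $\mathfrak m_i$ lives within $O(1)$ scales of $\fp_i$, forcing the $\scI(\mathfrak m_i)$ to be genuinely nested as $i$ grows and each fixed point $x \in \scI(\fp_1)$ to lie in at least $m$ of them. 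Then $x \in A(\lambda,k,n)$ for $\lambda \approx m/2^{n+1}$ fails the counting unless $m \lesssim n$; tuning constants (using $k \le n$) should give exactly $m \le n$.

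The main obstacle I anticipate is the middle step: cleanly extracting from ``$\dens_k'(\{\fp\}) > 2^{4a}2^{-n}$ and $\mathfrak B(\fp) = \emptyset$'' a controlled $\mathfrak m(\fp) \in \mathfrak M(k,n)$ whose spatial cube is comparable to $\scI(\fp)$. The density is about $E_2(\lambda,\cdot)$ at large dilations $\lambda$, whereas $\mathfrak M(k,n)$ is about $E_1$, and bridging the two --- plus ruling out that the maximal tile realizing $\mathfrak M(k,n)$ sits many scales above $\fp$ (which is precisely what $\mathfrak B(\fp)=\emptyset$ should prevent, via \Cref{wiggle-order-2} and \Cref{wiggle-order-3} turning $\fp \le \mathfrak m$ into $100\fp \lesssim \mathfrak m$ once the scale gap is large enough) --- is the delicate part. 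Once that dictionary is set up, the overlap/John--Nirenberg finish is routine.
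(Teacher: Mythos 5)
Your overall skeleton is right in two places: the reduction to showing $\fL_0(k,n)$ has no chain of length $n+1$ (followed by the standard Mirsky decomposition into $n$ antichains by longest-chain-below) is exactly the paper's reduction, and the extraction of some $\mathfrak m\in\mathfrak M(k,n)$ starting from the density lower bound $\dens_k'(\{\fp_n\})>2^{4a}2^{-n}$ (via a $\fp'$ witnessing the density, an auxiliary covering family $\mathfrak O$ of tiles with $\scI(\fp'')=\scI(\fp')$, and a pigeonhole giving $\mu(E_1(\fp''))\ge 2^{-n}\mu(\scI(\fp''))$) is close to what the paper does. But your planned finish --- the overlap count against $A(\lambda,k,n)$ and John--Nirenberg --- does not work, and it is not what the paper does.

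Two concrete obstructions to the counting finish. First, the numerology is off by an exponential: $x\in A(\lambda,k,n)$ requires $\sum_{\mathfrak m\in\mathfrak M(k,n)}\mathbf 1_{\scI(\mathfrak m)}(x)>\lambda 2^{n+1}$, so a chain of length $n+1$ forcing $n+1$ top tiles through a point gives only $\lambda<(n+1)2^{-n-1}$, i.e.\ $\lambda=0$, which is useless. Second, and more fundamentally, \Cref{L0-antichain} is a bare combinatorial statement about $\fL_0(k,n)$ proved \emph{before} any exceptional set is removed; it is invoked in the proof of \Cref{forest-complement}, where the intersection with $\fP'$ happens \emph{afterwards}. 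So even if a point $x$ were shown to lie in $A(2n+6,k,n)\subset G_2$, that is not a contradiction here. Your claim that $\mathfrak B(\fp)=\emptyset$ forces the selected $\mathfrak m$ to sit within $O(1)$ scales of $\fp$ is also not how the argument goes: the $\mathfrak m$ obtained from the density can sit at a much larger scale, and that is harmless.

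What replaces the counting is a \emph{metric contraction} along the chain. Since $\fp_0\le\fp_1\le\dotsb\le\fp_n$ are pairwise distinct, the cubes $\scI(\fp_i)$ are strictly nested, and applying \Cref{monotone-cube-metrics} $n$ times gives $d_{\fp_0}\le 2^{-95an}d_{\fp_n}$. All the distances one needs to control --- between $\fcc(\fp_n)$, $\fcc(\fp')$, $\fcc(\fp'')$, $\fcc(\mathfrak m)$ and a generic $\mfa\in B_{\mathfrak m}(\fcc(\mathfrak m),1)$ --- are bounded at scale $\fp_n$ by roughly $\lambda$, and the density inequality yields $2^n\ge 2^{4a}\lambda^a\ge\lambda$. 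So after the contraction, everything at scale $\fp_0$ is bounded by $1+2^{-95an}(2\lambda+3)<100$. This gives $100\fp_0\lesssim\mathfrak m$, i.e.\ $\mathfrak m\in\mathfrak B(\fp_0)$, contradicting $\fp_0\in\fL_0(k,n)$. In short, the chain length $n$ is spent as a power of $2^{-95a}$ to crush $\lambda\le 2^n$, not as an overlap count against John--Nirenberg.
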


\begin{proof}
    \leanok
    It suffices to show that $\fL_0(k,n)$ contains no chain of length $n + 1$. Suppose that we had such a chain $\fp_0 \le \fp_1 \le \dotsb \le \fp_{n}$ of pairwise distinct $\fp_i$. Since $\fp_n \in \fC(k,n)$, we have that $\dens_k'(\{\fp_n\}) > 2^{-n}$, which by definition means that there exists $\fp' \in \fP(k)$ and $\lambda \ge 2$ with $\lambda \fp_n \le \lambda \fp'$ and
    \begin{equation}
        \label{eq-p'}
        \frac{\mu(E_2(\lambda, \fp'))}{\mu(\scI(\fp'))} > \lambda^{a} 2^{4a} 2^{-n}\,.
    \end{equation}
    Let $\mathfrak{O}$ be the set of all $\fp'' \in \fP(k)$ such that we have $ \scI(\fp'') = \scI(\fp')$ and $B_{\fp'}(\fcc(\fp'), \lambda) \cap \Omega(\fp'') \neq \emptyset$.
    From the geometric doubling property \eqref{thirddb} and the fact \eqref{eq-freq-comp-ball} that the balls $B_{\fp'}(\fcc(\fp''), 0.2)$, $\fp'' \in \mathfrak{O}$ are disjoint, it follows that
    \begin{equation*}
        |\mathfrak{O}| \le 2^{4a}\lambda^a\,.
    \end{equation*}
    By the definitions \eqref{definee1} and \eqref{definee2} we have $E_2(\lambda, \fp') \subset \bigcup_{\fp'' \in \mathfrak{O}} E_1(\fp'')$, thus
    $$
        \sum_{\fp'' \in \mathfrak{O}} \frac{\mu(E_1(\fp''))}{\mu(\scI(\fp''))} > 2^{4a}\lambda^a 2^{-n}\,.
    $$
    Hence there exists a tile $\fp'' \in \mathfrak{O}$ with
    \begin{equation}
    \label{eq-mexists}
        \mu(E_1(\fp'')) \ge 2^{-n} \mu(\scI(\fp''))\,.
    \end{equation}
    From \eqref{eq-p'}, the inclusion $E_2(\lambda, \fp') \subset \scI(\fp')$ and $a\ge 1$ we obtain
    $$
        2^n \geq 2^{4a} \lambda^{a} \geq \lambda\,.
    $$
    Let $\mathfrak{m} \in \mathfrak{M}(k,n)$ with $\fp'' \leq \mathfrak{m}$, it exists by \eqref{eq-mexists}.
    From \Cref{monotone-cube-metrics} and $a \ge 1$, it holds for all $\mfa \in B_{\mathfrak{m}}(\fcc(\mathfrak{m}), 1)$ that
    \begin{align*}
        d_{\fp_0}(\fcc(\fp_0), \mfa)
        &\leq d_{\fp_0}(\fcc(\fp_0), \fcc(\fp_{n})) + d_{\fp_0}(\fcc(\fp_{n}), \fcc(\fp')) + d_{\fp_0}(\fcc(\fp'), \fcc(\fp''))\\
        &\quad+ d_{\fp_0}(\fcc(\fp''), \fcc(\mathfrak{m})) +
        d_{\fp_0}(\fcc(\mathfrak{m}), \mfa)\\
        &\leq 1 + 2^{-95an} (d_{\fp_{n}}(\fcc(\fp_n), \fcc(\fp')) + d_{\fp'}(\fcc(\fp'), \fcc(\fp''))\\
        &\quad+ d_{\fp''}(\fcc(\fp''), \fcc(\mathfrak{m})) +
        d_{\mathfrak{m}}(\fcc(\mathfrak{m}), \mfa))\\
        &\leq 1 + 2^{-95an}(\lambda + (\lambda + 1) + 1 + 1) \leq 100\,.
    \end{align*}
    This implies that $100\fp_0 \lesssim \mathfrak{m}$, a contradiction to $\fp_0 \in \fL_0(k,n)$.
\end{proof}

\begin{lemma}[L2 antichain]
\label{L2-antichain}
\uses{monotone-cube-metrics}
\leanok
\lean{antichain_L2}
    Each of the sets $\fL_2(k,n,j)$ is an antichain.
\end{lemma}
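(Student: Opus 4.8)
Recall that $\fL_2(k,n,j)$ consists of those $\fp\in\fC_2(k,n,j)$ for which no $\fu\in\fU_1(k,n,j)$ satisfies $\scI(\fp)\ne\scI(\fu)$ and $2\fp\lesssim\fu$. I would argue by contradiction: suppose $\fp,\fp'\in\fL_2(k,n,j)$ with $\fp\le\fp'$ and $\fp\ne\fp'$. First note that $\fp\le\fp'$ together with $\fp\ne\fp'$ forces $\scI(\fp)\subsetneq\scI(\fp')$, since if the cubes were equal then by \eqref{injective}, \eqref{eq-dis-freq-cover} and \eqref{eq-freq-comp-ball} the nonempty cells $\Omega(\fp),\Omega(\fp')$ would be disjoint, contradicting $\Omega(\fp')\subseteq\Omega(\fp)$ from \eqref{straightorder}. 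Both tiles lie in $\fC_2(k,n,j)\subseteq\fC_1(k,n,j)$. It then suffices to produce a single $\fu\in\fU_1(k,n,j)$ with $\scI(\fp)\ne\scI(\fu)$ and $2\fp\lesssim\fu$, as this contradicts $\fp\in\fL_2(k,n,j)$.

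To build $\fu$, I would take it to be a $\le$-maximal element of the finite, nonempty set $\{\fq\in\fC_1(k,n,j):\fp\le\fq\}$ (which contains $\fp'$). The cubes of all members of this set contain $\scI(\fp)$, hence are nested by \eqref{dyadicproperty}, so $\fu$ is the member with inclusion-maximal cube; thus $\scI(\fp)\subsetneq\scI(\fp')\subseteq\scI(\fu)$, giving $\scI(\fp)\ne\scI(\fu)$, and from $\fp\le\fu$ and \eqref{eq-sc1} with $\lambda=2$ we get $2\fp\lesssim 2\fu$, hence $2\fp\lesssim\fu$. It remains to verify $\fu\in\fU_1(k,n,j)$. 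If not, there is $\fv\in\fC_1(k,n,j)$ with $\scI(\fu)\subsetneq\scI(\fv)$ whose $100$-ball meets that of $\fu$. Using $\fcc(\fu)\in\tQ(X)$ and the disjoint covering \eqref{eq-dis-freq-cover} at the cube $\scI(\fv)$, pick the unique $\fw\in\fP(\scI(\fv))$ with $\fcc(\fu)\in\Omega(\fw)$; since $\fcc(\fu)\in\Omega(\fu)$ by \eqref{eq-freq-comp-ball} and $\scI(\fu)\subsetneq\scI(\fw)$, property \eqref{eq-freq-dyadic} gives $\Omega(\fw)\subseteq\Omega(\fu)$, i.e.\ $\fu\le\fw$, and hence $\fp\le\fw$ with $\scI(\fu)\subsetneq\scI(\fw)$.

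This would complete the contradiction \emph{provided} $\fw\in\fC_1(k,n,j)$, for then $\fw$ belongs to the collection maximized over and has cube strictly larger than $\fu$. That $\fw\in\fP(k)$ is automatic, since $\scI(\fw)=\scI(\fv)$ lies in $\mathcal{C}(G,k)$. The membership of $\fw$ in the density window $\fC(k,n)$ and the counting strip $2^j\le|\mathfrak{B}(\fw)|<2^{j+1}$ should follow by comparing $\dens_k'$ and $|\mathfrak{B}(\cdot)|$ along $\fp\le\fu\le\fw$ and between the same-cube tiles $\fv$ and $\fw$, using \Cref{monotone-cube-metrics}: the factor $2^{-95a}$ in its second inequality means that a strict inclusion of cubes changes the cube metric by a factor $2^{95a}$, which dominates the $O(1)$ slack inherent in \eqref{eq-freq-comp-ball}, so the governing inequalities should survive. \textbf{This verification that $\fw$ does not fall out of $\fC_1(k,n,j)$ is the main obstacle}: the relation $\fp\le\fw$ only makes the densities of $\fw$ at most those of $\fp$, so retaining the \emph{lower} density threshold requires care — one likely needs to refine the choice of $\fu$ (e.g.\ to the largest tile above $\fp$ still lying in the window $\fC(k,n)$), or to use that $\fp'\in\fC_2$ sits well above the minimal layers of $\fC_1$. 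This is the one place where the strong form of \Cref{monotone-cube-metrics}, rather than the plain monotonicity \eqref{monotonedb}, is indispensable.
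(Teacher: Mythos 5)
Your proof stalls at exactly the right place, and the obstacle you flag is genuine — but the fix is not to refine the choice of $\fu$ or to control densities along $\le$-chains. The root issue is that you are insisting on building a $\le$-chain inside $\fC_1(k,n,j)$: the tile $\fw$ you produce (same cube as $\fv$, with $\fcc(\fu)\in\Omega(\fw)$) satisfies $\fp\le\fw$, but nothing forces $\fw$ into the density window $\fC(k,n)$ or the counting strip $2^j\le|\mathfrak{B}(\fw)|<2^{j+1}$, and the relation $\fp\le\fw$ only pushes density and $|\mathfrak{B}(\cdot)|$ in the wrong direction for the lower thresholds. This cannot be repaired within the $\le$-framework.

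The paper's proof sidesteps this entirely by abandoning $\le$ in favor of the wiggle order $\lesssim$. From $\fp_0\ne\fp_1$, $\fp_0\le\fp_1$, one first gets $2\fp_0\lesssim 200\fp_1$ via \Cref{wiggle-order-2}, and then one takes a maximal chain $2\fp_0\lesssim 200\fp_1\lesssim\cdots\lesssim 200\fp_l$ of distinct $\fp_i\in\fC_1(k,n,j)$. Maximality forces $\fp_l\in\fU_1(k,n,j)$: indeed, if $\fp_l\notin\fU_1$ then the definition of $\fU_1$ hands you a tile $\fp_{l+1}\in\fC_1(k,n,j)$ — membership in $\fC_1$ is part of the definition of $\fU_1$, no density argument needed — with strictly larger cube and $100$-ball overlapping that of $\fp_l$, and \Cref{monotone-cube-metrics} then upgrades this overlap to $200\fp_l\lesssim 200\fp_{l+1}$, contradicting maximality of $l$. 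Since $2\fp_0\lesssim 200\fp_l\lesssim\fp_l$ and $\scI(\fp_0)\ne\scI(\fp_l)$, the conclusion $\fp_l\in\fU_1$ contradicts $\fp_0\in\fL_2(k,n,j)$.

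In short: your $\fv$ is already the tile you need, and it is already in $\fC_1(k,n,j)$; the detour to $\fw$ is what creates the unprovable obligation. Replacing the $\le$-maximal element by a $\lesssim$-maximal chain lets you use $\fv$ directly, because the wiggle order propagates through the ball intersection via \Cref{monotone-cube-metrics} without touching the frequency cells or densities at all.
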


\begin{proof}
    \leanok
    Suppose that there are $\fp_0, \fp_1 \in \fL_2(k,n,j)$ with $\fp_0 \ne \fp_1$ and $\fp_0 \le \fp_1$. By \Cref{wiggle-order-2}, it follows that $2\fp_0 \lesssim 200\fp_1$. Since $\fC_1(k,n,j)$ is finite, there exists a chain $2\fp_0 \lesssim 200 \fp_1 \lesssim \dotsb \lesssim 200 \fp_l$ of distinct $\fp_i \in \fC_1(k,n,j)$ of maximal length $l$. Since $2\fp_0 \lesssim 200 \fp_l \lesssim \fp_l$ and $\fp_0 \in \fL_2(k,n,j)$, we have $\fp_l \not\in \fU_1(k,n,j)$. So, by definition of $\fU_1(k,n,j)$, there exists $\fp_{l+1} \in \fC_1(k,n,j)$ with $\scI(\fp_l) \subsetneq \scI(\fp_{l+1}) $ and $B_{\fp_l}(\fcc(\fp_l), 100) \cap B_{\fp_{l+1}}(\fcc(\fp_{l+1}), 100) \ne \emptyset$. Using \Cref{monotone-cube-metrics}, it is straightforward to deduce $200 \fp_l \lesssim 200\fp_{l+1}$. This contradicts maximality of $l$.
\end{proof}

\section{Proof of the Antichain Operator Proposition}

\label{antichainboundary}

Let an antichain $\mathfrak{A}$ and functions $f$, $g$ as in \Cref{antichain-operator} be given.
We prove in \Cref{sec-TT*-T*T} two inequalities \eqref{eqttt9} and \eqref{eqttt3}, each involving one of the two densities.
The claimed estimate \eqref{eq-antiprop} follows as the product of the $(2-q)$-th power of \eqref{eqttt9} and the $(q-1)$-st power of \eqref{eqttt3}.

The proof of \eqref{eqttt9} will need a careful estimate formulated in
\Cref{tile-correlation} of
the $TT^*$ correlation between two tile operators.
\Cref{tile-correlation} will be proven in
Subsection \ref{sec-tile-operator}.

The summation of the contributions of these individual correlations will require a
geometric \Cref{antichain-tile-count} counting the relevant tile pairs.
\Cref{antichain-tile-count} will be proven in Subsection
\ref{subsec-geolem}.

\subsection{The density arguments}\label{sec-TT*-T*T}

By the definition of $E(\fp)$ and the ordering $\le$ on tiles, the sets $E(\fp)$, $\fp \in \mathfrak{A}$ are pairwise disjoint. This will be used repeatedly below. Set
\begin{equation*}
    \tilde{q}=\frac {2q}{1+q}\,.
\end{equation*}
Since $1< q\le 2$, we have $1<\tilde{q}<q\le 2$.
\begin{lemma}[dens2 antichain]
\label{dens2-antichain}
\leanok
\lean{Dens2Antichain}
We have that
\begin{equation}\label{eqttt9}
  \left|\int \overline{g} T_{\mathfrak{A}} f\, \mathrm{d}\mu\right|\le
  2^{111a^3}({q}-1)^{-1} \dens_2(\mathfrak{A})^{\frac 1{\tilde{q}}-\frac 12} \|f\|_2\|g\|_2\, .
\end{equation}
\end{lemma}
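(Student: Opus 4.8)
The plan is to run a $TT^{*}$-style almost-orthogonality argument on the antichain operator. Since the sets $E(\fp)$, $\fp\in\mathfrak{A}$, are pairwise disjoint, the functions $T_{\fp}f$ have pairwise disjoint supports, so that by Cauchy--Schwarz
\[
  \Big|\int \overline{g}\,T_{\mathfrak{A}}f\,\mathrm{d}\mu\Big|
  =\Big|\sum_{\fp\in\mathfrak{A}}\int \overline{\mathbf{1}_{E(\fp)}g}\,T_{\fp}f\,\mathrm{d}\mu\Big|
  \le \|g\|_{2}\Big(\sum_{\fp\in\mathfrak{A}}\|T_{\fp}f\|_{2}^{2}\Big)^{1/2}.
\]
Thus it suffices to prove $\sum_{\fp\in\mathfrak{A}}\|T_{\fp}f\|_{2}^{2}\le 2^{222a^{3}}(q-1)^{-2}\dens_{2}(\mathfrak{A})^{1/q}\|f\|_{2}^{2}$ whenever $|f|\le\mathbf{1}_{F}$, since $2(\tfrac{1}{\tilde q}-\tfrac12)=\tfrac1q$. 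Here it is essential that $f$ be supported in $F$: the density $\dens_{2}(\mathfrak{A})$ controls the fraction of space near the tile centres occupied by $F$, and indeed from \eqref{eq-Ks-size}, the support property \eqref{supp-Ks}, the doubling property \eqref{doublingx}, and $|f|\le\mathbf{1}_{F}$ one gets the crude pointwise bound $|T_{\fp}f(x)|\le 2^{O(a^{3})}\mathbf{1}_{E(\fp)}(x)\dens_{2}(\mathfrak{A})$. This bound alone is far too lossy to sum over $\fp$ (the sets $E(\fp)$, although disjoint, can together exhaust a large cube), so the point is to trade part of this density gain for cancellation.

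The double sum over tile pairs enters by iterating the $TT^{*}$ trick: writing $\sum_{\fp}\|T_{\fp}f\|_{2}^{2}=\langle\sum_{\fp}T_{\fp}^{*}T_{\fp}f,\,f\rangle\le\|\sum_{\fp}T_{\fp}^{*}T_{\fp}f\|_{2}\,\|f\|_{2}$ and expanding the square of the first factor leads to terms $\langle T_{\fp}f,\;(T_{\fp}T_{\fp'}^{*})(T_{\fp'}f)\rangle$, that is, to the $TT^{*}$ correlation $T_{\fp}T_{\fp'}^{*}$ of two tile operators. The forthcoming \Cref{tile-correlation} bounds the operator norm of $T_{\fp}T_{\fp'}^{*}$ by $2^{O(a^{3})}$ times a power of $\dens_{2}(\mathfrak{A})$ and times a factor decaying in the combined spatial and frequency separation of $\fp$ and $\fp'$. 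There are three mechanisms producing this decay: when $\ps(\fp)\ne\ps(\fp')$ the kernels $K_{\ps(\fp)}$ and $K_{\ps(\fp')}$ live at very different scales and the smoothness estimate \eqref{eq-Ks-smooth} yields a power gain; when the frequency centres $\fcc(\fp),\fcc(\fp')$ are far apart in the metric $d_{\fp}$, the inner oscillatory integral $\int K_{\ps(\fp)}(x,y)\overline{K_{\ps(\fp')}(x',y)}\,e\big((\tQ(x)-\tQ(x'))(y)\big)\,\mathrm{d}\mu(y)$ is small by the cancellative axiom via \Cref{Holder-van-der-Corput}; and when the spatial cubes are far apart the correlation vanishes outright by \eqref{supp-Ks}. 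The antichain hypothesis is what guarantees that two spatially nested relevant tiles must be frequency-separated (if $\fp\le\fp'$ held, the two tiles could not both lie in an antichain), while two relevant tiles of equal scale live over a common cube and are frequency-separated by the tile structure; so some decay is always available.

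Summing these correlation bounds over all pairs is the role of the geometric counting estimate \Cref{antichain-tile-count}, which bounds, for fixed $\fp$, the number of $\fp'\in\mathfrak{A}$ in each separation window by a polynomial in the separation. Combined with the decay this gives $\sum_{\fp'}(\text{decay factor})\le 2^{O(a^{3})}$ uniformly in $\fp$, so that $2ab\le a^{2}+b^{2}$ closes the double sum back into $2^{O(a^{3})}\dens_{2}(\mathfrak{A})^{\alpha}\sum_{\fp}\|T_{\fp}f\|_{2}^{2}$; feeding this into the inequality above and dividing yields $\sum_{\fp}\|T_{\fp}f\|_{2}^{2}\le 2^{O(a^{3})}\dens_{2}(\mathfrak{A})^{\alpha}\|f\|_{2}^{2}$. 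The precise value $\alpha=1/q$, and the factor $(q-1)^{-1}$, come from the quantitative form of \Cref{tile-correlation}: there the power of $\dens_{2}(\mathfrak{A})$ and the decay rate are interpolated against each other — a sharper density power is bought at the cost of a weaker decay — and balancing them as a function of $q$ forces a decay rate degenerating like $q-1$ as $q\to 1$, which is exactly what the geometric series in the Schur step turns into $(q-1)^{-1}$.

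The main obstacle will be \Cref{tile-correlation} itself, specifically the oscillatory $y$-integral above, where two distinct and variable modulation functions $\tQ(x),\tQ(x')$ appear. The plan there is to freeze the modulations at the tile frequency centres $\fcc(\fp),\fcc(\fp')$, absorbing the errors into the Hölder norm of a bump adapted to the smaller of the two scales (using \eqref{osccontrol} and the metric doubling properties \eqref{firstdb}, \eqref{seconddb} together with \Cref{monotone-cube-metrics}), and then to apply \Cref{Holder-van-der-Corput} to the single difference $\fcc(\fp)-\fcc(\fp')$. The delicate point is to do this with enough uniformity in $x$ and $x'$ — and, when the scales differ, to combine it with the smoothness gain from \eqref{eq-Ks-smooth} — so that the resulting decay genuinely dominates the polynomial tile count supplied by \Cref{antichain-tile-count}.
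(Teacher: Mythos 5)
Your proposal runs a $TT^{*}$ almost-orthogonality argument with \Cref{tile-correlation} and \Cref{antichain-tile-count}, which is the strategy of the \emph{companion} \Cref{dens1-antichain}, not this one. \Cref{dens2-antichain} is proved by a much more elementary pointwise argument and uses no cancellation at all: because the sets $E(\fp)$, $\fp\in\mathfrak{A}$, are pairwise disjoint, at each point $x$ at most one term $T_{\fp}f(x)$ is nonzero, so the crude kernel bounds \eqref{eq-Ks-size}, \eqref{supp-Ks} already give a pointwise estimate $|T_{\mathfrak{A}}f(x)|\lesssim 2^{O(a^3)}\,\sup_{x\in B\in\mathcal{B}}\frac{1}{\mu(B)}\int_{B}|f|$ where $\mathcal{B}=\{B(\pc(\fp),8D^{\ps(\fp)}):\fp\in\mathfrak{A}\}$. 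Then one applies H\"older to the local average of $|f|=|f|\mathbf{1}_{F}$, splitting it into $\bigl(M(|f|^{\beta})(x)\bigr)^{1/\beta'}\cdot\dens_{2}(\mathfrak{A})^{\frac1{\tilde q}-\frac12}$ with $\beta=\frac{2\tilde q}{3\tilde q-2}$, and finishes by Cauchy--Schwarz against $g$ and $L^{3\tilde q -2}$-boundedness of the Hardy--Littlewood maximal function. The factor $(q-1)^{-1}$ comes from the blow-up of that maximal-function constant as $3\tilde q-2\to1$, not from any balance of decay against density in a correlation lemma.

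Two concrete misconceptions in the proposal are worth flagging. First, you claim the crude pointwise bound ``is far too lossy to sum over $\fp$''; in fact, since the $E(\fp)$ are disjoint, the pointwise bound on $T_{\mathfrak{A}}f$ (not the sum of pointwise bounds on $T_{\fp}f$) is not lossy and closes the estimate directly via the maximal function. Second, you state that \Cref{tile-correlation} bounds the correlation by ``a power of $\dens_{2}(\mathfrak{A})$''; that lemma contains no density factor at all, only a gain in the frequency separation $d_{\fp'}(\fcc(\fp'),\fcc(\fp))$, and it is \Cref{antichain-tile-count} that introduces $\dens_{1}$ (not $\dens_{2}$). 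Both auxiliary lemmas feed into \Cref{dens1-antichain}; neither is used here.
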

\begin{proof}
Let $\mathcal{B}$ be the collection of balls
\begin{equation*}
    \{B(\pc(\fp), 8D^{\ps(\fp)}) \ : \ \fp \in \mathfrak{A}\}.
\end{equation*}
From disjointness of the $E(\fp), \fp \in \mathfrak{A}$, the triangle inequality and the kernel support \eqref{supp-Ks} and upper bound \eqref{eq-Ks-size} it follows that for every $x$
\begin{equation}\label{hlmbound}
    |T_{\mathfrak{A}} f(x)|\le 2^{107 a^3} \sup_{x \in B \in \mathcal{B}} \frac{1}{\mu(B)} \int_B |f| \, \mathrm{d}\mu \, .
\end{equation}
We have $f=\mathbf{1}_Ff$. Using H\"older's inequality, that $1 < \tilde q \le 2$ and the definition of $\dens_2$, we obtain for
each $x\in B'$ and each $B'\in \mathcal{B}$
\begin{equation*}
    \frac 1{\mu(B')}\int_{B'} |f(y)|\, \mathrm{d}\mu(y)
    \le \left(M (|f|^{\frac {2{\tilde{q}}}{3{\tilde{q}}-2}})(x)\right)^{\frac 32-\frac 1{\tilde{q}}}
\dens_2(\mathfrak{A})^{\frac 1{\tilde{q}}-\frac 12}\, .
\end{equation*}
Combining this with \eqref{hlmbound} and boundedness of the Hardy-Littlewood maximal function completes the proof.
\end{proof}

\begin{lemma}[dens1 antichain]\label{dens1-antichain}
\uses{tile-correlation,antichain-tile-count}
Set $p:=4a^4$. Then
    \begin{equation}\label{eqttt3}
  \left|\int \overline{g} T_{\mathfrak{A}} f\, \mathrm{d}\mu\right|\le
   2^{117a^3}\dens_1(\mathfrak{A})^{\frac 1{2p}} \|f\|_2\|g\|_2\,.
\end{equation}
\end{lemma}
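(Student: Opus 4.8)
The plan is to bound the $L^2\to L^2$ operator norm of the $G$-localized operator $\mathbf 1_G T_{\mathfrak A}$ by a $TT^*$ argument iterated $p=4a^4$ times. Since $g=\mathbf 1_G g$, Cauchy--Schwarz reduces \eqref{eqttt3} to showing $\|\mathbf 1_G T_{\mathfrak A}\|_{L^2\to L^2}\le 2^{117a^3}\dens_1(\mathfrak A)^{1/(2p)}$. The density enters only through the observation that, since $T_\fp$ carries the factor $\mathbf 1_{E(\fp)}$ and $E(\fp)\cap G\subset E_1(\fp)\subset E_2(\lambda,\fp)$ for any $\lambda\ge2$ by \eqref{definee1}, \eqref{definee2}, \eqref{eq-freq-comp-ball}, taking $\fp'=\fp$ in \eqref{definedens1} gives $\mu(E_2(\lambda,\fp))\le\lambda^a\dens_1(\mathfrak A)\mu(\scI(\fp))$ for every $\fp\in\mathfrak A$ and $\lambda\ge2$.

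First I would set $A:=\mathbf 1_G T_{\mathfrak A}T_{\mathfrak A}^*\mathbf 1_G=(\mathbf 1_G T_{\mathfrak A})(\mathbf 1_G T_{\mathfrak A})^*$, which is positive and self-adjoint, so that $\|\mathbf 1_G T_{\mathfrak A}\|^{2p}=\|A\|^p=\|A^p\|$; it thus suffices to prove $\|A^p\|\le 2^{2p\cdot117a^3}\dens_1(\mathfrak A)$. Expanding $T_{\mathfrak A}=\sum_{\fp\in\mathfrak A}T_\fp$ and using disjointness of the sets $E(\fp)$ (which forces the intermediate tiles in any nonvanishing composition to coincide), one writes $A^p$ as a sum over tile chains $(\fq_0,\fq_1,\dots,\fq_p)$ of compositions of $p$ tile-correlation operators $\mathbf 1_{E(\fq_{i-1})\cap G}\,T_{\fq_{i-1}}T_{\fq_i}^*\,\mathbf 1_{E(\fq_i)\cap G}$, whose kernels are the integrals $\int K_{\ps(\fq)}(x,y)\overline{K_{\ps(\fq')}(x',y)}\,e(\tQ(x)(y)-\tQ(x')(y))\,\mathrm d\mu(y)$. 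Each factor is estimated by \Cref{tile-correlation}, whose proof rests on the kernel bounds \eqref{eq-Ks-size}, \eqref{eq-Ks-smooth} and on the H\"older van der Corput estimate \Cref{Holder-van-der-Corput} with modulation functions $\mfa=\tQ(x)$, $\mfb=\tQ(x')$; it yields decay both in the frequency separation $d_{\fq}(\fcc(\fq),\fcc(\fq'))$ and in the difference of scales $\ps(\fq),\ps(\fq')$, together with a factor measuring the sizes of $E_1(\fq)$ and $E_1(\fq')$.

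One then bounds $\|A^p\|$ by the Schur test applied to the kernel of $A^p$, reducing the problem to estimating, uniformly over a base point, an iterated sum over tile chains of products of these correlation bounds. This summation is the heart of the argument, and it uses the geometric counting estimate \Cref{antichain-tile-count}: here the antichain hypothesis is essential, since together with \Cref{monotone-cube-metrics} it controls, by roughly $\lambda^{O(a)}$, the number of tiles of $\mathfrak A$ that can be simultaneously spatially and frequency-related to a fixed tile at frequency scale $\lambda$ -- matching the factor $\lambda^a$ in the density bound above. The decay exponent $\tfrac1{2a^2+a^3}$ from \Cref{Holder-van-der-Corput}, compounded over the $p=4a^4$ iterations, produces a net decay that outweighs this polynomial count, so the ensuing series in the frequency parameter sums to a constant of the form $2^{O(pa^3)}$; one power of $\dens_1(\mathfrak A)$ is retained from the $E_1$-localization of a single correlation factor, the remaining density factors being at most $1$ and discarded. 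Collecting constants and taking the $2p$-th root yields \eqref{eqttt3}.

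The step I expect to be the main obstacle is exactly this chain summation: pinning down the precise quantitative forms of \Cref{tile-correlation} and \Cref{antichain-tile-count}, and verifying the bookkeeping that the compounded H\"older decay from $p=4a^4$ iterations genuinely beats the tile count across all relevant spatial and frequency scales, while ensuring that exactly one factor of $\dens_1(\mathfrak A)$ is kept and none is over-spent.
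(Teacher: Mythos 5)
Your proposal departs substantially from the paper's proof, and the departure leaves a real gap. The paper does \emph{not} iterate the $TT^*$ operator $p$ times. It expands $\|T^*_{\mathfrak A}g\|_2^2$ \emph{once} as a sum over pairs $(\fp,\fp')$ with $\ps(\fp')\le\ps(\fp)$, applies \Cref{tile-correlation} to each pair, and then introduces the exponent $p=4a^4$ through a single application of H\"older's inequality with exponents $(p,p')$ on the inner sum
\[
h(\fp)=\frac1{\mu(B(\fp))}\int\sum_{\fp'\in\mathfrak A(\fp)}(1+d_{\fp'}(\fcc(\fp'),\fcc(\fp)))^{-1/(2a^2+a^3)}(\mathbf 1_{E(\fp')}|g|)(y')\,\mathrm d\mu(y').
\]
The role of \Cref{antichain-tile-count} is precisely to bound the resulting $L^p$ norm of the weighted indicator sum $\sum_{\fp'}(1+d_{\fp'}(\fcc(\fp'),\fcc(\fp)))^{-1/(2a^2+a^3)}\mathbf 1_{E(\fp')}\mathbf 1_G$ by $2^{5a}\dens_1(\mathfrak A)^{1/p}\mu(\cup\scI(\fp'))^{1/p}$; this produces the power $\dens_1^{1/p}$ on $\|T^*_{\mathfrak A}g\|_2^2$ directly, and hence $\dens_1^{1/(2p)}$ on the bilinear form after Cauchy--Schwarz.

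The gap in your version is the chain summation, which you flag yourself as ``the main obstacle.'' Writing $A^p$ as a sum over chains $(\fq_0,\dots,\fq_p)$ and applying the Schur test requires summing products of $p$ correlation factors while retaining exactly one power of $\dens_1(\mathfrak A)$, and nothing in the paper provides this. You propose to use \Cref{antichain-tile-count} as a counting lemma (``controls, by roughly $\lambda^{O(a)}$, the number of tiles\dots''), but that is not what it says: it is an $L^p(\mu)$ estimate on a weighted sum of indicator functions, designed exactly for the H\"older step in the paper's argument and not readily convertible to a count of tiles related to a fixed one. Moreover, the claim that ``compounded H\"older decay from $p=4a^4$ iterations genuinely beats the tile count'' is asserted but not justified; the decay factor $(1+d_{\fq_{i-1}}(\cdot,\cdot))^{-1/(2a^2+a^3)}$ lives on a different parameter (pairwise frequency separation along the chain) than the one being counted, and the bookkeeping that would make the iterated sum converge to $2^{O(pa^3)}\dens_1(\mathfrak A)$ is not present. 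So while the $(TT^*)^p$ strategy is not implausible in principle, as written it does not constitute a proof, and it does not match the mechanism by which the paper's lemmas actually deliver the density power.
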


\begin{proof}
 We have by expanding the square
\begin{equation}
    \label{eqtts2}
    \int \Big|\sum_{\fp\in \mathfrak{A}}T^*_{\fp}g(y)\Big|^2\, \mathrm{d}\mu(y)\le2 \sum_{\fp\in \mathfrak{A}} \sum_{\fp'\in \mathfrak{A}: \ps(\fp')\le \ps(\fp)}
    \Big|\int T^*_{\fp}g(y)\overline{T^*_{\fp'}g(y)}\, \mathrm{d}\mu(y)\Big|\,.
\end{equation}
Define for $\fp\in \fP$ the ball
\begin{equation*}
    B(\fp):=B(\pc(\fp), 14D^{\ps(\fp)})
\end{equation*}
and the collection of tiles interacting with $\fp$
\begin{equation*}
    \mathfrak{A}(\fp):=\{\fp'\in\mathfrak{A}: \ps(\fp')\leq \ps(\fp) \land \scI(\fp') \subset B(\fp)\}.
\end{equation*}
Using \Cref{tile-correlation} and the doubling property \eqref{doublingx}, we estimate \eqref{eqtts2} by
\begin{equation}\label{eqtts3}
     \le 2^{232a^3+6a+1} \sum_{\fp\in \mathfrak{A}}
    \int_{E(\fp)}|g|(y) h(\fp)\, \mathrm{d}\mu(y)
\end{equation}
with $h(\fp)$ defined as
\begin{equation*}
    \frac 1{\mu(B(\fp))}\int \sum_{\fp'\in \mathfrak{A}(\fp)}
    {(1+d_{\fp'}(\fcc(\fp'), \fcc(\fp)))^{-1/(2a^2+a^3)}}(\mathbf{1}_{E(\fp')}|g|)(y')\, \mathrm{d}\mu(y')\,.
\end{equation*}
Note that $p\geq 4$ since $a>4$.
By H\"older, using $|g|\le \mathbf{1}_G$ and $E(\fp')\subset B(\fp)$,
\begin{equation*}
    h(\fp) \le \frac{\|g\mathbf{1}_{B(\fp)}\|_{p'}}{\mu(B(\fp))}
    \Big\|\sum_{\fp'\in\mathfrak{A}(\fp)}(1+d_{\fp'}(\fcc(\fp), \fcc(\fp')))^{-1/(2a^2+a^3)}\mathbf{1}_{E(\fp')}\mathbf{1}_G\Big\|_{p}\, .
\end{equation*}
We apply \Cref{antichain-tile-count} to estimate this by
\begin{equation*}
    2^{5a}
    \dens_1(\mathfrak{A})^{\frac 1p}\frac{\|g\mathbf{1}_{B(\fp)}\|_{p'}}{\mu(B(\fp))^{\frac{1}{p'}}} \le 2^{5a}
    \dens_1(\mathfrak{A})^{\frac 1p} (M|g|^{p'})^{\frac{1}{p'}}\, ,
\end{equation*}
where we used that the $I({\fp'})$ with $\fp'\in \mathfrak{A}(\fp)$ are contained in $B(\fp)$.
Hence we may estimate \eqref{eqtts3} by
\begin{equation*}
    \le 2^{232a^3+11a + 1} { \dens_1(\mathfrak{A})^{\frac 1p}}\sum_{\fp\in \mathfrak{A}}\int_{E(\fp)}|g|(y)(M|g|^{p'})^{\frac{1}{p'}} \, \mathrm{d}\mu(y)\,.
\end{equation*}
Since the sets $E(\fp)$ are pairwise disjoint, the lemma now follows from boundedness of the Hardy-Littlewood maximal function.
\end{proof}
The following $TT^*$ estimate will be proved in \Cref{sec-tile-operator}.
\begin{lemma}[tile correlation]
    \label{tile-correlation}
    \leanok
    \lean{Tile.correlation_le, Tile.correlation_zero_of_ne_subset}
    Let $\fp, \fp'\in \fP$ with
    $\ps({\fp'})\leq \ps({\fp})$.
    Then
    \begin{equation*}
        \left|\int T^*_{\fp'}g\overline{T^*_{\fp}g}\right|
    \end{equation*}
    \begin{equation}
    \label{eq-basic-TT*-est}
        \le 2^{232a^3}\frac{(1+d_{\fp'}(\fcc(\fp'), \fcc(\fp)))^{-1/(2a^2+a^3)}}{\mu(\scI(\fp))}\int_{E(\fp')}|g|\int_{E(\fp)}|g|\,.
    \end{equation}
    Moreover, the term \eqref{eq-basic-TT*-est} vanishes unless
    \begin{equation}
        \label{eq-tt*sup}
        \scI(\fp') \subset B(\pc(\fp), 14D^{\ps(\fp)})\, .
    \end{equation}
\end{lemma}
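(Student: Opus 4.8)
The plan is to expand the correlation as an iterated integral and reduce everything to a single pointwise oscillatory bound, to which \Cref{Holder-van-der-Corput} applies. Write $s:=\ps(\fp)$, $s':=\ps(\fp')$, so $s'\le s$. Computing the adjoints from \eqref{definetp} and applying Fubini (legitimate, since all integrands are bounded by \eqref{eq-Ks-size} and supported on sets of finite measure), one obtains
\begin{equation*}
    \int T^*_{\fp'}g\,\overline{T^*_\fp g}\,\mathrm{d}\mu
    = \int_{E(\fp')}\int_{E(\fp)} g(x')\overline{g(x)}\,e\bigl(\tQ(x')(x')-\tQ(x)(x)\bigr)\,\Psi(x,x')\,\mathrm{d}\mu(x)\,\mathrm{d}\mu(x'),
\end{equation*}
where
\begin{equation*}
    \Psi(x,x'):=\int \overline{K_{s'}(x',y)}\,K_s(x,y)\,e\bigl(\tQ(x)(y)-\tQ(x')(y)\bigr)\,\mathrm{d}\mu(y).
\end{equation*}
Since $|g|\le 1$ and the double integral factors over $E(\fp')$ and $E(\fp)$, it then suffices to prove the pointwise estimate
\begin{equation*}
    |\Psi(x,x')|\le \frac{2^{232a^3}}{\mu(\scI(\fp))}\bigl(1+d_{\fp'}(\fcc(\fp'),\fcc(\fp))\bigr)^{-1/(2a^2+a^3)}
    \qquad (x\in E(\fp),\ x'\in E(\fp')).
\end{equation*}
For the ``moreover'' assertion: by the kernel support \eqref{supp-Ks}, $\Psi(x,x')\ne 0$ forces the existence of $y$ with $\rho(x',y)\le\tfrac12 D^{s'}$ and $\rho(x,y)\le\tfrac12 D^{s}$, whence $\rho(\pc(\fp'),\pc(\fp))<9D^{s}$ via \eqref{eq-vol-sp-cube}, and therefore $\scI(\fp')\subset B(\pc(\fp),14D^{s})$; so if \eqref{eq-tt*sup} fails, then $\Psi\equiv 0$ on $E(\fp)\times E(\fp')$ and the correlation vanishes.

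To prove the pointwise estimate I would fix $x\in E(\fp)$, $x'\in E(\fp')$, and may assume $\Psi(x,x')\ne 0$, so $\rho(\pc(\fp'),\pc(\fp))<9D^{s}$. Put $\varphi(y):=\overline{K_{s'}(x',y)}K_s(x,y)$ and $\tilde B:=B(\pc(\fp'),8D^{s'})$. By \eqref{supp-Ks} and \eqref{eq-vol-sp-cube} (which give $\rho(x',\pc(\fp'))<4D^{s'}$), the function $\varphi$ is supported on $\tilde B$. Applying the product rule to the size and smoothness bounds \eqref{eq-Ks-size}, \eqref{eq-Ks-smooth}, using $D^{s'}\le D^{s}$, and using the doubling property \eqref{doublingx} to pass between $\mu(\tilde B)$, $\mu(B(x',D^{s'}))$, $\mu(B(x,D^{s}))$ and $\mu(\scI(\fp))$ (all comparable up to factors $2^{O(a)}$, since $x\in\scI(\fp)$ and $\rho(x',\pc(\fp'))<4D^{s'}$), one gets $\mu(\tilde B)\,\|\varphi\|_{C^\tau(2\tilde B)}\le 2^{7a+230a^3}/\mu(\scI(\fp))$. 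Then \Cref{Holder-van-der-Corput}, with this $\tilde B$ and $\mfa=\tQ(x)$, $\mfb=\tQ(x')$, yields
\begin{equation*}
    |\Psi(x,x')|\le 2^{7a}\mu(\tilde B)\,\|\varphi\|_{C^\tau(2\tilde B)}\,\bigl(1+d_{\tilde B}(\tQ(x),\tQ(x'))\bigr)^{-1/(2a^2+a^3)}
    \le \frac{2^{14a+230a^3}}{\mu(\scI(\fp))}\,\bigl(1+d_{\tilde B}(\tQ(x),\tQ(x'))\bigr)^{-1/(2a^2+a^3)}.
\end{equation*}

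The remaining step is to replace $d_{\tilde B}(\tQ(x),\tQ(x'))$ by $d_{\fp'}(\fcc(\fp'),\fcc(\fp))$ by playing the metric axioms off each other. Monotonicity \eqref{monotonedb} gives $d_{\tilde B}(\fcc(\fp),\fcc(\fp'))\ge d_{\fp'}(\fcc(\fp),\fcc(\fp'))$, since the ball $B(\pc(\fp'),\tfrac14 D^{s'})$ defining $d_{\fp'}$ is contained in $\tilde B$. Five applications of \eqref{firstdb} (same center) give $d_{\tilde B}\le 2^{5a}d_{\fp'}$, so $d_{\tilde B}(\fcc(\fp'),\tQ(x'))<2^{5a}$ by \eqref{eq-freq-comp-ball} and \eqref{defineep}; combining \eqref{monotonedb} with $\tilde B\subset B(\pc(\fp),32D^{s})$ (valid since $\rho(\pc(\fp'),\pc(\fp))<9D^{s}$) and seven applications of \eqref{firstdb} give $d_{\tilde B}\le 2^{7a}d_\fp$, so $d_{\tilde B}(\fcc(\fp),\tQ(x))<2^{7a}$ by \eqref{eq-freq-comp-ball} and \eqref{defineep}. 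The triangle inequality for $d_{\tilde B}$ then gives $d_{\tilde B}(\tQ(x),\tQ(x'))\ge d_{\fp'}(\fcc(\fp'),\fcc(\fp))-2^{8a}$, which (since $d_{\tilde B}\ge 0$) upgrades to $1+d_{\tilde B}(\tQ(x),\tQ(x'))\ge 2^{-8a-1}\bigl(1+d_{\fp'}(\fcc(\fp'),\fcc(\fp))\bigr)$, and hence, using $(8a+1)/(2a^2+a^3)<1$, to
\begin{equation*}
    \bigl(1+d_{\tilde B}(\tQ(x),\tQ(x'))\bigr)^{-1/(2a^2+a^3)}\le 2\,\bigl(1+d_{\fp'}(\fcc(\fp'),\fcc(\fp))\bigr)^{-1/(2a^2+a^3)}.
\end{equation*}
Feeding this into the previous display and using $a\ge 4$ to absorb constants ($14a+230a^3+1\le 232a^3$) completes the proof.

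The hard part will be this last paragraph: one must choose the auxiliary ball $\tilde B$ just right --- centered at $\pc(\fp')$ and large enough both to contain the support of $\varphi$ and to keep the chains of doubling inequalities \eqref{firstdb}, \eqref{monotonedb} relating $d_{\tilde B}$ to $d_{\fp'}$ and $d_\fp$ short --- and then track the $2^{O(a)}$ losses carefully enough that they are swallowed by the slack in the target exponent $232a^3$, which is where the hypothesis $a\ge 4$ is used. The estimate of $\|\varphi\|_{C^\tau(2\tilde B)}$ from \eqref{eq-Ks-size}--\eqref{eq-Ks-smooth} is routine bookkeeping.
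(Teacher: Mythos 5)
Your proposal is correct and follows essentially the same route as the paper: both expand the correlation via Fubini into a double integral over $E(\fp')\times E(\fp)$ with an inner oscillatory integral $\Psi(x,x')$, bound the $C^\tau$ norm of the product kernel $\overline{K_{s'}(x',\cdot)}K_s(x,\cdot)$, apply \Cref{Holder-van-der-Corput}, and then translate the resulting $d$-distance between $\tQ(x)$ and $\tQ(x')$ back to $d_{\fp'}(\fcc(\fp'),\fcc(\fp))$ via the doubling and squeezing axioms. The only structural difference is the choice of ball: you center at $\pc(\fp')$ with radius $8D^{s'}$, while the paper uses $B(x',D^{s'})$ centered at the variable point $x'$ and packages the ensuing metric comparison into the separate \Cref{tile-uncertainty}; both are fine and give comparable constants. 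One tiny slip in your bookkeeping: from $d_{\tilde B}\ge\max\bigl(0,\,d_{\fp'}(\fcc(\fp'),\fcc(\fp))-2^{8a}\bigr)$ one gets $1+d_{\tilde B}\ge 2^{-8a-2}\bigl(1+d_{\fp'}(\fcc(\fp'),\fcc(\fp))\bigr)$, not $2^{-8a-1}$ (consider the case $d_{\fp'}\le 2^{8a+1}$, where $1+d_{\fp'}$ can be as large as $1+2^{8a+1}\le 2^{8a+2}$); this costs at most an extra factor $2$ after raising to the power $-1/(2a^2+a^3)$ and is absorbed by the slack $14a+230a^3+2\le 232a^3$ for $a\ge4$, so the conclusion stands.
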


The following lemma will be proved in \Cref{subsec-geolem}.
\begin{lemma}[antichain tile count]
    \label{antichain-tile-count}
\leanok
\lean{Antichain.tile_count}
    \uses{global-antichain-density}
    Set $p:=4a^4$ and let $p'$ be the dual exponent of $p$, that is $1/p+1/p'=1$.
    For every $\mfa\in\Mf$ and every subset $\mathfrak{A}'$ of $\mathfrak{A}$ we have
    \begin{equation}
        \label{eq-antichain-Lp}
        \Big\|\sum_{\fp\in\mathfrak{A}'}(1+d_{\fp}(\fcc(\fp), \mfa))^{-1/(2a^2+a^3)}\mathbf{1}_{E(\fp)}\mathbf{1}_G\Big\|_{p}
    \end{equation}
    \begin{equation*}
        \le
        2^{5a}\dens_1(\mathfrak{A})^{\frac 1p}\mu\left(\cup_{\fp\in\mathfrak{A}'}I({\fp})\right)^{\frac 1p}\, .
    \end{equation*}
\end{lemma}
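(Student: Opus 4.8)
The plan is to exploit that, for an antichain, the sets $E(\fp)$, $\fp\in\mathfrak A$ — indeed even the sets $E_1(\fp)$ — are pairwise disjoint: if $x\in E_1(\fp)\cap E_1(\fp')$ with $\fp\ne\fp'$, then $\scI(\fp)$ and $\scI(\fp')$ meet and are hence nested, say $\scI(\fp)\subset\scI(\fp')$, while $\tQ(x)\in\fc(\fp)\cap\fc(\fp')$, so \eqref{eq-freq-dyadic} forces $\fc(\fp')\subset\fc(\fp)$ and thus $\fp\le\fp'$ by \eqref{straightorder}, contradicting that $\mathfrak A$ is an antichain. Writing $c_\fp:=(1+d_{\fp}(\fcc(\fp),\mfa))^{-1/(2a^2+a^3)}\le 1$, this disjointness rewrites the $p$-th power of the left side of \eqref{eq-antichain-Lp} as $\sum_{\fp\in\mathfrak A'}c_\fp^{\,p}\,\mu(E(\fp)\cap G)$, so it suffices to bound this quantity by $2^{5ap}\,\dens_1(\mathfrak A)\,\mu\big(\bigcup_{\fp\in\mathfrak A'}\scI(\fp)\big)$, with $p=4a^4$.

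Two inputs feed the estimate. First, specializing the supremum in \eqref{definedens1} to $\fp'=\fp$ (so $\lambda\fp'\lesssim\lambda\fp$ holds trivially) gives, for every $\fp\in\mathfrak A$ and $\lambda\ge 2$, the bound $\mu(E_2(\lambda,\fp))\le\lambda^a\dens_1(\mathfrak A)\,\mu(\scI(\fp))$; since $\fc(\fp)\subset B_\fp(\fcc(\fp),1)$ by \eqref{eq-freq-comp-ball}, one has $E(\fp)\cap G\subset E_1(\fp)\subset E_2(\lambda,\fp)$ for every $\lambda\ge 1$. Second, the geometric doubling \eqref{thirddb}, together with the fact that the balls $B_\fp(\fcc(\fp),0.2)$, $\fp\in\fP(I)$, are pairwise disjoint (they sit inside the disjoint $\fc(\fp)$), bounds the number of $\fp\in\fP(I)$ with $d_{I^\circ}(\fcc(\fp),\mfa)<R$ by $2^{5a}\max(1,R)^a$. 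Next I split $\mathfrak A'=\bigcup_{k\ge 0}\mathfrak A'_k$ with $\mathfrak A'_k:=\{\fp\in\mathfrak A':2^k\le 1+d_\fp(\fcc(\fp),\mfa)<2^{k+1}\}$, so that $c_\fp^{\,p}\le 2^{-k\gamma}$ on $\mathfrak A'_k$ with $\gamma:=p/(2a^2+a^3)=4a^2/(a+2)$ satisfying $a-\gamma\le-40/6<-4$ for $a\ge 4$. Using disjointness once more, $\sum_{\fp\in\mathfrak A'}c_\fp^{\,p}\mu(E(\fp)\cap G)\le\sum_{k\ge 0}2^{-k\gamma}\,\mu\big(\bigcup_{\fp\in\mathfrak A'_k}(E(\fp)\cap G)\big)$.

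The crux is to show $\mu\big(\bigcup_{\fp\in\mathfrak A'_k}(E(\fp)\cap G)\big)\le 2^{(k+c_0)a}\dens_1(\mathfrak A)\,\mu\big(\bigcup_{\fp\in\mathfrak A'}\scI(\fp)\big)$ for an absolute constant $c_0$. For a cube $I$ with $\fP(I)\cap\mathfrak A'_k\ni\fp_0$, every $\fp\in\fP(I)\cap\mathfrak A'_k$ has $\fc(\fp)\subset B_{I^\circ}(\mfa,2^{k+1})\subset B_{I^\circ}(\fcc(\fp_0),2^{k+2})$ (using $d_{I^\circ}(\fcc(\fp_0),\mfa)<2^{k+1}$), so $\bigcup_{\fp\in\fP(I)\cap\mathfrak A'_k}(E(\fp)\cap G)\subset E_2(2^{k+2},\fp_0)$, whose measure is $\le 2^{(k+2)a}\dens_1(\mathfrak A)\mu(I)$ by the first input. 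Summing over the cubes $I$ of a fixed scale — pairwise disjoint, each of the form $\scI(\fp)$ with $\fp\in\mathfrak A'$ — bounds the contribution of that scale by $2^{(k+2)a}\dens_1(\mathfrak A)\,\mu\big(\bigcup_{\fp\in\mathfrak A'}\scI(\fp)\big)$. The main obstacle, and the step I expect to require the most care, is to sum this over all scales with only a bounded-in-$a$ loss rather than a loss of order $S$. Here I would invoke the sharpened monotonicity \Cref{monotone-cube-metrics}: along any chain of nested dyadic cubes the metrics $d_{I^\circ}$ grow by a factor at least $2^{95a}$ per step, so for a fixed $x$ the scale of the cube through $x$ that can carry a tile of $\mathfrak A'_k$ with $x\in E(\fp)$ lies in a window on which $d_{I^\circ}(\tQ(x),\mfa)$ changes by less than $2^{95a}$, hence is essentially unique; consequently the sets $\bigcup_{\fp\in\fP(I)\cap\mathfrak A'_k,\ s(I)=s}(E(\fp)\cap G)$ are disjoint in $s$, and \Cref{monotone-cube-metrics} combined with \eqref{thirddb} bounds the number of scales effectively active near a point, which upgrades the per-scale estimate to the displayed one.

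Granting this, $\sum_{\fp\in\mathfrak A'}c_\fp^{\,p}\mu(E(\fp)\cap G)\le 2^{c_0 a}\dens_1(\mathfrak A)\,\mu\big(\bigcup_{\fp\in\mathfrak A'}\scI(\fp)\big)\sum_{k\ge 0}2^{k(a-\gamma)}\le 2^{c_0 a+1}\dens_1(\mathfrak A)\,\mu\big(\bigcup_{\fp\in\mathfrak A'}\scI(\fp)\big)$, and since $2^{c_0 a+1}\le 2^{5ap}$ for $p=4a^4$ and $a\ge 4$, taking $p$-th roots gives \eqref{eq-antichain-Lp}. As a consistency check, discarding the weights $c_\fp\le 1$ and the $\dens_1$ factor already yields the trivial bound $\mu\big(\bigcup_{\fp\in\mathfrak A'}(E(\fp)\cap G)\big)\le\mu\big(\bigcup_{\fp\in\mathfrak A'}\scI(\fp)\big)$; the role of the frequency weights $c_\fp$ is precisely to purchase the extra $\dens_1$, and it is their interplay with \Cref{monotone-cube-metrics} in the third step that makes this quantitatively work.
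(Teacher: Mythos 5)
Your first two steps are correct and match the paper's: the sets $E(\fp)$ (even $E_1(\fp)$) are pairwise disjoint over an antichain, so the $p$-th power of the left side of \eqref{eq-antichain-Lp} becomes $\sum_{\fp\in\mathfrak{A}'}c_\fp^{\,p}\mu(E(\fp)\cap G)$, and the dyadic decomposition $\mathfrak{A}'_k$ together with $\gamma = p/(2a^2+a^3)\ge a+1$ reduces matters to a bound of the form $\sum_{\fp\in\mathfrak{A}'_k}\mu(E(\fp)\cap G)\le C_a 2^{ak}\dens_1(\mathfrak{A})\,\mu(\cup_{\fp\in\mathfrak{A}'}\scI(\fp))$. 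This is exactly the paper's \Cref{global-antichain-density}. Your per-cube estimate via $E_2(2^{k+2},\fp_0)$ and the sum over cubes at a single scale are also fine.

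The gap is in the sum over scales, which you flagged yourself. Your proposed fix is that the strict monotonicity of \Cref{monotone-cube-metrics} leaves, for each $x$, only $O(1)$ scales at which the cube through $x$ can carry a tile of $\mathfrak{A}'_k$. That is not what the monotonicity gives you. The requirement on a cube $I$ containing $x$ is that some antichain tile $\fp$ with $\scI(\fp)=I$ has $d_{I^\circ}(\fcc(\fp),\mfa)\in[2^k-1,2^{k+1}-1)$; for $k=0$ this is $d_{I^\circ}(\fcc(\fp),\mfa)<1$. Since $d_{I^\circ}$ only shrinks (by at least $2^{95a}$ per level) as you go to smaller cubes, the condition $d_{I^\circ}(\fcc(\fp),\mfa)<1$ can hold at arbitrarily many smaller scales: the frequency balls at those scales get squeezed toward $\mfa$ and yet remain pairwise disjoint in their own native metrics, so a chain of nested cubes each carrying a distinct tile of $\mathfrak{A}'_0$ is compatible with the antichain property. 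Moreover, even if each point were in $\le C$ cubes carrying $\mathfrak{A}'_k$-tiles, that is a bound on an overlap count, not a Carleson packing bound on $\sum_I\mu(I)$, which is what your per-scale summation actually requires.

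The paper circumvents this entirely. Instead of summing per-cube bounds over scales, it partitions $\bigcup_{\fp\in\mathfrak{A}'}\scI(\fp)$ into the maximal cubes $\mathcal{L}^*$ strictly below every $\scI(\fp)$, and for each $L\in\mathcal{L}^*$ bounds $\sum_{\fp}\mu(E(\fp)\cap G\cap L)$ by a \emph{single} term $\mu(E_2(2^{N+3},\fp_\mfa))$ plus a stack-density contribution at the parent scale $L'$. The key input is \Cref{tile-reach}: a tile $\fp_\mfa$ at scale $s(L')$ with $\mfa\in B_{\fp_\mfa}(\fcc(\fp_\mfa),2^{N+1})$ satisfies $2^{N+3}\fp_\mfa\lesssim 2^{N+3}\fp$ for every larger tile $\fp\in\mathfrak{A}_{\mfa,N}$ containing it, so $E(\fp)\cap G\cap\scI(\fp_\mfa)\subset E_2(2^{N+3},\fp_\mfa)$ for all such $\fp$ simultaneously; disjointness of the $E(\fp)$ then collapses the sum over all larger scales into $\mu(E_2(2^{N+3},\fp_\mfa))\le 2^{(N+3)a}\dens_1(\mathfrak{A})\mu(L')$ with no loss in the number of scales. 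This reach-and-collapse mechanism (\Cref{tile-reach}, \Cref{stack-density}, \Cref{local-antichain-density}) is the missing idea in your argument; the monotonicity lemma is used inside \Cref{tile-reach}, but not in the way you envisage.
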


\subsection{The tile correlation lemma}\label{sec-tile-operator}

We start with a geometric estimate for two tiles.
\begin{lemma}\label{tile-uncertainty}
\leanok
\lean{Tile.uncertainty}
    Let $\fp_1, \fp_2\in \fP$ with
    $B(\pc(\fp_1),5D^{\ps(\fp_1)}) \cap B(\pc(\fp_2),5D^{\ps(\fp_2)}) \ne \emptyset$ and
$\ps({\fp_1})\leq \ps({\fp_2})$. For each $x_1\in E(\fp_1)$ and
$x_2\in E(\fp_2)$ we have
\begin{equation*}
  1+d_{\fp_1}(\fcc(\fp_1), \fcc(\fp_2))\le
    2^{8a}(1 + d_{B(x_1, D^{\ps(\fp_1)})}(\tQ(x_1),\tQ(x_2)))\, .
\end{equation*}
\end{lemma}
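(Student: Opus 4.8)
The plan is to compare the three metrics $d_{\fp_1}$, $d_{\fp_2}$ and $d_{B(x_1,D^{\ps(\fp_1)})}$ using only the doubling axioms \eqref{monotonedb} and \eqref{firstdb}, and then to chain estimates via the triangle inequality for $d_{\fp_1}$. Writing
\begin{equation*}
    d_{\fp_1}(\fcc(\fp_1),\fcc(\fp_2)) \le d_{\fp_1}(\fcc(\fp_1),\tQ(x_1)) + d_{\fp_1}(\tQ(x_1),\tQ(x_2)) + d_{\fp_1}(\tQ(x_2),\fcc(\fp_2))\,,
\end{equation*}
the first summand is $<1$ because $x_1\in E(\fp_1)$ forces $\tQ(x_1)\in\fc(\fp_1)\subset B_{\fp_1}(\fcc(\fp_1),1)$ by \eqref{eq-freq-comp-ball}; similarly $x_2\in E(\fp_2)$ gives $\tQ(x_2)\in\fc(\fp_2)\subset B_{\fp_2}(\fcc(\fp_2),1)$, so that $d_{\fp_2}(\tQ(x_2),\fcc(\fp_2))<1$.

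For the middle summand I would observe that $x_1\in E(\fp_1)\subset\scI(\fp_1)\subset B(\pc(\fp_1),4D^{\ps(\fp_1)})$ by \eqref{eq-vol-sp-cube}, \eqref{tilecenter}, \eqref{tilescale}, whence $B(\pc(\fp_1),\frac14 D^{\ps(\fp_1)})\subset B(x_1,5D^{\ps(\fp_1)})\subset B(x_1,8D^{\ps(\fp_1)})$. Monotonicity \eqref{monotonedb} followed by three applications of \eqref{firstdb} to the concentric balls of radii $D^{\ps(\fp_1)},2D^{\ps(\fp_1)},4D^{\ps(\fp_1)},8D^{\ps(\fp_1)}$ centered at $x_1$ yields the inequality of metrics $d_{\fp_1}\le 2^{3a}d_{B(x_1,D^{\ps(\fp_1)})}$, so the middle summand is at most $2^{3a}d_{B(x_1,D^{\ps(\fp_1)})}(\tQ(x_1),\tQ(x_2))$. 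For the last summand I would use the intersection hypothesis: choosing $z$ in $B(\pc(\fp_1),5D^{\ps(\fp_1)})\cap B(\pc(\fp_2),5D^{\ps(\fp_2)})$ and using $\ps(\fp_1)\le\ps(\fp_2)$ gives $\rho(\pc(\fp_1),\pc(\fp_2))<10D^{\ps(\fp_2)}$, hence $B(\pc(\fp_1),\frac14 D^{\ps(\fp_1)})\subset B(\pc(\fp_2),16D^{\ps(\fp_2)})$. Again \eqref{monotonedb} together with six applications of \eqref{firstdb} (concentric at $\pc(\fp_2)$, doubling $\frac14 D^{\ps(\fp_2)}$ up to $16 D^{\ps(\fp_2)}$) give $d_{\fp_1}\le 2^{6a}d_{\fp_2}$, so the last summand is at most $2^{6a}d_{\fp_2}(\tQ(x_2),\fcc(\fp_2))<2^{6a}$.

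Putting the three bounds together,
\begin{equation*}
    1+d_{\fp_1}(\fcc(\fp_1),\fcc(\fp_2)) < 2 + 2^{6a} + 2^{3a}d_{B(x_1,D^{\ps(\fp_1)})}(\tQ(x_1),\tQ(x_2)) \le 2^{8a}\bigl(1+d_{B(x_1,D^{\ps(\fp_1)})}(\tQ(x_1),\tQ(x_2))\bigr)\,,
\end{equation*}
where the last step uses $2+2^{6a}\le 2^{6a+1}\le 2^{8a}$ and $2^{3a}\le 2^{8a}$ (recall $a\ge 4$). The only point requiring care is the last summand, namely comparing the metric $d_{\fp_1}$ at the small scale $D^{\ps(\fp_1)}$ with $d_{\fp_2}$ at the possibly much larger scale $D^{\ps(\fp_2)}$; the resolution is that the geometric closeness built into the hypothesis makes the ball underlying $d_{\fp_1}$ sit inside a bounded dilate of the ball underlying $d_{\fp_2}$, after which the doubling axiom \eqref{firstdb} does the rest. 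No use is made of the cancellative axiom, and beyond \eqref{eq-freq-comp-ball}, \eqref{eq-vol-sp-cube}, \eqref{tilecenter}, \eqref{tilescale} no property of the tile structure enters.
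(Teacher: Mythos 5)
Your proof is correct and follows essentially the same route as the paper's: the same three-term triangle-inequality split, the bound $d_{\fp_1}(\tQ(x_2),\fcc(\fp_2))\le 2^{6a}d_{\fp_2}(\tQ(x_2),\fcc(\fp_2))<2^{6a}$ obtained by nesting the defining ball of $d_{\fp_1}$ inside a bounded dilate of that of $d_{\fp_2}$ via the intersection hypothesis and \eqref{eq-vol-sp-cube}, and the $2^{3a}$ factor from recentering at $x_1$ by \eqref{monotonedb} and \eqref{firstdb}. The only differences are cosmetic (you nest inside $B(\pc(\fp_2),16D^{\ps(\fp_2)})$ while the paper records $B(\pc(\fp_2),14D^{\ps(\fp_2)})$, leading to the same six doublings).
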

\begin{proof}
\leanok
Let $i\in \{1,2\}$.
By definition \eqref{defineep} of $E$,
we have
\begin{equation}\label{dponetwo}
    d_{\fp_i}(\tQ(x_i),\fcc(\fp_i)) < 1\, .
\end{equation}
By the triangle inequality and \eqref{eq-vol-sp-cube} we have $\scI(\fp_1) \subset B(\pc(\fp_2),14D^{\ps(\fp_2)})$.
Thus, using again \eqref{eq-vol-sp-cube} and the doubling property \eqref{firstdb},
\begin{equation}\label{tgeo0.5}
    d_{\fp_1}(\tQ(x_2), \fcc(\fp_2)) \le 2^{6a} d_{\fp_2}(\tQ(x_2), \fcc(\fp_2)) \le 2^{6a}\,.
\end{equation}
By the triangle inequality, we obtain from \eqref{dponetwo} and
\eqref{tgeo0.5}
\begin{equation}\label{tgeo1}
     1+d_{\fp_1}(\fcc(\fp_1), \fcc(\fp_2))\le 2 + 2^{6a} +d_{\fp_1}(\tQ(x_1), \tQ(x_2))\, .
\end{equation}
As $x_1\in \scI(\fp_1)$ we have by \eqref{eq-vol-sp-cube} and the triangle inequality
\begin{equation*}
    \scI(\fp_1)\subset B(x_1,8D^{\ps(\fp_1)})\, .
\end{equation*}
Applying monotonicity \eqref{monotonedb} of the metrics $d_B$ and the doubling property \eqref{firstdb} in \eqref{tgeo1} completes the proof.
\end{proof}

Now we prove \Cref{tile-correlation}.
\begin{proof}[Proof of \Cref{tile-correlation}]
    \proves{tile-correlation}
The support of $K_s$, see \eqref{supp-Ks}, and the triangle inequality imply that the left-hand side of \eqref{eq-basic-TT*-est} vanishes unless
$B(\pc(\fp_1),5D^{\ps(\fp_1)}) \cap B(\pc(\fp_2),5D^{\ps(\fp_2)}) \neq \emptyset$. We assume this for the remainder of the proof. Then \eqref{eq-tt*sup} follows from the triangle inequality and
the squeezing property \eqref{eq-vol-sp-cube}.

To prove \eqref{eq-basic-TT*-est} we expand the left-hand side and apply Fubini and the triangle inequality to bound it from above by
\begin{equation*}
    \int_{E(\fp_1)} \int_{E(\fp_2)} {\bf I}(x_1, x_2) |g(x_1)||g(x_2)|\, \mathrm{d}\mu(x_1)\mathrm{d}\mu(x_2)\,
\end{equation*}
with
\begin{equation*}
    {\bf I}(x_1, x_2):=
    \left|\int
    e(-\tQ(x_1)(y)+\tQ(x_2)(y))\varphi_{x_1,x_2}(y)
    \mathrm{d}\mu(y) \right|
\end{equation*}
and
\begin{equation*}
 \varphi_{x_1, x_2}(y) := \overline{K_{s_1}(x_1, y)}
 K_{s_2}(x_2, y) \, .
\end{equation*}
Note that by \eqref{supp-Ks} the function $\varphi$ is supported in $B(x_1, D^{s_1})$ and by \eqref{eq-Ks-size} and \eqref{eq-Ks-smooth} we have with $\tau = 1/a$
\begin{equation*}
  \|\varphi_{x_1, x_2}\|_{C^\tau(B(x_1, D^{s_1}))}\le
\frac{2^{231 a^3}}{\mu(B(x_1, D^{s_1}))\mu(B(x_2, D^{s_2}))}
      \, .
\end{equation*}
We can therefore estimate ${\bf I}(x_1, x_2)$ for fixed $x_1\in E(\fp_1)$ and
$x_2\in E(\fp_2)$ with the van-der-Corput type estimate from
\Cref{Holder-van-der-Corput} on the ball $B' := B(x_1, D^{\ps(\fp_1)})$. We obtain
\begin{equation*}
 {\bf I}(x_1, x_2) \le  \frac{2^{231a^3+8a}}
 {\mu(B(x_2, D^{\ps(\fp_2)}))}
       (1 + d_{B'}(\tQ(x_1),\tQ(x_2)))^{-1/(2a^2+a^3)}\,.
\end{equation*}
Using \Cref{tile-uncertainty} and $a\ge 1$, we estimate this by
\begin{equation}\label{eqa2}
 \le \frac{2^{231a^3 + 8a + 1}}
 {\mu(B(x_2, D^{\ps(\fp_2)}))}
       (1+d_{\fp_1}(\fcc(\fp_1), \fcc(\fp_2)))^{-1/(2a^2+a^3)}\,.
\end{equation}
As $x_2\in E(\fp_2)$ we have $\rho(x_2,\pc(\fp_2))\le 4D^{\ps(\fp_2)}$, thus by the squeezing property \eqref{eq-vol-sp-cube} and the triangle inequality
$\scI(\fp_2)\subset B(x_2,8D^{\ps(\fp_2)})$. Applying the doubling property \eqref{doublingx} in the first factor of \eqref{eqa2}, we obtain \eqref{eq-basic-TT*-est}.
\end{proof}

\subsection{The tile count lemma}
\label{subsec-geolem}

We start with some auxiliary lemmas.
\begin{lemma}[tile reach]\label{tile-reach}
\leanok
\lean{Antichain.tile_reach}
\uses{monotone-cube-metrics}
Let $\mfa\in \Mf$ and $N\ge0$ be an integer.
Let $\fp, \fp'\in \fP$ with
\begin{equation}\label{eqassumedismfa}
    d_{\fp}(\fcc(\fp), \mfa))\le 2^N\qquad\text{and}\qquad
    d_{\fp'}(\fcc(\fp'), \mfa))\le 2^N\, .
\end{equation}
Assume $\scI(\fp)\subset \scI(\fp')$ and $\ps(\fp)<\ps(\fp')$.
Then
\begin{equation}\label{lp'lp''}2^{N+2}\fp\lesssim 2^{N+2} \fp'\, .
\end{equation}
\end{lemma}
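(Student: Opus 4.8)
The plan is to unwind the definition \eqref{wiggleorder} of $2^{N+2}\fp\lesssim 2^{N+2}\fp'$. Since $\scI(\fp)\subset\scI(\fp')$ is already among the hypotheses, it remains only to prove the frequency-ball inclusion $B_{\fp'}(\fcc(\fp'),2^{N+2})\subset B_{\fp}(\fcc(\fp),2^{N+2})$. So I would fix $\mfb\in B_{\fp'}(\fcc(\fp'),2^{N+2})$, that is, $d_{\fp'}(\fcc(\fp'),\mfb)<2^{N+2}$, and estimate $d_{\fp}(\fcc(\fp),\mfb)$ by the triangle inequality through $\mfa$, bounding the summand $d_{\fp}(\fcc(\fp),\mfa)$ by $2^N$ using \eqref{eqassumedismfa}.

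The key point is to control the other summand $d_{\fp}(\mfa,\mfb)$ by pushing it up to the scale of $\fp'$. By \eqref{tilecenter}, \eqref{tilescale} and \eqref{defdp} we have $d_{\fp}=d_{\scI(\fp)^\circ}$ and $d_{\fp'}=d_{\scI(\fp')^\circ}$ in the notation of \Cref{monotone-cube-metrics}; moreover $\scI(\fp)\subset\scI(\fp')$ together with $\ps(\fp)<\ps(\fp')$ forces $\scI(\fp)\ne\scI(\fp')$. Hence the strict case of \Cref{monotone-cube-metrics} applies and yields $d_{\fp}(\mfa,\mfb)\le 2^{-95a}d_{\fp'}(\mfa,\mfb)$. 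One further triangle inequality, $d_{\fp'}(\mfa,\mfb)\le d_{\fp'}(\mfa,\fcc(\fp'))+d_{\fp'}(\fcc(\fp'),\mfb)<2^N+2^{N+2}$ (again using \eqref{eqassumedismfa} for the first term), then combines to
\[
    d_{\fp}(\fcc(\fp),\mfb)< 2^N+2^{-95a}\bigl(2^N+2^{N+2}\bigr)\le 2^N\bigl(1+2^{3-95a}\bigr)\le 2^{N+1}<2^{N+2}\,,
\]
where in the last steps we used $a\ge 4$, so that $2^{3-95a}\le 1$. This gives $\mfb\in B_{\fp}(\fcc(\fp),2^{N+2})$, completing the inclusion and hence \eqref{lp'lp''}.

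I do not anticipate a genuine obstacle: the whole argument is two applications of the triangle inequality glued together with a single invocation of \Cref{monotone-cube-metrics}. The only things that require a moment's care are (i) checking that one is in the strict case of that lemma, which is exactly what the hypothesis $\ps(\fp)<\ps(\fp')$ provides, and (ii) observing that the factor $2^{-95a}$ is an enormous gain that easily swallows the loss incurred by enlarging the radius from $2^N$ to $2^{N+2}$ — indeed the computation above has plenty of slack.
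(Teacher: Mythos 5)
Your proof is correct, but takes a cleaner route than the paper's. The paper applies \Cref{monotone-cube-metrics} only in its non-strict form, to deduce $d_{\fp}(\fcc(\fp'),\mfa)\le d_{\fp'}(\fcc(\fp'),\mfa)\le 2^N$ and hence $d_\fp(\fcc(\fp),\fcc(\fp'))\le 2^{N+1}$, and then establishes the scale gain by hand: given $\mfa'\in B_{\fp'}(\fcc(\fp'),2^{N+2})$, it enlarges the $\fp'$-ball to radius $8D^{\ps(\fp')}$ via \eqref{firstdb} (losing $2^{5a}$), inscribes $B(\pc(\fp),4D^{\ps(\fp')})$ in it via \eqref{eq-vol-sp-cube} and monotonicity \eqref{monotonedb}, and then shrinks back down to $d_\fp$ using \eqref{seconddb} and the strict inequality $\ps(\fp)<\ps(\fp')$, which recovers more than enough to cancel the $2^{5a}$ loss; a final triangle through $\fcc(\fp')$ closes the argument. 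You instead triangulate through $\mfa$ in the $\fp$-metric and invoke the \emph{strict} case of \Cref{monotone-cube-metrics} directly, which supplies the $2^{-95a}$ gain in one stroke. Both require $\scI(\fp)\ne\scI(\fp')$, which you correctly extract from $\ps(\fp)<\ps(\fp')$. Your version is shorter and keeps the scale-gain bookkeeping inside the lemma where it has already been done once; the paper's version has the minor virtue of not relying on the strict clause but redoes that bookkeeping locally.
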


\begin{proof}
\leanok
By the monotonicity of  \Cref{monotone-cube-metrics}, we have
\begin{equation*}
     d_{\fp}(\fcc(\fp'),\mfa)
     \le d_{\fp'}(\fcc(\fp'),\mfa)
     \le 2^{N} \, .
\end{equation*}
Together with \eqref{eqassumedismfa} and the triangle inequality, we obtain
\begin{equation}\label{eqdistqpqp}
    d_{\fp}(\fcc(\fp'),\fcc(\fp))\le 2^{N+1} \, .
\end{equation}
To show \eqref{lp'lp''}, pick some
\begin{equation*}
    \mfa'\in B_{\fp'}(\fcc(\fp'),2^{N+2})\, .
\end{equation*}
By the doubling property \eqref{firstdb}, applied five times, we have
\begin{equation}\label{ageo1} d_{B(\pc(\fp'),8D^{\ps(\fp')})}(\fcc(\fp'),\mfa') < 2^{5a+N+2}\, .
\end{equation}
With the assumption, $\scI(\fp)\subset \scI(\fp')$,
the squeezing property \eqref{eq-vol-sp-cube}, and the triangle inequality, we have
\begin{equation*}
 B(\pc(\fp), 4D^{\ps(\fp')})
 \subseteq
B(\pc(\fp'),8D^{\ps(\fp')})\, .
\end{equation*}
Together with \eqref{ageo1} and monotonicity \eqref{monotonedb} of $d$ this implies
\begin{equation*}
    d_{B(\pc(\fp),4D^{\ps(\fp')})}(\fcc(\fp'),\mfa') < 2^{5a+N+2}\, .
\end{equation*}
Using $\ps(\fp)<\ps(\fp')$, $D=2^{100a^2}$,  $a\ge 4$, and the doubling property \eqref{seconddb} gives
\begin{equation*}
    d_{\fp}(\fcc(\fp'),\mfa') < d_{B(\pc(\fp),2^{2-5a^2-2a}D^{\ps(\fp')})}(\fcc(\fp'),\mfa') < 2^{N}\, .
\end{equation*}
Finally, we obtain with the triangle inequality and \eqref{eqdistqpqp},
\begin{equation*}
    d_{\fp}(\fcc(\fp),\mfa') < 2^{N+2}\, .
\end{equation*}
This implies \eqref{lp'lp''} and completes the proof of the lemma.
\end{proof}

For $\mfa \in \Mf$ and $N\ge 0$ define
\begin{equation*}
    \mathfrak{A}_{\mfa,N}:=\{\fp\in\mathfrak{A}: 2^{N}\le 1+d_{\fp}(\fcc(\fp), \mfa) < 2^{N+1}\} \, .
\end{equation*}

\begin{lemma}[stack density]
\label{stack-density}
\leanok
\lean{Antichain.stack_density}
Let $\mfa \in \Mf$, $N\ge 0$ and
$L\in \mathcal{D}$. Then
\begin{equation}\label{eqanti-1}
    \sum_{\fp\in\mathfrak{A}_{\mfa,N}:\scI(\fp)=L}\mu(E(\fp)\cap G)\le 2^{a(N+5)}\dens_1(\mathfrak{A})\mu(L)\, .
\end{equation}
\end{lemma}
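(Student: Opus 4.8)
The plan is to reduce \eqref{eqanti-1} to a single application of the definition \eqref{definedens1} of $\dens_1$. If no tile $\fp \in \mathfrak{A}_{\mfa,N}$ has $\scI(\fp) = L$, then the left-hand side vanishes and there is nothing to prove, so fix one such tile and call it $\fp_0$. Recall from \eqref{defdp}, \eqref{tilecenter} and \eqref{tilescale} that every tile $\fq$ with $\scI(\fq) = L$ satisfies $d_\fq = d_{L^\circ}$, where $L^\circ := B(c(L), \frac{1}{4} D^{s(L)})$ as in \eqref{defineIsupo}; in particular $d_{\fp_0}$ and all the $d_\fp$ appearing in the sum coincide with $d_{L^\circ}$.

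First I would observe that the sets $E(\fp)$ with $\scI(\fp) = L$ are pairwise disjoint: by the disjoint covering property \eqref{eq-dis-freq-cover}, every value $\tQ(x) \in \tQ(X)$ lies in exactly one $\fc(\fp)$ with $\fp \in \fP(L)$, so by \eqref{defineep} a point $x$ can lie in $E(\fp)$ for at most one such $\fp$. Consequently the left-hand side of \eqref{eqanti-1} equals the measure of the union $\bigcup(E(\fp)\cap G)$ taken over all $\fp\in\mathfrak{A}_{\mfa,N}$ with $\scI(\fp)=L$.

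Second I would show that this union is contained in $E_2(2^{N+3}, \fp_0)$. Let $\fp \in \mathfrak{A}_{\mfa,N}$ with $\scI(\fp) = L$ and $x \in E(\fp) \cap G$; then $x \in L \cap G$ and, by \eqref{eq-freq-comp-ball}, $\tQ(x) \in \fc(\fp) \subset B_{L^\circ}(\fcc(\fp), 1)$. The definition of $\mathfrak{A}_{\mfa,N}$ gives $d_{L^\circ}(\fcc(\fp), \mfa) < 2^{N+1}$ and likewise $d_{L^\circ}(\fcc(\fp_0), \mfa) < 2^{N+1}$, hence by the triangle inequality $d_{L^\circ}(\fcc(\fp_0), \fcc(\fp)) < 2^{N+2}$ and then $d_{L^\circ}(\fcc(\fp_0), \tQ(x)) < 2^{N+2} + 1 \le 2^{N+3}$. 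Thus $\tQ(x) \in B_{\fp_0}(\fcc(\fp_0), 2^{N+3})$, i.e.\ $x \in E_2(2^{N+3}, \fp_0)$ by \eqref{definee2}.

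Finally, I would apply \eqref{definedens1} with $\lambda := 2^{N+3} \ge 2$ and the reference tile $\fp_0$ itself: since $\fp_0 \in \mathfrak{A} \subset \fP(\mathfrak{A})$ and trivially $\lambda \fp_0 \lesssim \lambda \fp_0$, it follows that $\mu(E_2(2^{N+3}, \fp_0)) \le 2^{a(N+3)} \dens_1(\mathfrak{A}) \mu(\scI(\fp_0)) = 2^{a(N+3)} \dens_1(\mathfrak{A}) \mu(L)$. Combining the three steps and using $2^{a(N+3)} \le 2^{a(N+5)}$ yields \eqref{eqanti-1}. No step is genuinely hard here: the only points that need care are verifying that $\lambda \ge 2$ and that $\fp_0$ is an admissible reference tile in \eqref{definedens1}, together with the elementary radius bookkeeping in the triangle inequality; the generous powers of $2$ in the claimed bound leave ample slack.
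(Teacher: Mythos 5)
Your proof is correct, and it takes a genuinely different route from the paper's. The paper bounds each individual term $\mu(E(\fp)\cap G)$ by $2^a\dens_1(\mathfrak{A})\mu(L)$ using the density definition with $\lambda=2$, and then separately bounds the \emph{number} of tiles $\fp\in\mathfrak{A}_{\mfa,N}$ with $\scI(\fp)=L$ by $2^{a(N+4)}$: using the covering property \eqref{thirddb} (applied $N+4$ times) it covers $B_\fp(\mfa,2^{N+1})$ by $2^{a(N+4)}$ balls of radius $0.2$, and the disjointness of the balls $B_\fp(\fcc(\fp'),0.2)$ (from \eqref{eq-dis-freq-cover} and \eqref{eq-freq-comp-ball}) shows each small ball captures at most one tile center. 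Multiplying the two bounds gives $2^{a(N+5)}$. You instead use the disjointness of the sets $E(\fp)$ directly to turn the sum into the measure of a union, show that union sits inside a single $E_2(2^{N+3},\fp_0)$ by the triangle inequality on $d_{L^\circ}$, and invoke the density definition once with the large parameter $\lambda=2^{N+3}$, yielding the slightly sharper constant $2^{a(N+3)}$. Your argument is shorter and sidesteps the tile-counting step and the geometric doubling property \eqref{thirddb} altogether; the paper's argument is more modular in that it quantifies separately how many tiles can share a cube and frequency band, a count that is of independent use in nearby lemmas. Both reduce ultimately to a single application of \eqref{definedens1}, so the core mechanism is the same, but the decomposition before that step is different.
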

\begin{proof}
\leanok
Let $\mfa,N,L$ be given and set
\begin{equation*}
\mathfrak{A}':=\{\fp\in\mathfrak{A}_{\mfa,N}:\scI(\fp)=L\}\, .
\end{equation*}
Let
$\fp\in\mathfrak{A}'$.
By definition \eqref{definedens1} of $\dens_1$ with $\lambda=2$ and the squeezing property \eqref{eq-freq-comp-ball},
\begin{equation}\label{eqanti-3}
\mu(E(\fp)\cap G)\le \mu(E_2(2, \fp))\le 2^{a}\dens_1(\mathfrak{A}')\mu(L)\, .
\end{equation}
By the covering property \eqref{thirddb}, applied $N+4$ times, there is a collection $\Mf'$ of at most $2^{a(N+4)}$
elements such that
\begin{equation}\label{eqanti-4}
    B_{\fp}(\mfa, 2^{N+1})\subset \bigcup_{\mfa'\in \Mf'}
    B_{\fp}(\mfa', 0.2)\, .
\end{equation}
Note that the metrics $d_{\fp'}$, $\fp' \in \mathfrak{A}'$ are all equal, since they only depend on $\scI(\fp') = L$.
Hence, each $\fcc(\fp')$ with $\fp'\in \mathfrak{A}'$
is contained in the left-hand-side
of \eqref{eqanti-4}. Moreover, the balls $B_{\fp}(\fcc(\fp'), 0.2) \subset \fc(\fp')$ with $\fp' \in \mathfrak{A}'$ are pairwise disjoint by \eqref{eq-dis-freq-cover}. Hence each $B_\fp(\mfa', 0.2)$ contains at most one of the $\fcc(\fp')$ with $\fp' \in \mathfrak{A}'$.
It follows that there are at most $2^{a(N+4)}$ elements in
$\mathfrak{A}'$. Adding \eqref{eqanti-3} over $\mathfrak{A}'$ proves
\eqref{eqanti-1}.
\end{proof}

\begin{lemma}[local antichain density]\label{local-antichain-density}
\leanok
\lean{Antichain.local_antichain_density, Antichain.Ep_inter_G_inter_Ip'_subset_E2}
Let $\mfa\in \Mf$ and {$N$} be
an integer. Let $\fp_{\mfa}$ be a tile with $\mfa\in B_{\fp_{\mfa}}(\fcc(\fp_{\mfa}), 2^{N+1})$.
Then we have
\begin{equation}\label{eqanti-0.5}
    \sum_{\fp\in\mathfrak{A}_{\mfa,N}: \ps(\fp_{\mfa})<\ps(\fp)}\mu(E(\fp)\cap G \cap \scI(\fp_{\mfa}))
    \le \mu (E_2(2^{N+3},\fp_{\mfa}))
 \, .
\end{equation}

\end{lemma}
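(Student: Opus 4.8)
The plan is to reduce \eqref{eqanti-0.5} to a pointwise set inclusion. Since $\mathfrak{A}_{\mfa,N}\subset\mathfrak{A}$, the sets $E(\fp)$ with $\fp$ ranging over the index set of the sum are pairwise disjoint, as recorded at the beginning of \Cref{sec-TT*-T*T}. Hence the left-hand side of \eqref{eqanti-0.5} equals $\mu\big(\bigcup_{\fp}(E(\fp)\cap G\cap\scI(\fp_{\mfa}))\big)$, the union taken over $\fp\in\mathfrak{A}_{\mfa,N}$ with $\ps(\fp_{\mfa})<\ps(\fp)$, and it suffices to prove that each such set $E(\fp)\cap G\cap\scI(\fp_{\mfa})$ is contained in $E_2(2^{N+3},\fp_{\mfa})$. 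We may assume $N\ge0$, since otherwise $\mathfrak{A}_{\mfa,N}$ is empty.

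First I would establish the spatial nesting. Fix $\fp\in\mathfrak{A}_{\mfa,N}$ with $\ps(\fp_{\mfa})<\ps(\fp)$ and a point $x\in E(\fp)\cap G\cap\scI(\fp_{\mfa})$. Then $x\in\scI(\fp)\cap\scI(\fp_{\mfa})$ and $s(\scI(\fp_{\mfa}))=\ps(\fp_{\mfa})<\ps(\fp)=s(\scI(\fp))$ by \eqref{tilescale}, so the dyadic nesting property \eqref{dyadicproperty} gives $\scI(\fp_{\mfa})\subset\scI(\fp)$ with $\scI(\fp_{\mfa})\ne\scI(\fp)$. Consequently \Cref{monotone-cube-metrics}, applied with $I=\scI(\fp_{\mfa})$ and $J=\scI(\fp)$ and using that $d_{\fp}=d_{\scI(\fp)^\circ}$ and $d_{\fp_{\mfa}}=d_{\scI(\fp_{\mfa})^\circ}$ by \eqref{defdp}, \eqref{tilecenter}, \eqref{tilescale}, \eqref{defineIsupo}, yields $d_{\fp_{\mfa}}(\mfa,\tQ(x))\le 2^{-95a}\, d_{\fp}(\mfa,\tQ(x))$ for this pair of points.

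Next I would run two triangle inequalities in the respective metrics. From $x\in E(\fp)$ and \eqref{eq-freq-comp-ball} (which gives $\fc(\fp)\subset B_{\fp}(\fcc(\fp),1)$) we have $d_{\fp}(\fcc(\fp),\tQ(x))<1$, while $\fp\in\mathfrak{A}_{\mfa,N}$ gives $d_{\fp}(\fcc(\fp),\mfa)<2^{N+1}$; hence $d_{\fp}(\mfa,\tQ(x))<1+2^{N+1}\le 2^{N+2}$, using $N\ge0$. Transferring through the monotonicity above and using $a\ge4$ gives $d_{\fp_{\mfa}}(\mfa,\tQ(x))\le 2^{-95a}2^{N+2}\le 2^{N+1}$. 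Combined with the hypothesis $\mfa\in B_{\fp_{\mfa}}(\fcc(\fp_{\mfa}),2^{N+1})$, i.e. $d_{\fp_{\mfa}}(\fcc(\fp_{\mfa}),\mfa)<2^{N+1}$, the triangle inequality gives $d_{\fp_{\mfa}}(\fcc(\fp_{\mfa}),\tQ(x))<2^{N+2}<2^{N+3}$, that is $\tQ(x)\in B_{\fp_{\mfa}}(\fcc(\fp_{\mfa}),2^{N+3})$. Since also $x\in\scI(\fp_{\mfa})\cap G$, definition \eqref{definee2} of $E_2$ yields $x\in E_2(2^{N+3},\fp_{\mfa})$, which is what was needed.

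There is no genuine obstacle here. The one point requiring care is that the monotonicity of the cube metrics must be invoked in the form \Cref{monotone-cube-metrics}: the balls $\scI(\fp_{\mfa})^\circ$ and $\scI(\fp)^\circ$ need not be nested even though the cubes $\scI(\fp_{\mfa})\subset\scI(\fp)$ are, so the raw axiom \eqref{monotonedb} does not apply to them directly. The constants close comfortably; in fact the plain monotonicity $d_{\scI(\fp_{\mfa})^\circ}\le d_{\scI(\fp)^\circ}$ from \Cref{monotone-cube-metrics} already suffices, since $1+2^{N+1}+2^{N+1}<2^{N+3}$.
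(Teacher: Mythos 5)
Your proof is correct. The overall strategy is the same as the paper's: reduce \eqref{eqanti-0.5} to the set inclusion $E(\fp)\cap G\cap\scI(\fp_{\mfa})\subset E_2(2^{N+3},\fp_{\mfa})$, note that only $\fp$ with $\scI(\fp_{\mfa})\subsetneq\scI(\fp)$ contribute, and sum using pairwise disjointness of the $E(\fp)$ for $\fp$ in an antichain. The one place you diverge is how you obtain the distance bound $d_{\fp_\mfa}(\fcc(\fp_\mfa),\tQ(x))<2^{N+3}$: the paper invokes Lemma~\ref{tile-reach}, which is a reusable ball-inclusion statement $2^{N+3}\fp_{\mfa}\lesssim 2^{N+3}\fp$ proved through a chain of doubling estimates, and then feeds $\tQ(x)\in B_{\fp}(\fcc(\fp),1)$ into that inclusion. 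You instead exploit directly that the relevant point $\tQ(x)$ sits in the unit ball $B_\fp(\fcc(\fp),1)$, which lets you replace the full ball inclusion with two triangle inequalities and a single application of Lemma~\ref{monotone-cube-metrics}. This is a genuine shortcut for this specific statement, though tile-reach earns its keep elsewhere in the paper (for instance in Lemma~\ref{global-antichain-density}), which is why the authors packaged it as a separate lemma. Your parenthetical observation that plain monotonicity $d_{\scI(\fp_{\mfa})^\circ}\le d_{\scI(\fp)^\circ}$ already suffices here (since $1+2^{N+2}<2^{N+3}$ for $N\ge 0$) is correct, and your preliminary reduction to $N\ge 0$ is also correct and necessary for the numerics.
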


\begin{proof}
\leanok
On the left hand side only tiles $\fp \in \mathfrak{A}_{\mfa, N}$ with $\scI(\fp_\mfa) \subset \scI(\fp)$ contribute.
Combining the assumption on $\fp_\mfa$ with $\fp \in \mathfrak{A}_{\mfa,N}$ and \Cref{tile-reach}, we conclude
$2^{N+3}\fp_{\mfa} \lesssim 2^{N+3}\fp$ and hence
\begin{equation*}
    E(\fp)\cap G \cap \scI(\fp_{\mfa}) \subset E_2(2^{N+3},\fp_{\mfa})\, .
\end{equation*}
Using disjointness of the various $E(\fp)$ with $\fp\in \mathfrak{A}$, we obtain \eqref{eqanti-0.5}.
\end{proof}
\begin{lemma}[global antichain density]
\label{global-antichain-density}
\leanok
\lean{Antichain.global_antichain_density}
\uses{stack-density,local-antichain-density}
Let $\mfa\in Q(X)$ and let $N\ge 0$ be
an integer. Then we have
\begin{equation*}
    \sum_{\fp\in\mathfrak{A}_{\mfa,N}}\mu(E(\fp)\cap G)
    \le
 2^{101a^3+Na}\dens_1(\mathfrak{A})\mu\left(\cup_{\fp\in\mathfrak{A}}I({\fp})\right)\,.
\end{equation*}
\end{lemma}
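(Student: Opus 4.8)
Since $\mathfrak{A}$ is an antichain, the sets $E(\fp)$, $\fp \in \mathfrak{A}$, are pairwise disjoint (this is recorded at the start of \Cref{sec-TT*-T*T}), so the sum in question equals $\mu\bigl(\bigsqcup_{\fp \in \mathfrak{A}_{\mfa,N}}(E(\fp) \cap G)\bigr)$. The first step is a reduction to a single maximal cube. Let $\mathcal{L}$ be the collection of maximal elements of $\{\scI(\fp) : \fp \in \mathfrak{A}_{\mfa,N}\}$ with respect to inclusion; by \eqref{dyadicproperty} the cubes in $\mathcal{L}$ are pairwise disjoint, each equals $\scI(\fp)$ for some $\fp \in \mathfrak{A}$, and every $\scI(\fp)$ with $\fp \in \mathfrak{A}_{\mfa,N}$ is contained in exactly one $L \in \mathcal{L}$. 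Hence $\sum_{L \in \mathcal{L}} \mu(L) \le \mu\bigl(\bigcup_{\fp \in \mathfrak{A}} \scI(\fp)\bigr)$, and since $E(\fp) \subseteq \scI(\fp)$ it suffices to bound, for each fixed $L \in \mathcal{L}$, the quantity $S_L := \sum_{\fp \in \mathfrak{A}_{\mfa,N},\, \scI(\fp) \subseteq L} \mu(E(\fp) \cap G)$ by $2^{a(N+O(a^2))}\dens_1(\mathfrak{A})\mu(L)$.

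Fix $L \in \mathcal{L}$ and pick, for each tile-cube $J$ (i.e.\ $J = \scI(\fq)$ for some $\fq \in \mathfrak{A}_{\mfa,N}$), a tile $\fq_J \in \mathfrak{A}_{\mfa,N}$ with $\scI(\fq_J) = J$. The top layer $\{\fp \in \mathfrak{A}_{\mfa,N} : \scI(\fp) = L\}$ is handled directly by \Cref{stack-density}, contributing at most $2^{a(N+5)}\dens_1(\mathfrak{A})\mu(L)$. For the deeper tiles the key geometric input is: if $\fp \in \mathfrak{A}_{\mfa,N}$ and $J$ is a tile-cube with $J \subsetneq \scI(\fp)$, then $\fq_J$ and $\fp$ both have $d$-distance $< 2^{N+1}$ from $\mfa$ at their centers, so \Cref{tile-reach} (with $N+1$ in place of $N$) gives $2^{N+3}\fq_J \lesssim 2^{N+3}\fp$; using \eqref{eq-freq-comp-ball} this forces $\fc(\fp) \subseteq B_{\fp}(\fcc(\fp),1) \subseteq B_{\fq_J}(\fcc(\fq_J), 2^{N+3})$, so that for $x \in E(\fp) \cap G \cap J$ we have $x \in E_2(2^{N+3}, \fq_J)$ (the same conclusion holds trivially, via \eqref{eq-freq-comp-ball}, when $\scI(\fp) = J$). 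Together with \Cref{local-antichain-density} this shows that the total contribution of all tiles $\fp \in \mathfrak{A}_{\mfa,N}$ with $\scI(\fp) \supseteq J$, restricted to $J$, is at most $\mu(E_2(2^{N+3},\fq_J))$; and by the definition \eqref{definedens1} of $\dens_1$ applied with witness $\fp' = \fq_J$ and $\lambda = 2^{N+3}$ (for which $2^{N+3}\fq_J \lesssim 2^{N+3}\fq_J$ holds trivially) this is at most $2^{a(N+3)}\dens_1(\mathfrak{A})\mu(J)$.

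The remaining — and main — difficulty is to assemble these per-cube estimates inside $L$ without over-counting, since a single point can lie in an arbitrarily long chain of nested tile-cubes and the naive sum $\sum_J \mu(J)$ over all tile-cubes $J \subseteq L$ is not controlled by $\mu(L)$. The plan here is a stopping-time argument: descending from $L$, one stops at the maximal tile-cubes $J \subsetneq L$ at which the mass $\mu\bigl(\bigsqcup_{\fp \in \mathfrak{A}_{\mfa,N},\,\scI(\fp)\subseteq J}(E(\fp)\cap G)\bigr)$ first exceeds a suitable Calder\'on--Zygmund threshold, iterates, and exploits that the ``uncovered'' parts $J \setminus \bigcup(\text{stopping children of }J)$ of the cubes occurring in the resulting stopping tree are pairwise disjoint subsets of $L$, hence have measures summing to at most $\mu(L)$. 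On each uncovered region the contribution of the relevant tiles is estimated by the containment into $E_2(2^{N+3},\fq_J)$ of the previous paragraph together with \Cref{stack-density}, yielding a factor $2^{a(N+O(1))}\dens_1(\mathfrak{A})$ times the measure of that (disjoint) region; summing over the finitely many stopping cubes gives $S_L \le 2^{a(N+O(a^2))}\dens_1(\mathfrak{A})\mu(L)$. Summing over $L \in \mathcal{L}$ and using $\sum_{L \in \mathcal{L}}\mu(L) \le \mu\bigl(\bigcup_{\fp \in \mathfrak{A}}\scI(\fp)\bigr)$ then completes the proof, once one checks that all accumulated constants are of the form $2^{aN}\cdot 2^{O(a^3)}$ and that $aN + O(a^3) \le Na + 101a^3$ for $a \ge 4$. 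The delicate point throughout is arranging the stopping tree so that the deeper-tile contributions genuinely telescope against the disjointness of the uncovered regions while retaining the linear $\dens_1$ gain rather than merely an $O(\mu(L))$ bound; the geometric and density facts needed are exactly those quoted above.
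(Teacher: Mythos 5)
Your first and second paragraphs are on track — disjointness of the $E(\fp)$, reduction to the union of the tile cubes, \Cref{stack-density} for tiles at the fixed scale, \Cref{tile-reach} and \Cref{local-antichain-density} for deeper tiles — these are exactly the ingredients the paper uses. The gap is in the third paragraph, and you yourself flag it: you have reduced to the \emph{maximal} tile cubes $L$ and then face the genuine over-counting problem that a long chain of nested tile cubes inside $L$ can contribute many times. The stopping-time argument you sketch does not resolve this. A Calder\'on--Zygmund stopping rule on the accumulated mass $\mu\bigl(\bigsqcup_{\scI(\fp)\subseteq J}(E(\fp)\cap G)\bigr)$ cannot deliver the crucial factor $\dens_1(\mathfrak{A})$: the trivial disjointness of the $E(\fp)$ only yields $\sum \mu(E(\fp)\cap G)\le\mu(L)$, and nothing in the proposed stopping mechanism reintroduces the density gain. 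Moreover, the stopping threshold you would need is precisely the bound being proved, which makes the scheme circular.

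The paper avoids the difficulty by choosing the \emph{opposite} decomposition: it lets $\mathcal{L}$ be the set of cubes $I$ that are contained in some $\scI(\fp)$, $\fp\in\mathfrak{A}'$, but contain no such $\scI(\fp)$, and takes the maximal elements $\mathcal{L}^*$. These are \emph{bottom} cubes, strictly below all tile cubes. They are pairwise disjoint, partition $\bigcup_{\fp}\scI(\fp)$, and — this is the point — for each $L\in\mathcal{L}^*$ every tile $\fp\in\mathfrak{A}'$ contributing to $E(\fp)\cap G\cap L$ has $\scI(\fp)\supsetneq L$, hence $\scI(\fp)\supseteq L'$, the parent of $L$. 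Thus \Cref{local-antichain-density} applies with a single $\fp_{\mfa}$ at scale $s(L')$, and \Cref{stack-density} handles the tiles at scale $s(L')$; doubling converts $\mu(L')$ to $\mu(L)$, and summing over the disjoint $L$'s gives the claim directly, with no stopping time. The per-cube estimate you derive at the end of your second paragraph is exactly right; it just needs to be applied at the bottom cubes $\mathcal{L}^*$ rather than at the maximal tile cubes, because it is the bottom cubes that are pairwise disjoint and that each see only tiles above them.
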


\begin{proof}
Fix $\mfa$ and $N$. The contribution of tiles $\fp$ with $s(\fp) = -S$ is taken care of by Lemma \ref{stack-density}. Let
$\mathfrak{A}'$ be the set of remaining tiles $\fp\in\mathfrak{A}_{\mfa,N}$ such that $\scI(\fp)\cap G \ne \emptyset$ and $s(\fp) > -S$.

Let $\mathcal{L}$ be the collection of dyadic cubes $I\in\mathcal{D}$ such that $I\subset \scI(\fp)$ for some $\fp\in\mathfrak{A}'$ and $\scI(\fp)\not \subset I$ for all $\fp\in\mathfrak{A}'$. Let $\mathcal{L}^*$ be the set of maximal elements in $\mathcal{L}$ with respect to set inclusion. By the grid properties, the elements in $\mathcal{L}^*$ are pairwise disjoint and we have
\begin{equation}\label{eqdecAprime}
    \bigcup\mathcal{L}^*=\bigcup_{\fp \in \mathfrak{A}'}\scI(\fp)\, .
\end{equation}
Using the partition \eqref{eqdecAprime}, it suffices to show that for each $L\in \mathcal{L}^*$
\begin{equation}\label{eqanti0}
    \sum_{\fp\in\mathfrak{A}'}\mu(E(\fp)\cap G \cap L)
    \le
    2^{101a^3+aN}
    \dens_1(\mathfrak{A})\mu(L)\,.
\end{equation}
Fix $L\in \mathcal{L}^*$.
By definition of $L$, there exists an element $\fp'\in \mathfrak{A}'$ of minimal scale such that $L\subset \scI(\fp')$.
Let $L'\in \mathcal{D}$ be the unique cube with $s(L')=s(L)+1$ and $L \subset L'$.

We split the left-hand side of \eqref{eqanti0} as
\begin{equation}\label{eqanti1}
    \sum_{\fp\in\mathfrak{A}':\scI(\fp)=L'}\mu(E(\fp)\cap G\cap L)
    +
     \sum_{\fp\in\mathfrak{A}':\scI(\fp)\neq L'}\mu(E(\fp)\cap G\cap L)\, ,
\end{equation}
The first term satisfies the required bound by \Cref{stack-density} and the doubling property \eqref{doublingx}.

For the second term, note that by the maximality of $L$ in $\mathcal{L}$, there exists $\fp''\in \mathfrak{A}'$ with
$\scI(\fp'')\subset L'$. If $\scI(\fp'') = L'$, then we set $\fp_\mfa = \fp''$. Otherwise, we use that by the covering property \eqref{eq-dis-freq-cover}, there exists a unique $\fp_{\mfa}$ with $\scI(\fp_{\mfa})=L'$ such that $\mfa\in \fc(\fp_{\mfa})$, and we take this as the definition of $\fp_\mfa$.
Using that $\fp'' \in \mathfrak{A}_{\mfa,N}$ and \Cref{tile-reach}, we conclude in both cases that
\begin{equation*}
    2^{N+3}\fp'' \lesssim 2^{N+3}\fp_{\mfa} \, .
\end{equation*}
As $\fp''\in \mathfrak{A}'$, we have by the definition \eqref{definedens1} of $\dens_1$ that
\begin{equation}\label{pmfadens}
   \mu(E_2(2^{N+3}, \fp_{\mfa}))\le 2^{Na+3a}\dens_1(\mathfrak{A}) {\mu(L')}\, .
\end{equation}
Now let $\fp$ be any tile in the second sum in \eqref{eqanti1}. It follows by the dyadic property \eqref{dyadicproperty}
and the definition of $L$ that
$L\subset \scI(\fp)$ and $L\neq \scI(\fp)$ and in fact $L'\subset \scI(\fp)$ and $L'\neq \scI(\fp)$, so we conclude $s(L')<\ps(\fp)$.
By \Cref{local-antichain-density}, we thus estimate the second term in \eqref{eqanti1} by
\begin{equation*}
    \sum_{\fp\in\mathfrak{A}':\scI(\fp)\neq L'}\mu(E(\fp)\cap G\cap L')
    \le \mu (E_2(2^{N+3},\fp_{\mfa}))\, .
\end{equation*}
With
\eqref{pmfadens} and the doubling property \eqref{doublingx}, this proves \eqref{eqanti0}.
\end{proof}

We turn to the proof of \Cref{antichain-tile-count}.

\begin{proof}[Proof of \Cref{antichain-tile-count}]
\proves{antichain-tile-count}

Using that $\mathfrak{A}$ is the disjoint union of the
$\mathfrak{A}_{\mfa,N}$ with $N\ge 0$ and that the sets $E(\fp)$ with $\fp \in \mathfrak{A}$ are pairwise disjoint
we estimate the $p$-th power of \eqref{eq-antichain-Lp} by
\begin{equation*}
\sum_{N\ge 0}2^{-pN/(2a^2+a^3)} \sum_{\fp\in\mathfrak{A}_{\mfa,N}}\mu(E(\fp)\cap G)\, .
\end{equation*}
Using \Cref{global-antichain-density}, we estimate the last display by
\begin{equation}\label{eqanti21}
    \le \sum_{N \ge 0} 2^{-pN/(2a^2+a^3)+101a^3+Na}\dens_1(\mathfrak{A})\mu\left(\cup_{\fp\in\mathfrak{A}}\scI(\fp)\right).
\end{equation}
Recalling $p = 4a^4$ and
using  $a\ge 4$, we conclude
\begin{equation*}
    pN/(2a^2+a^3)\ge
    4a^4N/(3a^3) \ge Na+N\, .
\end{equation*}
Hence we have for \eqref{eqanti21} the upper bound
\begin{equation*}
\le 2^{101a^3} \sum_{N \ge 0} 2^{-N}\dens_1(\mathfrak{A})\mu\left(\cup_{\fp\in\mathfrak{A}}\scI(\fp)\right).
\end{equation*}
Summing over $N\ge 0$ and taking the $p$-th root proves the lemma.
\end{proof}

\section{Proof of the Forest Operator Proposition}

\label{treesection}

 After proving a series of auxiliary lemmas, we assemble the proof of \Cref{forest-operator} in Subsection \ref{subsec-forest}.
Fix a forest $(\fU, \fT)$.

\subsection{The pointwise tree estimate}
 The main result of this subsection is the pointwise estimate for operators associated to sets $\fT(\fu)$
 stated in \Cref{pointwise-tree-estimate}.
For $\fu \in \fU$ and $x\in X$, we define
$$
    \sigma (\fu, x):=\{\ps(\fp):\fp\in \fT(\fu), x\in E(\fp)\},
$$
$$
    \overline{\sigma} (\fu, x) := \max \sigma(\fT(\fu), x) \qquad \text{and} \qquad \underline{\sigma} (\fu, x) := \min\sigma(\fT(\fu), x)\,.
$$
By the convexity property \eqref{forest2}, we have for each $\fu \in \fU$
\begin{equation}
    \label{convex-scales}
    \sigma(\fu, x) = \mathbb{Z} \cap [\underline{\sigma} (\fu, x), \overline{\sigma} (\fu, x)]\,.
\end{equation}
For a nonempty collection of tiles $\mathfrak{S} \subset \fP$, we define $\mathcal{J}_0(\mathfrak{S})$
to be the collection of all dyadic cubes $J \in \mathcal{D}$ such that $s(J) = -S$ or
$$
    \scI(\fp) \not\subset B(c(J), 100D^{s(J) + 1})
$$
for all $\fp \in \mathfrak{S}$.  We further define
$\mathcal{L}_0(\mathfrak{S})$ to be the collection of dyadic cubes $L \in \mathcal{D}$ such that $s(L) = -S$, or there exists $\fp \in \mathfrak{S}$ with $L \subset \scI(\fp)$ and there exists no $\fp \in \mathfrak{S}$ with $\scI(\fp) \subset L$.
Let
\begin{equation}
\label{eq-def-jl}
    \mathcal{J}(\mathfrak{S}),\ \mathcal{L}(\mathfrak{S})
\end{equation}
  be the collection of inclusion maximal cubes in $\mathcal{J}_0(\mathfrak{S})$ and  $\mathcal{L}_0(\mathfrak{S})$, respectively. Both collections partition the union of all grid cubes:
\begin{equation*}
    \bigcup_{I \in \mathcal{D}} I = \dot{\bigcup_{J \in \mathcal{J}(\mathfrak{S})}} J = \dot{\bigcup_{L \in \mathcal{L}(\mathfrak{S})}} L\,.
\end{equation*}
For a finite set of pairwise disjoint cubes $\mathcal{C}$, define the projection operator
$$
    P_{\mathcal{C}}f(x) :=\sum_{J\in\mathcal{C}}\mathbf{1}_J(x) \frac{1}{\mu(J)}\int_J f(y) \, \mathrm{d}\mu(y)\,.
$$
We denote by $I_s(x)$ the unique grid cube of scale $s$ containing $x$. Define for $\mfa \in \Mf$ the nontangential maximal operator
\begin{equation*}
    T_{\mathcal{N}}^\mfa f(x) := \sup_{-S \le s_1} \sup_{x' \in I_{s_1}(x)} \sup_{\substack{s_1 \le s_2 \le S\\ D^{s_2-1} \le R_Q(\mfa, x')}} \left| \sum_{s = s_1}^{s_2} \int K_s(x',y) f(y) \, \mathrm{d}\mu(y) \right|\,.
\end{equation*}
Up to boundary terms that are controlled by the Hardy-Littlewood maximal function, this operator is controlled by the maximal operator $T_Q^\mfa$ defined in \eqref{def-lin-star-op}. This implies the estimate
\begin{equation}
    \label{nontangential-operator-bound}
    \|T_{\mathcal{N}}^{\mfa} f\|_2 \le 2^{102a^3} \|f\|_2\,.
\end{equation}
We define also for each $\fu \in \fU$ the auxiliary operator
\begin{equation*}
    S_{1,\fu}f(x) :=\sum_{I\in\mathcal{D}} \mathbf{1}_{I}(x) \sum_{\substack{J\in \mathcal{J}(\fT(\fu))\\
    J\subset B(c(I), 16 D^{s(I)})\\ s(J) \le s(I)}} \frac{D^{(s(J) - s(I))/a}}{\mu(B(c(I), 16D^{s(I)}))}\int_J |f(y)| \, \mathrm{d}\mu(y)\,.
\end{equation*}

\begin{lemma}[pointwise tree estimate]
    \label{pointwise-tree-estimate}
    \leanok
    \lean{TileStructure.Forest.pointwise_tree_estimate}
    Let $\fu \in \fU$ and $L \in \mathcal{L}(\fT(\fu))$. Let $x, x' \in L$.
    Then for all bounded functions $f$ with bounded support
    $$
        \left| T_{\fT(\fu)}[ e(-\fcc(\fu))f](x)\right|
    $$
    \begin{equation}
        \label{eq-LJ-ptwise}
        \leq 2^{129a^3}(M+S_{1,\fu})P_{\mathcal{J}(\fT(\fu))}|f|(x')+|T_{\mathcal{N}}^{\fcc(\fu)} P_{\mathcal{J}(\fT(\fu))}f(x')|\, .
    \end{equation}
\end{lemma}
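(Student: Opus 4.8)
\emph{Setup and scale decomposition.} Write $\mathcal J=\mathcal J(\fT(\fu))$. By \eqref{convex-scales} the set $\sigma(\fu,x)$ is an integer interval $[\underline\sigma,\overline\sigma]$, and for each $s\in\sigma(\fu,x)$ the disjoint covering \eqref{eq-dis-freq-cover} together with \eqref{dyadicproperty} produces a \emph{unique} $\fp_s\in\fT(\fu)$ with $x\in E(\fp_s)$, $\ps(\fp_s)=s$, $\scI(\fp_s)=I_s(x)$. Since $L\in\mathcal L(\fT(\fu))$, the cube $\scI(\fp_s)$, which contains $x\in L$, is not strictly contained in $L$, hence $L\subseteq\scI(\fp_s)$ and $s(L)\le\underline\sigma$; in particular $x\in I_{s_1}(x')$ for every $s_1\ge\underline\sigma$. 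Expanding $T_{\fT(\fu)}$ and pulling out the unimodular factor $e(-\tQ(x)(x))$,
\[
    T_{\fT(\fu)}[e(-\fcc(\fu))f](x)=e(-\tQ(x)(x))\sum_{s=\underline\sigma}^{\overline\sigma}\int K_s(x,y)\,f(y)\,e(\tQ(x)(y)-\fcc(\fu)(y))\,\mathrm{d}\mu(y).
\]

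\emph{Frequency estimates.} From $x\in E(\fp_{\overline\sigma})$, \eqref{eq-freq-comp-ball} and \eqref{forest1} one gets $d_{\fp_{\overline\sigma}}(\tQ(x),\fcc(\fu))<5$, and since $x\in\scI(\fp_{\overline\sigma})\subset B(\pc(\fp_{\overline\sigma}),4D^{\overline\sigma})$ the doubling \eqref{firstdb} and monotonicity \eqref{monotonedb} give $d_{B(x,D^{\overline\sigma})}(\tQ(x),\fcc(\fu))<2^{5a+3}$. The strengthened doubling \eqref{seconddb}, which halves $d$ under each $2^a$-fold shrinking of the radius, then yields, using $D=2^{100a^2}$,
\[
    d_{B(x,D^{s})}(\tQ(x),\fcc(\fu))<2^{5a+3-100a(\overline\sigma-s)}\qquad(s\le\overline\sigma).
\]
In particular $d_{B(x,D^{\overline\sigma-1})}(\tQ(x),\fcc(\fu))<1$, so $R_{\tQ}(\fcc(\fu),x)\ge D^{\overline\sigma-1}$, and for every $s\le\overline\sigma-1$ the residual phase $\tQ(x)-\fcc(\fu)$ oscillates by less than $2^{3-95a}$ on the support ball $B(x,D^s)$ of $K_s(x,\cdot)$.

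\emph{Calderón--Zygmund projection.} Replace $f$ by $P_{\mathcal J}f$. Using the definition of $\mathcal J_0(\fT(\fu))$, \eqref{eq-vol-sp-cube}, \eqref{supp-Ks} and $s(L)\le\underline\sigma$, every $J\in\mathcal J$ meeting $\operatorname{supp}K_s(x,\cdot)$ with $s\in\sigma(\fu,x)$ has $s(J)\le s$ — with equality only in the degenerate case $s(J)=s=-S$ — and satisfies $J\subset B(c(I_s(x)),16D^{s})$. On such a $J$ we have $\int_J(f-P_{\mathcal J}f)\,\mathrm{d}\mu=0$, so its contribution at scale $s$ equals $\int_J\bigl(K_s(x,y)e(\tQ(x)(y)-\fcc(\fu)(y))-c_J\bigr)(f-P_{\mathcal J}f)(y)\,\mathrm{d}\mu(y)$ for a constant $c_J$ read off at a point of $J$; bounding the bracket by the Hölder estimate \eqref{eq-Ks-smooth} plus the product of \eqref{eq-Ks-size} with the oscillation of the phase over $J$ — which by \eqref{osccontrol} and \eqref{seconddb} (descending from scale $s$ to scale $s(J)$) is $\le 2^{7a+3}D^{(s(J)-s)/a}$ — gives a bound $\le 2^{128a^3}D^{(s(J)-s)/a}\mu(B(c(I_s(x)),16D^{s}))^{-1}\int_J P_{\mathcal J}|f|$, while the degenerate case contributes $\lesssim 2^{128a^3}M(P_{\mathcal J}|f|)(x')$ (here $x,x'\in L$ and all radii occurring are $\ge D^{s(L)}$, so Hardy--Littlewood averages at $x$ and at $x'$ agree up to a factor $2^{O(a)}$). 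Summing over $J$ and over $s\in\sigma(\fu,x)$, all of which have $I_s(x)=I_s(x')$ since $s>s(L)$, the total projection error is bounded by $2^{129a^3}(M+S_{1,\fu})P_{\mathcal J}|f|(x')$.

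\emph{Extraction of the nontangential operator.} It remains to estimate $\sum_{s=\underline\sigma}^{\overline\sigma}\int K_s(x,y)(P_{\mathcal J}f)(y)e(\tQ(x)(y)-\fcc(\fu)(y))\,\mathrm{d}\mu(y)$. The single top scale $s=\overline\sigma$ is bounded trivially by $\int|K_s(x,y)||P_{\mathcal J}f(y)|\,\mathrm{d}\mu(y)\le 2^{103a^3}M(P_{\mathcal J}|f|)(x')$ via \eqref{eq-Ks-size}. For $s\le\overline\sigma-1$, replace $e(\tQ(x)(y)-\fcc(\fu)(y))$ by the $s$-independent constant $e(\tQ(x)(x)-\fcc(\fu)(x))$; by the oscillation bound of step two, the error at scale $s$ is $\le 2^{3-95a}2^{-100a(\overline\sigma-s)}\int|K_s||P_{\mathcal J}f|\lesssim 2^{104a^3}2^{-100a(\overline\sigma-s)}M(P_{\mathcal J}|f|)(x')$, which is summable in $s$. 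Since the remaining unimodular factors multiply to $e(-\fcc(\fu)(x))$, the main term has modulus $\bigl|\sum_{s=\underline\sigma}^{\overline\sigma-1}\int K_s(x,y)(P_{\mathcal J}f)(y)\,\mathrm{d}\mu(y)\bigr|$, which is at most $T_{\mathcal N}^{\fcc(\fu)}(P_{\mathcal J}f)(x')$: take the nontangential point $x''=x\in I_{\underline\sigma}(x')$, lower scale $\underline\sigma$ and upper scale $\overline\sigma-1$, the admissibility $D^{\overline\sigma-2}\le R_{\tQ}(\fcc(\fu),x)$ being exactly the bound from step two. Collecting constants gives the claim. The crux is steps two and four together: one must exploit the large scale-gap $D=2^{100a^2}$ so that the single bounded tile datum $d_{\fp_{\overline\sigma}}(\tQ(x),\fcc(\fu))<5$ propagates through \eqref{seconddb} with geometric gain, simultaneously forcing the residual modulation to be essentially constant below the top scale and pushing the stopping radius $R_{\tQ}(\fcc(\fu),x)$ above all relevant scales — this is what lets the \emph{unmodulated} operator $T_{\mathcal N}^{\fcc(\fu)}$ capture the modulated tree operator.
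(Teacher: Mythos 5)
Your proof is correct and follows essentially the same route as the paper: reduce to the scales $s\in\sigma(\fu,x)$, project onto $\mathcal J(\fT(\fu))$, linearize the phase using \eqref{osccontrol} with the geometric gain from \eqref{seconddb}, and absorb the main term into $T_{\mathcal N}^{\fcc(\fu)}$. The only difference is cosmetic: the paper splits off the phase error $(e(\fu,x,y)-1)K_s f$ first (\Cref{first-tree-pointwise}) and then the unmodulated projection error $K_s(f-P_{\mathcal J}f)$ (\Cref{third-tree-pointwise}), whereas you project first and then replace the modulation by a constant on $P_{\mathcal J}f$ after peeling off the top scale; the underlying estimates — the admissibility $R_{\tQ}(\fcc(\fu),x)\ge D^{\overline\sigma-1}$, the kernel Hölder bound \eqref{eq-Ks-smooth}, and the $D^{(s(J)-s)/a}$-weighted sum defining $S_{1,\fu}$ — are the same.
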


\begin{proof}
    The left hand side of \eqref{eq-LJ-ptwise} equals
    \begin{equation*}
        \Bigg| \sum_{s \in \sigma(\fu, x)} \int
        e(\fu,x,y)
    K_s(x,y)f(y) \, \mathrm{d}\mu(y) \Bigg|\,.
    \end{equation*}
    with
    \[e(\fu,x,y):=e(-\fcc(\fu)(y) + \tQ(x)(y) + \fcc(\fu)(x) -\tQ(x)(x))\, .\]
    Using the triangle inequality, we bound this by the sum of three terms:
    \begin{equation}
        \label{eq-term-A}
        \le \Bigg| \sum_{s \in \sigma(\fu, x)} \int (
        e(\fu,x,y)
    -1)       K_s(x,y)f(y) \, \mathrm{d}\mu(y) \Bigg|
    \end{equation}
    \begin{equation}
        \label{eq-term-B}
        + \Bigg| \sum_{s \in \sigma(\fu, x)} \int K_s(x,y) P_{\mathcal{J}(\fT(\fu))} f(y) \, \mathrm{d}\mu(y) \Bigg|
    \end{equation}
    \begin{equation}
        \label{eq-term-C}
        + \Bigg| \sum_{s \in \sigma(\fu, x)} \int K_s(x,y) (f(y) - P_{\mathcal{J}(\fT(\fu))} f(y)) \, \mathrm{d}\mu(y) \Bigg|\,.
    \end{equation}
    Unpacking of the definitions shows that \eqref{eq-term-B} is bounded by
    \[
        T_{\mathcal{N}}^{\fcc(\fu)} P_{\mathcal{J}(\fT(\fu))} f(x').
    \]
    The proof is completed using the bounds for the other two terms proven in \Cref{first-tree-pointwise} and \Cref{third-tree-pointwise}.
\end{proof}

\begin{lemma}[first tree pointwise]
    \label{first-tree-pointwise}
    \leanok
    \lean{TileStructure.Forest.first_tree_pointwise}
    \uses{convex-scales}
    For all $\fu \in \fU$, all $L \in \mathcal{L}(\fT(\fu))$, all $x, x' \in L$ and all bounded $f$ with bounded support, we have
    $$
        \eqref{eq-term-A} \le 10 \cdot 2^{104a^3} M P_{\mathcal{J}(\fT(\fu))}|f|(x')\,.
    $$
\end{lemma}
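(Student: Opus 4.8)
The plan is to derive \eqref{eq-term-A} from a pointwise bound on the oscillatory factor on the support of each $K_s(x,\cdot)$, and then recognize the resulting averages of $|f|$ as $MP_{\mathcal{J}(\fT(\fu))}|f|(x')$. If $\sigma(\fu,x)=\emptyset$ both sides vanish, so we may assume it is nonempty and set $s^{*}:=\overline{\sigma}(\fu,x)$.

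First I would fix $s\in\sigma(\fu,x)$ and the unique tile $\fp_{s}\in\fT(\fu)$ with $\ps(\fp_{s})=s$ and $x\in E(\fp_{s})$ (uniqueness by \eqref{dyadicproperty}). By \eqref{defineep} we have $\tQ(x)\in\fc(\fp_{s})$, hence $d_{\fp_{s}}(\fcc(\fp_{s}),\tQ(x))<1$ by \eqref{eq-freq-comp-ball}, while \eqref{forest1} gives $\fcc(\fu)\in B_{\fp_{s}}(\fcc(\fp_{s}),4)$; so $d_{\fp_{s}}(\tQ(x),\fcc(\fu))<5$. The cubes $\scI(\fp_{s})$, $s\in\sigma(\fu,x)$, all contain $x$ and are therefore nested by \eqref{dyadicproperty}; chaining \Cref{monotone-cube-metrics} along a tower of dyadic cubes of consecutive scales joining $\scI(\fp_{s})$ to $\scI(\fp_{s^{*}})$ (which exists by \eqref{coverdyadic}) upgrades this to $d_{\fp_{s}}(\tQ(x),\fcc(\fu))\le 5\cdot 2^{-95a(s^{*}-s)}$. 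Now if $K_{s}(x,y)\neq 0$ then $\rho(x,y)\le\tfrac12 D^{s}$ by \eqref{supp-Ks}, so $x,y\in B(x,2\rho(x,y))\subset B(x,D^{s})$ and, using $\scI(\fp_{s})\subset B(\pc(\fp_{s}),4D^{s})$ from \eqref{eq-vol-sp-cube} (so $B(x,D^{s})\subset B(\pc(\fp_{s}),8D^{s})$), the oscillation axiom \eqref{osccontrol}, the monotonicity \eqref{monotonedb} and five applications of \eqref{firstdb} yield
\[
  |e(\fu,x,y)-1|\le d_{B(x,2\rho(x,y))}(\tQ(x),\fcc(\fu))\le d_{B(x,D^{s})}(\tQ(x),\fcc(\fu))\le 2^{5a}d_{\fp_{s}}(\tQ(x),\fcc(\fu))\le 5\cdot 2^{5a}2^{-95a(s^{*}-s)}.
\]

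Using \eqref{eq-Ks-size} I would then bound the $s$-th summand of \eqref{eq-term-A} by $5\cdot 2^{5a+102a^{3}}2^{-95a(s^{*}-s)}\mu(B(x,D^{s}))^{-1}\int_{\rho(x,y)\le\frac12 D^{s}}|f|\,\mathrm{d}\mu$. The geometric input is that every $J\in\mathcal{J}(\fT(\fu))$ meeting $B(x,\tfrac12 D^{s})$ satisfies $s(J)\le s$: otherwise, since $\fp_{s}\in\fT(\fu)$, $x\in\scI(\fp_{s})$ and $\scI(\fp_{s})\subset B(\pc(\fp_{s}),4D^{s})$, one would get $\scI(\fp_{s})\subset B(c(J),100D^{s(J)+1})$, contradicting $J\in\mathcal{J}_{0}(\fT(\fu))$. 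Consequently such $J$ lie in $B(x,16D^{s})$, are pairwise disjoint, and satisfy $\int_{J}|f|=\int_{J}P_{\mathcal{J}(\fT(\fu))}|f|$, so $\int_{\rho(x,y)\le\frac12 D^{s}}|f|\le\int_{B(x,16D^{s})}P_{\mathcal{J}(\fT(\fu))}|f|$. Since $\fp_{s}\in\fT(\fu)$ and $L\in\mathcal{L}_{0}(\fT(\fu))$ force $L\subset\scI(\fp_{s})\subset B(x,8D^{s})$, we have $\rho(x,x')<8D^{s}$, hence $B(x,16D^{s})\subset B(x',24D^{s})\subset B(x,32D^{s})$, and \eqref{doublingx} gives $\mu(B(x',24D^{s}))\le 2^{5a}\mu(B(x,D^{s}))$; therefore $\mu(B(x,D^{s}))^{-1}\int_{\rho(x,y)\le\frac12 D^{s}}|f|\le 2^{5a}MP_{\mathcal{J}(\fT(\fu))}|f|(x')$.

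Summing over $s\in\sigma(\fu,x)\subset\mathbb{Z}\cap(-\infty,s^{*}]$ produces the geometric series $\sum_{k\ge 0}2^{-95ak}\le 2$, so \eqref{eq-term-A} is at most $10\cdot 2^{10a+102a^{3}}MP_{\mathcal{J}(\fT(\fu))}|f|(x')\le 10\cdot 2^{104a^{3}}MP_{\mathcal{J}(\fT(\fu))}|f|(x')$, the last step using $10a\le 2a^{3}$ for $a\ge 4$. The step I expect to be the main obstacle is the phase estimate: a scale-independent bound $|e(\fu,x,y)-1|\lesssim 1$ is useless since $\sigma(\fu,x)$ can be long, so one must extract genuine decay in $s$, and it is precisely the strengthened monotonicity of \Cref{monotone-cube-metrics}, gaining a factor $2^{-95a}$ per scale, that supplies it while absorbing the $2^{5a}$ losses incurred passing between $B(x,D^{s})$ and the tile metric $d_{\fp_{s}}$; the secondary subtlety is the bookkeeping ensuring the maximal function lands on $P_{\mathcal{J}(\fT(\fu))}|f|$ and is evaluated at $x'$ rather than $x$, which is where the inclusion $L\subset\scI(\fp_{s})$ and the two doubling steps above are used.
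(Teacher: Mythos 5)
Your proof is correct and follows essentially the same approach as the paper's: bound the oscillatory factor via the oscillation axiom \eqref{osccontrol} and doubling, extract geometric decay in $\overline{\sigma}(\fu,x)-s$ from \Cref{monotone-cube-metrics}, recognize the resulting averages of $|f|$ as averages of $P_{\mathcal{J}(\fT(\fu))}|f|$, and dominate by $MP_{\mathcal{J}(\fT(\fu))}|f|(x')$ via the inclusion $L\subset\scI(\fp_s)$ and doubling. The only cosmetic difference is that you chain \Cref{monotone-cube-metrics} to get the decay $2^{-95a(s^*-s)}$, while the paper records the weaker (but equally sufficient) factor $2^{s-s^*}$; you also spell out the $x\mapsto x'$ bookkeeping that the paper compresses into a final invocation of \eqref{doublingx}.
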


\begin{proof}
    \leanok
    Let $\fp \in \fT(\fu)$ with $s = \ps(\fp) \in \sigma(\fu,x)$.
    If $x, y \in X$ with $K_s(x,y)\neq 0$, then by the support assumption \eqref{supp-Ks} we have $\rho(x,y)\leq 1/2 D^s$.
    Hence, by the oscillation control \eqref{osccontrol},
\begin{equation*}
    |e(\fu,x,y)
        -1|
        \leq d_{B(x, 1/2 D^{s})}(\fcc(\fu), \tQ(x))\,.
  \end{equation*}
    Let $\fp'$ be a tile with $\ps(\fp') = \overline{\sigma}(\fu, x)$ and $x \in E(\fp')$.
    Using the doubling property \eqref{firstdb} repeatedly, we bound the previous display by
    $$
        d_{B(x, 4 D^{s})}(\fcc(\fu), \tQ(x)) \leq 2^{4a} d_{\fp}(\fcc(\fu), \tQ(x)) \le 2^{4a} 2^{s - \overline{\sigma}(\fu, x)} d_{\fp'}(\fcc(\fu), \tQ(x))\,.
    $$
    Since $\fcc(\fu) \in B_{\fp'}(\fcc(\fp'), 4)$ by the tree property \eqref{forest1} and $\tQ(x) \in \Omega(\fp') \subset B_{\fp'}(\fcc(\fp'), 1)$ by the squeezing property \eqref{eq-freq-comp-ball}, the last display is
    $$
        \le 5 \cdot 2^{4a} 2^{s - \overline{\sigma}(\fu, x)} \,.
    $$
    Using the pointwise kernel bound \eqref{eq-Ks-size}, it follows that
    $$
        \eqref{eq-term-A} \le 5\cdot 2^{103a^3} \sum_{s\in\sigma(x)}2^{s - \overline{\sigma}(\fu, x)} \frac{1}{\mu(B(x,D^s))}\int_{B(x,0.5D^{s})}|f(y)|\,\mathrm{d}\mu(y)\,.
    $$
    $$
         \le 5\cdot 2^{103a^3} \sum_{s\in\sigma(x)}2^{s - \overline{\sigma}(\fu, x)} \frac{1}{\mu(B(x,D^s))}\sum_{\substack{J \in \mathcal{J}(\fT(\fu))\\J \cap B(x, 0.5D^s) \ne \emptyset} }\int_{J}|f(y)|\,\mathrm{d}\mu(y)\,.
    $$
    This expression does not change if we replace $|f|$ by $P_{\mathcal{J}(\fT(\fu))}|f|$. Further, if $J \in \mathcal{J}(\fT(\fu))$ with $B(x, 0.5 D^s) \cap J \ne \emptyset$ then by the triangle inequality and the definition of $\mathcal{J}$ we obtain $J \subset B(\pc(\fp_s), 16 D^s)$. Hence the last display is
    $$
        \le 5\cdot 2^{103a^3} \sum_{s\in\sigma(x)}2^{s - \overline{\sigma}(\fu, x)} \frac{1}{\mu(B(x,D^s))}\int_{B(\pc(\fp_s),16 D^s)}P_{\mathcal{J}(\fT(\fu))}|f(y)|\,\mathrm{d}\mu(y)\,.
    $$
    Combined with the doubling property \eqref{doublingx}, this completes the estimate for the term \eqref{eq-term-A} and thus the lemma.
\end{proof}

\begin{lemma}[third tree pointwise]
    \label{third-tree-pointwise}
    \leanok
    \lean{TileStructure.Forest.third_tree_pointwise}
    For all $\fu \in \fU$, all $L \in \mathcal{L}(\fT(\fu))$, all $x, x' \in L$ and all bounded $f$ with bounded support, we have
    \begin{equation*}
          \eqref{eq-term-C} \le 2^{128a^3} S_{1,\fu} P_{\mathcal{J}(\fT(\fu))}|f|(x')\,.
    \end{equation*}
\end{lemma}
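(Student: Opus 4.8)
The plan is to expand $\eqref{eq-term-C}$ over the cubes $J \in \mathcal{J}(\fT(\fu))$, use that $f - P_{\mathcal{J}(\fT(\fu))}f$ has vanishing integral over each such $J$ to convert the size of $K_s$ into its H\"older modulus of continuity, and then recognize the resulting bound as a sub-sum of the series defining $S_{1,\fu}P_{\mathcal{J}(\fT(\fu))}|f|(x')$.

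First I would fix $s \in \sigma(\fu,x)$ and choose a tile $\fp_s \in \fT(\fu)$ with $\ps(\fp_s) = s$ and $x \in E(\fp_s)$, so that $I := \scI(\fp_s)$ is the scale-$s$ grid cube containing $x$. The key geometric step is to show that any $J \in \mathcal{J}(\fT(\fu))$ meeting $\supp K_s(x,\cdot) \subset B(x, \tfrac12 D^s)$ (by \eqref{supp-Ks}) satisfies $s(J) \le s$ and $J \subset B(c(I), 16 D^s)$: the first claim follows because if $s(J) > s$ then a short triangle-inequality computation using \eqref{eq-vol-sp-cube} gives $\scI(\fp_s) \subset B(c(J), 100 D^{s(J)+1})$, contradicting $J \in \mathcal{J}(\fT(\fu))$ (note $s(J) \ge s+1 > -S$), and the second is another triangle-inequality estimate. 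For such a $J$, using $\int_J (f - P_{\mathcal{J}(\fT(\fu))}f)\,\mathrm{d}\mu = 0$ to subtract $K_s(x,c(J))$, the smoothness bound \eqref{eq-Ks-smooth} (which holds with no restriction on $\rho(y,y')$), and $\rho(y,c(J)) < 4 D^{s(J)}$ for $y \in J$, together with $\int_J |f - P_{\mathcal{J}(\fT(\fu))}f| \le 2\int_J|f|$, I obtain
\[
    \Big|\int_J K_s(x,y)\big(f - P_{\mathcal{J}(\fT(\fu))}f\big)(y)\,\mathrm{d}\mu(y)\Big| \le \frac{2^{127a^3+2}}{\mu(B(x,D^s))}\, D^{(s(J)-s)/a}\int_J |f|\,\mathrm{d}\mu\,.
\]
Since $B(c(I),16D^s) \subset B(x,32D^s)$, the doubling property \eqref{doublingx} applied five times lets me replace $\mu(B(x,D^s))$ by $2^{-5a}\mu(B(c(I),16D^s))$.

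Summing this over the admissible $J$ bounds the $s$-term of $\eqref{eq-term-C}$ by $2^{127a^3+2+5a}$ times the $I$-summand of $S_{1,\fu}|f|$, with $I = \scI(\fp_s)$. As the cubes $\scI(\fp_s)$, $s \in \sigma(\fu,x)$, have pairwise distinct scales and each contains $x'$ — because $L \in \mathcal{L}(\fT(\fu))$ forces $L \subset \scI(\fp_s)$ for any $\fp_s \in \fT(\fu)$ with $x \in \scI(\fp_s)$ — summing over $s \in \sigma(\fu,x)$ and discarding the remaining nonnegative summands of $S_{1,\fu}|f|(x')$ yields $\eqref{eq-term-C} \le 2^{127a^3+2+5a} S_{1,\fu}|f|(x') \le 2^{128a^3} S_{1,\fu}|f|(x')$, using $a \ge 4$. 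Finally $\int_J |f| = \int_J P_{\mathcal{J}(\fT(\fu))}|f|$ for $J \in \mathcal{J}(\fT(\fu))$, so $S_{1,\fu}|f|(x') = S_{1,\fu}P_{\mathcal{J}(\fT(\fu))}|f|(x')$, which is the claimed bound. I expect the main obstacle to be the geometric step that every relevant cube $J$ has scale at most $s$; once that and the elementary ball inclusions are in place, only bookkeeping of the doubling constants remains.
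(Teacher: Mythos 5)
Your proof is correct and follows essentially the same approach as the paper's: expand over $J \in \mathcal{J}(\fT(\fu))$, exploit the mean-zero property of $f - P_{\mathcal{J}(\fT(\fu))}f$ on each $J$ together with the H\"older smoothness \eqref{eq-Ks-smooth} of $K_s$, establish the geometric constraints $s(J) \le s$ and $J \subset B(c(\scI(\fp_s)), 16D^s)$ that match the summation conditions in $S_{1,\fu}$, and finally transfer from $x$ to $x'$ via the $\mathcal{L}$-partition property. The only cosmetic difference is that you subtract the constant $K_s(x,c(J))$ and lose a factor $2$ from $\int_J|f - P_{\mathcal{J}}f| \le 2\int_J|f|$, whereas the paper uses the telescoping form $\int_J \frac{1}{\mu(J)}\int_J (K_s(x,y)-K_s(x,z))\,d\mu(z)\,f(y)\,d\mu(y)$ which avoids that factor; both versions comfortably land under $2^{128a^3}$.
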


\begin{proof}
    \leanok
    We have for $J \in \mathcal{J}(\fT(\fu))$:
    $$
        \int_J K_{s}(x,y)(1 - P_{\mathcal{J}(\fT(\fu))})f(y) \, \mathrm{d}\mu(y)
    $$
    \begin{equation*}
        = \int_J \frac{1}{\mu(J)} \int_J K_s(x,y) - K_s(x,z) \, \mathrm{d}\mu(z) \,f(y) \, \mathrm{d}\mu(y)\,.
    \end{equation*}
    By the kernel regularity \eqref{eq-Ks-smooth} and the squeezing property \eqref{eq-vol-sp-cube}, we have for $y, z \in J$
    $$
        |K_s(x,y) - K_s(x,z)| \le \frac{2^{127a^3}}{\mu(B(x, D^s))} \left(\frac{8 D^{s(J)}}{D^s}\right)^{1/a}\,.
    $$
    Suppose that $s \in \sigma(\fu, x)$.
    As in the proof of Lemma \ref{first-tree-pointwise}, if $K_s(x,y) \ne 0$ for some $y \in J \in \mathcal{J}(\fT(\fu))$ then $J \subset B(x, 16 D^s)$ and $s(J) \le \ps(\fp)$. Thus, we can estimate \eqref{eq-term-C} by
    $$
        2^{127a^3 + 3/a}\sum_{\fp\in \mathfrak{T}}\frac{\mathbf{1}_{E(\fp)}(x)}{\mu(B(x,D^{\ps(\fp)}))}\sum_{\substack{J\in \mathcal{J}(\fT(\fu))\\J\subset B(x, 16D^{\ps(\fp)})\\ s(J) \le \ps(\fp)}} D^{(s(J) - \ps(\fp))/a} \int_J |f|\,.
    $$
    By \eqref{eq-dis-freq-cover} and definition \eqref{defineep}, the sets $E(\fp)$ for tiles $\fp$ with $\scI(\fp) = I$ are pairwise disjoint. It follows from the definition of $\mathcal{L}(\fT(\fu))$ that $x \in \scI(\fp)$ if and only if $x' \in \scI(\fp)$, thus we can estimate the sum over such $\mathbf{1}_{E(\fp)}(x)$ by $\mathbf{1}_{I}(x')$. Using also the doubling property \eqref{doublingx}, we estimate the last display by
    $$
        \le 2^{128a^3}\sum_{I \in \mathcal{D}} \frac{\mathbf{1}_{I}(x')}{\mu(B(c(I), 16D^{s(I)}))}\sum_{\substack{J\in \mathcal{J}(\fT(\fu))\\J\subset B(x, 16 D^{s(I)})\\ s(J) \le s(I)}} D^{(s(J) - s(I))/a} \int_J |f|
    $$
    $$
         = 2^{128a^3} S_{1,\fu} P_{\mathcal{J}(\fT(\fu))}|f|(x')\,.
    $$
    This completes the proof of the lemma.
\end{proof}

\subsection{An auxiliary \texorpdfstring{$L^2$}{L2} tree estimate}

The main result of this subsection is the following estimate on $L^2$ for operators associated to trees.

\begin{lemma}[tree projection estimate]
    \label{tree-projection-estimate}
    \leanok
    \lean{TileStructure.Forest.tree_projection_estimate}
    Let $\fu \in \fU$.
    Then we have for all $f, g$ bounded with bounded support
    \begin{equation*}
         \Big|\int_X  \bar g T_{\fT(\fu)}f \, \mathrm{d}\mu \Big| \le 2^{130a^3}\|P_{\mathcal{J}(\fT(\fu))}|f|\|_{2}\|P_{\mathcal{L}(\fT(\fu))}|g|\|_{2}.
    \end{equation*}
\end{lemma}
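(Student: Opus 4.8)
The plan is to combine the pointwise bound of \Cref{pointwise-tree-estimate} with $L^2$ boundedness of the three operators on its right-hand side: the Hardy--Littlewood maximal operator $M$, the auxiliary operator $S_{1,\fu}$, and the nontangential operator $T_{\mathcal N}^{\fcc(\fu)}$. First I would strip off the modulation: given bounded $f$ with bounded support, set $\tilde f := e(\fcc(\fu))f$, so that $|\tilde f| = |f|$ and $e(-\fcc(\fu))\tilde f = f$ pointwise. Applying \Cref{pointwise-tree-estimate} with $\tilde f$ in the role of its function, and using $P_{\mathcal J(\fT(\fu))}|\tilde f| = P_{\mathcal J(\fT(\fu))}|f|$, gives for every $L\in\mathcal L(\fT(\fu))$ and all $x,x'\in L$
\begin{equation*}
    |T_{\fT(\fu)}f(x)| \le 2^{129a^3}(M+S_{1,\fu})P_{\mathcal J(\fT(\fu))}|f|(x') + |T_{\mathcal N}^{\fcc(\fu)}P_{\mathcal J(\fT(\fu))}\tilde f(x')|\, .
\end{equation*}

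Since this holds for every $x'\in L$, I would average over $x'\in L$, so that the right-hand side becomes $P_{\mathcal L(\fT(\fu))}$ applied to the two (nonnegative) functions appearing in it, and then sum $\mathbf 1_L$ over $L\in\mathcal L(\fT(\fu))$; as $\mathcal L(\fT(\fu))$ partitions $\bigcup_{I\in\mathcal D}I\supset\bigcup_{\fp\in\fT(\fu)}\scI(\fp)$ and $T_{\fT(\fu)}f$ is supported on $\bigcup_{\fp\in\fT(\fu)}E(\fp)$, this yields the pointwise inequality
\begin{equation*}
    |T_{\fT(\fu)}f| \le 2^{129a^3}P_{\mathcal L(\fT(\fu))}\big[(M+S_{1,\fu})P_{\mathcal J(\fT(\fu))}|f|\big] + P_{\mathcal L(\fT(\fu))}\big[\,|T_{\mathcal N}^{\fcc(\fu)}P_{\mathcal J(\fT(\fu))}\tilde f|\,\big]\, .
\end{equation*}
Pairing with $|g|$ and using that for nonnegative $h$ one has $\int|g|\,P_{\mathcal C}h\,\mathrm d\mu = \int(P_{\mathcal C}|g|)\,h\,\mathrm d\mu$ (the bilinear form $\sum_{J\in\mathcal C}\mu(J)^{-1}(\int_Jg)(\int_Jh)$ is symmetric), followed by Cauchy--Schwarz, I obtain
\begin{equation*}
    \Big|\int_X\bar g\,T_{\fT(\fu)}f\,\mathrm d\mu\Big| \le \|P_{\mathcal L(\fT(\fu))}|g|\|_2\Big(2^{129a^3}\|(M+S_{1,\fu})P_{\mathcal J(\fT(\fu))}|f|\|_2 + \|T_{\mathcal N}^{\fcc(\fu)}P_{\mathcal J(\fT(\fu))}\tilde f\|_2\Big)\, .
\end{equation*}

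It remains to bound the bracket by $C(a)\|P_{\mathcal J(\fT(\fu))}|f|\|_2$. For the last term, \eqref{nontangential-operator-bound} gives $\|T_{\mathcal N}^{\fcc(\fu)}P_{\mathcal J(\fT(\fu))}\tilde f\|_2\le 2^{102a^3}\|P_{\mathcal J(\fT(\fu))}\tilde f\|_2$, and $\|P_{\mathcal J(\fT(\fu))}\tilde f\|_2\le\|P_{\mathcal J(\fT(\fu))}|\tilde f|\|_2 = \|P_{\mathcal J(\fT(\fu))}|f|\|_2$ since $|P_{\mathcal C}h|\le P_{\mathcal C}|h|$ pointwise. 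For $M$ one uses the standard $\|M\|_{2\to2}\le 2^{O(a)}$ on a doubling space with doubling constant $2^a$, as throughout the paper. The genuinely new ingredient — and the step I expect to be the main obstacle — is the estimate $\|S_{1,\fu}\|_{2\to2}\le 2^{O(a)}$, which I would obtain by a Schur test on the nonnegative kernel
\begin{equation*}
    k(x,y) = \sum_{\substack{I\in\mathcal D\\ x\in I}}\ \sum_{\substack{J\in\mathcal J(\fT(\fu)),\ y\in J\\ J\subset B(c(I),16D^{s(I)}),\ s(J)\le s(I)}}\frac{D^{(s(J)-s(I))/a}}{\mu(B(c(I),16D^{s(I)}))}\, ,
\end{equation*}
for which $S_{1,\fu}h(x)=\int k(x,y)h(y)\,\mathrm d\mu(y)$ when $h\ge0$. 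Since $\mathcal J(\fT(\fu))$ is a partition, for fixed $y$ the inner sum runs over the single $\mathcal J$-cube $J_y\ni y$ (of fixed scale), and summing over cubes $I$ of a fixed scale $s\ge s(J_y)$ with $J_y\subset B(c(I),16D^{s(I)})$ there are only $2^{O(a)}$ of them by \eqref{doublingx} and disjointness of equal-scale cubes; together with $\mu(I)\le\mu(B(c(I),16D^{s(I)}))$ this gives $\int k(x,y)\,\mathrm d\mu(x)\le 2^{O(a)}\sum_{s\ge s(J_y)}D^{(s(J_y)-s)/a} = 2^{O(a)}\sum_{k\ge0}2^{-100ak}\le 2^{O(a)}$. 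The symmetric bound $\int k(x,y)\,\mathrm d\mu(y)\le 2^{O(a)}$ is obtained by grouping the $\mathcal J$-cubes inside each $B(c(I),16D^{s(I)})$ by scale: they are pairwise disjoint, so their total measure is at most $\mu(B(c(I),16D^{s(I)}))$ at each scale, and the partition property forces the relevant $J$'s to live at scales above $s(I)$ minus a bounded amount in the range where $I$ is deep inside its own $\mathcal J$-cube, again yielding a geometric series in scales with ratio $D^{-1/a}=2^{-100a}$; Schur's lemma then gives $\|S_{1,\fu}\|_{2\to2}\le 2^{O(a)}$. Collecting the three bounds, the bracket is $\le 2^{130a^3}\|P_{\mathcal J(\fT(\fu))}|f|\|_2$ after absorbing the $2^{O(a)}$ factors into the exponent using $a\ge4$, which completes the proof.
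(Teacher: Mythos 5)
Your overall structure is exactly the paper's: strip the modulation by applying \Cref{pointwise-tree-estimate} to $e(\fcc(\fu))f$, pass to $P_{\mathcal L(\fT(\fu))}|g|$ via the choice (or average) of $x'$ in each $L$, apply Cauchy--Schwarz, and then control the remaining $L^2$ norm using $\|M\|_{2\to 2}$, $\|S_{1,\fu}\|_{2\to 2}$ and \eqref{nontangential-operator-bound}. This matches the paper up to the point where you need the $L^2$ bound for $S_{1,\fu}$, which the paper isolates as \Cref{boundary-operator-bound}.

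Your argument for $\|S_{1,\fu}\|_{2\to 2}\le 2^{O(a)}$, however, has a genuine gap. The two-sided Schur test on the kernel $k(x,y)$ is not available: one side works, but the other does not. The bound $\int k(x,y)\,\mathrm d\mu(x)\le 2^{O(a)}$ is indeed fine — for fixed $y$ the unique $J_y\ni y$ is fixed, each scale $s\ge s(J_y)$ contributes at most $2^{O(a)}$ cubes $I$ with $J_y\subset B(I)$, and the factor $D^{(s(J_y)-s(I))/a}$ gives a geometric series. But $\int k(x,y)\,\mathrm d\mu(y)$ need \emph{not} be $O(1)$: for fixed $x$, you sum over the nested family of cubes $I\ni x$ (one per scale), and for each $I$ the inner sum $\sum_{s'\le s(I)}D^{(s'-s(I))/a}\,\alpha_{s'}^{(I)}$ with $\alpha_{s'}^{(I)}=\mu\bigl(\bigcup\{J\in\mathcal J:\,s(J)=s',\ J\subset B(I)\}\bigr)/\mu(B(I))$ can be of order $1$: if the $\mathcal J$-cubes inside $B(I)$ carry most of their measure at scales $s'$ close to $s(I)$, then $D^{(s'-s(I))/a}\approx 1$ there and $\sum_{s'}\alpha_{s'}^{(I)}\le 1$ gives no decay. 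This happens, for example, when $\mathcal J(\fT(\fu))$ looks like a Whitney decomposition around a tile near $x$, where at each annulus of radius $\approx D^s$ the $\mathcal J$-cubes have scale $\approx s$. Then each $I\ni x$ contributes $\Theta(1)$ and the total is $\Theta(S)$, which diverges with the scale range. Your phrase ``the partition property forces the relevant $J$'s to live at scales above $s(I)$ minus a bounded amount'' is not a fix: having $s(J)$ close to $s(I)$ makes the weight $D^{(s(J)-s(I))/a}$ \emph{large}, not small, so it destroys rather than produces the geometric series.

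The paper's \Cref{boundary-operator-bound} circumvents this precisely by not performing the $y$-integration of the kernel. Instead it dualizes, uses $J\subset B(I)$ to dominate the average $\mu(B(I))^{-1}\int_{B(I)}|g|$ by $M|g|(y)$ for $y\in J$, and only then sums over $I$ for each fixed $J$ — which is a genuine geometric series by \Cref{boundary-overlap}. The result is $\int|f|\,M|g|$, closed by the maximal function bound and duality. So to make your proof correct, replace the Schur test by this duality-plus-maximal-function argument (i.e.\ cite and use \Cref{boundary-operator-bound} directly, as the paper does).
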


\begin{proof}%[Proof of \Cref{tree-projection-estimate}]
    \proves{tree-projection-estimate}\leanok
    Let $L \in \mathcal{L}(\fT(\fu))$.
    Let $b(x')$ denote the right-hand side of \eqref{eq-LJ-ptwise} in
\Cref{pointwise-tree-estimate}. Applying this lemma to $e(\fcc(\fu)) f$, we obtain for all $y, x' \in L$
    $$
        | T_{\fT(\fu)} f(y) | \le b(x').
    $$
    Hence, choosing a fixed $x'$ in each $L$,
    $$
        \Big| \int \bar g(y) T_{\fT(\fu)}f(y) \, \mathrm{d}\mu(y) \Big| \le   \int_X \left[P_{\mathcal{L}(\fT(\fu))}|g|(y) \right] b(y) \, \mathrm{d}\mu(y) \,.
    $$
    With Cauchy-Schwarz, this is bounded by $\|P_{\mathcal{L}(\fT(\fu))}|g|\|_2  \|b\|_2$. The bounds for $b$ following from \Cref{boundary-operator-bound} below and \eqref{nontangential-operator-bound} then complete the proof.
\end{proof}
Denote $B(I) := B(c(I), 16 D^{s(I)})$.
\begin{lemma}[boundary operator bound]
    \label{boundary-operator-bound}
    \leanok
    \lean{TileStructure.Forest.boundary_operator_bound}
    For all $\fu \in \fU$ and all bounded functions $f$ with bounded support
    \begin{equation*}
        \|S_{1,\fu}f\|_2 \le 2^{12a} \|f\|_2\,.
    \end{equation*}
\end{lemma}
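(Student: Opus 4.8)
The plan is to prove the dual inequality
$$\int (S_{1,\fu}f)\, h\, \mathrm{d}\mu \le 2^{2a+1}\int f\, Mh\, \mathrm{d}\mu$$
for all non-negative bounded $f,h$ with bounded support, and then to finish by Cauchy--Schwarz together with the $L^2$-boundedness of the Hardy--Littlewood maximal operator $M$, whose operator norm on $L^2$ is at most $2^{2a}$ (this follows from the weak $(1,1)$ bound, which costs at most $2^{3a}$ via the basic $5r$-covering lemma in a space with doubling constant $2^a$, and Marcinkiewicz interpolation with the trivial $L^\infty$ bound). This reduction is harmless: $S_{1,\fu}f$ depends only on $|f|$ and is non-negative, so $\|S_{1,\fu}f\|_2 = \sup\{\int (S_{1,\fu}f)h\,\mathrm{d}\mu : h\ge 0,\ \|h\|_2\le 1\}$, and since $\int_J f\,\mathrm{d}\mu$ for $J\in\mathcal{J}(\fT(\fu))$ only sees $f$ on $\bigcup_{I\in\mathcal{D}}I$ we may also assume $f$ is supported there. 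All sums below are finite, as $\mathcal{D}$ and $\mathcal{J}(\fT(\fu))$ are finite.

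First I would insert the definition of $S_{1,\fu}$ and exchange the (finite) order of summation so that the sum over $J\in\mathcal{J}(\fT(\fu))$ is outermost:
$$\int (S_{1,\fu}f)h\, \mathrm{d}\mu = \sum_{J \in \mathcal{J}(\fT(\fu))} \Big(\int_J f\, \mathrm{d}\mu\Big) \sum_{\substack{I \in \mathcal{D}\ :\ J \subset B(I)\\ s(I) \ge s(J)}} \frac{D^{(s(J)-s(I))/a}}{\mu(B(I))}\int_I h\, \mathrm{d}\mu\,.$$
The heart of the matter is that for each $J$ and each $y\in J$ the inner sum is at most $2^{2a+1}Mh(y)$. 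To see this, group the $I$'s by the scale difference $k := s(I)-s(J)\ge 0$. For fixed $k$, all cubes $I$ with $s(I)=t:=s(J)+k$ and $J\subset B(I)$ are pairwise disjoint by \eqref{dyadicproperty}; since $y\in J\subset B(I)=B(c(I),16D^t)$ we get $\rho(y,c(I))<16D^t$, hence $I\subset B(c(I),4D^t)\subset B(y,20D^t)$ by \eqref{eq-vol-sp-cube}, and also $B(y,20D^t)\subset B(c(I),36D^t)\subset B(c(I),64D^t)$, so $\mu(B(I))\ge 2^{-2a}\mu(B(y,20D^t))$ by two applications of \eqref{doublingx}. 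Consequently
$$\sum_{\substack{I\in\mathcal{D}\ :\ s(I)=t\\ J\subset B(I)}}\frac{1}{\mu(B(I))}\int_I h\,\mathrm{d}\mu \le \frac{2^{2a}}{\mu(B(y,20D^t))}\int_{B(y,20D^t)}h\,\mathrm{d}\mu \le 2^{2a}Mh(y)\,,$$
and summing the geometric factor $\sum_{k\ge 0}D^{-k/a}=(1-D^{-1/a})^{-1}\le 2$ (using $D^{-1/a}=2^{-100a}\le\tfrac12$) yields the claimed inner bound.

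Since the inner sum is a constant with respect to the integration variable, we may write $\big(\int_J f\,\mathrm{d}\mu\big)\cdot(\text{inner sum}) = \int_J f(y)\,(\text{inner sum})\,\mathrm{d}\mu(y)\le 2^{2a+1}\int_J f\,Mh\,\mathrm{d}\mu$; summing over $J$ and using that $\mathcal{J}(\fT(\fu))$ partitions $\bigcup_{I\in\mathcal{D}}I$ gives
$$\int (S_{1,\fu}f)h\,\mathrm{d}\mu \le 2^{2a+1}\int f\,Mh\,\mathrm{d}\mu \le 2^{2a+1}\|f\|_2\|Mh\|_2 \le 2^{4a+1}\|f\|_2\|h\|_2 \le 2^{12a}\|f\|_2\|h\|_2\,,$$
which is the assertion. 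I expect the main conceptual obstacle to be recognizing that a scale-by-scale estimate — bounding the contribution of each scale $s(I)$ separately by a maximal function — is too lossy, since it produces a spurious factor of order $S$ (the number of scales); the remedy is to sum over the \emph{partition} $\mathcal{J}(\fT(\fu))$ first, which supplies precisely the telescoping that absorbs this factor. After that insight, the remaining steps are the routine doubling and covering bookkeeping above, and the generous target constant $2^{12a}$ leaves ample slack for the precise value of the Hardy--Littlewood constant.
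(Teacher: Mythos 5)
Your proof is correct, and it diverges from the paper's at one key step. Both proofs dualize (yours via testing against $h\ge 0$, the paper's via integrating against $\bar g$) and both then exchange the order of summation so that the sum over $J\in\mathcal{J}(\fT(\fu))$ is outermost, reducing the problem to bounding, for each $J$ and each $y\in J$, the inner sum $\sum_{I:\ J\subset B(I),\ s(I)\ge s(J)} D^{(s(J)-s(I))/a}\,\mu(B(I))^{-1}\int_I h\,\mathrm{d}\mu$. From here the paper dominates each $\mu(B(I))^{-1}\int_{B(I)}|g|$ individually by $M|g|(y)$ (valid since $y\in J\subset B(I)$) and then invokes the separate \Cref{boundary-overlap} to bound the \emph{number} of cubes $I$ at each fixed scale with $J\subset B(I)$ by $2^{9a}$, giving $2^{9a+1}$ after summing the geometric series. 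You instead exploit that same-scale cubes $I$ are pairwise disjoint and all lie in $B(y,20D^{s(I)})$, so their contributions at a fixed scale aggregate into a single average over one ball containing $y$; two applications of the doubling condition then give $2^{2a}M h(y)$ per scale, and the geometric sum contributes the remaining factor $2$. Your route avoids the counting lemma entirely and yields the sharper intermediate constant $2^{2a+1}$, while the paper's route offloads the geometric content into \Cref{boundary-overlap}, which it reuses elsewhere; both comfortably fit inside the generous target $2^{12a}$, and neither needs a precise value for $\|M\|_{L^2\to L^2}$ (your $2^{2a}$ estimate for it is adequate for $a\ge 4$).
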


\begin{proof}%[Proof of \Cref{boundary-operator-bound}]
    \leanok
    \proves{boundary-operator-bound}
    Let $g$ be a function with $\|g\|_2 = 1$. Then
    $$
        \Bigg|\int \bar g(y) S_{1,\fu}f(y) \, \mathrm{d}\mu(y)\Bigg|
    $$
    $$
        \le \sum_{I\in\mathcal{D}} \frac{1}{\mu(B(I))} \int_{B(I)} | g(y)| \, \mathrm{d}\mu(y) \times \sum_{\substack{J\in \mathcal{J}(\fT(\fu))\\J\subseteq B(I)\\ s(J) \le s(I)}} D^{(s(J)-s(I))/a}\int_J |f(y)| \,\mathrm{d}\mu(y)\,.
    $$
    Changing the order of summation and using $J \subset B(I)$ to bound the first average integral by $M|g|(y)$ for any $y \in J$, we obtain
    \begin{align*}
        \le \sum_{J\in\mathcal{J}(\fT(\fu))}\int_J|f(y)| M|g|(y) \, \mathrm{d}\mu(y) \sum_{\substack{I \in \mathcal{D} \, : \, J\subset B(I)\\ s(I) \ge s(J)}} D^{(s(J)-s(I))/a}.
    \end{align*}
    Using \Cref{boundary-overlap} below and summing a geometric series, the last display is
    bounded by \begin{equation*}
        2^{9a+1} \int_X|f(y)| M|g|(y) \, \mathrm{d}\mu(y)\,.
    \end{equation*}
    This completes the proof using boundedness of $M$ and duality.
\end{proof}

We used the following simple consequence of the doubling property \eqref{doublingx},
which we do not explicitly prove.

\begin{lemma}[boundary overlap]
    \label{boundary-overlap}
    \leanok
    \lean{TileStructure.Forest.boundary_overlap}
    For every cube $I \in \mathcal{D}$, there exist at most $2^{9a}$ cubes $J \in \mathcal{D}$ with $s(J) = s(I)$ and $B(I) \cap B(J) \ne \emptyset$.
\end{lemma}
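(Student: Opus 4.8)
The plan is to reduce the statement to a packing estimate for disjoint balls and then invoke the doubling property \eqref{doublingx} a bounded number of times. Fix a cube $I \in \mathcal{D}$, write $s = s(I)$, and let $\mathcal{J}$ be the collection of all $J \in \mathcal{D}$ with $s(J) = s$ and $B(I) \cap B(J) \neq \emptyset$; the claim is $|\mathcal{J}| \le 2^{9a}$.

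First I would record that the cubes in $\mathcal{J}$ are pairwise disjoint. Indeed, if $J_1, J_2$ are distinct cubes of the common scale $s$ that are not disjoint, then applying \eqref{dyadicproperty} in both orders (using $s(J_1) = s(J_2)$) forces $J_1 \subset J_2$ and $J_2 \subset J_1$, hence $J_1 = J_2$, a contradiction. By the inner inclusion in \eqref{eq-vol-sp-cube}, the balls $B_J := B(c(J), \tfrac14 D^s)$ with $J \in \mathcal{J}$ satisfy $B_J \subset J$, and are therefore pairwise disjoint.

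Next comes a radius bookkeeping step. If $J \in \mathcal{J}$, choosing $z \in B(I) \cap B(J)$ and using the triangle inequality gives $\rho(c(I), c(J)) < 16 D^s + 16 D^s = 32 D^s$. Hence $B_J \subset B(c(I), 33 D^s)$ and, in the other direction, $B(c(I), 33 D^s) \subset B(c(J), 65 D^s)$. Since $65 < 2^9 \cdot \tfrac14$, nine applications of \eqref{doublingx}, together with monotonicity of $R \mapsto \mu(B(c(J), R))$, yield
$$
    \mu\big(B(c(I), 33 D^s)\big) \le \mu\big(B(c(J), 2^9 \cdot \tfrac14 D^s)\big) \le 2^{9a}\, \mu(B_J)\,.
$$

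Finally I would sum over $\mathcal{J}$: using pairwise disjointness and $B_J \subset B(c(I), 33 D^s)$ on one side, and the displayed inequality on the other,
$$
    |\mathcal{J}| \cdot 2^{-9a}\, \mu\big(B(c(I), 33 D^s)\big) \le \sum_{J \in \mathcal{J}} \mu(B_J) \le \mu\big(B(c(I), 33 D^s)\big)\,.
$$
As $\mu$ is non-zero and locally finite and satisfies \eqref{doublingx}, every ball has positive and finite measure, so we may cancel $\mu(B(c(I), 33 D^s))$ to conclude $|\mathcal{J}| \le 2^{9a}$. There is no genuine obstacle here; the only points needing a little care are the same-scale disjointness (which rests on \eqref{dyadicproperty}) and choosing the inflation factors so that the number of required doublings is exactly $9$, matching the constant $2^{9a}$.
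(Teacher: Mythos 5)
The paper states this lemma without proof, noting only that it is ``a simple consequence of the doubling property,'' so there is nothing to compare against. Your argument is correct and is precisely the standard packing estimate one would supply: same-scale non-disjoint cubes coincide by \eqref{dyadicproperty}, the inner balls $B_J = B(c(J), \tfrac14 D^s)$ are pairwise disjoint and contained in $J$ by \eqref{eq-vol-sp-cube}, a triangle-inequality bookkeeping shows all $B_J$ sit inside $B(c(I), 33D^s)$ while $B(c(I), 33D^s) \subset B(c(J), 65D^s) \subset B(c(J), 128 \cdot \tfrac14 D^s)$, and nine applications of \eqref{doublingx} give $\mu(B(c(I), 33D^s)) \le 2^{9a}\mu(B_J)$; summing over the disjoint $B_J$ and cancelling the (positive, finite) measure of $B(c(I), 33D^s)$ yields the bound. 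The only implicit point, which you rightly flag, is that every ball has positive finite measure, which follows from $\mu$ being non-zero, locally finite, and doubling.
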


\subsection{The quantitative \texorpdfstring{$L^2$}{L2} tree estimate}

This section proves the following bound for tree operators  with control by the densities.

\begin{lemma}[densities tree bound]
    \label{densities-tree-bound}
    \leanok
    \lean{TileStructure.Forest.density_tree_bound1, TileStructure.Forest.density_tree_bound2}
    \uses{tree-projection-estimate,local-dens1-tree-bound,local-dens2-tree-bound}
    Let $\fu \in \fU$. Then for all bounded $f$ with bounded support and $g$ with $|g| \le \mathbf{1}_G$
    we have
    \begin{equation*}
        \left|\int_X \bar g T_{\fT(\fu)}f \, \mathrm{d}\mu \right| \le 2^{181a^3} \dens_1(\fT(\fu))^{1/2} \|f\|_2\|g\|_2\,.
    \end{equation*}
    If additionally $|f| \le \mathbf{1}_F$, then we have
    \begin{equation*}
        \left| \int_X \bar g T_{\fT(\fu)}f\, \mathrm{d}\mu \right| \le 2^{282a^3} \dens_1(\fT(\fu))^{1/2} \dens_2(\fT(\fu))^{1/2} \|f\|_2\|g\|_2\,.
    \end{equation*}
\end{lemma}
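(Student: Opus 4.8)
The plan is to derive both estimates from the tree projection estimate \Cref{tree-projection-estimate}, combined with two local density bounds, by exploiting that $T_{\fT(\fu)}$ is supported in its output variable on $G(\fu) := \bigcup_{\fp \in \fT(\fu)} E(\fp)$ and that each $T_\fp$ sees $f$ only through its restriction to $B(\pc(\fp), 5D^{\ps(\fp)})$, by \eqref{supp-Ks} and \eqref{eq-vol-sp-cube}.

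I would first prove the bound involving only $\dens_1$. Since $T_{\fT(\fu)} f$ vanishes off $G(\fu)$, we may replace $g$ by $\mathbf{1}_{G(\fu)} g$, and \Cref{tree-projection-estimate} gives
\[
    \Big| \int_X \bar g\, T_{\fT(\fu)} f \, \mathrm{d}\mu \Big| \le 2^{130a^3} \, \big\| P_{\mathcal{J}(\fT(\fu))} |f| \big\|_2 \, \big\| P_{\mathcal{L}(\fT(\fu))} (\mathbf{1}_{G(\fu)} |g|) \big\|_2 \, .
\]
The first factor is at most $\|f\|_2$ since averaging operators contract $L^2$. For the second, expanding $P_{\mathcal{L}(\fT(\fu))}$ and applying Cauchy--Schwarz on each cube $L$ gives
\[
    \big\| P_{\mathcal{L}(\fT(\fu))} (\mathbf{1}_{G(\fu)} |g|) \big\|_2^2 \le \Big( \sup_{L \in \mathcal{L}(\fT(\fu))} \frac{\mu(L \cap G(\fu))}{\mu(L)} \Big) \|g\|_2^2 \, ,
\]
so it suffices to show $\mu(L \cap G(\fu)) \le 2^{O(a^3)} \dens_1(\fT(\fu)) \mu(L)$ for each $L$ meeting $G(\fu)$. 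This is the content of the local $\dens_1$ tree bound: any $\fp \in \fT(\fu)$ with $E(\fp) \cap L \ne \emptyset$ satisfies $L \subset \scI(\fp)$, and using the tree property \eqref{forest1}, the frequency nesting \eqref{eq-freq-dyadic}, the convexity \eqref{forest2} and \Cref{monotone-cube-metrics} one funnels all such sets $E(\fp) \cap L$ into a single dilate $E_2(\lambda, \fp_L)$ of a suitable tile $\fp_L \in \fT(\fu)$, to which the definition \eqref{definedens1} of $\dens_1$ applies. Combining and tracking constants yields the first inequality.

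For the second inequality I would also extract a factor $\dens_2(\fT(\fu))^{1/2}$ from the $f$-side. Since $T_\fp f$ depends on $f$ only through its restriction to $B(\pc(\fp), 5D^{\ps(\fp)})$, and $\mu(F \cap B(\pc(\fp), r)) / \mu(B(\pc(\fp), r)) \le \dens_2(\fT(\fu))$ whenever $r \ge 4D^{\ps(\fp)}$ by \eqref{definedens2}, only the cubes $J \in \mathcal{J}(\fT(\fu))$ contained in such balls enter the function $b$ of \Cref{pointwise-tree-estimate}; as $|f| = \mathbf{1}_F |f|$, the analogous Cauchy--Schwarz on each such $J$ shows that the relevant portion of $\| P_{\mathcal{J}(\fT(\fu))} |f| \|_2$ is bounded by $2^{O(a^3)} \dens_2(\fT(\fu))^{1/2} \|f\|_2$, which is the local $\dens_2$ tree bound. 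Inserting this into the proof of \Cref{tree-projection-estimate} in place of the crude bound $\| P_{\mathcal{J}(\fT(\fu))} |f| \|_2 \le \|f\|_2$, retaining the $\dens_1$ gain on the $g$-side, and using the $L^2$-boundedness of $M$, of $S_{1,\fu}$ from \Cref{boundary-operator-bound}, and of $T_{\mathcal{N}}^{\fcc(\fu)}$ from \eqref{nontangential-operator-bound}, produces the second estimate.

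The main obstacle is the two local density lemmas. The $\dens_1$ one is the more delicate: a point of $L$ can lie in $E(\fp)$ for many tiles $\fp$ of differing scales, so one genuinely needs the tree structure together with \Cref{monotone-cube-metrics} to route all these contributions through one dilated set $E_2(\lambda, \fp_L)$ on which \eqref{definedens1} can be invoked. The $\dens_2$ one requires identifying precisely which cubes $J \in \mathcal{J}(\fT(\fu))$ feed into the pointwise tree estimate. The rest is Cauchy--Schwarz, the cited $L^2$ bounds, and bookkeeping powers of $2$.
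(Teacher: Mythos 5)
Your approach matches the paper's: apply \Cref{tree-projection-estimate} as a black box, then run Cauchy--Schwarz on each cube in $\mathcal{L}(\fT(\fu))$ (and, for the second bound, also on each cube in $\mathcal{J}(\fT(\fu))$) using the local density estimates $\mu(L \cap G \cap \bigcup_{\fp\in\fT(\fu)}E(\fp)) \le 2^{101a^3}\dens_1(\fT(\fu))\mu(L)$ and $\mu(F\cap J)\le 2^{201a^3}\dens_2(\fT(\fu))\mu(J)$, which the paper records as \Cref{local-dens1-tree-bound} and \Cref{local-dens2-tree-bound}. One small simplification: you do not need to re-enter the proof of \Cref{tree-projection-estimate} or re-invoke the bounds on $M$, $S_{1,\fu}$, and $T_{\mathcal{N}}^{\fcc(\fu)}$ for the second inequality, nor restrict to a privileged subset of cubes $J$ --- since the $\dens_2$ bound holds for every $J\in\mathcal{J}(\fT(\fu))$, the whole factor $\|P_{\mathcal{J}(\fT(\fu))}|f|\|_2$ acquires $\dens_2(\fT(\fu))^{1/2}$ in a single Cauchy--Schwarz step.
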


Recall that $T_\fp f$ is supported in $E(\fp)$.  \Cref{densities-tree-bound} follows immediately from the estimate of \Cref{tree-projection-estimate}, Cauchy-Schwarz and Lemmas
\ref{local-dens1-tree-bound} and
\ref{local-dens1-tree-bound} below, controlling the size of the support of the tree operator and its adjoint.

\begin{lemma}[local dens1 tree bound]
    \label{local-dens1-tree-bound}
    \leanok
    \lean{TileStructure.Forest.local_dens1_tree_bound}
    \uses{monotone-cube-metrics}
    Let $\fu \in \fU$ and $L \in \mathcal{L}(\fT(\fu))$. Then
    \begin{equation}
    \label{eq-1density-estimate-tree}
        \mu(L \cap G \cap \bigcup_{\fp \in \fT(\fu)} E(\fp)) \le 2^{101a^3} \dens_1(\fT(\fu)) \mu(L)\,.
    \end{equation}
\end{lemma}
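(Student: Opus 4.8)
The plan is to reduce the measure estimate to an application of the definition of $\dens_1$ for a suitably chosen reference tile. First I would dispose of the trivial case: if the left-hand set is empty there is nothing to prove, so pick a tile $\fp_0 \in \fT(\fu)$ with $E(\fp_0) \cap G \cap L \neq \emptyset$; by the definition of $\mathcal{L}(\fT(\fu))$ as a partition into cubes no larger than the cubes of tiles in $\fT(\fu)$, we have $L \subset \scI(\fp_0)$. The key observation is that every $\fp \in \fT(\fu)$ with $E(\fp) \cap L \neq \emptyset$ satisfies $L \subset \scI(\fp)$ (again by the defining property of $\mathcal{L}$, since $L$ is not strictly contained in any smaller tile cube), so these tiles $\scI(\fp)$ are all nested and all contain $L$. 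For such $\fp$, the set $E(\fp) \cap G \cap L$ is contained in $E_1(\fp) = \{x \in \scI(\fp) \cap G : \tQ(x) \in \fc(\fp)\}$, and by disjointness of the $\fc(\fp)$ over $\fp \in \fP(\scI(\fp))$ (the disjoint covering property \eqref{eq-dis-freq-cover}), the contributions from tiles with the same spatial cube are disjoint.

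Next I would organize the tiles contributing to the left-hand side by their spatial cube. For each scale $s$ with $s(L) \le s \le \ps(\fu)$ there is a unique grid cube $L_s$ with $L \subset L_s$ and $s(L_s) = s$, by \eqref{coverdyadic} and \eqref{dyadicproperty}; every contributing tile $\fp$ has $\scI(\fp) = L_s$ for some such $s$. For a fixed cube $L_s$, the tiles $\fp$ with $\scI(\fp) = L_s$ have pairwise disjoint $\fc(\fp)$, and since $\tQ(X) \subset \dot{\bigcup}_{\fp \in \fP(L_s)} \fc(\fp)$, for $\mu$-a.e.\ point $x \in L \cap G$ there is at most one such tile $\fp$ with $x \in E_1(\fp)$. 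Hence $\sum_{\fp : \scI(\fp) = L_s} \mu(E(\fp) \cap G \cap L) \le \mu(E_2(\lambda_s, \fp))$ for an appropriate $\fp$ of cube $L_s$ and an appropriate $\lambda_s \ge 2$ coming from the position of $\fcc(\fu)$ relative to $\fcc(\fp)$: using the forest property $4\fp \lesssim \fu$ from \eqref{forest1} and \Cref{monotone-cube-metrics} applied along the inclusion $L_s = \scI(\fp) \subset \scI(\fu)$, one bounds $d_{\fp}(\fcc(\fp), \fcc(\fu))$ by a small constant, so that all the relevant frequency points $\tQ(x)$ lie in a ball $B_\fp(\fcc(\fp), \lambda)$ with $\lambda$ bounded by an absolute constant, say $\lambda \le 2^5$. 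Then the definition \eqref{definedens1} of $\dens_1(\fT(\fu))$, applied with this $\fp$ and $\lambda$, together with the relation $\lambda \fu \lesssim \lambda \fp$ that must be checked, gives $\mu(E_2(\lambda, \fp)) \le \lambda^a \dens_1(\fT(\fu)) \mu(\scI(\fp)) = \lambda^a \dens_1(\fT(\fu)) \mu(L_s)$.

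Finally I would sum over the scales $s$. Since the cubes $L_s$ form an increasing chain and by \Cref{monotone-cube-metrics} (the strict monotonicity $d_{I^\circ} \le 2^{-95a} d_{J^\circ}$ for $I \subsetneq J$) the frequency separation grows geometrically as $s$ increases, only finitely many scales $s$ can contribute for a given bound on $d_\fp(\fcc(\fp),\fcc(\fu))$; more precisely, the bound $\lambda \le 2^5$ combined with the geometric growth forces $s - s(L)$ to be bounded, or alternatively one bounds each $\mu(L_s)$ crudely by $\mu(\scI(\fp_0))$ for a single top tile and absorbs the number of scales into the constant $2^{101a^3}$. I expect the main obstacle to be the bookkeeping in the second step: correctly producing, for each contributing spatial cube, a single tile $\fp$ and parameter $\lambda$ for which $\lambda \fu \lesssim \lambda \fp$ holds so that the definition of $\dens_1$ applies, and controlling $\lambda$ by an absolute constant via \Cref{monotone-cube-metrics} and the forest property \eqref{forest1} — the doubling losses in \eqref{firstdb} must be tracked carefully to land inside the stated constant $2^{101a^3}$.
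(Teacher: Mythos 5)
Your starting observations are sound: every tile $\fp \in \fT(\fu)$ with $E(\fp) \cap L \ne \emptyset$ has $L \subset \scI(\fp)$, and organizing the contributing tiles by their spatial cubes $L_s$ is natural. The gap is in the final step. Applying the definition of $\dens_1$ with a reference tile $\fp$ at scale $s$ produces a bound of the form $\lambda^a\,\dens_1(\fT(\fu))\,\mu(L_s)$, not $\mu(L)$: the set $E_2(\lambda,\fp)$ lives in $\scI(\fp)=L_s$, and $\mu(L_s)$ is of size roughly $D^{a(s-s(L))}\mu(L)$, which \emph{grows} geometrically in $s$. Summing these over the contributing scales therefore does not give a bound proportional to $\mu(L)$. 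Your proposed fixes do not close this. The claim that ``only finitely many scales can contribute'' is unsupported: by the tree convexity \eqref{forest2} and the definition \eqref{defineep} of $E(\fp)$, tiles at every scale in the range $[\sigma_1(x),\sigma_2(x)]$ can contribute, and this range is not uniformly bounded; moreover the strict monotonicity cited from \Cref{monotone-cube-metrics} says $d_{I^\circ}\le 2^{-95a}d_{J^\circ}$ for $I\subsetneq J$, i.e.\ distances \emph{shrink} at smaller cubes, and does not truncate the scale range. And ``bounding $\mu(L_s)$ by $\mu(\scI(\fp_0))$'' goes the wrong way: $\mu(L_s)\ge\mu(L)$ for every contributing $s$, so this caps nothing. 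A smaller point: the wiggle relation you need to verify for the $\dens_1$ definition is $\lambda\fp''\lesssim\lambda\fp$ for some $\fp''\in\fT(\fu)$, not $\lambda\fu\lesssim\lambda\fp$; the tree top $\fu$ is generally not in $\fT(\fu)$ and has $\scI(\fu)\supset\scI(\fp)$, which is the wrong inclusion for $\lesssim$.

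The idea you are missing is that the monotonicity of \Cref{monotone-cube-metrics}, used in the direction you dismiss (distances shrink on smaller cubes), lets you find a \emph{single} reference tile $\fp'$ at scale $s(L)$ or $s(L)+1$ that catches all contributing points at once. Concretely: for any $\fq\in\fT(\fu)$ with $E(\fq)\cap L\ne\emptyset$ one has $\scI(\fp')\subset\scI(\fq)$, so $d_{\fp'}(\fcc(\fq),\fcc(\fu))\le d_{\fq}(\fcc(\fq),\fcc(\fu))<4$ by \eqref{forest1}, and likewise $d_{\fp'}(\fcc(\fq),\tQ(x))\le d_{\fq}(\fcc(\fq),\tQ(x))<1$ for $x\in E(\fq)$; combined with $d_{\fp'}(\fcc(\fp'),\fcc(\fu))<4$ this forces $\tQ(x)\in B_{\fp'}(\fcc(\fp'),9)$. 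Hence $L\cap G\cap\bigcup_{\fq}E(\fq)\subset E_2(9,\fp')$, and a single application of the $\dens_1$ bound with $\lambda=9$ gives $9^a\,\dens_1(\fT(\fu))\,\mu(\scI(\fp'))$, which after one doubling step is $\lesssim 2^{101a^3}\dens_1(\fT(\fu))\,\mu(L)$. No sum over scales is needed. You also need to handle the degenerate case $s(L)=-S$, where $L=\scI(\fp_0)$ for some $\fp_0\in\fT(\fu)$ and one takes $\fp'=\fp_0$ directly, and the case $L\subsetneq\scI(\fp)$ for every contributing $\fp$, where $\fp'$ is constructed at the parent cube $L'$; the existence of such a $\fp'$ with $d_{\fp'}(\fcc(\fp'),\fcc(\fu))<4$ uses the covering property \eqref{eq-dis-freq-cover} and the frequency nesting \eqref{eq-freq-dyadic}.
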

\begin{proof}%[Proof of \Cref{local-dens1-tree-bound}]
    \leanok
    \proves{local-dens1-tree-bound}
    We  assume there exists $\fp \in \fT(\fu)$ with $L \cap \scI(\fp) \ne \emptyset$, for otherwise
 \eqref{eq-1density-estimate-tree} is void.
 Suppose first that there exists such $\fp$ with $\ps(\fp) \le s(L)$. Then $\ps(\fp) = -S$ and $L = \scI(\fp)$ by the definition of $\mathcal{L}$. Let $\fq \in \fT(\fu)$ be another tile with $E(\fq) \cap L \ne \emptyset$. By the grid property we must have $\scI(\fp) \subset \scI(\fq)$. Using monotonicity of $d$ and the tree property \eqref{forest1}, we conclude
    \begin{align*}
        d_{\fp}(\fcc(\fp), \fcc(\fq)) &\le d_{\fp}(\fcc(\fp), \fcc(\fu)) + d_{\fp}(\fcc(\fq), \fcc(\fu))\\
        &\le d_{\fp}(\fcc(\fp), \fcc(\fu)) + d_{\fq}(\fcc(\fq), \fcc(\fu)) \le 8\,.
    \end{align*}
    By definition of $E(\fq)$ and the triangle inequality, $L \cap G \cap E(\fq) \subset E_2(9, \fp)$. We obtain
    $$
        \mu(L \cap G \cap \bigcup_{\fq \in \fT(\fu)} E(\fq)) \le \mu(E_2(9, \fp))\,.
    $$
    By the definition of $\dens_1$, this is bounded by
    $$
        9^a \dens_1(\fT(\fu)) \mu(\scI(\fp)) =9^a \dens_1(\fT(\fu)) \mu(L)\,.
    $$
    Since $a \ge 4$, \eqref{eq-1density-estimate-tree} follows in the given case.

    Now assume the opposite case that for each $\fp \in \fT(\fu)$ with $L \cap E(\fp) \ne \emptyset$, we have $\ps(\fp) > s(L)$. Let $L'$ be the parent cube of $L$ and let  $\fp'' \in \fT(\fu)$ with $\scI(\fp'') \subset L'$.
    It suffices to show that there exists a tile $\fp' \in \fP(\fT(\fu))$ with $\scI(\fp') = L'$, $d_{\fp'}(\fcc(\fp'), \fcc(\fu)) < 4$ and $9\fp'' \lesssim 9\fp'$.
    For then, let $\fq \in \fT(\fu)$ with $L \cap E(\fq) \ne \emptyset$. Then $\ps(\fq) \ge s(L')$ and $L' \subset \scI(\fq)$.
    By the same computation as in the first case we deduce $L \cap G \cap E(\fq) \subset E_2(9, \fp')$ and
    $$
        \mu(L \cap G \cap \bigcup_{\fq \in \fT(\fu)} E(\fq)) \le \mu(E_2(9, \fp')) \le 9^a \dens_1(\fT(\fu)) \mu(L')\,.
    $$
    This proves \eqref{eq-1density-estimate-tree} using the doubling property \eqref{doublingx}.

    It remains to show existence of $\fp'$ with the required properties. If $\scI(\fp'') = L'$ we can take $\fp' = \fp''$.
    Otherwise, let $\fp'$ be the unique tile such that $\scI(\fp') = L'$ and such that $\Omega(\fu) \cap \Omega(\fp') \ne \emptyset$.
    Since $\scI(\fp') \subset \scI(\fp)$ and $\fp \in \fT(\fu)$, we have $\fp' \in \fP(\fT(\fu))$.
    By the tree property \eqref{forest1}, we have $\ps(\fp') = s(L') \le \ps(\fp) < \ps(\fu)$. By \eqref{dyadicproperty} and \eqref{eq-freq-dyadic}, we conclude  $\Omega(\fu) \subset \Omega(\fp')$, and hence the distance property required of $\fp'$.
    The property $9\fp'' \lesssim 9\fp'$ follows by the triangle inequality, \eqref{forest1}, \Cref{monotone-cube-metrics} and \eqref{eq-freq-comp-ball}.
    This completes the proof.
\end{proof}

\begin{lemma}[local dens2 tree bound]
    \label{local-dens2-tree-bound}
    \leanok
    \lean{TileStructure.Forest.local_dens2_tree_bound}
    Let $\fu \in \fU$ and $J \in \mathcal{J}(\fT(\fu))$. Then
    \begin{equation}\label{eq-dens2-lemma}
        \mu(F \cap J) \le 2^{201a^3} \dens_2(\fT(\fu)) \mu(J)\,.
    \end{equation}
\end{lemma}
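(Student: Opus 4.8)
The plan is to exhibit a tile $\fp\in\fT(\fu)$ together with a radius $r\ge 4D^{\ps(\fp)}$ such that $J\subset B(\pc(\fp),r)$ and $\mu(B(\pc(\fp),r))\le 2^{201a^3}\mu(J)$. Granting this, \eqref{eq-dens2-lemma} is immediate from the definition \eqref{definedens2} of $\dens_2$: since $\fp\in\fT(\fu)$ and $r\ge 4D^{\ps(\fp)}$,
\[
  \mu(F\cap J)\le \mu(F\cap B(\pc(\fp),r))\le \dens_2(\fT(\fu))\,\mu(B(\pc(\fp),r))\le 2^{201a^3}\dens_2(\fT(\fu))\,\mu(J)\,.
\]

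To locate $\fp$, I would first observe that $J$ cannot be the top cube $I_0$: since $\fT(\fu)$ is nonempty, every $\fq\in\fT(\fu)$ satisfies $\scI(\fq)\subset I_0\subset B(c(I_0),100D^{S+1})$ by \eqref{subsetmaxcube} and \eqref{eq-vol-sp-cube}, so $I_0\notin\mathcal{J}_0(\fT(\fu))$, whereas $J\in\mathcal{J}(\fT(\fu))\subset\mathcal{J}_0(\fT(\fu))$. Hence $s(J)<S$, and the grid axioms \eqref{coverdyadic}, \eqref{dyadicproperty}, \eqref{subsetmaxcube} provide a unique parent cube $J'$ with $s(J')=s(J)+1$ and $J\subsetneq J'$. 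Because $J$ is inclusion-maximal in $\mathcal{J}_0(\fT(\fu))$, the strictly larger cube $J'$ lies outside $\mathcal{J}_0(\fT(\fu))$; as $s(J')=s(J)+1>-S$, the definition of $\mathcal{J}_0$ then forces the existence of $\fp\in\fT(\fu)$ with $\scI(\fp)\subset B(c(J'),100D^{s(J)+2})$.

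The rest is triangle-inequality bookkeeping with the squeezing property \eqref{eq-vol-sp-cube} and the doubling property \eqref{doublingx}. From \eqref{eq-vol-sp-cube} one has $c(J)\in J\subset J'\subset B(c(J'),4D^{s(J)+1})$ and $\pc(\fp)=c(\scI(\fp))\in\scI(\fp)\subset B(c(J'),100D^{s(J)+2})$, hence $\rho(\pc(\fp),c(J))<104D^{s(J)+2}$. I would set $r:=\max\{108D^{s(J)+2},\,4D^{\ps(\fp)}\}$, so $r\ge 4D^{\ps(\fp)}$ automatically, and $J\subset B(\pc(\fp),r)$ since every $x\in J$ obeys $\rho(x,\pc(\fp))\le\rho(x,c(J))+\rho(c(J),\pc(\fp))<4D^{s(J)}+104D^{s(J)+2}\le 108D^{s(J)+2}$. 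For the volume bound I split into two cases. If $r=108D^{s(J)+2}$, then $B(\pc(\fp),r)\subset B(c(J),212D^{s(J)+2})$, whose radius exceeds that of the ball $B(c(J),\tfrac14 D^{s(J)})\subset J$ by the factor $848D^2\le 2^{200a^2+10}$, so \eqref{doublingx} gives $\mu(B(\pc(\fp),r))\le 2^{200a^3+10a}\mu(J)$. If $r=4D^{\ps(\fp)}$, then $B(\pc(\fp),\tfrac14 D^{\ps(\fp)})\subset\scI(\fp)\subset B(c(J'),100D^{s(J)+2})\subset B(c(J),104D^{s(J)+2})$, so four applications of \eqref{doublingx} together with the factor $416D^2\le 2^{200a^2+9}$ yield $\mu(B(\pc(\fp),r))\le 2^{4a}\mu(\scI(\fp))\le 2^{200a^3+13a}\mu(J)$. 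In both cases $200a^3+13a\le 201a^3$, since $a^2\ge 16>13$ for $a\ge 4$.

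The step I expect to require the most care is the exponent bookkeeping: one has to take the near-optimal radius $r$ in each branch — a wasteful uniform choice such as $r=D^{s(J)+3}$ would cost roughly $2^{300a^3}$ and overshoot $2^{201a^3}$ — and then absorb the residual $2^{10a},2^{13a}$ into $a^3$ using $a\ge 4$. The only genuinely structural point, producing the tile $\fp$ from maximality of $J$ in $\mathcal{J}_0(\fT(\fu))$ and nonemptiness of $\fT(\fu)$, is short.
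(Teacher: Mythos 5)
Your proof is correct and follows essentially the same route as the paper's: rule out $J=I_0$, pass to the parent $J'$, extract a tile $\fp\in\fT(\fu)$ with $\scI(\fp)\subset B(c(J'),100D^{s(J')+1})$ from the non-membership of $J'$ in $\mathcal{J}_0(\fT(\fu))$, and then bound the volume of a ball $B(\pc(\fp),r)$ with $r\ge 4D^{\ps(\fp)}$ by $2^{200a^3+O(a)}\mu(J)$ using the triangle inequality, \eqref{eq-vol-sp-cube} and \eqref{doublingx}. The only difference is cosmetic — you take $r=\max\{108D^{s(J)+2},4D^{\ps(\fp)}\}$ and split on which term dominates, while the paper splits on whether $J\subset B(\pc(\fp),4D^{\ps(\fp)})$ — and your constant bookkeeping (landing at $2^{200a^3+13a}$) is slightly sharper than the paper's $2^{200a^3+14a}$, both comfortably under $2^{201a^3}$.
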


\begin{proof}%[Proof of \Cref{local-dens2-tree-bound}]
    \proves{local-dens2-tree-bound}
    \leanok
    Suppose first that $s(J) = S \ge 1$. Then $J$ is the maximal cube $I_0$ as in \eqref{subsetmaxcube} and the definition of
    $\mathcal{J}(\fT(\fu))$ quickly shows $s(J) = -S $, a contradiction.
    It remains to consider the case $s(J) < S$.

    We show the existence of a tile $\fp \in \fT(\fu)$
    and an $r \geq 4 D ^ {\ps(\fp)}$ such that
    \begin{equation}\label{eq-pr-condition}
        J \subset B(\pc(\fp), r),\qquad \mu(B(\pc(\fp), r)) \le 2 ^ {200a^3 + 14a} \mu(J)\, .
    \end{equation}
    This will imply with the definition
    \eqref{definedens2} of $\dens_2$  the desired
    \eqref{eq-dens2-lemma} as follows
    $$
        \mu(F \cap J) \le \mu(F \cap B(\pc(\fp), r))
    $$
    $$
        \le \dens_2(\fT(\fu)) \mu(B(\pc(\fp), r)) \le 2^{200a^3 + 14a} \dens_2(\fT(\fu))\mu(J)\,.
    $$
    %
    % In particular, these criteria are satisfied, with $r = 4 D ^ {\ps(\fp)}$, by any
    % $\fp \in \fT(\fu)$ such that $J \subseteq B(\pc(\fp, 4 D ^ {\ps(\fp)}))$ and
    % $\mu(\scI(\fp)) \le 2^{100a^3 + 10a} \mu(J)$, because then by the doubling property
    % \eqref{doublingx},
    % $$
    %     \mu(B(\pc(\fp), 4 D ^ {\ps(\fp)})) \le 2^{4a} \mu(B(\pc(\fp), D ^ {\ps(\fp)} / 4))
    % $$
    % $$
    %     \le 2^{4a} \mu(\scI(\fp)) \le 2^{100a^3 + 14a} \mu(J)\,.
    % $$
    %
    The grid properties give a $J' \in \mathcal{D}$ with $s(J') = s(J) + 1$ and $J \subset J'$ and
    $$
        B(c(J'), 204D^{s(J')+1}) \subset B(c(J), 204D^{s(J') + 1} + 4D^{s(J')})
                                 \subset B(c(J), 2^8 D^{s(J) + 2})\,.
    $$
    The doubling property \eqref{doublingx}, squeezing property  \eqref{eq-vol-sp-cube}, and $D=2^{100a^2}$ give
    \begin{equation}
        \label{measure-comparison}
        \mu(B(c(J'), 204D^{s(J') + 1})) \leq 2 ^ {200a^3 + 10a} \mu(J)\,.
    \end{equation}
    By definition of $\mathcal{J}(\fT(\fu))$, there exists $\fp \in \fT(\fu)$
    with $$\scI (\fp)\subset B(c(J'), 100 D^{s(J') + 1})\, .$$
    If $J \subset B(\pc(\fp), 4 D^{\ps(\fp)})$, then \eqref{measure-comparison} gives \eqref{eq-pr-condition} with $r = 4D^{s(\fp)}$.
So assume $J \not \subset B(\pc(\fp), 4 D^{\ps(\fp)})$. By the triangle inequality,
    $$
        J \subset J' \subset B(c(J'), 4D^{s(J')}) \subset B(\pc(\fp), 104 D^{s(J') + 1})\,,
    $$
    so we must have $104 D^{s(J') + 1} > 4D^{\ps(\fp)}$. By the triangle inequality again,
   $$
        B(\pc(\fp), 104 D^{s(J') + 1}) \subset B(c(J), 204 D^{s(J') + 1})\,,
    $$
    so \eqref{measure-comparison} proves that  $\fp$ satisfies \eqref{eq-pr-condition} with $r=104 D^{s(J') + 1}$.
\end{proof}

\subsection{Almost orthogonality of separated trees}

The main result of this subsection is the almost orthogonality estimate for operators associated to distinct trees in a forest in \Cref{correlation-separated-trees} below. We will deduce it from Lemmas \ref{correlation-distant-tree-parts} and \ref{correlation-near-tree-parts}, which are proven in Subsections \ref{subsec-big-tiles} and \ref{subsec-rest-tiles}, respectively.

The adjoint of the operator $T_{\fp}$ defined in \eqref{definetp} is given by
\begin{equation}
    \label{definetp*}
    T_{\fp}^* g(x) = \int_{E(\fp)} \overline{K_{\ps(\fp)}(y,x)} e(-\tQ(y)(x)+ \tQ(y)(y)) g(y) \, \mathrm{d}\mu(y)\,.
\end{equation}
For each $\fp \in \fP$, we have
\begin{equation}\label{adjoint-tile-support}
    T_{\fp}^* g = \mathbf{1}_{B(\pc(\fp), 5D^{\ps(\fp)})} T_{\fp}^* \mathbf{1}_{\scI(\fp)} g \, .
\end{equation}
With the tree localization \eqref{forest6}, we conclude for $\fu \in \fU$ and $\fp \in \fT(\fu)$
\begin{equation}\label{eq-tree-uu}
      T_{\fp}^* g = \mathbf{1}_{\scI(\fu)} T_{\fp}^* \mathbf{1}_{\scI(\fu)} g\,.
\end{equation}
\Cref{densities-tree-bound} implies that for all $\fu \in \fU$ and $g$ with $|g| \le \mathbf{1}_G$ we have
\begin{equation}    \label{adjoint-tree-control}
    \|S_{2, \fu} g\|_2 \le 2^{182a^3} \|g\|_2\, ,
\end{equation}
$$
    S_{2,\fu}g := \left|T_{\fT(\fu)}^*g \right| + M|g| + |g|\, .
$$

\begin{lemma}[correlation separated trees]
    \label{correlation-separated-trees}
    \leanok
    \lean{TileStructure.Forest.correlation_separated_trees}
    \uses{correlation-distant-tree-parts,correlation-near-tree-parts}
    For any $\fu_1 \ne \fu_2 \in \fU$ and all bounded $g_1, g_2$ with bounded support, we have
    \begin{equation}
        \label{eq-lhs-sep-tree}
        \left| \int_X T^*_{\fT(\fu_1)}g_1 \overline{T^*_{\fT(\fu_2)}g_2 }\,\mathrm{d}\mu \right|
        \le 2^{512a^3+1-4n} \prod_{j =1}^2 \| S_{2, \fu_j} g_j\|_{L^2(\scI(\fu_1) \cap \scI(\fu_2))}\,.
    \end{equation}
\end{lemma}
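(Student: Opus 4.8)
The plan is to first reduce to the case where the cubes of the two tree tops are nested, and then to split the double sum over tile pairs into two collections handled by \Cref{correlation-distant-tree-parts} and \Cref{correlation-near-tree-parts}. If $\scI(\fu_1)\cap\scI(\fu_2)=\emptyset$, then by \eqref{eq-tree-uu} the functions $T^*_{\fT(\fu_1)}g_1$ and $T^*_{\fT(\fu_2)}g_2$ have disjoint supports and the left-hand side of \eqref{eq-lhs-sep-tree} vanishes. Otherwise the dyadic property \eqref{dyadicproperty} forces one of $\scI(\fu_1)$, $\scI(\fu_2)$ to contain the other. Since both sides of \eqref{eq-lhs-sep-tree} are unchanged under the swap $(\fu_1,g_1)\leftrightarrow(\fu_2,g_2)$ — for the left-hand side this is seen after taking complex conjugates — I may assume $\scI(\fu_1)\subseteq\scI(\fu_2)$, so $\scI(\fu_1)\cap\scI(\fu_2)=\scI(\fu_1)$, and by \eqref{eq-tree-uu} all integrals below are effectively over $\scI(\fu_1)$.

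Next I would expand $T^*_{\fT(\fu_j)}=\sum_{\fp\in\fT(\fu_j)}T^*_\fp$ and split the double sum over $(\fp_1,\fp_2)\in\fT(\fu_1)\times\fT(\fu_2)$ into a \emph{distant} collection and a \emph{near} collection. The distant collection consists of the pairs for which $\scI(\fp_1)$ and $\scI(\fp_2)$ are far apart at their respective scales — so that the kernel supports \eqref{supp-Ks} make the interaction on $\scI(\fu_1)$ weak, or $\scI(\fp_2)$ so much larger than $\scI(\fp_1)$ that $K_{\ps(\fp_2)}$ is nearly constant there — and \Cref{correlation-distant-tree-parts} bounds this part. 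The near collection consists of the remaining pairs; for these the configuration is essentially $\scI(\fp_2)\subseteq\scI(\fu_1)$, so the forest separation \eqref{forest5} gives the frequency gap $d_{\fp_2}(\fcc(\fp_2),\fcc(\fu_1))>2^{Z(n+1)}$, and \Cref{correlation-near-tree-parts} bounds this part. The geometric factor $2^{-4n}$ in \eqref{eq-lhs-sep-tree} will arise entirely from the near collection: each correlation $\int T^*_{\fp_1}g_1\overline{T^*_{\fp_2}g_2}$ is an oscillatory integral with phase involving $\tQ(\cdot)-\fcc(\fu_1)$, to which \Cref{Holder-van-der-Corput} applies, and since $Z=2^{12a}$ dwarfs $2a^2+a^3$ one has $2^{-Z(n+1)/(2a^2+a^3)}\le 2^{-4n}$.

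To finish, I would add the two estimates. Each of \Cref{correlation-distant-tree-parts} and \Cref{correlation-near-tree-parts} bounds its part of the sum by a constant times $2^{-4n}\prod_{j=1}^2\|S_{2,\fu_j}g_j\|_{L^2(\scI(\fu_1))}$ — recall $S_{2,\fu}g=|T^*_{\fT(\fu)}g|+M|g|+|g|$ — so summing and absorbing the two absolute constants together with the factor from the sum into $2^{512a^3+1}$ yields \eqref{eq-lhs-sep-tree}. Within this proof the only points needing care are checking that the distant/near dichotomy is exhaustive — using the tree localization \eqref{forest6} to control the supports of the operators $T^*_\fp$ and \eqref{dyadicproperty} to compare cubes — and that the relabeling in the first paragraph leaves the target unchanged. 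I expect the real obstacle, deferred to \Cref{correlation-near-tree-parts}, to be the quantitative conversion of the separation \eqref{forest5} into decay: verifying the Hölder-norm amplitude bound from \eqref{eq-Ks-size} and \eqref{eq-Ks-smooth}, applying \Cref{Holder-van-der-Corput} on the correct balls, and summing the resulting geometric series over the tiles of both trees while retaining the full gain $2^{-4n}$.
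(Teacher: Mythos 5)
Your overall skeleton matches the paper — reduce to the nested case $\scI(\fu_1)\subset\scI(\fu_2)$ via \eqref{eq-tree-uu} and \eqref{dyadicproperty}, split the tree $\fT(\fu_2)$ into two pieces, bound each piece by the cited sub-lemma, and add. That part is fine. But your description of the split, and your attribution of mechanisms to the two sub-lemmas, is inverted, and the claim about where the $2^{-4n}$ decay comes from is false.

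The paper's split is not a pair-by-pair spatial dichotomy. It is the set
$$
\mathfrak{S} := \{\fp \in \fT(\fu_1) \cup \fT(\fu_2) \ : \ d_{\fp}(\fcc(\fu_1), \fcc(\fu_2)) \ge 2^{Zn/2}\},
$$
a \emph{frequency} separation set, and $\fT(\fu_2)$ is partitioned into $\fT(\fu_2)\cap\mathfrak{S}$ and $\fT(\fu_2)\setminus\mathfrak{S}$ while $\fT(\fu_1)$ is kept whole (one has $\fT(\fu_1)\subset\mathfrak{S}$ by \Cref{overlap-implies-distance}). \Cref{correlation-distant-tree-parts} handles $\fT(\fu_2)\cap\mathfrak{S}$ — tiles with \emph{large} frequency gap between $\fcc(\fu_1)$ and $\fcc(\fu_2)$ — and this is precisely where the van der Corput estimate \Cref{Holder-van-der-Corput} is invoked, giving decay $2^{-Zn/(4a^2+2a^3)}$. \Cref{correlation-near-tree-parts} handles $\fT(\fu_2)\setminus\mathfrak{S}$; these tiles have small frequency gap, and \Cref{overlap-implies-distance} then forces $\scI(\fp)\cap\scI(\fu_1)=\emptyset$ — the \emph{opposite} of your "$\scI(\fp_2)\subseteq\scI(\fu_1)$" — so the decay $2^{-\frac{25}{101a}Zn\kappa}$ there comes from the small-boundary property and thin-scale considerations, not from van der Corput. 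Both sub-lemmas contribute an $n$-decay factor dominated by $2^{-4n}$; it is not true that the geometric gain arises entirely from one piece. As written, your plan would have you apply each sub-lemma to a collection of tiles to which it does not pertain, so the step "add the two estimates" does not close. You need the actual definition of $\mathfrak{S}$ to state what the two bounded quantities are before you can sum them.
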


By \eqref{eq-tree-uu} and the dyadic property \eqref{dyadicproperty}, the left hand side of \eqref{eq-lhs-sep-tree} vanishes unless $\scI(\fu_1) \subset \scI(\fu_2)$ or $\scI(\fu_2) \subset \scI(\fu_1)$. Without loss of generality we assume $\scI(\fu_1) \subset \scI(\fu_2)$.
Defining
\begin{equation*}
     \mathfrak{S} := \{\fp \in \fT(\fu_1) \cup \fT(\fu_2) \ : \ d_{\fp}(\fcc(\fu_1), \fcc(\fu_2)) \ge 2^{Zn/2}\}\, ,
\end{equation*}
\Cref{correlation-separated-trees} then follows by combining the definition \eqref{defineZ} of $Z$ with the following two lemmas.

\begin{lemma}[correlation distant tree parts]
        \label{correlation-distant-tree-parts}
        \leanok
        \lean{TileStructure.Forest.correlation_distant_tree_parts}
        \uses{Holder-van-der-Corput,Lipschitz-partition-unity,Holder-correlation-tree,lower-oscillation-bound}
        We have for all $\fu_1 \ne \fu_2 \in \fU$ with $\scI(\fu_1) \subset \scI(\fu_2)$ and all bounded $g_1, g_2$ with bounded support
      \begin{equation}
            \label{eq-lhs-big-sep-tree}
            \Big| \int_X  T^*_{\fT(\fu_1)}g_1 \overline{T^*_{\fT(\fu_2) \cap \mathfrak{S}}g_2 }\,\mathrm{d}\mu \Big|
            \le 2^{511a^3} 2^{-Zn/(4a^2 + 2a^3)} \prod_{j =1}^2 \| S_{2, \fu_j} g_j\|_{L^2(\scI(\fu_1))}\,.
        \end{equation}
    \end{lemma}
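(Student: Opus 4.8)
The plan is to expand the left-hand side into a double sum over tiles, integrate first in the output variable, and reduce to oscillatory integral estimates to which \Cref{Holder-van-der-Corput} applies. By the formula \eqref{definetp*} for $T_\fp^*$ and bilinearity,
\[
\int_X T^*_{\fT(\fu_1)}g_1\,\overline{T^*_{\fT(\fu_2)\cap\mathfrak S}g_2}\,\mathrm d\mu
=\sum_{\fp_1\in\fT(\fu_1)}\ \sum_{\fp_2\in\fT(\fu_2)\cap\mathfrak S}\int_X T^*_{\fp_1}g_1\,\overline{T^*_{\fp_2}g_2}\,\mathrm d\mu\, ,
\]
and for each pair Fubini rewrites $\int_X T^*_{\fp_1}g_1\overline{T^*_{\fp_2}g_2}\,\mathrm d\mu$ as an integral over $E(\fp_1)\times E(\fp_2)$ of $g_1(y_1)\overline{g_2(y_2)}$ against
\[
\mathbf I(y_1,y_2):=\int_X \overline{K_{\ps(\fp_1)}(y_1,x)}\,K_{\ps(\fp_2)}(y_2,x)\,e\big(\tQ(y_2)(x)-\tQ(y_1)(x)\big)\,\mathrm d\mu(x)\, .
\]
By the kernel support \eqref{supp-Ks} this vanishes unless the two annuli overlap, which constrains the relative position of $y_1,y_2$ and confines the $x$-integration to a ball $B$ of the smaller of the two scales; together with the tree localization \eqref{forest6}, \eqref{adjoint-tile-support}, \eqref{eq-tree-uu} everything is localized to $\scI(\fu_1)$.

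Next I would estimate $\mathbf I$. Using a Lipschitz (hence $C^\tau$) partition of unity subordinate to balls of the smaller scale — an auxiliary lemma matching \texttt{Lipschitz-partition-unity} — the integrand $x\mapsto\overline{K_{\ps(\fp_1)}(y_1,x)}K_{\ps(\fp_2)}(y_2,x)$ decomposes into pieces whose $C^\tau(2B)$ norm is controlled, via the kernel bounds \eqref{eq-Ks-size}, \eqref{eq-Ks-smooth}, by $2^{O(a^3)}\big(\mu(B(y_1,D^{\ps(\fp_1)}))\,\mu(B(y_2,D^{\ps(\fp_2)}))\big)^{-1}$. Applying \Cref{Holder-van-der-Corput} to each piece with phase $\tQ(y_2)-\tQ(y_1)$ gives a gain $(1+d_B(\tQ(y_1),\tQ(y_2)))^{-1/(2a^2+a^3)}$. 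When the two scales differ substantially, the same bookkeeping together with the Hölder regularity \eqref{eq-Ks-smooth} of the coarser kernel yields in addition a geometric-in-scale factor $D^{-\tau|\ps(\fp_1)-\ps(\fp_2)|}$; packaging the combination of these two mechanisms is the content of an auxiliary estimate I would record as \texttt{Holder-correlation-tree}.

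It then remains to convert the oscillatory gain into the claimed factor $2^{-Zn/(4a^2+2a^3)}$ — noting that this exponent is exactly what \Cref{Holder-van-der-Corput} produces from a separation of size $2^{Zn/2}$. Since $y_i\in E(\fp_i)$ forces $\tQ(y_i)\in\fc(\fp_i)\subset B_{\fp_i}(\fcc(\fp_i),1)$ by \eqref{defineep}, \eqref{eq-freq-comp-ball}, and the tree property \eqref{forest1} gives $\fcc(\fu_i)\in B_{\fp_i}(\fcc(\fp_i),4)$, repeated use of \Cref{monotone-cube-metrics} and the doubling axioms \eqref{firstdb}, \eqref{monotonedb}, \eqref{seconddb} lets one replace $d_B(\tQ(y_1),\tQ(y_2))$ by $d_B(\fcc(\fu_1),\fcc(\fu_2))$ up to bounded errors, and then bound this below by a multiple of $d_{\fp_2}(\fcc(\fu_1),\fcc(\fu_2))\ge 2^{Zn/2}$ (the tile is in $\mathfrak S$). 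This is the \texttt{lower-oscillation-bound} step: when $\ps(\fp_2)$ exceeds the smaller scale one loses $2^{95a}$ per dyadic level in \Cref{monotone-cube-metrics}, so one chooses a threshold on $|\ps(\fp_1)-\ps(\fp_2)|$ so that below it the surviving separation is still $\gtrsim 2^{Zn/2}$, while above it the geometric kernel factor $D^{-\tau|\ps(\fp_1)-\ps(\fp_2)|}$ alone beats $2^{-Zn/(4a^2+2a^3)}$; either way $\mathbf I(y_1,y_2)\lesssim 2^{-Zn/(4a^2+2a^3)}\big(\mu(B(y_1,D^{\ps(\fp_1)}))\mu(B(y_2,D^{\ps(\fp_2)}))\big)^{-1}$ up to $2^{O(a^3)}$.

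Finally, integrating this bound against $|g_1||g_2|$ over $E(\fp_1)\times E(\fp_2)$, summing the $\fp_1$ at a fixed scale and then over scales (both geometric, the latter using the kernel gain) produces bilinear Hardy–Littlewood-maximal expressions in $|g_1|$ and $|g_2|$ on $\scI(\fu_1)$; Cauchy–Schwarz and $L^2$-boundedness of $M$ — all absorbed into $S_{2,\fu_j}$ as in \eqref{adjoint-tree-control} — give the product $\|S_{2,\fu_1}g_1\|_{L^2(\scI(\fu_1))}\|S_{2,\fu_2}g_2\|_{L^2(\scI(\fu_1))}$, with accumulated constant at most $2^{511a^3}$. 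I expect the main obstacle to be precisely the third step: extracting the full $2^{Zn/2}$ oscillation at the correct (smaller) scale despite the per-level degradation in \Cref{monotone-cube-metrics}, i.e. calibrating the split between the "oscillatory" regime (nearby scales) and the "kernel-smoothness" regime (distant scales) so both individually deliver the claimed decay, while keeping every constant at the level $2^{O(a^3)}$.
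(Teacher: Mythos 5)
Your proposal takes a genuinely different route from the paper's, and that route has a gap that I do not see how to close.

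You propose to expand $T^*_{\fT(\fu_1)}g_1\,\overline{T^*_{\fT(\fu_2)\cap\mathfrak S}g_2}$ into a double sum over tile pairs $(\fp_1,\fp_2)$, apply a tile-by-tile $TT^*$ van der Corput estimate (as in the antichain bound, \Cref{tile-correlation}), and then sum. The issue is that for a fixed $\fp_2$, after the van der Corput gain (which is the same constant $2^{-Zn/(4a^2+2a^3)}$ for every pair) you are left with $\sum_{\fp_1\in\fT(\fu_1)}\int_{E(\fp_1)}|g_1|$ restricted to the support ball of $\fp_2$. At a fixed scale the $E(\fp_1)$ are disjoint and this sum is a single local average, but the tree $\fT(\fu_1)$ contains tiles at roughly $S$ scales under $\ps(\fp_2)$, and you get one such average per scale with no further decay. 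The ``geometric-in-scale factor $D^{-\tau|\ps(\fp_1)-\ps(\fp_2)|}$'' you invoke from \eqref{eq-Ks-smooth} is not there: after the $L^\infty$-normalization in the $C^\tau(2B)$ norm at the smaller scale, $\mu(B)\|\overline{K_{\ps(\fp_1)}}K_{\ps(\fp_2)}\|_{C^\tau(2B)}\sim 1/\mu(B(y_2,D^{\ps(\fp_2)}))$ with no dependence on the scale gap, because the product rule puts the $\|\cdot\|_\infty$ of the coarser kernel (which carries no smallness) against the full Hölder seminorm of the finer one. So the sum over $\ps(\fp_1)$ diverges with $S$.

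The paper sidesteps this by never expanding tile-by-tile. It first extracts the \emph{fixed} phases $e(\fcc(\fu_j))$ and forms the aggregated functions $e(\fcc(\fu_j))T^*_{\fT(\fu_j)}g_j$; the oscillatory gain is then taken once per uncertainty cube $J\in\mathcal J(\mathfrak S)$ with the single phase $e(\fcc(\fu_2)-\fcc(\fu_1))$, using \Cref{Lipschitz-partition-unity} to localize and \Cref{lower-oscillation-bound} for the separation. The work you are attributing to a scale-gap kernel gain is instead done by showing the aggregated adjoint tree operators are $C^\tau$ at scale $D^{s(J)}$ with constants summable over the tree (Lemmas \ref{Holder-correlation-tile}, \ref{global-tree-control-1}, \ref{global-tree-control-2}), which is what packages $\|h_J\|_{C^\tau}$ by $\prod_j(\inf_{B^\circ(J)}|T^*_{\fT(\fu_j)}g_j|+\inf_J M|g_j|)$ and ultimately yields the $S_{2,\fu_j}$ factors. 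Your outline does not produce the $|T^*_{\fT(\fu_j)}g_j|$ term of $S_{2,\fu_j}$ at all, which is another symptom that the aggregation is missing. In short, the key structural idea — extracting the tree-top phase and applying van der Corput to the aggregated product on each $J$ — is absent, and without it the scale summation does not close.
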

    \begin{lemma}[correlation near tree parts]
        \label{correlation-near-tree-parts}
        \leanok
        \lean{TileStructure.Forest.correlation_near_tree_parts}
        We have for all $\fu_1 \ne \fu_2 \in \fU$ with $\scI(\fu_1) \subset \scI(\fu_2)$ and all bounded $g_1, g_2$ with bounded support
      \begin{equation}
            \label{eq-lhs-small-sep-tree}
            \Big| \int_X  T^*_{\fT(\fu_1)}g_1 \overline{T^*_{\fT(\fu_2) \setminus \mathfrak{S}}g_2 }\,\mathrm{d}\mu \Big|
            \le 2^{232a^3+21a+5} 2^{-\frac{25}{101a}Zn \kappa} \prod_{j =1}^2 \| S_{2, \fu_j} g_j\|_{L^2(\scI(\fu_1))}\,.
        \end{equation}
    \end{lemma}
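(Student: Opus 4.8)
The plan is to expand the adjoint tree operator into tiles, localise the support of each piece to a boundary layer of $\scI(\fu_1)$, and sum with the help of the small boundary property \eqref{eq-small-boundary}; no oscillatory (van der Corput) gain is needed here. First, write $T^*_{\fT(\fu_2)\setminus\mathfrak{S}}g_2=\sum_{\fp\in\fT(\fu_2)\setminus\mathfrak{S}}T^*_\fp g_2$. By \eqref{eq-tree-uu} the factor $T^*_{\fT(\fu_1)}g_1$ is supported in $\scI(\fu_1)$, and by \eqref{adjoint-tile-support} each $T^*_\fp g_2$ is supported in $B(\pc(\fp),5D^{\ps(\fp)})$, so the $\fp$-summand of the integral is supported in $\scI(\fu_1)\cap B(\pc(\fp),5D^{\ps(\fp)})$. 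The first key step is the geometric claim that every contributing $\fp$ obeys $\scI(\fp)\not\subset\scI(\fu_1)$: from $\fp\in\fT(\fu_2)$ and \eqref{forest1} we get $d_\fp(\fcc(\fp),\fcc(\fu_2))<4$, from $\fp\notin\mathfrak{S}$ we get $d_\fp(\fcc(\fu_1),\fcc(\fu_2))<2^{Zn/2}$, so $d_\fp(\fcc(\fp),\fcc(\fu_1))<2^{Zn/2}+4$, which contradicts the separation \eqref{forest5} (demanding $>2^{Z(n+1)}$) if $\scI(\fp)\subset\scI(\fu_1)$, since $Z=2^{12a}\ge 5$. Combined with $\scI(\fp)\subsetneq\scI(\fu_2)$, $\scI(\fu_1)\subset\scI(\fu_2)$ and the dyadic property \eqref{dyadicproperty}, this forces $\scI(\fp)$ to be disjoint from $\scI(\fu_1)$ or to strictly contain it.

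Next I exploit that $\pc(\fp)=c(\scI(\fp))\in\scI(\fp)$. In the disjoint case $\pc(\fp)\in X\setminus\scI(\fu_1)$, so any $x$ in the support of the $\fp$-summand satisfies $\rho(x,X\setminus\scI(\fu_1))\le\rho(x,\pc(\fp))<5D^{\ps(\fp)}$, i.e. $x$ lies in the boundary layer of $\scI(\fu_1)$ of width $5D^{\ps(\fp)}$. On the other side, $T^*_{\fT(\fu_1)}g_1$ is supported in $\bigcup_{\fq\in\fT(\fu_1)}B(\pc(\fq),5D^{\ps(\fq)})$, and by \eqref{forest6} each such ball is at distance $\ge 3D^{\ps(\fq)}$ from $X\setminus\scI(\fu_1)$, while tiles of $\fT(\fu_1)$, having survived the removal of the layers $\fL_3$ and $\fL_4$, obey $\ps(\fq)\le\ps(\fu_1)-Z(n+1)-1$. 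Matching these shows the two supports can only meet within a boundary layer of width $\asymp D^{\ps(\fp)}$ with $\ps(\fp)\le\ps(\fu_1)-Z(n+1)-1+O(1)$, so \eqref{eq-small-boundary} bounds the measure of the relevant region by $2\bigl(CD^{\ps(\fp)-\ps(\fu_1)}\bigr)^\kappa\mu(\scI(\fu_1))$. The engulfing case $\scI(\fu_1)\subsetneq\scI(\fp)$ is handled analogously, using that for each scale there are at most $2^{O(a)}$ such tiles (from the geometric doubling \eqref{thirddb} and disjointness of the $\fc(\fp)$) and that the measure ratio $\mu(\scI(\fu_1))/\mu(B(\pc(\fp),D^{\ps(\fp)}))$ supplies the analogous gain.

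For each summand I apply the kernel size bound \eqref{eq-Ks-size} and the doubling property \eqref{doublingx} to get $|T^*_\fp g_2(x)|\le 2^{O(a^3)}\mu(B(\pc(\fp),D^{\ps(\fp)}))^{-1}\int_{E(\fp)}|g_2|\,\mathrm{d}\mu$; since $x$ is close to $\pc(\fp)$ and $E(\fp)\subset B(\pc(\fp),4D^{\ps(\fp)})$, this is in turn $\le 2^{O(a^3)}M|g_2|(x)\le 2^{O(a^3)}S_{2,\fu_2}g_2(x)$ for $x$ in the support, and trivially $|T^*_{\fT(\fu_1)}g_1|\le S_{2,\fu_1}g_1$. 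Cauchy--Schwarz then bounds the $\fp$-summand by $2^{O(a^3)}\mu(\text{region})^{1/2}\mu(\scI(\fu_1))^{-1/2}\|S_{2,\fu_1}g_1\|_{L^2(\scI(\fu_1))}\|S_{2,\fu_2}g_2\|_{L^2(\scI(\fu_1))}$ with $\mu(\text{region})^{1/2}\mu(\scI(\fu_1))^{-1/2}\lesssim D^{\kappa(\ps(\fp)-\ps(\fu_1))/2}$. Summing over $\fp$ organised by the scale $\ps(\fp)$ (with $2^{O(a)}$ tiles per scale) gives a geometric series $\sum_{m\ge Z(n+1)-O(1)}2^{O(a)}D^{-\kappa m/2}$; inserting $D=2^{100a^2}$, $\kappa=2^{-10a}$, $Z=2^{12a}$ and collecting the $2^{O(a^3)}$ kernel constants produces the bound $2^{232a^3+21a+5}2^{-\frac{25}{101a}Zn\kappa}$.

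The main obstacle is the geometric bookkeeping: pinning down exactly which boundary layer of $\scI(\fu_1)$ contains the support of each summand, extracting the scale separation $\ps(\fu_1)-\ps(\fp)\gtrsim Z(n+1)$ which is what makes the small boundary property produce genuine decay rather than a constant, and dealing with the engulfing tiles $\scI(\fu_1)\subsetneq\scI(\fp)$, whose summands are supported on all of $\scI(\fu_1)$ and so must be controlled by tile counting together with a measure-ratio argument instead of the boundary estimate.
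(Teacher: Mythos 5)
Your approach has a genuine gap in the scale-separation step, and it never invokes the one assumption that actually drives the estimate: the defining property of $\mathfrak{S}$. The decay in \eqref{eq-lhs-small-sep-tree} has to come from the fact that $\fp\in\fT(\fu_2)\setminus\mathfrak{S}$ means $d_\fp(\fcc(\fu_1),\fcc(\fu_2))<2^{Zn/2}$, while the forest separation \eqref{forest5} together with \eqref{forest1} forces $d_{\fq}(\fcc(\fu_1),\fcc(\fu_2))>2^{Z(n+1)}-4$ for every $\fq\in\fT(\fu_1)$ with $\scI(\fq)\subset\scI(\fu_2)$; playing these two off against each other via monotonicity and the doubling of the metrics is what produces a large scale gap. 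Your write-up instead appeals to a gap $\ps(\fq)\le\ps(\fu_1)-Z(n+1)-1$ attributed to "having survived the removal of the layers $\fL_3$ and $\fL_4$", but that is a feature of the particular forests constructed in \Cref{prop-decomposition}, not a property of an abstract $n$-forest, and \Cref{forest-operator} (hence this lemma) is asserted for an arbitrary forest. Worse, even granting that gap, your boundary comparison gives $3D^{\ps(\fq)}\le\rho(x,X\setminus\scI(\fu_1))<5D^{\ps(\fp)}$, i.e.\ $\ps(\fq)\le\ps(\fp)$, which is the wrong direction: it bounds $\ps(\fq)$ from above, not $\ps(\fp)$. A tile $\fp\in\fT(\fu_2)\setminus\mathfrak{S}$ with $\scI(\fp)$ just outside $\scI(\fu_1)$ and $\ps(\fp)$ close to $\ps(\fu_1)$ is perfectly consistent with the forest axioms, and for such $\fp$ your boundary layer in $\scI(\fu_1)$ has width comparable to $D^{\ps(\fu_1)}$, so \eqref{eq-small-boundary} gives essentially no gain, certainly not the required $2^{-\frac{25}{101a}Zn\kappa}$.

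The paper does not estimate the support intersection directly. It first applies \Cref{tree-projection-estimate} to replace $T^*_{\fT(\fu_1)}g_1$ by a projection onto the partition $\mathcal{J}'$ of $\scI(\fu_1)$ coming from $\mathcal{J}(\fT(\fu_1))$, and then establishes the scale gap \emph{locally} in \Cref{thin-scale-impact}: if $B(\scI(\fp))$ meets $B(J)$ for some $J\in\mathcal{J}'$, then $\ps(\fp)\le s(J)-Zn/(202a^3)$, obtained by comparing $d_{\fp'}$ for a nearby $\fp'\in\fT(\fu_1)$ with $d_\fp$, using precisely \eqref{forest5} and $\fp\notin\mathfrak{S}$. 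The small boundary property is then applied to the cubes $J$ (not to $\scI(\fu_1)$) through \Cref{square-function-count}, and the resulting gain is summed across $\mathcal{J}'$. Incidentally, your "engulfing case" $\scI(\fu_1)\subsetneq\scI(\fp)$ is vacuous: \Cref{overlap-implies-distance} already shows that $\fp\in\fT(\fu_2)\setminus\mathfrak{S}$ forces $\scI(\fp)\cap\scI(\fu_1)=\emptyset$, so no separate tile-counting argument is needed there.
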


In the proofs of both lemmas, we will need the following observation.

\begin{lemma}[overlap implies distance]
    \label{overlap-implies-distance}
    \leanok
    \lean{TileStructure.Forest.��_subset_��₀, TileStructure.Forest.overlap_implies_distance}
    Let $\fu_1 \ne \fu_2 \in \fU$ with $\scI(\fu_1) \subset \scI(\fu_2)$. If $\fp \in \fT(\fu_1) \cup \fT(\fu_2)$ with $\scI(\fp) \cap \scI(\fu_1) \ne \emptyset$, then $\fp \in \mathfrak{S}$. In particular, we have $\fT(\fu_1) \subset \mathfrak{S}$.
\end{lemma}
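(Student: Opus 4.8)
The plan is to unwind the definition of $\mathfrak{S}$ and reduce everything to the separation axiom \eqref{forest5}, the tree axiom \eqref{forest1}, and the monotonicity of the cube metrics. The key preliminary remark is that by \eqref{defdp} together with \eqref{tilecenter}, \eqref{tilescale} and \eqref{defineIsupo} one has $d_\fp = d_{\scI(\fp)^\circ}$, so \Cref{monotone-cube-metrics} applies to the metrics $d_\fp$ along nested spatial cubes. Also, since $Z = 2^{12a}$ with $a \ge 4$ and $n \ge 0$, the crude inequality $2^{Z(n+1)} - 4 \ge 2^{Zn/2}$ holds, and this is the only numerical fact needed to conclude membership in $\mathfrak{S}$ at the end of each case.

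First I would treat the case $\fp \in \fT(\fu_1)$, which also settles the ``in particular'' clause. Here \eqref{forest1} gives $4\fp \lesssim \fu_1$, hence $\scI(\fp) \subsetneq \scI(\fu_1) \subset \scI(\fu_2)$ and $d_\fp(\fcc(\fp),\fcc(\fu_1)) < 4$; and \eqref{forest5} applied with the pair $(\fu_2,\fu_1)$ gives $d_\fp(\fcc(\fp),\fcc(\fu_2)) > 2^{Z(n+1)}$. The triangle inequality then yields $d_\fp(\fcc(\fu_1),\fcc(\fu_2)) > 2^{Z(n+1)} - 4 \ge 2^{Zn/2}$, so $\fp \in \mathfrak{S}$, and in particular $\fT(\fu_1) \subset \mathfrak{S}$.

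Next I would treat $\fp \in \fT(\fu_2)$ with $\scI(\fp) \cap \scI(\fu_1) \ne \emptyset$. The grid dichotomy \eqref{dyadicproperty} splits this into $\scI(\fp) \subset \scI(\fu_1)$ or $\scI(\fu_1) \subsetneq \scI(\fp)$. In the first subcase, \eqref{forest5} with the pair $(\fu_1,\fu_2)$ gives $d_\fp(\fcc(\fp),\fcc(\fu_1)) > 2^{Z(n+1)}$, while \eqref{forest1} for $\fu_2$ gives $d_\fp(\fcc(\fp),\fcc(\fu_2)) < 4$, and the triangle inequality finishes exactly as before. In the second subcase, I would pick any $\fq \in \fT(\fu_1)$ (nonempty since $(\fU,\fT)$ is an $n$-forest); by the first step $d_\fq(\fcc(\fu_1),\fcc(\fu_2)) \ge 2^{Zn/2}$, and since $\scI(\fq) \subsetneq \scI(\fu_1) \subsetneq \scI(\fp)$ one has $\scI(\fq) \subset \scI(\fp)$ as grid cubes, so \Cref{monotone-cube-metrics} transfers this lower bound upward: $d_\fp(\fcc(\fu_1),\fcc(\fu_2)) \ge d_\fq(\fcc(\fu_1),\fcc(\fu_2)) \ge 2^{Zn/2}$. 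Thus $\fp \in \mathfrak{S}$ in every case.

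The only mildly delicate point — the closest thing to an obstacle — is this last subcase: when $\scI(\fu_1)$ is strictly smaller than $\scI(\fp)$ there is no direct control on $d_\fp(\fcc(\fp),\fcc(\fu_1))$, so one must route the separation estimate through a tile $\fq$ of the smaller tree $\fT(\fu_1)$ and then pull the bound back up to the coarser metric $d_\fp$ using the monotonicity of \Cref{monotone-cube-metrics}. Everything else is a direct combination of \eqref{forest1}, \eqref{forest5}, \eqref{dyadicproperty}, and the enormous size of $Z$.
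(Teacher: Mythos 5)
Your proposal is correct and follows essentially the same route as the paper: the same case split on $\fp \in \fT(\fu_1)$ versus $\fp \in \fT(\fu_2)$ (with the latter further split by the dyadic dichotomy), the same triangle-inequality computation from \eqref{forest1} and \eqref{forest5} in the first three subcases, and the same trick of picking an auxiliary tile $\fq \in \fT(\fu_1)$ and pushing the bound up via \Cref{monotone-cube-metrics} in the subcase $\scI(\fu_1) \subsetneq \scI(\fp)$.
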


\begin{proof}
    \leanok
    Suppose first that $\fp \in \fT(\fu_1)$. Then $\scI(\fp) \subset \scI(\fu_1) \subset \scI(\fu_2)$, by \eqref{forest1}. We conclude $\fp \in \mathfrak{S}$ as follows, where we use  the separation condition \eqref{forest5}, the squeezing property \eqref{eq-freq-comp-ball}, \eqref{forest1}, and $Z= 2^{12a}\ge 4$:
    \begin{multline*}
        d_{\fp}(\fcc(\fu_1), \fcc(\fu_2)) \ge d_{\fp}(\fcc(\fp), \fcc(\fu_2)) - d_{\fp}(\fcc(\fp), \fcc(\fu_1))\\
        \ge 2^{Z(n+1)} - 4\ge 2^{Zn/2}\,.
    \end{multline*}
    Suppose now that $\fp \in \fT(\fu_2)$. If $\scI(\fp) \subset \scI(\fu_1)$, then the same argument as above with $\fu_1$ and $\fu_2$ swapped shows $\fp \in \mathfrak{S}$. If $\scI(\fp) \not \subset \scI(\fu_1)$ then, by the dyadic property \eqref{dyadicproperty}, $\scI(\fu_1) \subset \scI(\fp)$. Pick $\fp' \in \fT(\fu_1)$, then we have $\scI(\fp') \subset \scI(\fu_1) \subset \scI(\fp)$. We conclude $\fp \in \mathfrak{S}$ as follows, using the monotonicity \Cref{monotone-cube-metrics} and a similar computation as the previous display,
    \begin{equation*}
        d_{\fp}(\fcc(\fu_1), \fcc(\fu_2)) \ge d_{\fp'}(\fcc(\fu_1), \fcc(\fu_2)) \ge 2^{Zn/2}\,.\qedhere
    \end{equation*}
\end{proof}

\subsection{Tiles with large separation}
    \label{subsec-big-tiles}

\Cref{correlation-distant-tree-parts} follows from an application of the van der Corput  \Cref{Holder-van-der-Corput} that we will elaborate  in \Cref{subsubsec-van-der-corput}. To prepare this application, we construct in \Cref{subsubsec-pao} a suitable partition of unity, and show in \Cref{subsubsec-holder-estimates} the H\"older estimates needed to apply \Cref{Holder-van-der-Corput}.

\subsubsection{A partition of unity}
\label{subsubsec-pao}
    Define
    $$
        \mathcal{J}' = \{J \in \mathcal{J}(\mathfrak{S}) \ : \ J \subset \scI(\fu_1)\}\,.
    $$
    This is a partition of $\scI(\fu_1)$. The definition of $\mathcal{J}$ implies that if two balls
    \begin{equation*}
        B(J) := B(c(J), 8D^{s(J)})
    \end{equation*}
    and $B(J')$ intersect, then $|s(J) - s(J')| \le 1$. Therefore, by standard arguments, we obtain the partition of unity of the following lemma.

    \begin{lemma}[Lipschitz partition unity]
        \label{Lipschitz-partition-unity}
        \leanok
        \lean{TileStructure.Forest.sum_χ, TileStructure.Forest.χ_le_indicator,
          TileStructure.Forest.dist_χ_le}
        There exists a family of functions $\chi_J$, $J \in \mathcal{J}'$ such that \begin{equation*}
            \mathbf{1}_{\scI(\fu_1)} = \sum_{J \in \mathcal{J}'} \chi_J\,,
        \end{equation*}
        and for all $J \in \mathcal{J}'$ and all $y,y' \in \scI(\fu_1)$
      \begin{equation*}
            0 \leq \chi_J(y) \leq \mathbf{1}_{B(J)}(y)\,,
        \end{equation*}
      \begin{equation*}
            |\chi_J(y) - \chi_J(y')| \le 2^{227a^3} \frac{\rho(y,y')}{D^{s(J)}}\,.
        \end{equation*}
    \end{lemma}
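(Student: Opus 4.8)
The plan is to build the $\chi_J$ by the standard normalized partition of unity recipe. First fix a piecewise linear profile $L\colon\R\to[0,1]$ with $L\equiv 1$ on $(-\infty,4]$, $L\equiv 0$ on $[8,\infty)$ and slope $-\tfrac14$ in between, so $L$ is $\tfrac14$-Lipschitz, and set $\tilde\chi_J(y):=L(\rho(y,c(J))/D^{s(J)})$ for $J\in\mathcal J'$. Since $|\rho(y,c(J))-\rho(y',c(J))|\le\rho(y,y')$, each $\tilde\chi_J$ is Lipschitz with constant $(4D^{s(J)})^{-1}$, and $\mathbf{1}_J\le\tilde\chi_J\le\mathbf{1}_{B(J)}$ by the squeezing property \eqref{eq-vol-sp-cube}. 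Put $\Sigma:=\sum_{J'\in\mathcal J'}\tilde\chi_{J'}$ and define $\chi_J:=\tilde\chi_J/\Sigma$ on $\scI(\fu_1)$, extended by $0$ elsewhere. Once $\Sigma$ is shown to be finite and $\ge 1$ at every point of $\scI(\fu_1)$, the pointwise bound $0\le\chi_J\le\mathbf{1}_{B(J)}$ is immediate and $\sum_{J}\chi_J=\mathbf{1}_{\scI(\fu_1)}\Sigma/\Sigma=\mathbf{1}_{\scI(\fu_1)}$, giving the first two assertions.

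To control $\Sigma$ on $\scI(\fu_1)$, fix $y\in\scI(\fu_1)$ and let $J_0$ be the unique cube of the partition $\mathcal J'$ containing $y$; then $\tilde\chi_{J_0}(y)=1$, so $\Sigma(y)\ge 1$. Moreover $\tilde\chi_{J'}(y)\ne 0$ precisely when $y\in B(J')$, and then $y\in B(J_0)\cap B(J')$, so the scale-separation property recalled before the lemma gives $|s(J')-s(J_0)|\le 1$. A Vitali-type count — using pairwise disjointness of the $J'$, the inclusion $B(c(J'),\tfrac14 D^{s(J')})\subset J'$ from \eqref{eq-vol-sp-cube}, and the doubling property \eqref{doublingx} applied a bounded number of times — bounds the number of $J'$ of a fixed scale with $y\in B(J')$ by $2^{7a}$. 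Hence at most $3\cdot 2^{7a}$ terms of $\Sigma(y)$ are nonzero.

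For the Lipschitz estimate, let $y,y'\in\scI(\fu_1)$. If $\rho(y,y')\ge D^{s(J)}$ the claim holds trivially since $0\le\chi_J\le 1$. If $\rho(y,y')<D^{s(J)}$, I would use the elementary quotient inequality
\[
  |\chi_J(y)-\chi_J(y')|\le|\tilde\chi_J(y)-\tilde\chi_J(y')|+\min\bigl(\chi_J(y),\chi_J(y')\bigr)\,|\Sigma(y)-\Sigma(y')|,
\]
valid because $\Sigma\ge 1$ on $\scI(\fu_1)$. The first term is at most $\rho(y,y')/(4D^{s(J)})$ by the Lipschitz bound on $\tilde\chi_J$. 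The second term vanishes unless both $y,y'\in B(J)$; assuming this, every $J'$ with $\tilde\chi_{J'}(y)\ne\tilde\chi_{J'}(y')$ satisfies $y\in B(J')$ or $y'\in B(J')$, hence $B(J)\cap B(J')\ne\emptyset$ and $|s(J')-s(J)|\le 1$ again by scale separation; the count above then leaves at most $6\cdot 2^{7a}$ contributing cubes, each with $|\tilde\chi_{J'}(y)-\tilde\chi_{J'}(y')|\le\rho(y,y')/(4D^{s(J')})\le D\,\rho(y,y')/(4D^{s(J)})$. Summing and inserting $D=2^{100a^2}$ yields $|\Sigma(y)-\Sigma(y')|\le 2^{100a^2+7a}\rho(y,y')/D^{s(J)}$, and since $\chi_J\le 1$ the whole expression is at most $2^{100a^2+7a+1}\rho(y,y')/D^{s(J)}$, which lies below $2^{227a^3}\rho(y,y')/D^{s(J)}$ for $a\ge 4$.

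The main obstacle is the bookkeeping for the term $\Sigma(y)-\Sigma(y')$: one must recognise that the cubes contributing to this difference all have scale within $1$ of $s(J)$, which is what forces the case split on whether $y$ and $y'$ lie in $B(J)$ and uses the scale-separation property for intersecting balls a second time. Everything else is routine geometry in doubling metric spaces, and the generous exponent $2^{227a^3}$ leaves ample room.
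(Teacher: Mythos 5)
The paper proves this lemma only by the remark ``by standard arguments,'' and your proposal supplies exactly the standard construction being invoked: cut-off profiles $\tilde\chi_J$ adapted to the balls $B(J)$, normalization by $\Sigma=\sum\tilde\chi_{J'}$, and the quotient-rule Lipschitz estimate controlled via the scale-separation fact $|s(J)-s(J')|\le 1$ for intersecting $B(J)$, $B(J')$ together with a Vitali/doubling count of overlapping balls at a given scale. The argument is correct (a couple of the explicit constants, e.g.\ $2^{100a^2+7a}$, are off by a small factor like $2$, but this has no effect since the target $2^{227a^3}$ is far larger for $a\ge 4$), and it matches the route the paper clearly has in mind.
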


\subsubsection{H\"older estimates for adjoint tree operators}
\label{subsubsec-holder-estimates}
    Let $g_1, g_2:X \to \mathbb{C}$ be bounded with bounded support.
    Define for $J \in \mathcal{J}'$
    \begin{equation*}
        h_J(y) := \chi_J(y)\cdot(e(\fcc(\fu_1)(y)) T_{\fT(\fu_1)}^* g_1(y)) \cdot \overline{(e(\fcc(\fu_2)(y)) T_{\fT(\fu_2) \cap \mathfrak{S}}^* g_2(y))}\,.
    \end{equation*}
    The  following main $\tau$-H\"older estimate for $h_J$ holds with $\tau = 1/a$.
    We use the notation
    \begin{equation*}
        B(J) := B(c(J), 8D^{s(J)}) \qquad \text{and} \qquad
     B^\circ{}(J) := B(c(J), \frac{1}{8}D^{s(J)})\, .
    \end{equation*}

    \begin{lemma}[Holder correlation tree]
        \label{Holder-correlation-tree}
        \leanok
        \lean{TileStructure.Forest.holder_correlation_tree}
        \uses{global-tree-control-2}
        We have for all $J \in \mathcal{J}'$ that
      \begin{equation*}
            \|h_J\|_{C^{\tau}(2B(J))} \le 2^{485a^3} \prod_{j = 1,2} (\inf_{B^\circ{}(J)} |T_{\fT(\fu_j)}^* g_j| + \inf_J M |g_j|)\,.
        \end{equation*}
    \end{lemma}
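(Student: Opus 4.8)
The plan is to exploit that the $L^\infty$-normalized H\"older norm is submultiplicative: for any ball $B$ and bounded $\tau$-H\"older functions $\phi_1,\phi_2,\phi_3$ on $B$ one has $\|\phi_1\phi_2\phi_3\|_{C^\tau(B)}\le\prod_i\|\phi_i\|_{C^\tau(B)}$, as follows by telescoping $\phi_1(x)\phi_2(x)\phi_3(x)-\phi_1(y)\phi_2(y)\phi_3(y)$ and estimating each resulting term by one H\"older difference and two suprema. Writing $h_J=\chi_J\cdot F_1\cdot\overline{F_2}$ with $F_1(y):=e(\fcc(\fu_1)(y))T^*_{\fT(\fu_1)}g_1(y)$ and $F_2(y):=e(\fcc(\fu_2)(y))T^*_{\fT(\fu_2)\cap\mathfrak S}g_2(y)$, it therefore suffices to bound $\|\chi_J\|_{C^\tau(2B(J))}$ by an absolute power of $2$ and each $\|F_j\|_{C^\tau(2B(J))}$ by $2^{Ca^3}\big(\inf_{B^\circ(J)}|T^*_{\fT(\fu_j)}g_j|+\inf_J M|g_j|\big)$ for a suitable constant $C$.

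For the cutoff, \Cref{Lipschitz-partition-unity} gives $0\le\chi_J\le1$ and Lipschitz constant $\le2^{227a^3}D^{-s(J)}$. On $2B(J)=B(c(J),16D^{s(J)})$ any two points lie within $32D^{s(J)}$, so $\rho(y,y')\le\rho(y,y')^\tau(32D^{s(J)})^{1-\tau}$ converts the Lipschitz bound into $|\chi_J(y)-\chi_J(y')|\le2^{227a^3}D^{-s(J)}(32D^{s(J)})^{1-\tau}\rho(y,y')^\tau$; multiplying the associated seminorm by $(16D^{s(J)})^\tau$ and adding $\|\chi_J\|_\infty\le1$ gives $\|\chi_J\|_{C^\tau(2B(J))}\le2^{228a^3}$.

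For the two de-modulated adjoint tree operators I would invoke the global tree control estimate \Cref{global-tree-control-2}. The relevant point is that $F_j$, and not $T^*_{\fT(\fu_j)}g_j$ by itself, is $\tau$-H\"older: for $\fp\in\fT(\fu_j)$ and $y\in E(\fp)$ the function $-\tQ(y)+\fcc(\fu_j)$ has $d_\fp$-oscillation bounded by an absolute constant, by \eqref{forest1} and \eqref{eq-freq-comp-ball}, so the factor $e(\fcc(\fu_j)(\cdot))$ cancels the leading oscillation $e(-\tQ(y)(\cdot))$ of $T^*_\fp$ up to a bounded error, and combined with the kernel regularity \eqref{eq-Ks-smooth} this gives H\"older control on balls of the appropriate scale, the sum over $\fp$ being handled by the fact that a cube of the partition sees only tiles of controlled scale. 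Concretely: since $\fT(\fu_1)\subset\mathfrak S$ forces $\mathcal J_0(\mathfrak S)\subset\mathcal J_0(\fT(\fu_1))$, each $J\in\mathcal J'$ is contained in a cube $\hat J_1\in\mathcal J(\fT(\fu_1))$; by \Cref{overlap-implies-distance} one verifies that $T^*_{\fT(\fu_2)\cap\mathfrak S}g_2$ agrees on a neighbourhood of $\scI(\fu_1)$, hence on $2B(J)$, with $T^*_{\fT(\fu_2)}g_2$, because every tile of $\fT(\fu_2)$ whose adjoint operator reaches $\scI(\fu_1)$ lies in $\mathfrak S$; and \Cref{global-tree-control-2} then bounds $\|F_j\|_{C^\tau(2B(J))}$ by $2^{Ca^3}\big(\inf_{B^\circ(\hat J_j)}|T^*_{\fT(\fu_j)}g_j|+\inf_{\hat J_j}M|g_j|\big)$. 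As $J\subset\hat J_j$, the infima over $\hat J_j$ are no larger than those over $J$, so this is $\le2^{Ca^3}\big(\inf_{B^\circ(J)}|T^*_{\fT(\fu_j)}g_j|+\inf_J M|g_j|\big)$.

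Multiplying the three bounds and using $a\ge4$ to absorb the lower-order factors into the exponent yields $\|h_J\|_{C^\tau(2B(J))}\le2^{228a^3}(2^{Ca^3})^2\le2^{485a^3}\prod_{j=1,2}\big(\inf_{B^\circ(J)}|T^*_{\fT(\fu_j)}g_j|+\inf_J M|g_j|\big)$, provided $2C+228\le485$, the constant $C$ being the one delivered by \Cref{global-tree-control-2}. The main obstacle is exactly this bookkeeping step: matching the partition $\mathcal J'$ of $\scI(\fu_1)$ on which $h_J$ is defined with the partitions $\mathcal J(\fT(\fu_j))$ for which the global tree control estimate is phrased, tracking carefully that the infima over the smaller sets $B^\circ(J)$ and $J$ dominate, and checking that the de-modulated operator $e(\fcc(\fu_j))T^*_{\fT(\fu_j)\cap\cdots}g_j$ is the object to which the H\"older bound genuinely applies.
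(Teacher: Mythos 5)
Your overall architecture---factor $h_J = \chi_J\cdot F_1\cdot\overline{F_2}$, use submultiplicativity of the normalized $\tau$-H\"older norm, and bound each factor---is exactly the paper's approach, and your treatment of the cutoff $\chi_J$ via \Cref{Lipschitz-partition-unity} is fine. But the way you cash out the H\"older bounds on $F_1, F_2$ has two genuine gaps.

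First, you cite only \Cref{global-tree-control-2} for $\|F_j\|_{C^\tau(2B(J))}$. That lemma provides only the $L^\infty$ bound $\sup_{2B(J)}|T^*_{\fT(\fu_2)\cap\mathfrak S}g|\le\inf_{B^\circ(J)}|T^*_{\fT(\fu_2)}g|+2^{129a^3+4a+4}\inf_J M|g|$; it says nothing about the $\tau$-H\"older seminorm of $F_j$. The seminorm bound is the content of \eqref{TreeHolder} in \Cref{global-tree-control-1}, which controls $|e(\fcc(\fu_i)(y))T^*_{\fC_i}g(y)-e(\fcc(\fu_i)(y'))T^*_{\fC_i}g(y')|$ for $y,y'\in 2B(J)$. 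You describe the underlying mechanism (the de-modulation kills the leading oscillation, kernel regularity handles the rest) correctly in prose, but the statement you actually invoke does not contain it. Without \Cref{global-tree-control-1} your bound on $\|F_j\|_{C^\tau(2B(J))}$ is unsupported. The paper's proof explicitly cites both \Cref{global-tree-control-1} and \Cref{global-tree-control-2}.

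Second, the claim that $T^*_{\fT(\fu_2)\cap\mathfrak S}g_2$ \emph{agrees} on $2B(J)$ with $T^*_{\fT(\fu_2)}g_2$ is false. \Cref{overlap-implies-distance} tells you that $\fp\in\fT(\fu_2)$ with $\scI(\fp)\cap\scI(\fu_1)\ne\emptyset$ belongs to $\mathfrak S$, but the adjoint tile operator $T^*_\fp$ is supported on the strictly larger ball $B(\pc(\fp),5D^{\ps(\fp)})\supsetneq\scI(\fp)$ (cf.\ \eqref{adjoint-tile-support} and \eqref{eq-vol-sp-cube}). Tiles in $\fT(\fu_2)\setminus\mathfrak S$ can therefore have $\scI(\fp)$ disjoint from $\scI(\fu_1)$ and yet contribute on $2B(J)$. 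Controlling that spillover (which is of scale comparable to $s(J)$ by \Cref{limited-scale-impact}) is precisely why \Cref{local-tree-control} and \Cref{global-tree-control-2} exist, and why the latter carries an extra $\inf_J M|g|$ error term rather than an identity. Your invocation of \Cref{global-tree-control-2} in the end lands on the right inequality, but the ``agrees'' rationale you attach to it would, if true, make \Cref{global-tree-control-2} unnecessary. Finally, the detour through cubes $\hat J_j\in\mathcal J(\fT(\fu_j))$ is superfluous: \Cref{global-tree-control-1} and \Cref{global-tree-control-2} are already stated for $J\in\mathcal J'=\{J\in\mathcal J(\mathfrak S):J\subset\scI(\fu_1)\}$, which is exactly the index set on which $h_J$ lives, so there is nothing to match up.
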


 \begin{proof}
 This lemma follows by combining the upper bounds and Hölder bounds from \Cref{Lipschitz-partition-unity} for the first factor of $h_J$ and from \Cref{global-tree-control-1} and \Cref{global-tree-control-2} below for the last two factors.
\end{proof}

    \begin{lemma}[Holder correlation tile]
        \label{Holder-correlation-tile}
        \leanok
        \lean{TileStructure.Forest.holder_correlation_tile}
        \uses{adjoint-tile-support}
        Let $\fu \in \fU$ and $\fp \in \fT(\fu)$. Then for all $y, y' \in X$ and all bounded $g$ with bounded support, we have
        \begin{equation}
        \label{eq-tp-minus-tp}
            |e(\fcc(\fu)(y)) T_{\fp}^* g(y) - e(\fcc(\fu)(y')) T_{\fp}^* g(y')|
        \end{equation}
        \begin{equation*}
            \le \frac{2^{128a^3}}{\mu(B(\pc(\fp), 4D^{\ps(\fp)}))} \left(\frac{\rho(y, y')}{D^{\ps(\fp)}}\right)^{1/a} \int_{E(\fp)} |g(x)| \, \mathrm{d}\mu(x)\,.
        \end{equation*}
    \end{lemma}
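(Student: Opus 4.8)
The plan is to peel off the $v$-independent phase $e(\tQ(v)(v))$ from $g$ and reduce to a pointwise H\"older estimate, uniform in the integration variable, for the kernel of $e(\fcc(\fu)(\cdot))T_{\fp}^*$. Write $s=\ps(\fp)$, and for $v\in E(\fp)$ set
\[
    G_v(w):=\overline{K_s(v,w)}\,e(\fcc(\fu)(w)-\tQ(v)(w)),\qquad \widetilde g(v):=e(\tQ(v)(v))g(v),
\]
so $|\widetilde g|=|g|$ and, by \eqref{definetp*},
\[
    e(\fcc(\fu)(y))T_{\fp}^*g(y)-e(\fcc(\fu)(y'))T_{\fp}^*g(y')=\int_{E(\fp)}\bigl(G_v(y)-G_v(y')\bigr)\widetilde g(v)\,\mathrm{d}\mu(v).
\]
It therefore suffices to prove $|G_v(y)-G_v(y')|\le 2^{128a^3}\mu(B(\pc(\fp),4D^s))^{-1}(\rho(y,y')/D^s)^{1/a}$ for every $v\in E(\fp)$ and integrate. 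Since $v\in\scI(\fp)\subset B(\pc(\fp),4D^s)$ by \eqref{eq-vol-sp-cube}, \eqref{tilecenter}, \eqref{tilescale}, we have $B(\pc(\fp),4D^s)\subset B(v,8D^s)$, so \eqref{doublingx} gives $\mu(B(\pc(\fp),4D^s))\le 2^{3a}\mu(B(v,D^s))$; this lets me replace $\mu(B(v,D^s))^{-1}$ by $2^{3a}\mu(B(\pc(\fp),4D^s))^{-1}$ wherever it appears below.

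If $y=y'$ there is nothing to prove. If $\rho(y,y')\ge \tfrac12 D^s$, then the crude bound $|G_v(y)-G_v(y')|\le |K_s(v,y)|+|K_s(v,y')|\le 2^{102a^3+1}\mu(B(v,D^s))^{-1}$ from \eqref{eq-Ks-size}, combined with $1\le 2(\rho(y,y')/D^s)^{1/a}$, gives the claim comfortably. So assume $0<\rho(y,y')<\tfrac12 D^s$, and split
\[
    G_v(y)-G_v(y')=\bigl(\overline{K_s(v,y)}-\overline{K_s(v,y')}\bigr)e(\fcc(\fu)(y)-\tQ(v)(y))+\overline{K_s(v,y')}\bigl(e(\fcc(\fu)(y)-\tQ(v)(y))-e(\fcc(\fu)(y')-\tQ(v)(y'))\bigr).
\]
The first summand has modulus $|K_s(v,y)-K_s(v,y')|\le 2^{127a^3}\mu(B(v,D^s))^{-1}(\rho(y,y')/D^s)^{1/a}$ by \eqref{eq-Ks-smooth}. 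For the second summand we may assume $K_s(v,y')\ne 0$, whence $|K_s(v,y')|\le 2^{102a^3}\mu(B(v,D^s))^{-1}$ by \eqref{eq-Ks-size} and $\rho(v,y')\le\tfrac12 D^s$ by \eqref{supp-Ks}; using $|e(t)-e(t')|\le |t-t'|$, it remains to bound $\Delta:=|\fcc(\fu)(y)-\tQ(v)(y)-\fcc(\fu)(y')+\tQ(v)(y')|$.

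Bounding $\Delta$ is the heart of the proof and the only step I expect to require genuine care, since one has to squeeze the gain $(\rho(y,y')/D^s)^{1/a}$ out of the family of metrics $d_B$. Write $r:=\rho(y,y')$; since $y,y'\in B(y',2r)$, the oscillation control \eqref{osccontrol} gives $\Delta\le d_{B(y',2r)}(\fcc(\fu),\tQ(v))$. Let $m\ge 0$ be the largest integer with $2^{am}\cdot 2r\le D^s$ (it exists as $2r<D^s$). Iterating \eqref{seconddb} along the concentric balls $B(y',2^{ai}\cdot 2r)$, $0\le i\le m$, and then applying \eqref{monotonedb} (using $2^{am}\cdot 2r\le D^s$), we get $2^m d_{B(y',2r)}(\fcc(\fu),\tQ(v))\le d_{B(y',D^s)}(\fcc(\fu),\tQ(v))$, while maximality of $m$ forces $2^{-m}\le 4(r/D^s)^{1/a}$. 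Finally $d_{B(y',D^s)}(\fcc(\fu),\tQ(v))$ is bounded by a fixed power of $2^a$: by the tree property \eqref{forest1} we have $\fcc(\fu)\in B_{\fp}(\fcc(\fp),4)$, and $\tQ(v)\in\Omega(\fp)\subset B_{\fp}(\fcc(\fp),1)$ by \eqref{defineep} and \eqref{eq-freq-comp-ball}, so $d_{\fp}(\fcc(\fu),\tQ(v))<5$ with $d_{\fp}=d_{B(\pc(\fp),\frac14 D^s)}$ by \eqref{defdp}; moreover $\rho(y',\pc(\fp))\le\rho(y',v)+\rho(v,\pc(\fp))<\tfrac12 D^s+4D^s$, so $B(y',D^s)\subset B(\pc(\fp),8D^s)$, and five applications of \eqref{firstdb} with center $\pc(\fp)$ followed by \eqref{monotonedb} give $d_{B(y',D^s)}(\fcc(\fu),\tQ(v))\le 2^{5a}\cdot 5\le 2^{5a+3}$. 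Hence $\Delta\le 2^{5a+5}(\rho(y,y')/D^s)^{1/a}$, so the second summand is at most $2^{102a^3+8a+5}\mu(B(\pc(\fp),4D^s))^{-1}(\rho(y,y')/D^s)^{1/a}$; adding the first summand, converting the measures, and absorbing the lower-order exponents into $2^{128a^3}$ using $a\ge 4$ yields the pointwise estimate. Integrating over $v\in E(\fp)$ completes the proof.
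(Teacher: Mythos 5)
Your proposal is correct and follows essentially the same route as the paper: split $G_v(y)-G_v(y')$ into a kernel-difference term handled by \eqref{eq-Ks-smooth} and a phase-difference term handled by \eqref{osccontrol} together with iterated applications of the doubling axioms \eqref{seconddb}, \eqref{firstdb}, \eqref{monotonedb} to convert the smallness of $\rho(y,y')$ into a power gain. The minor differences (going pointwise in the integration variable $v$ rather than keeping the integral, centering the oscillation ball at $y'$ with radius $2\rho(y,y')$ rather than at $y$ with radius $1.6\rho(y,y')$, and using the explicit parameter $m$ in place of the paper's $k$) are cosmetic.
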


    \begin{proof}
        \leanok
       We will assume $y, y' \in B(\pc(\fp), 5D^{\ps(\fp)})$. Else at least one summand on the left of \eqref{eq-tp-minus-tp} vanishes, and the proof is easy. Then we have $\rho(y,y') \le 10D^{\ps(\fp)}$.
        We estimate the left hand side of \eqref{eq-tp-minus-tp} by
        \begin{equation*}
            \int_{E(\fp)} |g(x)\overline{K_{\ps(\fp)}(x, y)}|
            |e( \fcc(\fu)(y) - \fcc(\fu)(y')-\tQ(x)(y) + \tQ(x)(y')) - 1|
        \, \mathrm{d}\mu(x) %\label{T*Holder1b}
        \end{equation*}
      \begin{equation}
            + \int_{E(\fp)} |g(x)| |\overline{K_{\ps(\fp)}(x, y)} - \overline{K_{\ps(\fp)}(x, y')} |\, \mathrm{d}\mu(x)\,.\label{T*Holder1}
        \end{equation}
         Let $k \in \mathbb{Z}$ be such that $2^{ak}\rho(y,y') \le 10D^{\ps(\fp)}$ but $2^{a(k+1)} \rho(y,y') > 10D^{\ps(\fp)}$.
        In particular, $k \ge 0$.
        By the oscillation estimate \eqref{osccontrol},
         followed by the doubling properties
         \eqref{seconddb} and \eqref{firstdb},
        we have
        \begin{equation*}
            | \fcc(\fu)(y) - \fcc(\fu)(y')-\tQ(x)(y) + \tQ(x)(y') |
            %\label{eq-lem-tile-Holder-comp}
            \le d_{B(y, 1.6\rho(y,y'))}(\tQ(x), \fcc(\fu))
        \end{equation*}
        $$
            \le 2^{6a - k} d_{\fp}(\tQ(x), \fcc(\fu))
            \le 5 \cdot 2^{6a - k} \le 10 \cdot 2^{6a} \left(\frac{\rho(y,y')}{10 D^{\ps(\fp)}}\right)^{1/a}\,.
        $$
        Together with the kernel bound \eqref{eq-Ks-size} and the doubling property \eqref{doublingx} this gives the needed estimate for the first summand in \eqref{T*Holder1}. The second summand  is estimated similarly using the kernel regularity \eqref{eq-Ks-smooth} and \eqref{doublingx}.
    \end{proof}

    \begin{lemma}[limited scale impact]
        \label{limited-scale-impact}
        \leanok
        \lean{TileStructure.Forest.limited_scale_impact}
        \uses{overlap-implies-distance}
        Let $\fp \in \fT(\fu_2) \setminus \mathfrak{S}$, $J \in \mathcal{J}'$ and suppose that
        \begin{equation}
         \label{eq-b-bo}
         B(\scI(\fp)) \cap B^\circ(J) \ne \emptyset\,.
        \end{equation}
        Then
        \begin{equation}
            \label{eq-limited-scales}
         s(J) \le \ps(\fp) \le s(J) +3\,.
         \end{equation}
    \end{lemma}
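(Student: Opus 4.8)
The plan is to unwind the two pieces of notation, reduce to the two scale inequalities, and prove the lower bound by an elementary geometric contradiction and the upper bound from the maximality defining $\mathcal{J}(\mathfrak S)$ together with \Cref{monotone-cube-metrics}. First I would record the setup. By \eqref{tilecenter} and \eqref{tilescale}, $B(\scI(\fp)) = B(\pc(\fp), 8D^{\ps(\fp)})$, and by \eqref{eq-vol-sp-cube} one has $B^\circ(J) = B(c(J), \tfrac18 D^{s(J)}) \subset B(c(J), \tfrac14 D^{s(J)}) \subset J$. Since $\fp \in \fT(\fu_2) \subset \fT(\fu_1) \cup \fT(\fu_2)$ and $\fp \notin \mathfrak S$, the contrapositive of \Cref{overlap-implies-distance} gives $\scI(\fp) \cap \scI(\fu_1) = \emptyset$, and because $J \subset \scI(\fu_1)$ by definition of $\mathcal{J}'$ this yields $\scI(\fp) \cap J = \emptyset$. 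Finally \eqref{eq-b-bo} supplies a point $z$ with $\rho(z,\pc(\fp)) < 8D^{\ps(\fp)}$ and $\rho(z,c(J)) < \tfrac18 D^{s(J)}$, so $\rho(\pc(\fp), c(J)) < 8D^{\ps(\fp)} + \tfrac18 D^{s(J)}$.

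For the lower bound $s(J) \le \ps(\fp)$ I would argue by contradiction: if $\ps(\fp) \le s(J)-1$, then since $D = 2^{100a^2}$ is enormous we have $8D^{\ps(\fp)} \le 8D^{s(J)-1} < \tfrac18 D^{s(J)}$, hence $\rho(\pc(\fp), c(J)) < \tfrac14 D^{s(J)}$, so $\pc(\fp) \in B(c(J), \tfrac14 D^{s(J)}) \subset J$ by \eqref{eq-vol-sp-cube}. But $\pc(\fp) = c(\scI(\fp)) \in \scI(\fp)$, which is disjoint from $J$, a contradiction. This establishes $s(J)\le\ps(\fp)$.

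For the upper bound $\ps(\fp) \le s(J)+3$ I would again argue by contradiction, assuming $\ps(\fp) \ge s(J)+4$. First one checks $J \subsetneq \scI(\fu_1)$: the tree $\fT(\fu_1)$ is nonempty, say $\fq \in \fT(\fu_1)$, and by \eqref{forest1} $\scI(\fq) \subsetneq \scI(\fu_1)$, so $\ps(\fu_1) > -S$ and $\scI(\fq) \subset \scI(\fu_1) \subset B(c(\scI(\fu_1)), 100D^{\ps(\fu_1)+1})$, which shows $\scI(\fu_1) \notin \mathcal{J}_0(\mathfrak S)$ and hence $J \subsetneq \scI(\fu_1)$, i.e. $s(J)<\ps(\fu_1)$. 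Next, using $s(J)+4 \le \ps(\fp) \le S$ and the grid structure, pick a suitable scale $k$ with $s(J) < k \le \ps(\fp)$ (calibrated so that the blow-up factor $100D^{k+1}$ is dominated by $D^{\ps(\fp)}$) and let $\hat J \supsetneq J$ be the unique grid cube with $s(\hat J)=k$; by maximality of $J$ in $\mathcal{J}_0(\mathfrak S)$ and $s(\hat J)>-S$, there is a tile $\fp_0 \in \mathfrak S$ with $\scI(\fp_0) \subset B(c(\hat J), 100D^{k+1})$. A bookkeeping of centres (from $c(J) \in \hat J \subset B(c(\hat J), 4D^{k})$ and the displayed bound on $\rho(\pc(\fp), c(J))$) places $\scI(\fp_0)$ in a ball of radius $O(D^{\ps(\fp)})$ about $\pc(\fp)$, and then \eqref{dyadicproperty} together with \eqref{eq-vol-sp-cube} forces $\scI(\fp_0) \subset \scI(\fp)$. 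With this in hand, \Cref{monotone-cube-metrics} gives $d_{\fp_0}(\fcc(\fu_1),\fcc(\fu_2)) \le d_{\fp}(\fcc(\fu_1),\fcc(\fu_2))$; but $\fp_0 \in \mathfrak S$ makes the left side $\ge 2^{Zn/2}$ while $\fp \notin \mathfrak S$ (with $\fp \in \fT(\fu_1)\cup\fT(\fu_2)$) makes the right side $< 2^{Zn/2}$, a contradiction. Hence $\ps(\fp) \le s(J)+3$.

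The hard part is precisely the geometric bookkeeping in the upper bound: one must choose the ancestor $\hat J$ at the right scale and track the triangle inequalities so that the witness tile $\fp_0$ — which the definition of $\mathcal{J}_0(\mathfrak S)$ only locates inside a large dilate of $\hat J$ — is genuinely forced to lie inside the (much larger) cube $\scI(\fp)$, so that the cube-metric monotonicity of \Cref{monotone-cube-metrics} can transfer the $2^{Zn/2}$-separation from $\fp_0$ back to $\fp$; this is where $\ps(\fp)\ge s(J)+4$ and $D = 2^{100a^2}\gg 1$ enter. Should that containment prove too delicate to establish directly, the fallback is to reroute the contradiction through the forest separation \eqref{forest5} or the tree localization \eqref{forest6}.
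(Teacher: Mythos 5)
Your lower bound argument is correct and is essentially the paper's: from $\fp \notin \mathfrak{S}$ and \Cref{overlap-implies-distance} you get $\scI(\fp) \cap \scI(\fu_1) = \emptyset$, so $\scI(\fp) \cap J = \emptyset$, and if $\ps(\fp) < s(J)$ then the tiny ball $B(\pc(\fp), 8D^{\ps(\fp)})$ touching $B^\circ(J)$ would push $\pc(\fp)$ inside $B(c(J), \tfrac14 D^{s(J)}) \subset J$, a contradiction. The paper phrases this as a lower bound on $\rho(c(J), \pc(\fp))$ rather than a containment of $\pc(\fp)$, but the content is identical.

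The upper bound, however, has a genuine gap, and it is exactly the one you flagged yourself. Your plan hinges on establishing $\scI(\fp_0) \subset \scI(\fp)$ for the witness tile $\fp_0 \in \mathfrak{S}$, so that \Cref{monotone-cube-metrics} can transfer the separation with no loss. But this containment is in general \emph{false}. You have already noted that $\scI(\fp) \cap \scI(\fu_1) = \emptyset$; on the other hand, $\mathfrak{S} \supset \fT(\fu_1)$ by \Cref{overlap-implies-distance}, and any $\fp_0 \in \fT(\fu_1)$ has $\scI(\fp_0) \subset \scI(\fu_1)$ by \eqref{forest1}, hence $\scI(\fp_0)$ is \emph{disjoint} from $\scI(\fp)$. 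There is nothing in the construction that prevents the witness produced by $J' \notin \mathcal{J}_0(\mathfrak{S})$ from lying in $\fT(\fu_1)$, so the cube inclusion cannot be forced no matter how the ancestor scale $k$ is calibrated. (A more quantitative version of the obstruction: $\rho(c(J'), \pc(\fp))$ is of order $8D^{\ps(\fp)}$, which already exceeds the inner radius $\tfrac14 D^{\ps(\fp)}$ of $\scI(\fp)$, so the dilate $B(c(J'), 100D^{s(J)+2})$ housing $\scI(\fp_0)$ can never be pushed inside $\scI(\fp)$.)

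The paper avoids cube inclusion altogether. It observes that $\scI(\fp') \subset B(c(J'), 100D^{s(J)+2})$ gives, via \eqref{eq-vol-sp-cube} and the ball-metric monotonicity \eqref{monotonedb},
\[
2^{Zn/2} \le d_{\fp'}(\fcc(\fu_1), \fcc(\fu_2)) \le d_{B(c(J'), 100D^{s(J)+2})}(\fcc(\fu_1), \fcc(\fu_2))\,,
\]
and that the triangle inequality (using $\ps(\fp) > s(J)+3$ and $D$ large) gives $B(c(J'), 100D^{s(J)+3}) \subset B(\pc(\fp), 10D^{\ps(\fp)})$. Chaining \eqref{seconddb} (gaining a factor $2^{-100a}$ by blowing the radius up one power of $D$) with \eqref{monotonedb} and then \eqref{firstdb} (losing only $2^{6a}$ to shrink back to $d_\fp$) transfers the separation at a net cost $2^{-94a}$, which is more than compensated by the strict inequality $d_\fp(\fcc(\fu_1), \fcc(\fu_2)) < 2^{Zn/2}$ from $\fp \notin \mathfrak{S}$. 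The moral is that the correct tool here is the \emph{ball}-metric doubling and monotonicity, accepting a bounded multiplicative loss, not the lossless \emph{cube}-metric monotonicity of \Cref{monotone-cube-metrics}, which would require an inclusion of dyadic cubes that simply does not hold. Your fallback suggestion of rerouting through \eqref{forest5} or \eqref{forest6} does not address this either; the fix is the doubling chain, not a different separation condition.
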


    \begin{proof}
        \leanok
        For the first inequality in \eqref{eq-limited-scales}, assume to get a contradiction that $\ps(\fp) <  s(J)$
        %, then in particular $\ps(\fp) \le \ps(\fu_1)$
        . Since $\fp \notin \mathfrak{S}$, we have by \Cref{overlap-implies-distance} that $\scI(\fp) \cap \scI(\fu_1) = \emptyset$.
        Since $B(c(J), \frac{1}{4} D^{s(J)}) \subset \scI(J) \subset \scI(\fu_1)$, this implies
        $$
            \rho(c(J), \pc(\fp)) \ge \frac{1}{4}D^{s(J)}\,.
        $$
        On the other hand by our assumption
        $$
            \rho(c(J), \pc(\fp)) \le \frac{1}{8} D^{s(J)} + 8 D^{\ps(\fp)}\,.
        $$
        Thus $D^{\ps(\fp)} \ge 64^{-1} D^{s(J)}$, contradicting the definition \eqref{defineD} of $D$ and $a \ge 4$.

        For the second inequality in \eqref{eq-limited-scales}, assume to get a contradiction that $\ps(\fp) > s(J) + 3$. Let $J' \in \mathcal{D}$ with $J \subset J'$ and $s(J') = s(J) + 1$, and $\fp' \in \mathfrak{S}$ such that $\scI(\fp') \subset B(c(J'), 100 D^{s(J) + 2})$. By \eqref{eq-b-bo} and the triangle inequality,
        \begin{equation}
            \label{eq-ball-incl-617}
            B(c(J'), 100 D^{s(J) + 3}) \subset B(\pc(\fp), 10 D^{\ps(\fp)})\,.
        \end{equation}
        Using the definition of $\mathfrak{S}$, we have
        $$
            2^{Zn/2} \le d_{\fp'}(\fcc(\fu_1), \fcc(\fu_2)) \le d_{B(c(J'), 100 D^{s(J) + 2})}(\fcc(\fu_1), \fcc(\fu_2))\,.
        $$
        By the doubling property \eqref{seconddb} and \eqref{eq-ball-incl-617} and the definition of $\mathfrak{S}$, this is
        $$
            \le 2^{-100a} d_{B(\pc(\fp), 10 D^{\ps(\fp)})}(\fcc(\fu_1), \fcc(\fu_2))
            \le 2^{-94a} d_{\fp}(\fcc(\fu_1), \fcc(\fu_2)) \le 2^{Zn/2-94a}.
        $$
        This is a contradiction, hence the second inequality in \eqref{eq-limited-scales} follows.
    \end{proof}

    \begin{lemma}[local tree control]
        \label{local-tree-control}
        \leanok
        \lean{TileStructure.Forest.local_tree_control}
        \uses{limited-scale-impact}
        For all $J \in \mathcal{J}'$ and all bounded $g$ with bounded support,
        \begin{equation}\label{eq-local-tree-control}
            \sup_{B^\circ{}(J)} |T_{\mathfrak{T}(\mathfrak{u}_2)\setminus\mathfrak{S}}^* g| \le 2^{104a^3} \inf_J M|g|\, .
        \end{equation}
    \end{lemma}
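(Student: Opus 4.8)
The plan is to expand $T^*_{\fT(\fu_2)\setminus\mathfrak{S}}g=\sum_{\fp\in\fT(\fu_2)\setminus\mathfrak{S}}T^*_\fp g$, discard all tiles whose adjoint operator does not see $B^\circ(J)$, observe that only a bounded window of scales survives near $J$, and bound each surviving scale by a single Hardy--Littlewood average. Concretely I would fix a point $x\in B^\circ(J)$ and a point $w\in J$ and prove $|T^*_{\fT(\fu_2)\setminus\mathfrak{S}}g(x)|\le 2^{104a^3}M|g|(w)$; taking the supremum over $x\in B^\circ(J)$ and the infimum over $w\in J$ then gives \eqref{eq-local-tree-control}.

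First I would use the support property \eqref{adjoint-tile-support}: if $T^*_\fp g(x)\neq 0$ then $x\in B(\pc(\fp),5D^{\ps(\fp)})\subset B(\scI(\fp))$, and since also $x\in B^\circ(J)$ we get $B(\scI(\fp))\cap B^\circ(J)\neq\emptyset$, so \Cref{limited-scale-impact} forces $s(J)\le\ps(\fp)\le s(J)+3$. For such a tile, writing $s=\ps(\fp)$, the kernel support \eqref{supp-Ks} and size bound \eqref{eq-Ks-size}, together with the doubling property \eqref{doublingx} to replace $\mu(B(y,D^s))$ by $\mu(B(x,D^s))$ whenever $\rho(x,y)\le\tfrac12 D^s$, give $|K_s(y,x)|\le 2^{102a^3+a}\mu(B(x,D^s))^{-1}\mathbf{1}_{B(x,\frac12 D^s)}(y)$, hence $|T^*_\fp g(x)|\le 2^{102a^3+a}\mu(B(x,D^s))^{-1}\int_{E(\fp)\cap B(x,\frac12 D^s)}|g|$. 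The key combinatorial input is that for a fixed scale $s$ the sets $E(\fp)$ with $\ps(\fp)=s$ are pairwise disjoint: distinct grid cubes of equal scale are disjoint by \eqref{dyadicproperty}, and for a fixed cube $I$ the sets $\fc(\fp)$, $\fp\in\fP(I)$, are disjoint by \eqref{eq-dis-freq-cover}. Summing over all $\fp$ with $\ps(\fp)=s$ is therefore at most $2^{102a^3+a}\mu(B(x,D^s))^{-1}\int_{B(x,\frac12 D^s)}|g|$, and summing the at most four admissible scales contributes only a factor $4$.

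It remains to pass from $x$-centered averages to $w$. Since $x\in B^\circ(J)$ and $w\in J$ satisfy $\rho(x,w)<5D^{s(J)}\le 5D^s$ for every admissible $s$, we have $B(x,\tfrac12 D^s)\subset B(w,6D^s)\subset B(x,16D^s)$, so doubling gives $\mu(B(x,D^s))^{-1}\int_{B(x,\frac12 D^s)}|g|\le 2^{4a}M|g|(w)$; collecting constants yields $|T^*_{\fT(\fu_2)\setminus\mathfrak{S}}g(x)|\le 2^{102a^3+5a+2}M|g|(w)\le 2^{104a^3}M|g|(w)$ since $a\ge 4$. The one step that genuinely uses the hypothesis is the scale window from \Cref{limited-scale-impact}: it is precisely the separation $\fp\notin\mathfrak{S}$ that keeps tiles of wildly different scales away from $J$, so that the scale sum stays finite with a clean constant; everything else is routine bookkeeping with the doubling property.
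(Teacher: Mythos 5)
Your proof is correct and follows essentially the same route as the paper's: restrict to the scale window $[s(J), s(J)+3]$ via \Cref{limited-scale-impact} and the support property, bound each scale's contribution by a maximal average using the kernel size bound, disjointness of the $E(\fp)$ at fixed scale, and doubling. The only cosmetic difference is that the paper centers its averages at $c(J)$ with radius $16D^s$ and passes directly to $\inf_{x'\in J}M|g|$, whereas you recenter at a generic $w\in J$; both yield a constant comfortably below $2^{104a^3}$.
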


    \begin{proof}
        \leanok
        Since $T_{\fp}^*$ is supported on $ B(\pc(\fp), 5D^{\ps(\fp)})$, the triangle inequality and \Cref{limited-scale-impact} bound the left hand side of \eqref{eq-local-tree-control} by
        \begin{equation}
            \label{eq-sep-tree-aux-3}
            \sup_{B^\circ{}(J)} \sum_{\substack{\fp \in \fT(\fu_2) \setminus \mathfrak{S}\\ B(\scI(\fp)) \cap B^\circ(J) \ne \emptyset}} |T_{\fp}^*g| \le \sum_{s = s(J)}^{s(J) + 3} \sum_{\substack{\fp \in \fP, \ps(\fp) = s\\ B(\scI(\fp)) \cap B^\circ(J) \ne \emptyset}} \sup_{B^\circ{}(J)} |T_{\fp}^* g|\,.
        \end{equation}
        If $x \in E(\fp)$ and $B(\scI(\fp)) \cap B^\circ(J) \ne \emptyset$, then
        $$
            B(c(J), 16D^{\ps(\fp)}) \subset B(x, 32 D^{\ps(\fp)})\,.
        $$
        Together with the doubling property \eqref{doublingx} and the kernel bounds \eqref{eq-Ks-size} we bound \eqref{eq-sep-tree-aux-3} by
        $$
            2^{103a^3}\sum_{s = s(J)}^{s(J) + 3} \sum_{\substack{\fp \in \fP, \ps(\fp) = s\\B(\scI(\fp)) \cap B^\circ(J) \ne \emptyset}} \frac{1}{\mu(B(c(J), 16 D^s)} \int_{E(\fp)} |g| \, \mathrm{d}\mu\,.
        $$
        Since the $E(\fp)$ in the inner sum are pairwise disjoint and contained in $B(c(J), 16 D^{\ps(\fp)})$, the last display is bounded by
        $$
            2^{103a^3}\sum_{s = s(J)}^{s(J) + 3} \frac{1}{\mu(B(c(J), 16 D^s))} \int_{B(c(J), 16 D^s)} |g| \, \mathrm{d}\mu
            \le \inf_{x' \in J} 2^{103a^3 +2} M|g|\,.
        $$
    \end{proof}

    \begin{lemma}[global tree control 1]
        \label{global-tree-control-1}
        \leanok
        \lean{TileStructure.Forest.global_tree_control1_edist_left, TileStructure.Forest.global_tree_control1_edist_right, TileStructure.Forest.global_tree_control1_supbound}
        Let $\fC_1 = \fT(\fu_1)$ and $\fC_2 = \fT(\fu_2) \cap \mathfrak{S}$. Then for $i = 1,2$ and  $J \in \mathcal{J}'$ and  bounded $g$ with bounded support, we have
        \begin{align}
            \label{TreeUB}
            \sup_{2B(J)} |T_{\fC_i}^*g| \leq \inf_{B^\circ{}(J)} |T^*_{\fC_i} g| + 2^{128a^3+4a+3} \inf_{J}  M|g|
        \end{align}
        and for all $y,y' \in 2B(J)$
        \begin{equation}
            \label{TreeHolder}
             |e(\fcc(\fu_i)(y)) T_{\fC_i}^* g(y) - e(\fcc(\fu_i)(y')) T_{\fC_i}^* g(y')|\end{equation}
             \begin{equation*}\le 2^{128a^3+4a+1} \Big(\frac{\rho(y,y')}{D^{s(J)}}\Big)^{\frac{1}{a}} \inf_J M|g|\,.
        \end{equation*}
    \end{lemma}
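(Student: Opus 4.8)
The plan is to prove the Hölder bound \eqref{TreeHolder} first, by expanding $T_{\fC_i}^*=\sum_{\fp\in\fC_i}T_\fp^*$ and summing the single-tile estimate of \Cref{Holder-correlation-tile}, and then to deduce the supremum bound \eqref{TreeUB} from \eqref{TreeHolder} by the triangle inequality. First I would record that $\fC_i\subset\mathfrak{S}$ in both cases: $\fC_1=\fT(\fu_1)\subset\mathfrak{S}$ by \Cref{overlap-implies-distance}, and $\fC_2=\fT(\fu_2)\cap\mathfrak{S}\subset\mathfrak{S}$ trivially. By the support property \eqref{adjoint-tile-support}, $T_\fp^*g$ vanishes outside $B(\pc(\fp),5D^{\ps(\fp)})$, so for $y,y'\in 2B(J)$ only the tiles in $\fC_i(J):=\{\fp\in\fC_i:B(\pc(\fp),5D^{\ps(\fp)})\cap 2B(J)\ne\emptyset\}$ contribute to $e(\fcc(\fu_i)(\cdot))T_{\fC_i}^*g$ at $y$ or $y'$, and I would write the left side of \eqref{TreeHolder} as a sum over $\fp\in\fC_i(J)$ of the tile differences $e(\fcc(\fu_i)(y))T_\fp^*g(y)-e(\fcc(\fu_i)(y'))T_\fp^*g(y')$, each controlled by \Cref{Holder-correlation-tile}.

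The geometric heart of the argument is the claim that every $\fp\in\fC_i(J)$ satisfies $\ps(\fp)\ge s(J)$. If instead $\ps(\fp)\le s(J)$, then $B(\pc(\fp),5D^{\ps(\fp)})\cap B(c(J),16D^{s(J)})\ne\emptyset$ gives $\rho(\pc(\fp),c(J))<21D^{s(J)}$, whence $\scI(\fp)\subset B(\pc(\fp),4D^{\ps(\fp)})\subset B(c(J),25D^{s(J)})\subset B(c(J),100D^{s(J)+1})$ using \eqref{eq-vol-sp-cube} and $D\ge 1$; since $J\in\mathcal{J}(\mathfrak{S})$, the defining property of $\mathcal{J}_0(\mathfrak{S})$ then forces $s(J)=-S$, in which case $\ps(\fp)\ge -S=s(J)$ regardless. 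This is the one place where the hypotheses $\fC_i\subset\mathfrak{S}$ and $J\in\mathcal{J}(\mathfrak{S})$ enter, and it is the step I expect to require the most care; the rest is bookkeeping.

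With the scale restriction in hand, I would group the tile sum by scale $s=\ps(\fp)\ge s(J)$. For fixed $s$ the sets $E(\fp)$ with $\ps(\fp)=s$ are pairwise disjoint (by \eqref{eq-dis-freq-cover} when $\scI(\fp)$ coincides, and by \eqref{dyadicproperty} forcing distinct cubes of equal scale to be disjoint otherwise), and for $\fp\in\fC_i(J)$ one has $E(\fp)\subset\scI(\fp)\subset B(c(J),25D^s)$, which in turn sits inside $B(\pc(\fp),64D^s)=B(\pc(\fp),2^4\cdot 4D^s)$, so four applications of the doubling property \eqref{doublingx} give $\mu(B(\pc(\fp),4D^{s}))^{-1}\le 2^{4a}\mu(B(c(J),25D^s))^{-1}$. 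Since $c(J)\in J\subset B(c(J),25D^s)$, summing the bounds of \Cref{Holder-correlation-tile} over $\fp\in\fC_i(J)$ with $\ps(\fp)=s$ yields at most $2^{128a^3}\cdot 2^{4a}(\rho(y,y')/D^{s})^{1/a}\inf_J M|g|$. Writing $(\rho(y,y')/D^{s})^{1/a}=(\rho(y,y')/D^{s(J)})^{1/a}D^{(s(J)-s)/a}$ and summing the geometric series $\sum_{s\ge s(J)}D^{(s(J)-s)/a}=(1-2^{-100a})^{-1}<2$, using $D=2^{100a^2}$ from \eqref{defineD}, produces \eqref{TreeHolder} with constant $2^{128a^3}\cdot 2^{4a}\cdot 2=2^{128a^3+4a+1}$.

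Finally, for \eqref{TreeUB}, given $y\in 2B(J)$ and $y_0\in B^\circ(J)$, the fact that $|e(\cdot)|=1$ makes $\big||T_{\fC_i}^*g(y)|-|T_{\fC_i}^*g(y_0)|\big|$ bounded by the left side of \eqref{TreeHolder} with $y'=y_0$; since $\rho(y,y_0)<17D^{s(J)}$ and $17^{1/a}\le 17^{1/4}<2^2$ for $a\ge 4$, this gives $|T_{\fC_i}^*g(y)|\le |T_{\fC_i}^*g(y_0)|+2^{128a^3+4a+3}\inf_J M|g|$. Taking the infimum over $y_0\in B^\circ(J)$ and then the supremum over $y\in 2B(J)$ completes the proof.
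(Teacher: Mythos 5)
Your proposal is correct and follows essentially the same route as the paper: restrict the tile sum via the support property \eqref{adjoint-tile-support}, apply \Cref{Holder-correlation-tile} tile by tile, rule out scales below $s(J)$ using $\fC_i\subset\mathfrak{S}$ together with the defining property of $\mathcal{J}(\mathfrak{S})$, then group by scale, use disjointness of the $E(\fp)$ and the doubling property \eqref{doublingx}, and sum the geometric series in $D^{-1/a}$. The paper contents itself with "\eqref{TreeUB} follows from \eqref{TreeHolder}" where you spell out the triangle-inequality deduction with the extra factor $17^{1/a}\le 2^2$; the only other differences are cosmetic constants in intermediate balls ($25D^s$ versus the paper's $32D^s$), which do not affect the stated final constants.
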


    \begin{proof}
        \leanok
        Note that \eqref{TreeUB} follows from \eqref{TreeHolder}. By the triangle inequality, \Cref{adjoint-tile-support} and \Cref{Holder-correlation-tile}, we have for all $y, y' \in 2B(J)$
        \begin{equation*}
            |e(\fcc(\fu_i)(y)) T_{\fC_i}^* g(y) - e(\fcc(\fu_i)(y')) T_{\fC_i}^* g(y')|
        \end{equation*}
        \begin{equation*}
            \leq \sum_{\substack{\fp \in \fC_i\\ B(\scI(\fp)) \cap 2B(J) \neq \emptyset}} |e(\fcc(\fu_i)(y)) T_{\fp}^* g(y) - e(\fcc(\fu_i)(y')) T_{\fp}^* g(y')|
        \end{equation*}
        \begin{equation*}
            \le 2^{128a^3}\rho(y,y')^{1/a} \sum_{\substack{\fp \in \fC_i\\ B(\scI(\fp)) \cap 2B(J) \neq \emptyset}} \frac{D^{- \ps(\fp)/a}}{\mu(B(\pc(\fp), 4D^{\ps(\fp)}))} \int_{E(\fp)} |g| \, \mathrm{d}\mu\,.
        \end{equation*}
        For tiles $\fp \in \fC_i$ with $B(\scI(\fp)) \cap 2B(J) \neq \emptyset$ and $\ps(\fp) < s(J)$, we have $\scI(\fp) \subset B(c(J), 100 D^{s(J) + 1})$. Since $\fp \in \fC_i \subset \mathfrak{S}$, it follows from the definition of $\mathcal{J}'$ that $s(J) = -S$, which contradicts $\ps(\fp) < s(J)$.
          Further, for each $s \ge s(J)$, the sets $E(\fp)$ for $\fp \in \fP$ with $\ps(\fp) = s$ are pairwise disjoint and contained in $B(c(J), 32D^{s})$. With \eqref{doublingx}, we then estimate the previous display by
        $$
            \le 2^{128a^3}\rho(y,y')^{1/a} \sum_{S \ge s \ge s(J)} D^{-s/a} \frac{2^{4a}}{\mu(B(c(J), 32D^{s}))} \int_{B(c(J), 32D^{s})} |g| \, \mathrm{d}\mu
        $$
        \[
            \le 2^{128a^3+4a + 1} \Big(\frac{\rho(y,y')}{D^{s(J)}}\Big)^{1/a} \inf_J M|g|. \qedhere
        \]
    \end{proof}

    Combining \Cref{global-tree-control-1} and \Cref{local-tree-control} also proves the following lemma.
    \begin{lemma}[global tree control 2]
        \label{global-tree-control-2}
        \leanok
        \lean{TileStructure.Forest.global_tree_control2}
        \uses{global-tree-control-1, local-tree-control}
        We have for all $J \in \mathcal{J}'$ and all bounded $g$ with bounded support
        $$
            \sup_{2B(J)} |T^*_{\fT(\fu_2) \cap \mathfrak{S}} g| \le \inf_{B^\circ{}(J)} |T^*_{\fT(\fu_2)} g| + 2^{129a^3+4a+4} \inf_{J} M|g|\,.
        $$
    \end{lemma}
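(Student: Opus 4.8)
The statement is obtained purely by combining \Cref{global-tree-control-1} with \Cref{local-tree-control} via the triangle inequality, using the disjoint decomposition
$\fT(\fu_2) = \bigl(\fT(\fu_2)\cap\mathfrak{S}\bigr) \cup \bigl(\fT(\fu_2)\setminus\mathfrak{S}\bigr)$,
which, since $T_\fC^* = \sum_{\fp\in\fC} T_\fp^*$, gives the operator identity $T^*_{\fT(\fu_2)} = T^*_{\fT(\fu_2)\cap\mathfrak{S}} + T^*_{\fT(\fu_2)\setminus\mathfrak{S}}$.

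First I would apply \Cref{global-tree-control-1} with $\fC_2 = \fT(\fu_2)\cap\mathfrak{S}$ and $i = 2$ to get
$\sup_{2B(J)}|T^*_{\fT(\fu_2)\cap\mathfrak{S}}g| \le \inf_{B^\circ(J)}|T^*_{\fT(\fu_2)\cap\mathfrak{S}}g| + 2^{128a^3+4a+3}\inf_J M|g|$.
Next, for any point $x_0\in B^\circ(J)$ the operator identity and the triangle inequality give
$|T^*_{\fT(\fu_2)\cap\mathfrak{S}}g(x_0)| \le |T^*_{\fT(\fu_2)}g(x_0)| + |T^*_{\fT(\fu_2)\setminus\mathfrak{S}}g(x_0)| \le |T^*_{\fT(\fu_2)}g(x_0)| + \sup_{B^\circ(J)}|T^*_{\fT(\fu_2)\setminus\mathfrak{S}}g|$; taking the infimum over $x_0\in B^\circ(J)$ yields
$\inf_{B^\circ(J)}|T^*_{\fT(\fu_2)\cap\mathfrak{S}}g| \le \inf_{B^\circ(J)}|T^*_{\fT(\fu_2)}g| + \sup_{B^\circ(J)}|T^*_{\fT(\fu_2)\setminus\mathfrak{S}}g|$. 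The last supremum is bounded by $2^{104a^3}\inf_J M|g|$ by \Cref{local-tree-control}. Chaining the three estimates gives
$\sup_{2B(J)}|T^*_{\fT(\fu_2)\cap\mathfrak{S}}g| \le \inf_{B^\circ(J)}|T^*_{\fT(\fu_2)}g| + \bigl(2^{104a^3}+2^{128a^3+4a+3}\bigr)\inf_J M|g|$, and since $a\ge 4$ we have $2^{104a^3}+2^{128a^3+4a+3}\le 2^{128a^3+4a+4}\le 2^{129a^3+4a+4}$, which is the claimed bound.

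There is essentially no obstacle: all the analytic content (the H\"older and kernel estimates, and the limited-scale-impact counting) is already packaged into \Cref{global-tree-control-1} and \Cref{local-tree-control}. The only point requiring care is the bookkeeping of which of the balls $2B(J)$, $B^\circ(J)$, $J$ each supremum or infimum is taken over, together with the elementary observation that an infimum of a sum is at most the infimum of one summand plus the supremum of the other, so that \Cref{local-tree-control} can be applied on $B^\circ(J)$ as stated.
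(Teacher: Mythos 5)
Your proof is correct and matches the paper's intended argument: the paper simply states that the lemma follows by combining \Cref{global-tree-control-1} and \Cref{local-tree-control}, and your chain of triangle-inequality applications with the constant check $2^{104a^3}+2^{128a^3+4a+3}\le 2^{128a^3+4a+4}\le 2^{129a^3+4a+4}$ is exactly how that combination is meant to be carried out.
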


    % \begin{proof}
    %     By \Cref{global-tree-control-1}
    %     $$
    %         \sup_{B(J)} |T^*_{\fT(\fu_2) \cap \mathfrak{S}} g| \le \inf_{B^\circ{}(J)} |T_{\fT(\fu_2) \cap \mathfrak{S}}^* g| + 2^{154a^3} \inf_{J} M |g|
    %     $$
    %     $$
    %         \le \inf_{B^\circ{}(J)} |T_{\fT(\fu_2)}^* g| + \sup_{B^\circ{}(J)} |T_{\fT(\fu_2) \setminus \mathfrak{S}}^* g| + 2^{154a^3} \inf_{J} M |g|\,,
    %     $$
    %     and by \Cref{local-tree-control}
    %     $$
    %         \le \inf_{B^\circ{}(J)} |T_{\fT(\fu_2)}^* g| + (2^{104a^3} + 2^{154a^3}) \inf_{J} M |g|\,.
    %     $$
    %     This completes the proof.
    % \end{proof}

\subsubsection{The van der Corput estimate}
\label{subsubsec-van-der-corput}
    \begin{lemma}[lower oscillation bound]
        \label{lower-oscillation-bound}
        \leanok
        \lean{TileStructure.Forest.lower_oscillation_bound}
        \uses{overlap-implies-distance}
        For all $J \in \mathcal{J}'$, we have that
        $$
            d_{B(J)}(\fcc(\fu_1), \fcc(\fu_2)) \ge 2^{-201a^3} 2^{Zn/2}\,.
        $$
    \end{lemma}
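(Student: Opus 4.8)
The plan is to use the maximality of $J$ in $\mathcal J_0(\mathfrak S)$ to produce a tile $\fp^*\in\mathfrak S$ whose cube lies just above $J$, and then to transport the separation bound $d_{\fp^*}(\fcc(\fu_1),\fcc(\fu_2))\ge 2^{Zn/2}$, which is built into the definition of $\mathfrak S$, down to the ball $B(J)$, losing only a factor $2^{O(a^3)}$ via the monotonicity \eqref{monotonedb} and iterated doubling \eqref{firstdb}.

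First I would set up the combinatorics. Since the tree $\fT(\fu_1)$ is nonempty and $\fT(\fu_1)\subset\mathfrak S$ by \Cref{overlap-implies-distance}, the collection $\mathfrak S$ is nonempty; as $\scI(\fp)\subset I_0\subset B(c(I_0),100D^{S+1})$ for every $\fp\in\mathfrak S$, the top cube $I_0$ fails the defining condition of $\mathcal J_0(\mathfrak S)$, so $I_0\notin\mathcal J_0(\mathfrak S)$. Since $I_0$ is the unique cube of scale $S$, it follows from the grid axioms that every $J\in\mathcal J(\mathfrak S)$ has $s(J)<S$ and therefore has a parent $J'\in\mathcal D$ with $s(J')=s(J)+1$ and $J\subset J'$. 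By maximality of $J$ in $\mathcal J_0(\mathfrak S)$, the cube $J'$ is not in $\mathcal J_0(\mathfrak S)$, and since $s(J')>-S$ this means there is a tile $\fp^*\in\mathfrak S$ with $\scI(\fp^*)\subset B(c(J'),100D^{s(J')+1})=B(c(J'),100D^{s(J)+2})$.

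Next comes the geometry. From \eqref{eq-vol-sp-cube} we have $c(J)\in J\subset J'\subset B(c(J'),4D^{s(J)+1})$, hence $\scI(\fp^*)\subset B(c(J),101D^{s(J)+2})=:B^{**}$; and again by \eqref{eq-vol-sp-cube} the ball $B(\pc(\fp^*),\tfrac14 D^{\ps(\fp^*)})=B(c(\scI(\fp^*)),\tfrac14 D^{s(\scI(\fp^*))})$ defining $d_{\fp^*}$ is contained in $\scI(\fp^*)\subset B^{**}$, so \eqref{monotonedb} gives $d_{\fp^*}(\fcc(\fu_1),\fcc(\fu_2))\le d_{B^{**}}(\fcc(\fu_1),\fcc(\fu_2))$. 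On the other hand $B(J)=B(c(J),8D^{s(J)})$ and $B^{**}$ are concentric with radius ratio at most $2^{200a^2+4}$, so applying \eqref{firstdb} at most $200a^2+4$ times (its center hypothesis being automatic for concentric balls) yields $d_{B^{**}}(\fcc(\fu_1),\fcc(\fu_2))\le 2^{200a^3+4a}\,d_{B(J)}(\fcc(\fu_1),\fcc(\fu_2))$. Chaining these with the membership $\fp^*\in\mathfrak S$, i.e. $d_{\fp^*}(\fcc(\fu_1),\fcc(\fu_2))\ge 2^{Zn/2}$, gives $d_{B(J)}(\fcc(\fu_1),\fcc(\fu_2))\ge 2^{-200a^3-4a}\,2^{Zn/2}\ge 2^{-201a^3}\,2^{Zn/2}$ for $a\ge 4$, as claimed.

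The only delicate point — and it is really bookkeeping rather than a genuine obstacle — is the first step: one must verify carefully, using the grid axioms, that cubes of $\mathcal J(\mathfrak S)$ have scale below $S$ and hence admit a parent cube, and then read off from the failure of the $\mathcal J_0$-condition at that parent exactly which ball contains $\scI(\fp^*)$. Once $\fp^*$ is in hand, the remaining estimate is a routine chain of the doubling and monotonicity axioms with constants tracked.
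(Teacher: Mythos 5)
Your proof is correct and follows essentially the same route as the paper's: pass to the parent cube $J'$, use the failure of the $\mathcal J_0$-condition at $J'$ to produce a tile $\fp^*\in\mathfrak S$ with $\scI(\fp^*)$ in a ball concentric with $B(J)$, apply monotonicity \eqref{monotonedb} to pass from $d_{\fp^*}$ to the metric of that larger ball, and then iterate \eqref{firstdb} to come back down to $d_{B(J)}$, tracking the constant. The only difference is that you spell out explicitly why the parent cube exists (via $I_0\notin\mathcal J_0(\mathfrak S)$), a point the paper leaves implicit.
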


    \begin{proof}
    \leanok
    Let $J'$ be the parent cube of $J$. By definition of $\mathcal{J'}$ and the triangle inequality, there exists $\fp \in \mathfrak{S}$ such that
    $$
        \scI(\fp) \subset B(c(J'), 100 D^{s(J') + 1}) \subset B(c(J), 128 D^{s(J)+2})\,.
    $$
    Thus, by definition of $\mathfrak{S}$:
    \begin{align*}
        2^{Zn/2} \le d_{\fp}(\fcc(\fu_1), \fcc(\fu_2)) \le d_{B(c(J), 128 D^{s(J)+2})}(\fcc(\fu_1), \fcc(\fu_2))\,.
    \end{align*}
    The lemma follows using the doubling property \eqref{firstdb} and $a \ge 4$.
    \end{proof}

    \begin{proof}[Proof of \Cref{correlation-distant-tree-parts}]
    \proves{correlation-distant-tree-parts}
    By the triangle inequality, the left hand side of \eqref{eq-lhs-big-sep-tree} is at most
    $$
        \le \sum_{J \in \mathcal{J}'} \Big|\int_{B(J)} e(\fcc(\fu_2)(y) - \fcc(\fu_1)(y)) h_J(y) \, \mathrm{d}\mu(y) \Big|\,.
    $$
    The van der Corput \Cref{Holder-van-der-Corput}  estimates this by
    $$
        \le 2^{7a} \sum_{J \in \mathcal{J}'} \mu(2B(J)) \|h_J\|_{C^{\tau}(B(J))} (1 + d_{B(J)}(\fcc(\fu_1), \fcc(\fu_2)))^{-1/(2a^2+a^3)}\,.
    $$
    With \Cref{Holder-correlation-tree}, \Cref{lower-oscillation-bound} and $a \ge 4$,  the last display is estimated by
    \begin{equation*}
        \le 2^{485a^3+201} 2^{-Zn/(4a^2 + 2a^3)} \sum_{J \in \mathcal{J}'} \mu(B(J))
        \prod_{j=1}^2 (\inf_{B^\circ{}(J)} |T_{\fT(\fu_j)}^* g_j| + \inf_J M |g_j|)\,.
    \end{equation*}
    Using the doubling property \eqref{doublingx}, a summand with fixed $J$ is controlled by
    $$
        2^{6a} \int_J \prod_{j=1}^2 ( |T_{\fT(\fu_j)}^* g_j|(x) + M |g_j|(x)) \, \mathrm{d}\mu(x)\,.
    $$
    The lemma follows by summing over $J \in \mathcal{J}'$ and using Cauchy-Schwarz.
    \end{proof}

\subsection{The remaining tiles}
    \label{subsec-rest-tiles}
    Differently from the previous subsection, define
    $$
        \mathcal{J}' := \{J \in \mathcal{J}(\fT(\fu_1)) \, : \, J \subset \scI(\fu_1)\}\,.
    $$
    The collection $\mathcal{J}'$  is a partition of $\scI(\fu_1)$.

In this section, we prove
\Cref{correlation-near-tree-parts}.
    By \Cref{tree-projection-estimate} and the support property \eqref{adjoint-tile-support}, we estimate the left side of \eqref{eq-lhs-small-sep-tree} by
    \begin{align*}
        %\eqref{eq-lhs-small-sep-tree}
        \le  2^{130a^3} \|g_1\mathbf{1}_{\scI(\fu_1)}\|_2 \|P_{\mathcal{J}'}|T_{\fT(\fu_2) \setminus \mathfrak{S}}^* g_2|\|_2.
    \end{align*}
    This reduces \Cref{correlation-near-tree-parts} to the following lemma.

    \begin{lemma}[bound for tree projection]
        \label{bound-for-tree-projection}
        \leanok
        \lean{TileStructure.Forest.bound_for_tree_projection}
        We have
        \begin{equation}
        \label{eq-bound-tree-proj}
              \|P_{\mathcal{J}'}|T_{\fT(\fu_2) \setminus \mathfrak{S}}^* g_2|\|_2
            \le 2^{102a^3+21a+5} 2^{-\frac{25}{101a}Zn\kappa} \|\mathbf{1}_{\scI(\fu_1)} M |g_2|\|_2.
    \end{equation}
    \end{lemma}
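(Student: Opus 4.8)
The plan is to reduce \eqref{eq-bound-tree-proj} to the combination of a pointwise maximal function estimate on the cubes of $\mathcal{J}'$, in the spirit of \Cref{local-tree-control}, with a smallness estimate for a thin inner shell of $\scI(\fu_1)$ furnished by the small boundary property \eqref{eq-small-boundary}. The decay in $n$ will be produced by the observation that only tiles whose scale is much smaller than $\ps(\fu_1)$ can contribute to $T_{\fT(\fu_2)\setminus\mathfrak{S}}^*g_2$ inside $\scI(\fu_1)$.

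Write $h:=|T_{\fT(\fu_2)\setminus\mathfrak{S}}^*g_2|$ and call a cube $J\in\mathcal{J}'$ active if it meets the support $\bigcup_{\fp\in\fT(\fu_2)\setminus\mathfrak{S}}B(\pc(\fp),5D^{\ps(\fp)})$ of $h$ (recall \eqref{adjoint-tile-support}). Since the cubes of $\mathcal{J}'$ are pairwise disjoint and partition $\scI(\fu_1)$, Cauchy--Schwarz on each cube gives
\[
   \|P_{\mathcal{J}'}h\|_2^2=\sum_{J\in\mathcal{J}'}\mu(J)\Big(\tfrac{1}{\mu(J)}\textstyle\int_J h\,\mathrm d\mu\Big)^2=\sum_{J \text{ active}}\mu(J)\Big(\tfrac{1}{\mu(J)}\textstyle\int_J h\,\mathrm d\mu\Big)^2 .
\]
As in the proof of \Cref{local-tree-control}, for a fixed scale the sets $E(\fp)$ are pairwise disjoint, so the kernel size bound \eqref{eq-Ks-size}, the doubling property \eqref{doublingx}, and the fact that only a bounded number of scales of tiles in $\fT(\fu_2)\setminus\mathfrak{S}$ reach a fixed point of $\scI(\fu_1)$ yield $\tfrac{1}{\mu(J)}\int_J h\,\mathrm d\mu\le 2^{102a^3+21a}\inf_J M|g_2|$ for every active $J$.

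The geometric heart of the argument is that every active cube lies very close to $X\setminus\scI(\fu_1)$. By \Cref{overlap-implies-distance}, every $\fp\in\fT(\fu_2)\setminus\mathfrak{S}$ satisfies $\scI(\fp)\cap\scI(\fu_1)=\emptyset$; hence, by the kernel support \eqref{supp-Ks}, any point $x$ at which $T_\fp^*g_2$ is nonzero has $\rho(x,X\setminus\scI(\fu_1))\le\tfrac12 D^{\ps(\fp)}$, so an active cube $J$ with contributing tile $\fp$ satisfies $J\subset\{y\in\scI(\fu_1):\rho(y,X\setminus\scI(\fu_1))\le 13 D^{\ps(\fp)}\}$. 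Combining the separation property \eqref{forest5} with the tree property \eqref{forest1} shows that any tile of $\fT(\fu_2)$ whose spatial cube lies inside $\scI(\fu_1)$ already belongs to $\mathfrak{S}$; propagating the bound $d_\fp(\fcc(\fu_1),\fcc(\fu_2))<2^{Zn/2}$ (valid for $\fp\notin\mathfrak{S}$) downward in scale by means of \Cref{monotone-cube-metrics} then forces $\ps(\fu_1)-\ps(\fp)\ge Zn/(202a^3)$ for every contributing tile $\fp$. Consequently all active cubes are contained in the shell $X_{\fu_1}:=\{y\in\scI(\fu_1):\rho(y,X\setminus\scI(\fu_1))\le 2^{-\frac{50}{101a}Zn}D^{\ps(\fu_1)}\}$, and \eqref{eq-small-boundary} gives $\mu(X_{\fu_1})\le 2\cdot 2^{-\frac{50}{101a}Zn\kappa}\mu(\scI(\fu_1))$.

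Feeding the per-cube estimate into the displayed identity, using $\inf_J M|g_2|\le M|g_2|(y)$ for $y\in J$ to compare $\sum_{J\text{ active}}\mu(J)(\inf_J M|g_2|)^2$ with the integral of $(M|g_2|)^2$ over the shell $X_{\fu_1}$, and balancing $\mu(X_{\fu_1})$ against $\mu(\scI(\fu_1))$ produces the factor $2^{-\frac{25}{101a}Zn\kappa}$ and the claimed bound \eqref{eq-bound-tree-proj}. I expect the main obstacle to be the scale restriction $\ps(\fu_1)-\ps(\fp)\gtrsim Zn/a^3$ for contributing tiles---this is precisely where the frequency separation of distinct trees, encoded in \eqref{forest5}, must be converted into a metric inequality at the scale of $\scI(\fu_1)$ via the monotonicity of the cube metrics---together with the final bookkeeping step that turns the smallness of $\mu(X_{\fu_1})$ into the stated $L^2$ bound featuring the maximal function of $g_2$ on the full cube $\scI(\fu_1)$.
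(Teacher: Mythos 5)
There is a genuine gap, and it stems from conflating the geometry of the previous subsection with the present one. In the setting of \Cref{local-tree-control}, the partition $\mathcal{J}'$ is built from $\mathfrak{S}$ and \Cref{limited-scale-impact} forces $s(J)\le \ps(\fp)\le s(J)+3$ for contributing tiles, so indeed only a bounded number of scales reach any point of $J$. Here, however, $\mathcal{J}'$ is built from $\fT(\fu_1)$ and the analogue of the scale constraint (\Cref{thin-scale-impact}) points the opposite way: contributing tiles satisfy $\ps(\fp)\le s(J)-Zn/(202a^3)$, which leaves an \emph{unbounded} range of scales below $s(J)$. Your claimed per-cube bound $\tfrac1{\mu(J)}\int_J h\le 2^{102a^3+21a}\inf_J M|g_2|$ therefore does not follow; a direct pointwise sum over scales would cost a logarithmic factor that the statement does not allow. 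Relatedly, the assertion that every active cube $J$ is contained in the thin shell $X_{\fu_1}$ of width $\approx 2^{-cZn}D^{\ps(\fu_1)}$ is false: an active $J$ can have $s(J)$ comparable to $\ps(\fu_1)$, so $\mathrm{diam}(J)\sim D^{s(J)}$ far exceeds the shell width, and only the portion of $J$ inside the support of $h$ lies close to $X\setminus\scI(\fu_1)$. Finally, even granting this inclusion, the last step does not yield the decay: passing from $\sum_{J\text{ active}}\mu(J)(\inf_J M|g_2|)^2$ to $\int_{X_{\fu_1}}(M|g_2|)^2$ and then to $\int_{\scI(\fu_1)}(M|g_2|)^2$ loses the small factor, which cannot be recovered without extra information on $M|g_2|$.

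The paper avoids all three issues by first applying Minkowski's inequality over scales $s\ge s_1$ (so the unbounded scale range is paid for by a convergent geometric series rather than a pointwise sum), and then, at each fixed scale $s$, using \Cref{square-function-count} to exploit the small boundary property \emph{relative to $J$}: the intersections $B(I)\cap J$ lie in a shell of width $8D^{-s}D^{s(J)}$ inside $J$, so $\tfrac1{\mu(J)}\int_J\big(\sum_I\mathbf{1}_{B(I)}\big)^2\le 2^{14a+1}(8D^{-s})^{\kappa}$. It is this small fraction \emph{of each $J$}, not of $\scI(\fu_1)$, combined with the geometric sum starting at $s_1=Zn/(202a^3)$, that produces $2^{-\frac{25}{101a}Zn\kappa}$ in front of $\|\mathbf{1}_{\scI(\fu_1)}M|g_2|\|_2$. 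If you want to retain your picture, the fix is to apply the shell idea scale-by-scale inside each $J$ and then sum the geometric series, which is exactly what the paper does.
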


    \begin{proof}%[Proof of \Cref{bound-for-tree-projection}]
    \proves{bound-for-tree-projection}
    Expanding the definition of $P_{\mathcal{J}'}$, we have for the left side of \eqref{eq-bound-tree-proj}
    \begin{equation*}
        \Big(\sum_{J \in \mathcal{J}'} \frac{1}{\mu(J)} \Big|\int_J \sum_{\fp \in \fT(\fu_2) \setminus \mathfrak{S}} T_{\fp}^* g_2 \, \mathrm{d}\mu(y) \Big|^2 \Big)^{1/2}
    \end{equation*}
    \begin{equation}
    \label{eq-sep-tree-small-1}
        \le \sum_{s \ge s_1} \Bigg( \sum_{J \in \mathcal{J}'} \frac{1}{\mu(J)} \Bigg|\int_J \sum_{\substack{\fp \in \fT(\fu_2) \setminus \mathfrak{S}\\ \ps(\fp) = s(J) - s\\
        J \cap B(\scI(\fp)) \ne \emptyset}} T_{\fp}^* g_2 \, \mathrm{d}\mu(y) \Bigg|^2\Bigg)^{1/2}\,.
    \end{equation}
    Here we have restricted the summation set using
    \Cref{thin-scale-impact} below with $s_1 := \frac{Zn}{202a^3}$ and used
    that by the support property \eqref{adjoint-tile-support}, the integral over $J$ vanishes if $J \cap B(\scI(\fp)) = \emptyset$. We also used Minkowski's inequality.

    Since for each $I \in \mathcal{D}$ the sets $E(\fp)$ with $\fp \in \fP(I)$ are disjoint, it follows from the doubling property \eqref{doublingx} and the kernel upper bound \eqref{eq-Ks-size} that
    $$
        \bigg| \int_J \sum_{\substack{\fp \in \fT(\fu_2) \setminus \mathfrak{S}\\ \scI(\fp) = I\\
        J \cap B(\scI(\fp)) \ne \emptyset}} T_{\fp}^* g_2 \, \mathrm{d}\mu \bigg|
        \le 2^{103a^3} \int_J \mathbf{1}_{B(I)} M |g_2|  \, \mathrm{d}\mu\,.
    $$
    By \Cref{overlap-implies-distance}, we have $\scI(\fp) \cap \scI(\fu_1) = \emptyset$ for all $\fp \in \fT(\fu_2) \setminus \mathfrak{S}$.
    Thus we can estimate \eqref{eq-sep-tree-small-1} by
    $$
        2^{103a^3} \sum_{s \ge s_1} \Bigg( \sum_{J \in \mathcal{J}'} \frac{1}{\mu(J)} \Bigg|\int_J \sum_{\substack{I \in \mathcal{D}, s(I) = s(J) - s\\ I \cap \scI(\fu_1) = \emptyset\\
        J \cap B(I) \ne \emptyset}} M |g_2| \mathbf{1}_{B(I)} \, \mathrm{d}\mu \Bigg|^2\Bigg)^{\frac 1 2}\,,
    $$
    which by Cauchy-Schwarz and \Cref{square-function-count} below is bounded by
    $$
        \le 2^{103a^3} \sum_{s \ge s_1} \Big(\sum_{J \in \mathcal{J}'} \int_J (M |g_2|)^2 2^{14a+1} (8 D^{-s})^\kappa\Big)^{\frac 1 2}.
    $$
    Summing a geometric series using
    $s_1 = \frac{Zn}{202a^3}$ and using that $\mathcal{J}'$ is a partition of $\scI(\fu_1)$ completes the proof.
\end{proof}

    \begin{lemma}[thin scale impact]
        \label{thin-scale-impact}
        \leanok
        \lean{TileStructure.Forest.thin_scale_impact}
        If $\fp \in \fT(\fu_2) \setminus \mathfrak{S}$ and $J \in \mathcal{J'}$ with $B(\scI(\fp)) \cap B(J) \ne \emptyset$, then
        $$
            \ps(\fp) \le s(J) - \frac{Zn}{202a^3}\,.
        $$
    \end{lemma}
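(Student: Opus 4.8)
The plan is to argue by contradiction, assuming $\ps(\fp) > s(J) - \frac{Zn}{202a^3}$, and to exploit—as in the proof of \Cref{limited-scale-impact}—that the metrics $d_B$ contract by a definite factor at every scale: were $\scI(\fp)$ at a scale close to $s(J)$, the metric $d_{\fp}$ would be so large that $\fp \in \mathfrak{S}$, contradicting $\fp \notin \mathfrak{S}$. As in the proof of \Cref{local-dens2-tree-bound}, we may assume $s(J) < S$ (the case $s(J) = S$ cannot occur for $J \in \mathcal{J}(\fT(\fu_1))$ once $S \ge 1$). Recall $B(I) = B(c(I), 8D^{s(I)})$, and $I^\circ = B(c(I), \frac14 D^{s(I)})$ from \eqref{defineIsupo}, so that $d_{\fp} = d_{\scI(\fp)^\circ}$.

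First I would locate a reference tile of $\fT(\fu_1)$ near $J$. Let $J' \in \mathcal{D}$ be the parent of $J$, so $s(J') = s(J)+1$. Since $J$ is inclusion-maximal in $\mathcal{J}_0(\fT(\fu_1))$ and $J \subsetneq J'$, we have $J' \notin \mathcal{J}_0(\fT(\fu_1))$; as $s(J') \ne -S$, this furnishes $\fp_1 \in \fT(\fu_1)$ with $\scI(\fp_1) \subset B(c(J'), 100D^{s(J)+2})$. By \eqref{forest1}, $\scI(\fp_1) \subset \scI(\fu_1) \subset \scI(\fu_2)$, so \eqref{forest5} gives $d_{\fp_1}(\fcc(\fp_1), \fcc(\fu_2)) > 2^{Z(n+1)}$; combining this with $d_{\fp_1}(\fcc(\fp_1), \fcc(\fu_1)) < 4$ (from \eqref{forest1} and \eqref{eq-freq-comp-ball}) and the triangle inequality,
\[
    d_{\fp_1}(\fcc(\fu_1), \fcc(\fu_2)) \ge 2^{Z(n+1)} - 4 \ge 2^{Z(n+1)-1}\,.
\]

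Next comes the geometric bookkeeping. From $B(\scI(\fp)) \cap B(J) \ne \emptyset$ one has $\rho(\pc(\fp), c(J)) < 8D^{\ps(\fp)} + 8D^{s(J)}$; combining this with $c(J) \in J \subset J'$ and $\scI(\fp_1)^\circ \subset \scI(\fp_1) \subset B(c(J'), 100D^{s(J)+2})$, one checks that the whole ball $\scI(\fp_1)^\circ$ lies in $B^* := B(\pc(\fp), R)$ with $R \le D^{s(J)+3} + 8D^{\ps(\fp)}$. Then monotonicity \eqref{monotonedb} yields $d_{B^*}(\fcc(\fu_1),\fcc(\fu_2)) \ge d_{\scI(\fp_1)^\circ}(\fcc(\fu_1),\fcc(\fu_2)) = d_{\fp_1}(\fcc(\fu_1),\fcc(\fu_2))$, while iterating \eqref{firstdb} along the balls $B(\pc(\fp), 2^{j}\cdot\frac14 D^{\ps(\fp)})$, $j = 0,\dots,k$, with $k$ minimal so that $2^{k}\cdot\frac14 D^{\ps(\fp)} \ge R$, yields $d_{B^*}(\fcc(\fu_1),\fcc(\fu_2)) \le 2^{ak} d_{\fp}(\fcc(\fu_1),\fcc(\fu_2))$. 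Since $\fp \notin \mathfrak{S}$ gives $d_{\fp}(\fcc(\fu_1),\fcc(\fu_2)) < 2^{Zn/2}$, we conclude
\[
    2^{Z(n+1)-1} \le 2^{ak}\,2^{Zn/2}\,, \qquad\text{hence}\qquad \frac{Zn}{2} + Z - 1 \le ak\,.
\]
If $\ps(\fp) \le s(J) + 3$, then $R \le 9D^{s(J)+3}$, so with $m := s(J) - \ps(\fp)$ one gets $k \le 100a^2(m+3) + O(1)$ and thus $ak \le 100a^3 m + 300a^3 + O(a)$; since $Z = 2^{12a}$ dominates the $O(a^3)$ error for all $a \ge 4$, the displayed inequality rearranges to $s(J) - \ps(\fp) \ge \frac{Zn}{200a^3} \ge \frac{Zn}{202a^3}$, contradicting the assumption. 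If instead $\ps(\fp) \ge s(J) + 4$, then $R \le 9D^{\ps(\fp)}$ and $k = O(1)$, so the displayed inequality already forces $Z \le O(a)$, absurd for $a \ge 4$.

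I expect the only real difficulty to be bookkeeping: tracking the radii carefully enough that the exponent emerging from the chain of doubling estimates is at least $\frac{Zn}{202a^3}$ and not marginally worse, and disposing of the degenerate scale ranges (notably $\ps(\fp) \ge s(J)$ and the top-cube case). The conceptual content—that a small value of $d_{\fp}(\fcc(\fu_1),\fcc(\fu_2))$ forces a large scale gap between $\scI(\fp)$ and $J$, because $d_B$ contracts geometrically in the scale—is exactly the mechanism behind \Cref{limited-scale-impact}, now quantified in $n$ thanks to the lower bound $d_{\fp_1}(\fcc(\fu_1),\fcc(\fu_2)) \ge 2^{Z(n+1)-1}$ coming from the tree-separation property \eqref{forest5}.
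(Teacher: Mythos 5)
Your proof is correct and follows essentially the same route as the paper's: both argue by contradiction, use the definition of $\mathcal{J}$ (via the parent cube $J'$) to produce a reference tile $\fp' \in \fT(\fu_1)$ near $J$, combine \eqref{forest5} and \eqref{forest1} to get $d_{\fp'}(\fcc(\fu_1), \fcc(\fu_2)) \ge 2^{Z(n+1)-1}$, pass from $d_{\fp'}$ to $d_{\fp}$ via monotonicity and iterated doubling, invoke $\fp \notin \mathfrak{S}$ for the upper bound $2^{Zn/2}$, and extract a numerical contradiction from the definitions of $Z$, $D$, $s_1$. The only real difference is bookkeeping: the paper uses the contradictory hypothesis $\ps(\fp) > s(J) - s_1$ at the outset to bound $8D^{s(J)} + 8D^{\ps(\fp)} \le 16D^{\ps(\fp)+s_1}$, which lets it count the doublings in one shot with no case distinction; you instead split on $\ps(\fp) \le s(J)+3$ versus $\ps(\fp) \ge s(J)+4$ and track the radius in each regime, arriving at the same contradiction.
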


    \begin{proof}
        \leanok
        Assume to the contrary that $\ps(\fp) > s(J)  -s_1$ with $s_1:=\frac{Zn}{202a^3}$. Then
        $$
            \rho(\pc(\fp), c(J)) \le 8D^{s(J)}+8D^{\ps(\fp)} \le 16 D^{\ps(\fp) + s_1}\,.
        $$
        Let $J'$ be the parent of $J$. By definition \eqref{eq-def-jl} of $\mathcal{J}$, there is $\fp' \in \fT(\fu_1)$ with
        \begin{equation}
        \label{eq-ip-bcp}
            \scI(\fp') \subset B(c(J'), 100 D^{s(J') + 1}) \subset B(\pc(\fp), 128 D^{\ps(\fp) + s_1 + 1})\,.
        \end{equation}
        Since
        %$\fp' \in \fT(\fu_1)$ and
        $\scI(\fu_1) \subset \scI(\fu_2)$, we have by the forest properties \eqref{forest5} and \eqref{forest1}
        $$
            d_{\fp'}(\fcc(\fu_1), \fcc(\fu_2)) > 2^{Z(n+1)} - 4 \ge 2^{Z(n+1) - 1}\,.
        $$
        It follows by \eqref{eq-ip-bcp} and the monotonicity property \eqref{monotonedb} that
        $$
            2^{Z(n+1)-1}
            %\le d_{\fp'}(\fcc(\fu_1), \fcc(\fu_2))
            \le d_{B(\pc(\fp), 128 D^{\ps(\fp) + s_1+ 1})}(\fcc(\fu_1), \fcc(\fu_2))\,.
        $$
        Using the doubling property \eqref{firstdb} and $\fp \notin \mathfrak{S}$, we estimate this further by
        $$
            \le 2^{9a + 100a^3 (s_1+2)} d_{\fp}(\fcc(\fu_1), \fcc(\fu_2))
            \le 2^{9a + 100a^3 (s_1+2)} 2^{Zn/2}\,.
        $$
    The last two displays give the following contradiction to the definition of $s_1$:
        \begin{equation*}
            Z n/2 + Z - 1 \le 9a + 100a^3(s_1 + 2)\,.\qedhere
        \end{equation*}
    \end{proof}

    \begin{lemma}[square function count]
        \label{square-function-count}
        \leanok
        \lean{TileStructure.Forest.square_function_count}
        For $J \in \mathcal{J}'$ and $s \ge 0$, we have
        \begin{equation*}
            \frac{1}{\mu(J)} \int_J \Bigg(\sum_{\substack{I \in \mathcal{D}, s(I) = s(J) - s\\ I \cap \scI(\fu_1) = \emptyset\\
        J \cap B(I) \ne \emptyset}} \mathbf{1}_{B(I)}\bigg)^2 \, \mathrm{d}\mu \le 2^{14a+1} (8 D^{-s})^\kappa\,.
        \end{equation*}
    \end{lemma}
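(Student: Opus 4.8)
The plan is to bound the integrand pointwise, by combining a bounded‑overlap property of the balls $B(I)$ with the small boundary property \eqref{eq-small-boundary} of the cube $J$. Write $k := s(J)-s$. If $\mathcal{D}$ contains no cube of scale $k$, i.e.\ $k<-S$, then the sum in the statement is empty and the inequality holds trivially, so we may assume $k\ge -S$; in particular every cube $I$ occurring in the sum has $s(I)=k\ge -S$.

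\emph{Step 1 (bounded overlap).} I would first show that every point $x\in X$ lies in at most $2^{7a}$ of the balls $B(I)=B(c(I),8D^{k})$ with $s(I)=k$. Indeed, if $x$ lies in $M$ such balls $B(c(I_1),8D^{k}),\dots,B(c(I_M),8D^{k})$ with the $I_j$ pairwise distinct, then the balls $B(c(I_j),\tfrac14 D^{k})\subset I_j$ are pairwise disjoint by \eqref{dyadicproperty} and \eqref{eq-vol-sp-cube}, and they all lie in $B(x,9D^{k})$; on the other hand $B(x,9D^{k})\subset B(c(I_j),17D^{k})$, and since $17D^{k}=68\cdot\tfrac14 D^{k}$, seven applications of the doubling property \eqref{doublingx} give $\mu(B(c(I_j),17D^{k}))\le 2^{7a}\mu(B(c(I_j),\tfrac14 D^{k}))$. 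Summing the disjoint balls contained in $B(x,9D^{k})$ then forces $M\le 2^{7a}$.

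\emph{Step 2 (support near the boundary of $J$).} Since $J\in\mathcal{J}'$ satisfies $J\subset\scI(\fu_1)$ while each $I$ occurring in the sum satisfies $I\cap\scI(\fu_1)=\emptyset$, the center $c(I)\in I$ lies in $X\setminus J$. Hence for any $x\in J\cap B(I)$ we have $\rho(x,X\setminus J)\le\rho(x,c(I))<8D^{k}=8D^{-s}\,D^{s(J)}$. Consequently the set of $x\in J$ at which $\sum_{I}\mathbf{1}_{B(I)}(x)>0$ is contained in the boundary layer $N:=\{x\in J:\rho(x,X\setminus J)\le 8D^{-s}D^{s(J)}\}$.

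\emph{Step 3 (conclusion, and the main point to watch).} By Step 1 we have $\big(\sum_{I}\mathbf{1}_{B(I)}\big)^2\le 2^{14a}\,\mathbf{1}_{\{\sum_{I}\mathbf{1}_{B(I)}>0\}}$, so by Step 2 the integral over $J$ is at most $2^{14a}\mu(N)$. Since $k\ge -S$ we have $8D^{-s}D^{s(J)}=8D^{k}\ge D^{-S}$, so the small boundary property \eqref{eq-small-boundary} applied to $J$ with $t=8D^{-s}$ gives $\mu(N)\le 2(8D^{-s})^{\kappa}\mu(J)$; dividing by $\mu(J)$ yields the claimed bound $2^{14a+1}(8D^{-s})^{\kappa}$. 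I do not expect a genuine obstacle here. The only delicate points are the bookkeeping of radii in Step 1, so that the overlap constant comes out as exactly $2^{7a}$ (whence the two factors of $2^{7a}$ together with the factor $2$ from \eqref{eq-small-boundary} assemble to $2^{14a+1}$), and checking the side condition $tD^{s(J)}\ge D^{-S}$ required to invoke \eqref{eq-small-boundary}.
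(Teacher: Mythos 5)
Your proof is correct and follows the same route as the paper: show that the overlap of the balls $B(I)$ of a fixed scale is at most $2^{7a}$ via the doubling property, observe that $B(I)\cap J$ lies in the boundary layer $\{x\in J:\rho(x,X\setminus J)\le 8D^{-s}D^{s(J)}\}$ because $c(I)\notin\scI(\fu_1)\supset J$, and then invoke the small boundary property \eqref{eq-small-boundary}. You simply spell out the radius bookkeeping and the side condition $8D^{s(J)-s}\ge D^{-S}$ that the paper leaves implicit.
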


    \begin{proof}
        \leanok
        Since $J \in \mathcal{J}'$ we have $J \subset \scI(\fu_1)$. Thus, if $B(I) \cap J \ne \emptyset$, then
    \begin{equation*}
        B(I) \cap J \subset \{x \in J \ : \ \rho(x, X \setminus J) \le 8D^{-s}D^{s(J)}\}\,.
    \end{equation*}
    Furthermore, for each $s$, the balls $B(I)$ with $s(I) = s$ have overlap at most $2^{7a}$ by the doubling property \eqref{doublingx}. Combining this with the small boundary property \eqref{eq-small-boundary}, the lemma follows.
\end{proof}

\subsection{Forests}
\label{subsec-forest}
In this subsection, we complete the proof of \Cref{forest-operator} from the results of the previous subsections.

Define an $n$-row to be an $n$-forest $(\fU, \fT)$, such that in addition the sets $\scI(\fu), \fu \in \fU$ are pairwise disjoint. By iteratively selecting the trees with inclusion maximal top tiles $\fu$, we can decompose the forest $(\fU, \fT)$ into a disjoint union of at most $2^n$ many $n$-rows
\begin{equation*}
(\fU_j, \fT_j) := (\fU_j, \fT|_{\fU_j}).
\end{equation*}
We set $\mathfrak{R}_j = \cup_{\fu \in \fU_j} \fT(\fu)$.
The support property \eqref{adjoint-tile-support} implies that  $T_{\fT(\fu)}$ with $\fu \in \fU_j$ maps the corresponding summand of the orthogonal direct sum
\[
    \bigoplus_{\fu \in \fU_j} L^2(\scI(\fu)) \subset L^2(X),
\]
into itself and annihilates all other summands. Hence, the operator norm of the operator associated to a row is the maximum of the norms of the corresponding tree operators.
With Lemma \ref{densities-tree-bound} and the density assumption \eqref{forest4} this gives the following row estimate.

\begin{lemma}[row bound]
    \label{row-bound}
    \leanok
    \lean{TileStructure.Forest.row_bound, TileStructure.Forest.indicator_row_bound}
    \uses{densities-tree-bound,adjoint-tile-support}
    For each $1 \le j \le 2^n$ and each bounded $g$ with bounded support with $|g| \le \mathbf{1}_G$,
    we have
    \begin{equation*}
        \left\|  T_{\mathfrak{R}_j}^* g\right\|_2 \le 2^{182a^3} 2^{-n/2} \|g\|_2
    \end{equation*}
    and
    \begin{equation*}
        \left\|  \mathbf{1}_F T_{\mathfrak{R}_j}^* g\right\|_2 \le 2^{283a^3} 2^{-n/2} \dens_2(\bigcup_{\fu\in \fU}\fT(\fu))^{1/2} \|g\|_2\,.
    \end{equation*}
\end{lemma}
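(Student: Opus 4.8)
The plan is to exploit the orthogonal decomposition $\bigoplus_{\fu \in \fU_j} L^2(\scI(\fu)) \subset L^2(X)$ mentioned above: because the cubes $\scI(\fu)$, $\fu \in \fU_j$, are pairwise disjoint in an $n$-row, the row operator splits into its tree operators in an orthogonal way, so that one only needs the single-tree bounds of \Cref{densities-tree-bound} together with the tree density bound \eqref{forest4}.

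First I would observe that within a row the collections $\fT(\fu)$, $\fu\in\fU_j$, are pairwise disjoint: a tile lying in $\fT(\fu)\cap\fT(\fu')$ with $\fu\ne\fu'$ would, by \eqref{forest1}, have its cube inside $\scI(\fu)\cap\scI(\fu')=\emptyset$. Hence $T_{\mathfrak{R}_j}^*g=\sum_{\fu\in\fU_j}T_{\fT(\fu)}^*g$. Summing \eqref{eq-tree-uu} over $\fp\in\fT(\fu)$ shows that each summand equals $\mathbf{1}_{\scI(\fu)}T_{\fT(\fu)}^*(\mathbf{1}_{\scI(\fu)}g)$ and is supported in $\scI(\fu)$; since the $\scI(\fu)$ are disjoint, Pythagoras gives
\[
    \|T_{\mathfrak{R}_j}^*g\|_2^2=\sum_{\fu\in\fU_j}\big\|T_{\fT(\fu)}^*(\mathbf{1}_{\scI(\fu)}g)\big\|_2^2\,,
\]
and the same identity with $\mathbf{1}_F$ inserted both in front of $T_{\mathfrak{R}_j}^*$ and in front of each $T_{\fT(\fu)}^*$.

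Next I would estimate each summand by $L^2$ duality from \Cref{densities-tree-bound}. Its two bilinear estimates yield, for any $h$ with $|h|\le\mathbf{1}_G$, the operator bounds $\|T_{\fT(\fu)}^*h\|_2\le 2^{181a^3}\dens_1(\fT(\fu))^{1/2}\|h\|_2$ and $\|\mathbf{1}_FT_{\fT(\fu)}^*h\|_2\le 2^{282a^3}\dens_1(\fT(\fu))^{1/2}\dens_2(\fT(\fu))^{1/2}\|h\|_2$; for the second one the testing function may be taken supported on $F$ and is handled by the $|f|\le\mathbf{1}_F$ version of the estimate after normalisation. Applying these with $h=\mathbf{1}_{\scI(\fu)}g$, using $\dens_1(\fT(\fu))\le 2^{4a+1-n}$ from \eqref{forest4}, using monotonicity of $\dens_2$ (immediate from \eqref{definedens2}) to replace $\dens_2(\fT(\fu))$ by $\dens_2(\bigcup_{\fu\in\fU}\fT(\fu))$, and then summing the squares while invoking disjointness of the $\scI(\fu)$ once more to recover $\sum_{\fu\in\fU_j}\|\mathbf{1}_{\scI(\fu)}g\|_2^2=\|g\|_2^2$, I would arrive at the two bounds with constants $2^{181a^3+2a+1}$ and $2^{282a^3+2a+1}$ and the factor $2^{-n/2}$; since $a\ge 4$ gives $2a+1\le a^3$, these are absorbed into $2^{182a^3}$ and $2^{283a^3}$, as claimed.

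I do not expect a genuine obstacle here. The only points needing care are the duality bookkeeping — passing from the bilinear estimates of \Cref{densities-tree-bound} to operator-norm bounds for $T_{\fT(\fu)}^*$, including the reduction of the testing class $|f|\le\mathbf{1}_F$ to "$f$ supported on $F$" by homogeneity — and tracking the powers of $2$; the decisive $2^{-n/2}$ gain is produced purely by the orthogonality coming from the disjointness of the cubes $\scI(\fu)$ built into the definition of an $n$-row.
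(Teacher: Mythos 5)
Your proposal is correct and follows essentially the same route as the paper: decompose the row operator via the orthogonal direct sum $\bigoplus_{\fu\in\fU_j}L^2(\scI(\fu))$ (the paper phrases this as "the operator norm of a row is the maximum of the tree operator norms", you phrase it as Pythagoras), apply \Cref{densities-tree-bound} by duality to each tree, invoke \eqref{forest4} and monotonicity of $\dens_2$, and absorb $2^{2a+1}\le 2^{a^3}$ into the stated constants. The one spot worth tightening is the remark on "normalisation": the clean way to pass from the testing class $|f|\le\mathbf{1}_F$ to an $L^2\to L^2$ bound for $\mathbf{1}_F T^*_{\fT(\fu)}$ is to note that the bilinear estimate in \Cref{densities-tree-bound} is linear in $f$, so scaling $f\mapsto f/\|f\|_\infty$ extends it to all bounded $f$ supported in $F$, whence duality gives $\|\mathbf{1}_F T^*_{\fT(\fu)}g\|_2\le 2^{282a^3}\dens_1^{1/2}\dens_2^{1/2}\|g\|_2$; your sketch gets this right in spirit.
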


We further have the following $TT^*$ estimate for distinct rows.

\begin{lemma}[row correlation]
    \label{row-correlation}
    \leanok
    \lean{TileStructure.Forest.row_correlation}
    \uses{adjoint-tree-control,correlation-separated-trees}
    For all $1 \le j < j' \le 2^n$ and for all $g_1, g_2$ with $|g_i| \le \mathbf{1}_G$, it holds that
    \begin{equation}
        \label{eq-row-corr}\left| \int  T^*_{\mathfrak{R}_j} g_1 \overline{T^*_{\mathfrak{R}_{j'}} g_2} \, \mathrm{d}\mu \right| \le
        2^{876a^3+1-4n}\|g_1\|_2 \|g_2\|_2\,.
    \end{equation}
\end{lemma}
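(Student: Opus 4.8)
The plan is to pass from the row operators to their constituent tree operators, estimate each tree--tree correlation by \Cref{correlation-separated-trees}, and then resum using Cauchy--Schwarz together with the disjointness of the tops inside an $n$-row. Since within the row $\mathfrak{R}_j = \bigcup_{\fu \in \fU_j} \fT(\fu)$ the trees $\fT(\fu)$, $\fu \in \fU_j$, are pairwise disjoint as sets of tiles (a common tile $\fp$ would force, via \eqref{forest1}, $\scI(\fp) \subset \scI(\fu) \cap \scI(\fu')$ for distinct $\fu, \fu'$, contradicting the $n$-row disjointness of tops), we have $T^*_{\mathfrak{R}_j} = \sum_{\fu \in \fU_j} T^*_{\fT(\fu)}$ and likewise for $j'$, so the left-hand side of \eqref{eq-row-corr} equals $\sum_{\fu_1 \in \fU_j} \sum_{\fu_2 \in \fU_{j'}} \int_X T^*_{\fT(\fu_1)} g_1 \, \overline{T^*_{\fT(\fu_2)} g_2} \, \mathrm{d}\mu$. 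Because $j \ne j'$ the index sets $\fU_j$ and $\fU_{j'}$ are disjoint, so every pair here has $\fu_1 \ne \fu_2$; applying \Cref{correlation-separated-trees} termwise bounds the absolute value of this double sum by
\begin{equation*}
    2^{512a^3+1-4n} \sum_{\fu_1 \in \fU_j} \sum_{\fu_2 \in \fU_{j'}} \|S_{2,\fu_1} g_1\|_{L^2(\scI(\fu_1) \cap \scI(\fu_2))} \, \|S_{2,\fu_2} g_2\|_{L^2(\scI(\fu_1) \cap \scI(\fu_2))}\,.
\end{equation*}

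Next I would apply Cauchy--Schwarz to the double sum, separating the $g_1$- and $g_2$-factors, so that it remains to bound $\sum_{\fu_1,\fu_2} \|S_{2,\fu_1}g_1\|_{L^2(\scI(\fu_1) \cap \scI(\fu_2))}^2$ and its $g_2$-analogue. Carrying out the $\fu_2$-summation first and using that the cubes $\scI(\fu_2)$, $\fu_2 \in \fU_{j'}$, are pairwise disjoint collapses this to $\sum_{\fu_1 \in \fU_j} \int_{\scI(\fu_1)} |S_{2,\fu_1}g_1|^2 \, \mathrm{d}\mu$. The crucial point is that $S_{2,\fu_1}g_1 = |T^*_{\fT(\fu_1)}g_1| + M|g_1| + |g_1|$ contains the \emph{nonlocal} maximal term $M|g_1|$, so bounding each summand individually by \eqref{adjoint-tree-control} would cost a factor $|\fU_j|$ and be useless; instead one recombines. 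By \eqref{eq-tree-uu} each $T^*_{\fT(\fu_1)}g_1$ is supported in $\scI(\fu_1)$, and since these cubes are disjoint, $\sum_{\fu_1 \in \fU_j} \mathbf{1}_{\scI(\fu_1)}|T^*_{\fT(\fu_1)}g_1| = |T^*_{\mathfrak{R}_j}g_1|$ pointwise; as disjointly supported functions add orthogonally in $L^2$, this gives
\begin{equation*}
    \sum_{\fu_1 \in \fU_j} \int_{\scI(\fu_1)} |S_{2,\fu_1}g_1|^2 \, \mathrm{d}\mu = \Bigl\| |T^*_{\mathfrak{R}_j}g_1| + \mathbf{1}_{\bigcup_{\fu_1 \in \fU_j} \scI(\fu_1)}\bigl(M|g_1| + |g_1|\bigr) \Bigr\|_2^2 \le \bigl(\|T^*_{\mathfrak{R}_j}g_1\|_2 + \|M|g_1|\|_2 + \|g_1\|_2\bigr)^2\,.
\end{equation*}

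Here the first term is $\le 2^{182a^3}2^{-n/2}\|g_1\|_2$ by \Cref{row-bound}, the second is bounded by a constant depending only on $a$ times $\|g_1\|_2$ by the $L^2$-boundedness of the Hardy--Littlewood maximal operator on the doubling space, and the third is trivial; the same holds with $g_1$ replaced by $g_2$, which is legitimate since $|g_i| \le \mathbf{1}_G$ so that \eqref{adjoint-tree-control} and \Cref{row-bound} apply. Feeding the two resulting bounds into the constant $2^{512a^3+1-4n}$ from \Cref{correlation-separated-trees} and bookkeeping powers of $2$ yields $2^{876a^3+1-4n}\|g_1\|_2\|g_2\|_2$; in the regime of small $n$, where the extra $O_a(1)$ contributions of the maximal and identity terms do not immediately fit under the exponent $182a^3$, one instead uses the trivial estimate $\|T^*_{\mathfrak{R}_j}g_1\|_2\|T^*_{\mathfrak{R}_{j'}}g_2\|_2 \le 2^{364a^3}2^{-n}\|g_1\|_2\|g_2\|_2$ (again from \Cref{row-bound}), which already dominates $2^{876a^3+1-4n}\|g_1\|_2\|g_2\|_2$ whenever $3n \le 512a^3+1$.

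The main obstacle is precisely this recombination: one must resist estimating the individual tree operators $T^*_{\fT(\fu)}$ and instead reassemble them into the single row operator $T^*_{\mathfrak{R}_j}$ before taking $L^2$ norms, so that the number $|\fU_j|$ of trees in the row never enters the bound; the disjointness of the spatial tops of an $n$-row and the support property \eqref{eq-tree-uu} are exactly what make this reassembly possible, and everything else is the routine propagation of constants.
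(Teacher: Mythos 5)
Your proposal is correct, but it diverges from the paper at the final resummation step, and the divergence has a real cost. The paper uses the support property \eqref{adjoint-tile-support}/\eqref{eq-tree-uu} \emph{before} invoking \Cref{correlation-separated-trees}: it first replaces $g_1$ by $\mathbf{1}_{\scI(\fu)}g_1$ and $g_2$ by $\mathbf{1}_{\scI(\fu')}g_2$, and only then applies the correlation lemma, so that after Cauchy--Schwarz one sums $\|S_{2,\fu}(\mathbf{1}_{\scI(\fu)}g_1)\|_{L^2(\scI(\fu'))}^2$ over $\fu'$ to get the full $L^2$ norm, then applies \eqref{adjoint-tree-control} to get $2^{364a^3}\|\mathbf{1}_{\scI(\fu)}g_1\|_2^2$, and finally sums over $\fu$ by disjointness of the $\scI(\fu)$. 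This localizes on the \emph{input} side, so the disjointness collapses the sum to $\|g_1\|_2^2$ with no spare constants and the exponent $876a^3+1-4n$ pops out exactly. You instead apply the correlation lemma to the unrestricted $g_i$ and localize on the \emph{output} side, reassembling $\sum_{\fu_1}\mathbf{1}_{\scI(\fu_1)}|T^*_{\fT(\fu_1)}g_1|$ into $|T^*_{\mathfrak{R}_j}g_1|$ and then bounding the row operator and $M|g_1|$ and $|g_1|$ separately in $L^2$. This is valid, but because the three $L^2$ bounds $2^{182a^3}2^{-n/2}$, $C_a$, $1$ are summed rather than multiplied with a single well-controlled constant, the resulting exponent overshoots $876a^3+1-4n$ slightly at small $n$, which is why you need the fallback to the trivial Cauchy--Schwarz estimate $\|T^*_{\mathfrak{R}_j}g_1\|_2\|T^*_{\mathfrak{R}_{j'}}g_2\|_2 \le 2^{364a^3-n}\|g_1\|_2\|g_2\|_2$ for $3n \le 512a^3+1$. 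The case split is legitimate and closes the gap, but it is an artifact of your ordering of the localization; the paper's input-side localization buys exactly the avoidance of this case split and the clean constant. Your worry that "estimating individual tree operators costs a factor $|\fU_j|$" is right in spirit, but the paper shows that it is enough to localize the inputs rather than to reassemble the outputs — the disjoint inputs already sum orthogonally, so the tree-by-tree bound \eqref{adjoint-tree-control} can be used without loss.
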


\begin{proof}
    Using the support property \eqref{adjoint-tile-support} and \Cref{correlation-separated-trees} first and then Cauchy Schwarz, we bound the left hand side of \eqref{eq-row-corr} by
    \begin{equation*}
        \sum_{\fu \in \fU_j} \sum_{\fu' \in \fU_{j'}} \left| \int T^*_{\fT_j(\fu)} (\mathbf{1}_{\scI(\fu)} g_1) \overline{T^*_{\fT_{j'}(\fu')} (\mathbf{1}_{\scI(\fu')} g_2)} \, \mathrm{d}\mu \right|
    \end{equation*}
    \begin{equation*}
         \le 2^{512a^3+1-4n} \sum_{\fu \in \fU_j} \sum_{\fu' \in \fU_{j'}} \|S_{2,\fu} (\mathbf{1}_{\scI(\fu)}g_1)\|_{L^2(\scI(\fu'))} \|S_{2, \fu'} (\mathbf{1}_{\scI(\fu')}g_2)\|_{L^2(\scI(\fu))}
    \end{equation*}
    $$
        \le 2^{512a^3+1-4n} \Big(\!\sum_{\substack{\fu \in \fU_j\\\fu' \in \fU_{j'}}} \|S_{2,\fu} (\mathbf{1}_{\scI(\fu)}g_1)\|_{L^2(\scI(\fu'))}^2 \sum_{\substack{\fu \in \fU_j\\\fu' \in \fU_{j'}}} \|S_{2,\fu'} (\mathbf{1}_{\scI(\fu')}g_2)\|_{L^2(\scI(\fu))}^2 \Big)^{\frac{1}{2}}.
    $$
    By the estimate  \eqref{adjoint-tree-control} on $S_{2,\fu}$  and pairwise disjointness of the sets $\scI(\fu)$ for $\fu \in \fU_j$ and of the sets $\scI(\fu')$ for $\fu' \in \fU_{j'}$, the last display is controlled by the right hand side of \eqref{eq-row-corr}, completing the proof of the lemma.
    %\[     \le 2^{862a^3 - 3n} \|g_1\|_2\|g_2\|_2\,.\]
\end{proof}

Define for $1 \le j \le 2^n$
$$
    E_j := \bigcup_{\fu \in \fU_j} \bigcup_{\fp \in \fT(\fu)} E_1(\fp)\,.
$$
The separation condition \eqref{forest5} for trees in a forest implies that the sets $E_j$ are pairwise disjoint.

\begin{proof}[Proof of \Cref{forest-operator}]
    \proves{forest-operator}
    Recalling the expression  \eqref{definetp*} for $T^*_{\fp}$, we have for each $j$
    $$
        T_{\mathfrak{R}_j}^*g = \sum_{\fu \in \fU_j} \sum_{\fp \in \fT(\fu)} T_{\fp}^* g = \sum_{\fu \in \fU_j} \sum_{\fp \in \fT(\fu)} T_{\fp}^* \mathbf{1}_{E_j} g = T_{\mathfrak{R}_j}^* \mathbf{1}_{E_j} g\,.
    $$
    Using this we can write
    $$
         \Big\|\sum_{j = 1}^{2^n} T^*_{\mathfrak{R}_{j}} g\Big\|_2^2
        = \sum_{j=1}^{2^n} \int_X |T_{\mathfrak{R}_j}^* \mathbf{1}_{E_j} g|^2 + \sum_{j =1}^{2^n} \sum_{\substack{j' = 1,j' \ne j}}^{2^n} \int_X \overline{ T_{\mathfrak{R}_j}^* \mathbf{1}_{E_j} g} T_{\mathfrak{R}_{j'}}^* \mathbf{1}_{E_{j'}} g \, \mathrm{d}\mu\,.
    $$
    We use \Cref{row-bound} to estimate each term in the first sum, and \Cref{row-correlation} to bound each term in the second sum.
    $$
        \le 2^{566a^3-n} \sum_{j = 1}^{2^n} \|\mathbf{1}_{E_j} g\|_2^2 + 2^{876a^3+1-4 n}\sum_{j=1}^{2^n}\sum_{j' = 1}^{2^n} \|\mathbf{1}_{E_j} g\|_2 \|\mathbf{1}_{E_{j'}}g\|_2\,.
    $$
    By Cauchy-Schwarz in the second two sums and disjointness of the sets $E_j$, this is at most
    $$
        2^{876a^3+1} (2^{-n} + 2^{n}2^{-4 n}) \sum_{j = 1}^n \|\mathbf{1}_{E_j} g\|_2^2 \le
        2^{876a^3+2 - n} \|g\|_2^2\,.
    $$
    Taking adjoints and square roots, it follows that for all $f$
    \begin{equation}
        \label{eq-forest-bound-1}
        \Big\|\sum_{\fu \in \fU} \sum_{\fp \in \fT(\fu)} T_{\fp} f\Big\|_2 \le 2^{439a^3-\frac{n}{2}} \|f\|_2\,.
    \end{equation}
    On the other hand, we have by disjointness of the sets $E_j$
    $$
        \Big\|\sum_{\fu \in \fU} \sum_{\fp \in \fT(\fu)} T_{\fp} f\Big\|_2 = \Big\|\sum_{j=1}^{2^n} \mathbf{1}_{E_j} T_{\mathfrak{R}_{j}} f\Big\|_2 = \Big(\sum_{j = 1}^{2^n} \|\mathbf{1}_{E_j} T_{\mathfrak{R}_{j}} f\|_2^2\Big)^{1/2}\,.
    $$
    If $|f| \le \mathbf{1}_F$ then we obtain from \Cref{row-bound}
    \begin{equation*}
        \le 2^{283a^3} \dens_2\Big(\bigcup_{\fu\in \fU}\fT(\fu)\Big)^{\frac{1}{2}} 2^{-\frac{n}{2}} \Big(\sum_{j = 1}^{2^n} \|f\|_2^2\Big)^{\frac{1}{2}} =
        \end{equation*}\begin{equation}
\label{eq-forest-bound-2}=2^{283a^3} \dens_2\Big(\bigcup_{\fu\in \fU}\fT(\fu)\Big)^{\frac{1}{2}} \|f\|_2\, .
    \end{equation}
    \Cref{forest-operator} follows by taking the product of the $(2 - \frac{2}{q})$-th power of \eqref{eq-forest-bound-1} and the $(\frac{2}{q} - 1)$-st power of \eqref{eq-forest-bound-2}.
\end{proof}

\section{Proof of the H\"older cancellative condition}
\label{liphoel}

We use the following standard approximation lemma, which we will not prove here.
Recall that $\tau = 1/a$.

\begin{lemma}[Lipschitz Holder approximation]
    \label{Lipschitz-Holder-approximation}
    \leanok
    \lean{support_holderApprox_subset, dist_holderApprox_le, iLipENorm_holderApprox}
    Let $z\in X$ and $R>0$. Let $\varphi: X \to \mathbb{C}$ be a function supported in the ball
    $B:=B(z,R)$ with finite norm $\|\varphi\|_{C^\tau(2B)}$. Let $0<t \leq 1$. There exists a function $\tilde \varphi : X \to \mathbb{C}$, supported in $B(z,2R)$, such that for every $x\in X$
    \begin{equation}\label{eq-firstt}
        |\varphi(x) - \tilde \varphi(x)| \leq t^{\tau} \|\varphi\|_{C^\tau(2B)}
    \end{equation}and
   \begin{equation}\label{eq-secondt}
       \|\tilde \varphi\|_{\Lip(B(z,2R))} \leq 2^{4a}t^{-1-a} \|\varphi\|_{C^{\tau}(2B)}\, .
   \end{equation}
\end{lemma}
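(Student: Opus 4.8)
The plan is to obtain $\tilde\varphi$ by mollifying $\varphi$ against a fixed Lipschitz weight at scale $tR$. Concretely, I would fix a function $\chi\colon[0,\infty)\to[0,1]$ that equals $1$ on $[0,\tfrac12]$, vanishes on $[1,\infty)$, and is affine on $[\tfrac12,1]$, so that $\chi$ is $2$-Lipschitz; then set $w_x(y):=\chi(\rho(x,y)/(tR))$ and
\[
    \Phi(x):=\int_X\varphi(y)\,w_x(y)\,\mathrm{d}\mu(y),\qquad W(x):=\int_X w_x(y)\,\mathrm{d}\mu(y),\qquad \tilde\varphi(x):=\frac{\Phi(x)}{W(x)}\,.
\]
Since $\mu$ is non-zero, locally finite, and doubling, every ball has finite positive measure, so $W(x)\ge\mu(B(x,tR/2))>0$ and $\tilde\varphi$ is well defined with $|\tilde\varphi|\le\|\varphi\|_\infty\le\|\varphi\|_{C^\tau(2B)}$. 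For the support, I would note that $\tilde\varphi(x)\neq0$ forces some $y$ in the support of $\varphi$, hence in $B=B(z,R)$, to satisfy $\rho(x,y)<tR$, whence $\rho(x,z)<tR+R\le2R$; thus $\tilde\varphi$ is supported in $B(z,2R)$.

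For \eqref{eq-firstt}, outside $B(z,2R)$ both $\varphi$ and $\tilde\varphi$ vanish, and for $x\in B(z,2R)$ I would write $\varphi(x)-\tilde\varphi(x)=W(x)^{-1}\int_X w_x(y)(\varphi(x)-\varphi(y))\,\mathrm{d}\mu(y)$. Only $y$ with $\rho(x,y)<tR$ contribute, and for those either $x\in B$ (so $\rho(y,z)<tR+R\le2R$) or else $\varphi(x)=0$ and one may restrict to $y\in B$; in either case $x,y\in2B$ and $\rho(x,y)<tR$, so the H\"older bound gives $|\varphi(x)-\varphi(y)|\le(\rho(x,y)/(2R))^\tau\|\varphi\|_{C^\tau(2B)}\le t^\tau\|\varphi\|_{C^\tau(2B)}$. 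Averaging yields \eqref{eq-firstt}.

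The heart of the matter is \eqref{eq-secondt}. The supremum term of $\|\tilde\varphi\|_{\Lip(B(z,2R))}$ is already $\le\|\varphi\|_{C^\tau(2B)}$. For the Lipschitz seminorm, I would fix $x\ne x'$ and use
\[
    |\tilde\varphi(x)-\tilde\varphi(x')|\le\frac{|\Phi(x)-\Phi(x')|}{W(x)}+|\tilde\varphi(x')|\,\frac{|W(x)-W(x')|}{W(x)}\,.
\]
Since $\chi$ is $2$-Lipschitz and $|\rho(x,y)-\rho(x',y)|\le\rho(x,x')$, one has $|w_x(y)-w_{x'}(y)|\le2\rho(x,x')/(tR)$, and this vanishes unless $y\in B(x,tR)\cup B(x',tR)$. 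In the case $\rho(x,x')<tR$ that union lies in $B(x,2tR)$, and the doubling property \eqref{doublingx} gives $\mu(B(x,2tR))\le2^{2a}\mu(B(x,tR/2))\le2^{2a}W(x)$; hence both $|\Phi(x)-\Phi(x')|$ and $|W(x)-W(x')|$ are $\le2^{2a+1}\|\varphi\|_\infty\rho(x,x')W(x)/(tR)$, so $|\tilde\varphi(x)-\tilde\varphi(x')|\le2^{2a+2}\|\varphi\|_\infty\rho(x,x')/(tR)$. In the remaining case $\rho(x,x')\ge tR$ the trivial bound $|\tilde\varphi(x)-\tilde\varphi(x')|\le2\|\varphi\|_\infty\le2\|\varphi\|_\infty\rho(x,x')/(tR)$ suffices. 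In both cases, multiplying by $2R/\rho(x,x')$,
\[
    2R\,\frac{|\tilde\varphi(x)-\tilde\varphi(x')|}{\rho(x,x')}\le2^{2a+3}\,t^{-1}\|\varphi\|_{C^\tau(2B)}\,,
\]
and adding the supremum term and using $a\ge4$, $0<t\le1$ (so $1+2^{2a+3}\le2^{4a}$ and $t^{-1}\le t^{-1-a}$) gives \eqref{eq-secondt}.

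The main obstacle is the Lipschitz estimate: controlling the quotient $\Phi/W$ requires producing, purely from the doubling property, a uniform lower bound for $W$ together with Lipschitz-in-$x$ control of both $\Phi$ and $W$, and then splitting the argument according to whether the increment $\rho(x,x')$ is small or large compared with the mollification scale $tR$. By comparison, \eqref{eq-firstt} and the support statement are routine once the weight $\chi$ has been chosen.
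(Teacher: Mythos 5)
Your proof is correct. For reference, the paper does not prove \Cref{Lipschitz-Holder-approximation} at all --- it is declared a ``standard approximation lemma, which we will not prove here'' --- so there is no paper argument to compare against; your mollification construction is the natural self-contained proof. The key points all check out: $W(x)\ge\mu(B(x,tR/2))>0$ and is finite since $\mu$ is nonzero, locally finite, and doubling; the doubling inequality gives $\mu(B(x,2tR))\le 2^{2a}W(x)$; the quotient-rule decomposition and the two-case split on whether $\rho(x,x')$ is smaller or larger than $tR$ are standard and correctly executed; and the final numerical reduction ($1+2^{2a+3}\le 2^{4a}$ for $a\ge 4$, together with $t^{-1}\le t^{-1-a}$) is valid. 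Two small remarks. First, your intermediate line asserting $|W(x)-W(x')|\le 2^{2a+1}\|\varphi\|_\infty\rho(x,x')W(x)/(tR)$ has a spurious factor $\|\varphi\|_\infty$ (the correct estimate for $W$ carries no such factor --- that factor in the second term of your quotient bound comes from $|\tilde\varphi(x')|$), but since you in fact supplied that factor from $|\tilde\varphi(x')|$, the final inequality $|\tilde\varphi(x)-\tilde\varphi(x')|\le 2^{2a+2}\|\varphi\|_\infty\rho(x,x')/(tR)$ stands. Second, what you actually prove is $\|\tilde\varphi\|_{\Lip(B(z,2R))}\le 2^{4a}t^{-1}\|\varphi\|_{C^\tau(2B)}$, which for $t<1$ is considerably stronger than the stated $t^{-1-a}$ bound; the statement is comfortably satisfied with room to spare.
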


We turn to the proof of \Cref{Holder-van-der-Corput}.
\begin{proof}[Proof of \Cref{Holder-van-der-Corput}]
    \proves{Holder-van-der-Corput}
Let $z\in X$ and $R>0$ and set $B=B(z,R)$. Let $\varphi$
be given as in \Cref{Holder-van-der-Corput}.
Set
\begin{equation*}
    t:=(1+d_B(\mfa,\mfb))^{-\frac{\tau}{2+a}}
\end{equation*}
and define $\tilde{\varphi}$ as in \Cref{Lipschitz-Holder-approximation}. Let $\mfa$ and $\mfb$ be in $\Mf$.
Then
\begin{equation*}
   \left|\int e(\mfa(x)-{\mfb(x)}) \varphi (x)\, \mathrm{d}\mu(x)\right|
\end{equation*}
\begin{equation}\label{eql61}
\le \left|\int e(\mfa(x)-{\mfb(x)}) \tilde{\varphi} (x)\, \mathrm{d}\mu(x)\right|
 + \left|\int e(\mfa(x)-{\mfb(x)}) (\varphi (x)-\tilde{\varphi}(x))\, \mathrm{d}\mu(x)\right|
\end{equation}
Using the cancellative condition \eqref{eq-vdc-cond} of $\Mf$ on the ball $B(z,2R)$, the first term in \eqref{eql61} is bounded above by
\begin{equation*}
    2^a \mu(B(z,2R)) \|\tilde{\varphi}\|_{\Lip(B(z,2R))} (1 + d_{B(z,2R)}(\mfa,\mfb))^{-\tau} \, .
\end{equation*}
With the  doubling condition \eqref{doublingx},
the inequality \eqref{eq-secondt}, and the monotonicity
$d_B\le d_{B(z,2R)}$, the last display is bounded above by
\begin{equation*}
     2^{6a}t^{-1-a} \mu(B) \|{\varphi}\|_{C^\tau(B)}
    (1 + d_{B}(\mfa,\mfb))^{-\tau} \, .
 \end{equation*}
The second term in \eqref{eql61} is by
\eqref{eq-firstt} bounded by
\begin{equation*}
  \mu(B(z,2R)) t^{\tau} \|\varphi\|_{C^\tau(B)}
  \le 2^a \mu(B) t^{\tau} \|\varphi\|_{C^\tau(B)}
  \,.
\end{equation*}
The proposition now follows by adding these two estimates.
\end{proof}

\printbibliography

\end{document}